\documentclass[11.5pt]{amsart}
\usepackage[letterpaper]{geometry}
\geometry{margin=1.5in}
\usepackage{amsmath,amsfonts,amsthm,amscd,amssymb,mathrsfs,amssymb, multirow, pst-node, comment}
\usepackage{enumerate}
\usepackage{stmaryrd}
\usepackage{graphicx}
\usepackage{verbatim}
\usepackage{mathtools}
\usepackage{tikz-cd} 
\newtheorem{theorem}{Theorem}

\newtheorem{proposition}[theorem]{Proposition}
\newtheorem{prop-def}[theorem]{Proposition-Definition}
\newtheorem{lemma}[theorem]{Lemma}

\newtheorem{corollary}[theorem]{Corollary}

\numberwithin{figure}{section}
\usepackage[colorlinks, linkcolor=blue, anchorcolor=black, citecolor=red]{hyperref}
\theoremstyle{definition}
\newtheorem{definition}[theorem]{Definition}
\newtheorem{remark}[theorem]{Remark}

\newcommand{\mcH}{\mathcal{H}}
\newcommand{\mcX}{\mathcal{X}}

\newcommand{\mcY}{\mathcal{Y}}
\newcommand{\mcZ}{\mathcal{Z}}

\newcommand{\mcL}{\mathcal{L}}

\newcommand{\mcJ}{\mathcal{J}}

\newcommand{\mcE}{\mathcal{E}}
\newcommand{\mcM}{\mathcal{M}}
\newcommand{\mcN}{\mathcal{N}}

\newcommand{\mcT}{\mathcal{T}}
\newcommand{\mcO}{\mathcal{O}}

\renewcommand{\AA}{\mathbb{A}}
\newcommand{\GG}{\mathbb{G}}
\newcommand{\PP}{\mathbb{P}}
\newcommand{\CC}{\mathbb{C}}
\newcommand{\NN}{\mathbb{N}}
\newcommand{\RR}{\mathbb{R}}
\newcommand{\ZZ}{\mathbb{Z}}
\newcommand{\QQ}{\mathbb{Q}}

\newcommand{\VV}{\mathbb{V}}
\newcommand{\TT}{\mathbb{T}}

\newcommand{\bA}{\mathbb{A}}

\newcommand{\NA}{{\rm NA}}
\newcommand{\MA}{{\rm MA}}

\newcommand{\bfE}{{\mathbf E}}
\newcommand{\bfJ}{{\mathbf J}}

\newcommand{\bfM}{{\mathbf M}}
\newcommand{\bfI}{{\mathbf I}}
\newcommand{\bfH}{{\mathbf H}}
\newcommand{\bfF}{{\mathbf F}}
\newcommand{\bfR}{{\mathbf R}}
\newcommand{\Fut}{{\mathbf {Fut}}}

\newcommand{\ddc}{{\rm dd^c}}
\renewcommand{\div}{{\rm div}}

\newcommand{\bvk}{{[\vec{k}]}}
\renewcommand{\max}{{\rm max}}
\newcommand{\FS}{{\rm FS}}

\newcommand{\an}{{\rm an}}
\newcommand{\gf}{\mathrm{gf}}
\newcommand{\IN}{\mathrm{IN}}
\newcommand{\Ric}{\mathrm{Ric}}
\newcommand{\rmd}{\mathrm{d}}

\newcommand{\cc}{\mathcal{C}}

\newcommand{\oo}{\mathcal{O}}

\newcommand*{\sheafhom}{\mathscr{H}\kern -.5pt om}
\newcommand*{\sheafext}{\mathscr{E}\kern -.5pt xt}


\newcommand{\Aut}{{\rm Aut}}

\newcommand{\ord}{{\rm ord}}
\newcommand{\bG}{\mathbb{G}}
\newcommand{\triv}{{\rm triv}}
\newcommand{\mcF}{\mathcal{F}}
\newcommand{\cF}{\mathcal{F}}
\newcommand{\vol}{{\rm vol}}

\newcommand{\mcD}{\mathcal{D}}

\newcommand{\PSH}{{\rm PSH}}
\newcommand{\PL}{{\rm PL}}

\newcommand{\fa}{\mathfrak{a}}

\newcommand{\fb}{\mathfrak{b}}

\newcommand{\rv}{\mathrm v}
\newcommand{\rw}{\mathrm w}
\newcommand{\ext}{\mathrm{ext}}
\newcommand{\aut}{\mathfrak{aut}}

\renewcommand{\tilde}{\widetilde}

\allowdisplaybreaks[4]

\numberwithin{equation}{section}
\numberwithin{theorem}{section}
\numberwithin{table}{section}
\numberwithin{table}{section}

\begin{document}
\bibliographystyle{amsalpha}
\title{$\mathbb{G}$-uniform weighted K-stability for models on klt varieties}

\author{Jiyuan Han and Yaxiong Liu}

\address{Jiyuan Han, Institute for Theoretical Sciences,
Westlake University,
No.600 Dunyu Road, 
Hangzhou, 310030, China}
\email{hanjiyuan@westlake.edu.cn}

\address{Yaxiong Liu, Department of Mathematics,
University of Maryland,
4176 Campus Dr,
College Park, MD 20742, USA}
\email{yxliu238@umd.edu}

\thanks{}
\date{}

\subjclass[2020]{}
\keywords{$\bG$-uniform weighted K-stability for models; klt variety;  weighted non-Archimedean Monge--Amp\`ere measure
}
\begin{abstract}

In this paper, we make a generalization of the results in \cite{Li22a} to the singular and weighted setting. 
In particular, under the assumption of the envelope conjecture, we show that on a polarized projective klt variety, the $\mathbb{G}$-uniform weighted K-stability for models implies the $\mathbb{G}$-coercivity of the weighted Mabuchi functional. In the toric case (the envelope conjecture is known to be true), we further show that the $(\mathbb{C}^{\times})^n$-uniform $(\mathrm{v},\mathrm{w}\cdot\ell_{\mathrm{ext}})$-weighted K-stability is preserved when perturbing the polarization on the resolution, which implies the existence of the weighted extremal metric(s) on the resolution if the weight function $\mathrm{v}$ is log-concave.
\end{abstract}
\maketitle
\setcounter{tocdepth}{1}
\tableofcontents
\section{Introduction}
\label{intro}

This is a sequential paper of the work \cite{HL25}.
In this manuscript, we study the algebraic side of the (generalized) Yau--Tian--Donaldson conjecture, which predicts an equivalence between the existence of weighted cscK metric(s) and the $\GG$-uniform weighted K-stability condition. In \cite{Li22a}, the author shows that on a polarized smooth vareity $(X,L)$, $\GG$-uniform K-stability for models implies the existence of cscK metric(s). In \cite{BJ23}, the authors furthermore show that uniform K-stability for models is equivalent to divisorial stability. In this work, we make a generalization of the previous works to the singular and weighted setting. 
In particular, under the assumption of the envelope conjecture, we show that on a polarized projective variety with at most klt singularities, $\GG$-uniform weighted K-stability for models implies that the weighted Mabuchi functional is $\GG$-coercive.

Our setting is stated as follows.
Let $X$ be a projective variety with at most klt singularities. Let $L$ be an ample $\QQ$-line bundle on $X$. Let $\omega$ be a smooth K\"ahler form  (as a restriction of a smooth K\"ahler form defined on an ambient projective space)  in the class $c_1(L)$.
Let $T$ be an $r$-dimensional real torus generated by Hamiltonian vector fields of $(X, \omega)$. 
The complexified torus $T_\CC$ acts holomorphically on $(X,\omega)$. 
The Lie algebra $\mathfrak{t} := \mathrm{Lie} (T)$ is identified with $\underline{N}_\RR \simeq \mathbb{R}^r$.
Similarly, we identify the dual $\mathfrak{t}^*$ with $\underline{M}_\RR \simeq \mathbb{R}^r$ with the corresponding dual real coordinates, which will be denoted by 
$\underline{y}:=(\underline{y}_1,\cdots,\underline{y}_{r})$. 

Let $\{\xi^{\alpha}\}$ be a standard basis of $\mathfrak{t}$, and let 
$m^{\xi^\alpha}_\varphi\in C^{\infty}(X)$ be the associated Hamiltonian function for a K\"ahler form $\omega_{\varphi}\in [\omega]$. 
We have
\begin{equation*}
	\iota_{\xi^{\alpha}}\omega_\varphi
	=\dfrac{\sqrt{-1}}{2\pi}\bar{\partial} m^{\xi^\alpha}_\varphi.
\end{equation*}
By \cite{Ati82}, the image of 
 $m_{\varphi}:X\rightarrow \mathfrak{t}^*=\mathbb{R}^r$
is a convex polytope $P$, which is called the \textit{associated moment polytope}. It is explicitly given by
\begin{equation*}
	m_{\varphi}(x)
	:=(m_\varphi^{\xi^1}(x),\cdots,m_\varphi^{\xi^r}(x)), \quad
	P:=m_{\varphi}(X).
\end{equation*}

\begin{definition} 
    We define the Lie algebra
    \begin{align*}
        {{\aut}}_T(X) = \{\xi\in {{\aut}(X)}: [\xi, c]=0, \forall c\in \mathfrak{t}_\CC\}.
    \end{align*}
    Let ${\Aut}_T(X)$ be the connected subgroup of $\Aut(X)$ that is generated by ${\aut}_T(X)$. 
    Let $\GG = K_\CC \subset {\Aut}_T(X)$ be a reductive Lie group, and $K$ contains a maximal torus of ${\Aut}_T(X)$.
    Denote $\TT$ as the center of $\GG$. Note that, since $T$ is a subgroup of the center of ${\Aut}_T(X)$ and $K$ contains a maximal torus, we have $T\subset K$.
\end{definition}

The following is our main theorem.

\begin{theorem}
\label{theorem_A}
    Let $(X,L)$ be a polarized projective variety with at most klt singularities. Assume $(X,L)$ satisfies the envelope property $($see \textup{Definition~\ref{def:envelope_property}}$)$. Let $\rv\in C^\infty(P,\RR_{>0})$ and $\rw\in C^\infty(P,\RR)$. If $(X,L)$ is $\GG$-uniformly weighted K-stable for models, then the weighted Mabuchi functional $\bfM_{\rm v,w}$ is $\GG$-coercive.
\end{theorem}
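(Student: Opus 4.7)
The plan is to adapt the strategy of \cite{Li22a} to the weighted klt setting, using the weighted non-Archimedean pluripotential theory developed in \cite{HL25} as the technical foundation. I would argue by contradiction, via the correspondence between $\GG$-invariant finite-energy geodesic rays in the space of K\"ahler potentials and non-Archimedean potentials of finite weighted energy, combined with a weighted slope formula for $\bfM_{\rv,\rw}$ and an approximation of non-Archimedean potentials by models.

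Suppose $\bfM_{\rv,\rw}$ is not $\GG$-coercive. Then there is a sequence of $K$-invariant potentials $\varphi_j\in\mcE^1$ with $J^{\GG}(\varphi_j):=\inf_{g\in\GG} J(g^*\varphi_j)\to\infty$ and $\bfM_{\rv,\rw}(\varphi_j)/J^{\GG}(\varphi_j)\to 0$. Replacing each $\varphi_j$ by a $\GG$-almost-minimizer within its orbit and invoking Darvas' $d_1$-geometry on the finite-energy class $\mcE^1$ of the klt polarized variety $(X,L)$, together with convexity of $\bfM_{\rv,\rw}$ along finite-energy geodesics (the weighted analogue of Berman--Berndtsson convexity in the klt setting, which reduces to statements in \cite{HL25}), one extracts a $\GG$-calibrated finite-energy geodesic ray $(\varphi_t)_{t\ge 0}$ emanating from a fixed reference along which $J^{\GG}(\varphi_t)\asymp t$ and $\limsup_{t\to\infty}\bfM_{\rv,\rw}(\varphi_t)/t\le 0$.

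Attach to this ray a non-Archimedean potential $\phi^{\NA}$ of finite weighted energy via the Berman--Boucksom--Jonsson correspondence, adapted to the weighted singular setting. The weighted slope formula, whose key pieces are the Duistermaat--Heckman-type asymptotics for the weighted Monge--Amp\`ere energies proved in \cite{HL25} and a weighted Fatou-type bound for the entropy, then yields
\[
\liminf_{t\to\infty}\frac{\bfM_{\rv,\rw}(\varphi_t)}{t}\ \ge\ \bfM_{\rv,\rw}^{\NA}(\phi^{\NA}),\qquad \lim_{t\to\infty}\frac{J^{\GG}(\varphi_t)}{t}\ =\ J^{\GG,\NA}(\phi^{\NA}).
\]
Combined with the previous step, $\phi^{\NA}$ is a finite-energy non-Archimedean potential with $\bfM_{\rv,\rw}^{\NA}(\phi^{\NA})\le 0$ while $J^{\GG,\NA}(\phi^{\NA})>0$, violating any uniform inequality of the form $\bfM_{\rv,\rw}^{\NA}\ge \delta\, J^{\GG,\NA}$.

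The last step is to approximate $\phi^{\NA}$ by non-Archimedean potentials $\phi_m^{\NA}$ coming from models, as in \cite{BJ23} but adapted to klt pairs, and to transfer the destabilization from $\phi^{\NA}$ to the model level using continuity of $J^{\GG,\NA}$ and lower semicontinuity of $\bfM_{\rv,\rw}^{\NA}$ along the approximation; this would contradict $\GG$-uniform weighted K-stability for models. This is the main obstacle: since the weighted non-Archimedean entropy $\Ent_{\rv}^{\NA}$ is only lower semicontinuous, upgrading an energy-level inequality to a ``for models'' statement requires a careful Ross--Witt Nystr\"om / divisorial approximation adapted simultaneously to the klt log discrepancy and to the weight $\rv$. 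Positivity of $\rv$ is used to keep the sign of the Monge--Amp\`ere defect terms under control, while the klt condition enters through the uniform bound on log discrepancies that controls the $\Ent^{\NA}_{\rv}$ contribution from the approximating models; making all three ingredients (weighted slope, divisorial approximation, and klt log-discrepancy control) work simultaneously is where the bulk of the technical work lies.
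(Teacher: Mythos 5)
Your skeleton (argue by contradiction, extract a destabilizing finite-energy geodesic ray, pass to its non-Archimedean potential via a slope formula, then approximate by models) is the same as the paper's, but the two places where the paper does real work are exactly the places your proposal leaves open, and one of them is pointed in the wrong direction. First, your asserted slope statements $\liminf_t \bfM_{\rv,\rw}(\varphi_t)/t \ge \bfM_{\rv,\rw}^{\NA}(\phi^{\NA})$ and $\lim_t J^{\GG}(\varphi_t)/t = J^{\GG,\NA}(\phi^{\NA})$ are not available for an arbitrary finite-energy geodesic ray: in general one only has $\bfE^{\prime\infty}(\Phi)\le\bfE^{\NA}(\Phi^{\NA})$, and since $\bfM_{\rv,\rw}$ contains the term $c_{(\rv,\rw)}\bfE_{\rw}$ with $\rw$ merely real-valued (so of either sign), an inequality on energies does not propagate to the Mabuchi functional, nor does it give the equality of reduced $\bfJ_{\rv,\TT}$-slopes you normalize by. The paper's Step~1 exists precisely to close this: finiteness of the entropy slope forces the ray to be \emph{maximal} (via multiplier-ideal estimates of Lemma~\ref{integrabiliy_Phihat-Phi} and the klt analogue of the Berman--Darvas--Lu approximation, Lemma~\ref{BDL_klt}), and only then do Corollaries~\ref{geodesic_approx_equiv} and \ref{slope_reduced_J_v} give the slope identities you use.

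Second, and more seriously, your mechanism for transferring the destabilization to models does not work as stated: lower semicontinuity of $\bfM_{\rv,\rw}^{\NA}$ (equivalently of the entropy) along an approximation $\phi_m^{\NA}\to\phi^{\NA}$ gives $\bfM^{\NA}(\phi^{\NA})\le\liminf_m\bfM^{\NA}(\phi_m^{\NA})$, which is the useless direction — to contradict stability for models you need $\limsup_m \bfM_{\rv,\rw}^{\NA}(\phi_m^{\NA})\le \bfM_{\rv,\rw}^{\NA}(\phi^{\NA})\le 0$ while $\bfJ^{\NA}_{\rv,\TT}(\phi_m^{\NA})\to 1$, i.e.\ an approximation along which the entropy does \emph{not} jump up. You flag this as ``the main obstacle'' but offer no construction. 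The paper resolves it (Step~3) by passing to a resolution $\pi:Y\to X$, proving the envelope property on klt varieties (Theorem~\ref{env_thm}) so that the weighted non-Archimedean Calabi--Yau theorem (Theorem~\ref{weighted_NA_Calabi_problem}) is available, and then approximating the \emph{measure} $\MA^{\NA}_{\rv}(\pi^*\phi)$ by its retractions to dual complexes of snc models, solving the weighted Monge--Amp\`ere equation for the approximants and using continuity of $A_Y$ on quasi-monomial valuations to get $\bfH^{\NA}_{\rv}(\pi^*\phi_j)\to\bfH^{\NA}_{\rv}(\pi^*\phi)$; this in turn only yields a contradiction because of the birational invariance statements $\bfM^{\NA}_{\rv,\rw}(\phi_{\mcL})=\bfM^{\NA}_{\rv,\rw}(\phi_{\mcL'})$ and $\bfJ^{\NA}_{\rv,\TT}(\phi_{\mcL})=\bfJ^{\NA}_{\rv,\TT}(\phi_{\mcL'})$ (Proposition~\ref{M_v,w_equal}, Corollary~\ref{birational_uKs}), which transport $\GG$-uniform weighted K-stability for models from $X$ to big models on $Y$. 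None of these ingredients — the envelope property on klt varieties, the weighted Calabi--Yau theorem, the passage to the resolution and the birational invariance of the stability threshold — appear in your outline, so the final contradiction is not reached.
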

\begin{remark}
   
        In this paper, the envelope property (see Definition~\ref{def:envelope_property}) plays an important role in reducing arguments on $X$ to arguments on its resolution. When $X$ is smooth, the envelope property holds. See \cite{BFJ16, BJ22, BJ24a}. In Section \ref{toric}, we show that the envelope property is satisfied if $X$ is toric.

\end{remark}

The $\GG$-coercivity of the weighted Mabuchi functional has a strong connection with the existence of the weighted cscK metric(s).
When $X$ is smooth and $\rv$ is log-concave, as shown in \cite{BDL20}, \cite{CC21b}, \cite{AJL23}, \cite{HL25}, \cite{DNJL24}, the $\bG$-coercivity of weighted Mabuchi functional is equivalent to the existence of weighted cscK metric(s).
When $X$ is a projective klt variety that admits a \textit{resolution of Fano type} and ${\rm v}$ is log-concave, in \cite{PT24} (which is based on \cite{BJT24}), the authors showed that the $\GG$-coercivity of $(\rv,\rw\cdot\ell_\ext)$-weighted Mabuchi functional implies the existence of singular weighted extremal metrics. See also \cite{LTW21,Sze24,GS25,FGS25}.
After the post of the first version of this paper, we were informed of the breakthroughs made by Boucksom--Jonsson \cite{BJ25} and Darvas--Zhang \cite{DZ25}. They prove two different versions of (weighted)-cscK Yau--Tian--Donaldson conjecture for smooth projective varieties independently.

In the toric setting,  we further show that the $\GG$-uniform $(\rv,\rw\cdot\ell_\ext)$-weighted K-stability is an open condition when perturbing the polarization along the resolution. 
In particular, by combining Theorem~\ref{theorem_A} and \cite[Theorem~1.2]{HL25}, such openness implies the existence of toric weighted extremal metric(s) on the resolution of a toric variety.

\begin{theorem}
\label{theorem_B}
    Let $(X,L)$ be a polarized projective toric variety with at most klt singularities and $\rv,\rw\in C^\infty(P,\RR_{>0})$. Let $\pi:Y\rightarrow X$ be a resolution of $X$. Let $L_\epsilon =\pi^* L-\epsilon E$ be an ample $\RR$-line bundle on $Y$ for $0<\epsilon\ll 1$, where $E$ is an exceptional divisor. Assume $(X,L)$ is $(\CC^{\times})^n$-uniformly $(\rv,\rw\cdot\ell_\ext)$-weighted K-stable. Then there exists $\epsilon_0>0$, such that for $0<\epsilon<\epsilon_0$, $(Y,L_\epsilon)$ is $(\CC^{\times})^n$-uniformly $(\rv,\rw\cdot\ell_{\ext,\epsilon})$-weighted K-stable. In particular, if ${\rm v}$ is log-concave, there exists a weighted extremal metric on $(Y,L_\epsilon)$.
\end{theorem}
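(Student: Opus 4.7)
\emph{Overview.} My plan has two halves: first, prove the openness of $(\CC^\times)^n$-uniform $(\rv,\rw\cdot\ell_\ext)$-weighted K-stability under the perturbation $L\rightsquigarrow L_\epsilon$, which is the main new content; second, deduce the existence of weighted extremal metrics by feeding this openness into Theorem~\ref{theorem_A} together with \cite[Theorem~1.2]{HL25} and Lahdili's observation that $(\rv,\rw\cdot\ell_\ext)$-weighted cscK metrics are exactly $\rv$-weighted extremal metrics.

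\emph{Step 1: Combinatorial reformulation and behaviour under perturbation.} In the toric setting, a $(\CC^\times)^n$-equivariant model on $(X,L)$ corresponds to a convex function $u$ on the moment polytope $P\subset\mathfrak{t}^*$, and the weighted non-Archimedean Mabuchi functional $\bfM_{\rv,\rw\cdot\ell_\ext}^\NA(u)$ admits a Donaldson-type integral representation on $P$ as a boundary term weighted by $\rw\cdot\ell_\ext$ minus a bulk term involving $\rv$ and the weighted average scalar curvature, modulo an entropy correction controlled by the klt condition on $(X,L)$. The affine function $\ell_\ext$ is uniquely fixed by the requirement that this functional vanish on affine $u$, a nondegenerate linear system in its coefficients; thus $(\CC^\times)^n$-uniform stability is equivalent to a coercivity estimate of the shape $\bfM^\NA_{\rv,\rw\cdot\ell_\ext}(u)\geq\delta\|u\|_{\bfJ}$ on normalized convex $u$. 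Under the perturbation $L\rightsquigarrow L_\epsilon$, the fan of $Y$ refines that of $X$ and $P_\epsilon$ is obtained from $P$ by chopping off simplicial corners of diameter $O(\epsilon)$ near the singular cones. Consequently $P_\epsilon\to P$ in Hausdorff distance, $d\sigma_{P_\epsilon}\to d\sigma_P$ weakly, the weighted average scalar curvature of $L_\epsilon$ depends continuously on $\epsilon$, and hence $\ell_{\ext,\epsilon}\to\ell_\ext$.

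\emph{Step 2: Openness by compactness.} Suppose the conclusion fails, so that there exist $\epsilon_k\downarrow 0$ and normalized convex functions $u_k$ on $P_{\epsilon_k}$ with $\|u_k\|_{\bfJ}=1$ and $\bfM_{\rv,\rw\cdot\ell_{\ext,\epsilon_k}}^\NA(u_k)\to 0$. Extending each $u_k$ via its supporting-hyperplane description to a common neighbourhood of $P$ and invoking compactness of normalized convex functions on convex bodies, a subsequence converges locally uniformly on the interior of $P$ to a convex function $u_\infty$ with $\|u_\infty\|_{\bfJ}=1$. I then pass to the limit in the formula from Step~1: the bulk integral converges by dominated convergence, while the boundary integral converges by weak convergence of $d\sigma_{P_{\epsilon_k}}$, continuity of $\ell_{\ext,\epsilon}$, and a uniform $L^1(\partial P_{\epsilon_k})$ control on $u_k$. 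One concludes $\bfM^\NA_{\rv,\rw\cdot\ell_\ext}(u_\infty)\leq 0$ on a normalized convex $u_\infty$, contradicting the strict $(\CC^\times)^n$-uniform stability of $(X,L)$.

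\emph{Main obstacle and extremal metric existence.} The delicate part of Step~2 is showing that the boundary integral does not lose mass on the newly created exceptional facets of $P_{\epsilon_k}$, whose $(n{-}1)$-volumes tend to zero, while simultaneously preventing $u_k$ from concentrating there. The key input will be that convexity together with the $\bfJ$-norm normalization forces a uniform $L^\infty$ estimate for $u_k$ modulo affine functions on any compact subset of the interior of $P$, and a trace-type inequality on the convex body $P_{\epsilon_k}$ transfers this into uniform boundary control compatible with the shrinking exceptional facets; the klt hypothesis enters as log-discrepancy estimates comparing $d\sigma$ with the natural weighted boundary measure, keeping all constants uniform in $\epsilon$. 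Once openness is established, applying Theorem~\ref{theorem_A} on the smooth variety $(Y,L_\epsilon)$ yields $(\CC^\times)^n$-coercivity of $\bfM_{\rv,\rw\cdot\ell_{\ext,\epsilon}}$; the log-concavity of $\rv$ together with \cite[Theorem~1.2]{HL25} then produces a $(\rv,\rw\cdot\ell_{\ext,\epsilon})$-weighted cscK metric, which via Lahdili's correspondence is a $\rv$-weighted extremal metric on $(Y,L_\epsilon)$.
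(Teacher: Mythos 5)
Your overall strategy coincides with the paper's: reformulate $(\CC^\times)^n$-uniform $(\rv,\rw\cdot\ell_\ext)$-weighted K-stability as a coercive lower bound for the toric linear functional on normalized convex functions, then argue by contradiction and compactness of normalized convex functions as $\epsilon_j\to 0$, and finally feed the resulting stability of $(Y,L_\epsilon)$ into Theorem~\ref{theorem_A} and \cite[Theorem~1.2]{HL25}. Two inaccuracies first: the functional is $\mcL_{\rv,\rw\cdot\ell_\ext}(f)=2\int_{\partial P}f\,\rv\,d\sigma-\int_P f\,\rw\,\ell_\ext\,dy$, i.e.\ the boundary term carries $\rv$ and the bulk term carries $\rw\cdot\ell_\ext$ (you have them swapped), and there is no ``entropy correction controlled by the klt condition'': along a toric test configuration the $\log\det\mathrm{Hess}$ term has zero slope, so the non-Archimedean Mabuchi invariant is exactly this linear functional. (Separately, to invoke Theorem~\ref{theorem_A} you also need that toric uniform weighted K-stability for test configurations implies stability for models, which the paper supplies via the weighted non-Archimedean Calabi--Yau theorem producing piecewise linear solutions.)

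The genuine gap is exactly the step you label the ``main obstacle'': your proposed tools do not close it. The klt hypothesis plays no role in the polytope estimates (no log-discrepancy comparison of $d\sigma$ with any weighted boundary measure is needed or used), and no trace inequality is required. What the argument actually needs is: (i) the normalization $\int_{\partial P_{\epsilon_j}}f_j\,d\sigma=1$ together with the instability inequality, giving $\int_{\partial P_{\epsilon_j}}f_j\,d\sigma\le C\int_{P_{\epsilon_j}}f_j\,dy$; (ii) Donaldson's two-sided bound $1/C\le\int_{P_{\epsilon_j}}f_j\,dy\le C$ and interior Lipschitz bounds, yielding a locally uniform limit $f$; (iii) a boundary-mass bound $\int_{\partial P_\epsilon}f_j\,d\sigma\le C$ uniform in $j$, obtained purely from convexity and $f_j(0)=\inf f_j=0$ (so $f_j$ is nondecreasing along rays from the origin, and one compares $\partial P_\epsilon\setminus\partial P_{\epsilon_j}$ with its radial projection onto $\partial P_{\epsilon_j}$); and (iv) a collar estimate $\bigl|\int_{P_{\epsilon_j}\cap(\partial P)_\eta}f_j\,\rw\,\ell_{\ext,\epsilon_j}\,dy\bigr|\le C\eta$, which lets the bulk integral pass to the limit and remain strictly positive, so that $f$ is not affine. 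Without (iv), or an equivalent statement, your limit $u_\infty$ could a priori be affine --- all of the normalization could escape into the shrinking exceptional facets --- and then the conclusion $\mcL_{\rv,\rw\cdot\ell_\ext}(u_\infty)\le 0$ would not contradict uniform stability (note also that the boundary term is only lower semicontinuous in the limit, which happens to be the correct direction, but your proposal does not check this). As written, the proof plan identifies the crux but does not prove it, and the mechanisms it suggests (trace inequality, klt log-discrepancy estimates) are not the ones that make the limit argument work.
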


\subsection*{Organization}
\begin{itemize}
    \item In Section~\ref{preliminaries}, we review some background material from non-Archimedean pluripotential theory developed by Boucksom--Jonsson \cite{BJ22, BJ24}. We also recall weighted Archimedean pluripotential theory in \cite{HL23}.
    \item In Section~\ref{Weighted non-Archimedean Monge-Ampere measure, weighted non-Archimedean functionals and geodesic rays}, based on \cite{HL23}, we define weighted non-Archimedean Monge-Amp\`ere measure and functionals. We make a slight generalization of \cite{BJ22} and prove a weighted non-Archimedean Calabi--Yau theorem provided the envelope property on klt variety. 
    Finally, we study the maximal geodesic ray and prove the slope formula needed later.
    \item In Section~\ref{w-K-stability}, we provide a proof of Theorem \ref{theorem_A}.
    \item In Section~\ref{toric}, we study $\GG$-uniform weighted K-stability of toric varieties and prove Theorem \ref{theorem_B}.
\end{itemize}

\subsection*{Acknowledgement} 
The authors would like to thank Chi Li, Zhuo Liu for their helpful discussions. We are grateful to S\'ebastien Boucksom and Mattias  Jonsson for pointing out a gap in the proof of the envelope conjecture for klt varieties in the previous version of our draft and giving us numerous helpful suggestions. We are grateful to Simon Jubert for his valuable comments on the integral formula of the toric Mabuchi functional which helps us to refine our statement. We are grateful to Tam\'as Darvas for the helpful discussion on the definition of Ricci energy for semiample classes.
J. Han is supported by National Key R\&D Program of China 2023YFA1009900, NSFC-20701003A012401, XHD23A0101.

\section{Preliminaries}
\label{preliminaries}

\subsection{Notation}
Throughout this article, we use the following conventions:
\begin{itemize}
\item $\varphi$ denotes an Archimedean psh metric on the line bundle $L$;
   \item $\Phi$, $\Psi$ denote Archimedean psh rays or geodesics on $L$;
\item $\phi$, $\psi$  denote non-Archimdean (psh) functions on the Berkovich analytification $X^\an$;
\item$\varphi_\fa$, $\varphi_\mcL$ denote non-Archimedean piecewise linear (PL) functions arising from a flag ideal $\mathfrak a$ or a model $\mathcal L$;
\item$\phi_\mcL\coloneqq{\rm P}_L(\varphi_\mcL)$ denotes a non-Archimedean model psh metric associated to a model $\mcL$.
\end{itemize}

\subsection{Background on non-Archimedean pluripotential theory}
\label{NA-pre}

In this subsection, we review some basic notions and facts from non-Archimedean pluripotential theory, as developed in \cite{BJ22}, \cite{BE21}, \cite{BJ24}.

Let $X$ be a normal projective variety over an algebraically closed field $k$.

\subsubsection{Berkovich analytification and psh functions}
The \textit{Berkovich analytification} $X^\an$ of $X$ (with respect to the trivial absolute value on $k$) is a compact Hausdorff topological space, whose points are \textit{semivaluations} on $X$, i.e., valuations $v:k(W)^*\rightarrow\RR$ for a subvariety $W\subset X$ (see \cite{BJ22}, \cite{Berk90}).
A \textit{divisorial valuation} $v$ on $X$ is a valuation of the form $v=t\ord_E$, where $t\in\QQ_{\geq0}$ ($t=0$ corresponds to trivial valuation) and $E$ is a prime divisor on a smooth birational model $\pi:Y\rightarrow X$.
The set $X^\div$ of divisorial valuations is dense in $X^\an$ (see \cite[Theorem 2.14]{BJ22}).

\begin{definition}[\cite{Li22a}, \cite{BHJ17}]
    Let $L$ be a $\QQ$-line bundle on $X$.

    A \textit{model} of $(X,L)$ is a flat family of projective varieties $\mcX\rightarrow\AA^1$ together with the following data:
    \begin{enumerate}[(i)]
        \item a $\GG_m$-action on $\mcX$ lifting the canonical action on $\AA^1$;
        \item a $\GG_m$-linearized $\QQ$-line bundle $\mcL$ on $\mcX$;
        \item an isomorphism $(\mcX_1,\mcL_1)\simeq (X,L)$.
    \end{enumerate}
    
    The \textit{trivial model} of $(X,L)$ is given by $(X\times\AA^1,L\times\AA^1)=:(X_{\AA^1},L_{\AA^1})$.

    If we forget about the data $\mcL$ and $L$, then we say that $\mcX$ is a \textit{model} of $X$.

    A model $\mcX$ of $X$ is called \textit{dominating} if there exists a $\GG_m$-equivariant birational morphism $\rho:\mcX\rightarrow X_{\AA^1}$.

    A model $(\mcX,\mcL)$ of $(X,L)$ is called \textit{big} if $\bar\mcL$ is a big $\QQ$-line bundle over $\bar\mcX$ and the stable base ideal of $m\bar\mcL$ is the same as the $\pi$-base ideal of $m\mcL$ for $m\gg1$, where $(\bar\mcX,\bar\mcL)$ is the canonical $\GG_m$-equivariant compactification of $(\mcX,\mcL)$ over $\PP^1$.

    A model $(\mcX,\mcL)$ of $(X,L)$ is called a \textit{test configuration} if $\mcL$ is semiample over $\AA^1$.
\end{definition}
For any dominating model $(\mcX,\mcL)$ of $(X,L)$, one associates a continuous function
\begin{align*}
    \varphi_\mcL(v)\coloneqq \sigma(v)(\mcL-\rho^*L_{\AA^1})
\end{align*}
for any $v\in X^\an$, where $\sigma:X^\an\rightarrow(X\times\AA^1\setminus\{0\})^\an\subset\bar\mcX^\an$ is the \textit{Gauss extension} (see \cite[Section~1.3]{BJ22} for more details).

A \textit{flag ideal} $\fa$ is a $\GG_m$-invariant, coherent fractional ideal sheaf on $X\times \AA^1$ (see \cite{BHJ17}, \cite{Oda13}).
One defines a continuous function $\varphi_\fa:X^\an\rightarrow\RR$ by setting $\varphi_\fa(v)\coloneqq -\sigma(v)(\fa)$ (see \cite[Section 2.1]{BJ22}).
\begin{definition}
    The space of \textit{piecewise linear} (PL) functions on $X^\an$ is defined as the $\QQ$-linear subspace $\PL(X^\an)\subset C^0(X^\an)$ generated by all functions $\varphi_\fa$ associated to flag ideals $\fa$.
\end{definition}
By \cite[Theorem 2.7]{BJ22}, each PL function $\psi\in\PL(X^\an)$ is of the form $\psi=\varphi_\mcL$ for some dominating model $(\mcX,\mcL)\rightarrow\AA^1$ for $(X,L)$.
As illustrated by \cite[Theorem 2.2]{BJ22}, the space $\PL(X^\an)$ is dense in $C^0(X^\an)$ under the uniform convergence. 

\begin{definition}
    Given a subgroup $\Lambda\subset\RR$ and a $\QQ$-line bundle $L$ on $X$, 
a $\Lambda$-\textit{rational, generically finite Fubini--Study function for} $L$ is a function $\phi:X^\an\rightarrow\RR$ of the following form
\begin{align*}
    \phi\coloneqq m^{-1}\max_j\{\log|s_j|+\lambda_j \},
\end{align*}
where $m\in\ZZ_{>0}$ such that $mL$ is an honest line bundle, $(s_j)_j$ is a finite set of nonzero global sections of $mL$, and $\lambda_j\in\Lambda$.
Denote $\mcH^\gf_\Lambda(X^\an,L^\an)$ all such functions.

Such function is called a  $\Lambda$-\textit{rational Fubini--Study function for} $L$ if $\phi$ is further finite valued on $X^\an$, i.e., $(s_j)$ is a finite set of $H^0(X,mL)$ without common zeros.
Denote $\mcH_\Lambda(X^\an,L^\an)$ all $\Lambda$-rational Fubini--Study functions.
For simplicity, one denotes $\mcH(X^\an,L^\an)=\mcH_\QQ(X^\an,L^\an)$.
\end{definition}
It is easy to check that
\begin{align*}
    \mcH^\gf_\RR(X^\an,L^\an)\neq\varnothing
    \Leftrightarrow \mcH^\gf_0(X^\an,L^\an)\neq\varnothing
    \Leftrightarrow L \text{ effective}; \\
    \mcH_\RR(X^\an,L^\an)\neq\varnothing
    \Leftrightarrow \mcH_0(X^\an,L^\an)\neq\varnothing
    \Leftrightarrow L \text{ semiample}.
\end{align*}

Fubini--Study functions and PL functions have the following relation.
\begin{proposition}[\textup{\cite[Proposition 2.25, Theorem 2.31, Corollary 2.32]{BJ22}}]
\label{equiv_Fubibi-Study_func}
    Given a semiample $\QQ$-line bundle $L$ on $X$, $\phi\in\mcH(X^\an,L^\an)$ iff 
    $\phi=m^{-1}\varphi_\fa$ for a flag ideal $\fa$ and $m\in\ZZ_{>0}$ such that $mL$ is an honest line bundle and $mL_{\AA^1}\otimes\fa$ is globally generated on $X_{\AA^1}$,
    iff
    $\phi=\varphi_\mcL$ for a semiample test configuration $(\mcX,\mcL)$ of $(X,L)$.

    Furthermore, if $L$ is ample, $\phi\in\mcH(X^\an,L^\an)$ iff  $\phi=\varphi_\mcL$ for an ample test configuration $(\mcX,\mcL)$ of $(X,L)$.
\end{proposition}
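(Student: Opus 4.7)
The plan is to run the three-way cycle Fubini--Study $\Rightarrow$ flag ideal $\Rightarrow$ (semi)ample test configuration $\Rightarrow$ Fubini--Study. The key dictionary throughout is the Gauss extension: $\varphi_\fa(v)=-\sigma(v)(\fa)$ for a flag ideal and $\varphi_\mcL(v)=\sigma(v)(\mcL-\rho^*L_{\AA^1})$ for a dominating model, so each implication will amount to transporting data through these two formulas.

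For the first step, given $\phi=m^{-1}\max_j\{\log|s_j|+\lambda_j\}\in\mcH(X^\an,L^\an)$, I would clear denominators so that $m\lambda_j\in\ZZ$ for every $j$, and let $\fa$ be the $\GG_m$-invariant fractional ideal on $X_{\AA^1}$ characterized by $mL_{\AA^1}\otimes\fa=\sum_j\oo_{X_{\AA^1}}\cdot(t^{-m\lambda_j}s_j)$. Global generation of $mL_{\AA^1}\otimes\fa$ is immediate from the no-common-zero hypothesis on the $s_j$, which holds because $\phi\in\mcH$ rather than merely $\mcH^\gf$; a direct computation via the Gauss extension then yields $\varphi_\fa(v)=\max_j\{m\lambda_j+\log|s_j|(v)\}$, so $\phi=m^{-1}\varphi_\fa$.

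Next, to go from flag ideal to a semiample test configuration, let $\rho\colon\mcX:=\Bl_\fa(X_{\AA^1})\to X_{\AA^1}$ be the normalized $\GG_m$-equivariant blowup, so that $\fa\cdot\oo_\mcX=\oo_\mcX(-E)$ for an exceptional Cartier divisor $E$, and set $\mcL:=\rho^*L_{\AA^1}-m^{-1}E$. Then $m\mcL$ is the pullback of the globally generated sheaf $mL_{\AA^1}\otimes\fa$, hence $\mcL$ is semiample over $\AA^1$; tracing through the Gauss extension gives $\varphi_\mcL(v)=-m^{-1}\sigma(v)(E)=m^{-1}\varphi_\fa(v)=\phi(v)$. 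For the ample case, I would further contract to the relative ample model $\mcX'$ cut out by $|km\mcL|$ for $k\gg 1$, obtaining an ample line bundle $\mcL'$ on $\mcX'$ which pulls back to $\mcL$; ampleness of $L$ ensures the general fiber of $\mcX'$ is still $X$, and Gauss extensions descend along the contraction to give $\varphi_{\mcL'}=\varphi_\mcL=\phi$.

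Finally, to close the cycle, given a semiample test configuration $(\mcX,\mcL)$ I would pick $m$ so that $m\mcL$ is a $\GG_m$-linearized base-point-free line bundle, weight-decompose its global sections under the $\GG_m$-action into generators of the form $s_{j,\lambda}\otimes t^{-\lambda}$ with $s_{j,\lambda}\in H^0(X,mL)$, and read off the Fubini--Study presentation $\phi=m^{-1}\max_{j,\lambda}\{\log|s_{j,\lambda}|+\lambda\}$. The hardest part is not any single implication, but keeping sign and normalization conventions consistent across the Gauss extension, the blow-up, and the relative $t$-grading; once these are pinned down, every arrow reduces to a formal manipulation of the definitions recorded earlier.
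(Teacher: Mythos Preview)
The paper does not give its own proof of this proposition; it is stated as a cited result from \cite[Proposition~2.25, Theorem~2.31, Corollary~2.32]{BJ22} and used as a black box. Your proposal correctly reconstructs the argument from that reference: the three-way cycle via flag ideals and normalized blowups is exactly Boucksom--Jonsson's approach, and each of your implications is sound in outline.

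Two minor points of care. First, in your passage from a Fubini--Study function to a flag ideal, the integer $m$ is initially chosen only so that $mL$ is an honest line bundle, while the $\lambda_j$ are merely rational; you should explicitly replace $m$ by a suitable multiple (and the $s_j$ by their powers) before asserting $m\lambda_j\in\ZZ$. Second, in the ample refinement, when you contract $\mcX$ to the relative ample model $\mcX'$, you need $\mcX'$ to be normal for the test configuration to be of the type considered; this follows because the contraction is a Stein factorization onto a normal variety, but it is worth saying. Neither point is a genuine gap.
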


\begin{definition}
    Given a $\QQ$ line bundle $L$ on $X$, a $L$-\textit{psh function} $\phi:X^\an\rightarrow\RR\cup\{-\infty\}$ is a generically finite (that is, $\phi\neq-\infty$), upper semicontinuous function that can be written as the pointwise limit of a decreasing sequence $\phi_i\in\mcH^\gf_\RR(X^\an,L_i^\an)$ with $L_i\in{\rm Pic}(X)_\QQ$ such that $\lim_i c_1(L_i)=c_1(L)$ in $N^1(X)$.
    One denotes by $\PSH(X^\an,L^\an)$ the set of $L$-psh functions.
\end{definition}
Note that $\PSH(X^\an,L^\an)$ is nonempty only if $L$ is psef, whereas $\PSH(X^\an,L^\an)$ contains the constant functions iff $L$ is nef (see \cite[Section 4.1]{BJ22}).

When $L$ is ample, $\phi_i$ can be chosen in $\mcH(X^\an,L^\an)$ (see \cite[Theorem 4.15]{BJ22}).

\begin{definition}
\label{def:envelope_P}
    Let $L$ be a $\QQ$-line bundle on $X$.
    For any function $\psi: X^\an \rightarrow \RR\cup \{\pm \infty\}$, the $L$-\textit{psh envelope} of $\psi$ is defined as
    \begin{align*}
        {\rm P}_L(\psi) = \sup \{ \phi\in \PSH(X^\an, L^\an): \; \phi \leq \psi \}.
    \end{align*}
    The \textit{Fubini--Study envelope} of $\psi$ is defined as
    \begin{align*}
       {\rm Q}_L(\psi) = \sup \{ \phi\in \mcH^{\gf}(X^\an, L^\an): \; \phi \leq \psi \}. 
    \end{align*}
    When $L$ is ample,
    \begin{align*}
       {\rm Q}_L(\psi) = \sup \{ \phi\in \mcH(X^\an, L^\an): \; \phi \leq \psi \}. 
    \end{align*}
\end{definition}
\begin{definition}[\cite{BJ22}]
\label{def:envelope_property}
    We say $(X^\an, L^\an)$ has the \textit{envelope property} if for any increasing sequence $\varphi_j \in \PSH(X^\an, L^\an)$ that is uniformly bounded from above, the u.s.c regularization of the limit $(\lim_{j\to\infty} \varphi_j)^* $ is in $\PSH(X^\an, L^\an)$.
\end{definition}

\subsubsection{Norms and Fubini--Study operator}
\label{subsec:Norms_and_FS}
We follow the notations of \cite{BJ24}.

Let $L$ be a semiample $\QQ$-line bundle on $X$.
For any $d\in\NN$ such that $dL$ is an honest line bundle, one writes $R_d\coloneqq H^0(X,dL)$ and
\begin{align*}
    R^{(d)}=R(X,dL)=\bigoplus_{m\in\NN}R_{md},
\end{align*}
which is finite generated for all $d$ sufficiently divisible, since $L$ is semiample.
One writes $\mcN_\RR(R^{(d)})$ for the set of norms $\chi:R^{(d)}\rightarrow\RR$ that are
\begin{itemize}
    \item \textit{superadditive}, i.e., $\chi(fg)\geq\chi(f)+\chi(g)$ for $f,g\in R^{(d)}$;
    \item $k^*$-\textit{invariant}, i.e., $\chi(a\cdot f)=\chi(f)$ for $a\in k^*$ and $f\in R^{(d)}$, this is equivalent to that $\chi(\sum_ms_m)=\min_m\chi(s_m)$ where $s_m\in R^{(d)}_m$;
    \item \textit{linearly bounded}, i.e., there exists $C>0$ such that $|\chi|\leq Cm$ on $R^{(d)}_m\setminus\{0\}$ for all $m\geq1$.
\end{itemize}
Norms in $\mcN_\RR(R^{(d)})$ are in one-to-one correspondence with graded, linear bounded filtrations of $R^{(d)}$ as in \cite{BC11}.
When $d$ divides $d'$, there is a restriction map $\mcN_\RR(R^{(d)})\rightarrow\mcN_\RR(R^{(d')})$.
We set
\begin{align*}
    \mcN_\RR(X,L)\coloneqq \varinjlim_d\mcN_\RR(R^{(d)}).
\end{align*}
An element $\chi\in\mcN_\RR(X,L)$ is represented by a norm on some $R^{(d)}$.
For all $m$ sufficiently divisible, 
to define  $\chi|_{R_m}$, one needs to choose a representative of $\chi$ as a norm on some $R^{ (d)}$. But any other choice leads to the same norms $ \chi|_{R_m}\in \mcN_\RR(R_m)$.

A norm $\chi\in\mcN_\RR(X,L)$ is \textit{of finite type} if it is represented by a norm on some $R^{(d)}$ whose associated graded algebra ${\rm gr}_\chi R^{(d)}$ is of finite type. Denote by $\mcT_\RR(X,L)$ the set of finite type norms.
The element in $\mcT_\RR(X,L)$ can be interpreted as $\RR$-test configuration by the Rees construction (see \cite[Section 1.4]{BJ24}, \cite{HL24}).

As constructed in \cite{WN12}, any model $(\mcX,\mcL)$ induces an integral norm $\chi_{(\mcX,\mcL)}\in\mcN_\ZZ$, defined on $R_m=H^0(X,mL)$ for any $m\in\NN$ such that $m\mcL$ is a line bundle, whose corresponding filtration is defined as
\begin{align}
\label{eq:model_norm}
    \cF^\lambda R_m\coloneqq\{s\in R_m:t^{-\lambda}s\in H^0(\mcX,m\mcL)\}.
\end{align}

The \textit{canonical approximation} of $\chi\in\mcN_\RR(X,L)$ is a sequence $\chi_d\in\mcT_\RR(X,L)$ for $d\in\ZZ_{\geq1}$ sufficiently divisble, defined by letting $\chi_d$ be the (class of the) norm on $R^{(d)}$ generated in degree $1$ by $\chi_d$, i.e., $\chi_d|_{R^{(d)}_m}$ is the quotient norm of $S^m(\chi|_{R_d})$ under the canonical surjective map $S^m(R_d)\rightarrow R_{md}$.
If $d$ divides $d'$, then $\chi_d\leq\chi_{d'}\leq \chi$,

For any $\chi\in\mcN_\RR(X,L)$, one associates
\begin{align*}
    \FS_m(\chi)\coloneqq m^{-1}\FS_{mL}(\chi|_{R_m})
    =m^{-1}\sup_{s\in R_m\setminus\{0\}} \{\log|s|+\chi(s)\}\in\mcH_\RR(X^\an,L^\an)
\end{align*}
for $m$ sufficiently divisible.
By \cite[Lemma 2.12]{BJ24}, $\FS_m(\chi)$ is an increasing function of $m$ with respect to divisibility, and is further uniformly bounded, by linear boundedness of $\chi$.
The \textit{Fubini-Study} operator $\FS:\mcN_\RR(X,L)\rightarrow L^\infty(X^\an)$ is defined as $\FS(\chi)\coloneqq \lim_m\FS_m(\chi)=\sup_m\FS_m(\chi)$.

Conversely, one can define an operator $\IN:L^\infty(X^\an)\rightarrow\mcN_\RR(X,L)$.
Given a bounded function $\psi$, for any $m\in \NN$ such that $mL$ is a line bundle and $s\in R_m$, one defines
\begin{align*}
     {\rm IN}(\psi)(s) 
     = \inf_{v\in X^{\an}}\{v(s)+m\psi(v)\}
     =\inf_{X^\an}\{m\psi-\log|s|\}.
\end{align*}
In ``multiplicative" notation (\cite{BE21}), it is the usual supnorm $\|s\|_\psi=\sup_{X^\an}|s|e^{-m\psi}$.
\begin{proposition}[\textup{\cite[Theorem 2.29]{BJ24}, \cite[Theorem 7.26]{BE21}}]
\label{FS-IN=Q_L}
    For any $\psi\in L^\infty(X^\an)$, then $\FS(\IN(\psi))={\rm Q}_L(\psi)$, i.e.,
    ${\rm Q}_L(\psi)
        =\lim_m \FS_{m}(\IN(\psi)).$
\end{proposition}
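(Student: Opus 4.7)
The plan is to verify the identity by establishing the two opposite inequalities separately. Both are essentially tautologies expressing the adjunction between the sup-based $\FS$ and the inf-based $\IN$, with the identity $\log|s|(v)=-v(s)$ (valid for the trivial absolute value) serving as the bridge.

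For $\FS(\IN(\psi))\leq {\rm Q}_L(\psi)$, I would fix $m$ sufficiently divisible that $mL$ is an honest line bundle. For every $s\in R_m\setminus\{0\}$ and $v\in X^\an$,
$$\IN(\psi)(s)=\inf_{w\in X^\an}\{w(s)+m\psi(w)\}\leq v(s)+m\psi(v),$$
which rearranges to $\log|s|(v)+\IN(\psi)(s)\leq m\psi(v)$. Taking the supremum over $s$ and dividing by $m$ yields $\FS_m(\IN(\psi))\leq \psi$ pointwise. Since $\FS_m(\IN(\psi))\in \mcH^\gf_\RR(X^\an,L^\an)$ by construction, it is a legitimate competitor in the envelope defining ${\rm Q}_L(\psi)$, so $\FS_m(\IN(\psi))\leq {\rm Q}_L(\psi)$, and passing to the supremum in $m$ completes this direction.

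For the reverse inequality, I would take an arbitrary $\phi\in\mcH^\gf(X^\an,L^\an)$ with $\phi\leq\psi$ and write $\phi=m^{-1}\max_j\{\log|s_j|+\lambda_j\}$ for finitely many nonzero sections $s_j\in R_m$ and rational shifts $\lambda_j$. The bound $\phi\leq\psi$ rearranges, for every index $j$ and every $v$, into $\lambda_j\leq v(s_j)+m\psi(v)$, and infimizing in $v$ gives $\lambda_j\leq \IN(\psi)(s_j)$. Substituting back,
$$\phi(v)=m^{-1}\max_j\{\log|s_j|(v)+\lambda_j\}\leq m^{-1}\max_j\{\log|s_j|(v)+\IN(\psi)(s_j)\}\leq \FS_m(\IN(\psi))(v)\leq \FS(\IN(\psi))(v).$$
Taking the supremum over admissible $\phi$ gives ${\rm Q}_L(\psi)\leq \FS(\IN(\psi))$.

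The serious work is bookkeeping rather than conceptual. One must verify that $\IN(\psi)$ actually lies in $\mcN_\RR(X,L)$, i.e., that it is superadditive, $k^*$-invariant, and linearly bounded; each is immediate from boundedness of $\psi$ together with elementary manipulations of infima, and this is the only place where the hypothesis $\psi\in L^\infty(X^\an)$ is used. One must also check that $\FS_m(\IN(\psi))$, a priori defined by a supremum over the infinite set $R_m\setminus\{0\}$, genuinely belongs to $\mcH^\gf_\RR$; this uses finite-dimensionality of $R_m$, the $k^*$-invariance of $\IN(\psi)$, and the ultrametric property of $v$, so that the supremum descends to the compact $\PP(R_m)$ and coincides with the maximum over any chosen basis of $R_m$, placing $\FS_m(\IN(\psi))$ in the required finite-type class.
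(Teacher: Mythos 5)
Your proof is correct, and it is essentially the standard argument: the paper itself does not prove this proposition but quotes it from \cite[Theorem 2.29]{BJ24} and \cite[Theorem 7.26]{BE21}, where the proof is exactly this two-inequality adjunction between $\FS$ and $\IN$ via $\log|s|(v)=-v(s)$. The only loose ends are the ones you flag yourself: that $\FS_m(\IN(\psi))$ is an honest Fubini--Study function follows from choosing a basis of $R_m$ adapted to the filtration induced by $\IN(\psi)$ (diagonalizability of filtrations on finite-dimensional spaces), after which the supremum over $R_m\setminus\{0\}$ equals the maximum over that basis; and since the paper's envelope ${\rm Q}_L$ is taken over $\QQ$-rational Fubini--Study functions while $\IN(\psi)(e_i)\in\RR$, one should shrink these constants to nearby rationals from below, which changes nothing in the limit.
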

Given any non-pluripolar set $\Sigma\subset X^\an$ and any $\psi\in L^\infty(\Sigma)$, as in \cite[Definition 6.1]{BJ24}, one defines
\begin{align*}
    {\rm IN}_\Sigma(\psi)(s) &= \inf_{v\in \Sigma}\{v(s)+m\psi(v)\},
\end{align*}
for any $s\in R_m$.
We have $ {\rm IN}_\Sigma(\psi) \geq {\rm IN}(\psi)$. 
One denotes by $\mcN_\RR^\Sigma$ the set of norms $\IN_\Sigma(\phi)$, with $\phi$ ranging over bounded functions on $\Sigma$
\begin{lemma}[\textup{\cite[Proposition~6.5~(i)]{BJ24}}]
\label{IN_Sigma(FS)}
    Let $L$ be an ample line bundle on $X$. Then each $\chi\in\mcN_\RR^\Sigma$ satisfies $\chi=\IN_\Sigma(\phi)$ with $\phi=\FS(\chi)|_\Sigma$. Moreover, if $\chi=\IN_\Sigma(\psi)$, then $\FS(\chi)|_\Sigma\leq\psi$.
\end{lemma}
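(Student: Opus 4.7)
The plan is to view this lemma as a Galois correspondence between linearly bounded norms on the section ring and bounded functions on $\Sigma$, induced by the operators $\FS$ and $\IN_\Sigma$. The lemma amounts to saying that $\IN_\Sigma\circ\FS$ fixes $\mcN_\RR^\Sigma$, while $\FS\circ\IN_\Sigma\leq\mathrm{id}$ on bounded functions restricted to $\Sigma$. I expect no serious obstacles: all relevant inequalities should follow directly from the defining formulas for $\FS_m$ and $\IN_\Sigma$, together with the monotonicity of $m\mapsto\FS_m(\chi)$ recalled in \cite[Lemma 2.12]{BJ24}.

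I would begin with the ``Moreover'' assertion, as it furnishes one half of the main identity. Suppose $\chi=\IN_\Sigma(\psi)$. Then for every $s\in R_m\setminus\{0\}$ and every $v\in\Sigma$, the definition of $\IN_\Sigma$ yields $\chi(s)\leq v(s)+m\psi(v)$; rearranging gives $m^{-1}\{-v(s)+\chi(s)\}\leq\psi(v)$. Taking the supremum over $s$ shows $\FS_m(\chi)(v)\leq\psi(v)$ on $\Sigma$ for every $m$, and passing to the limit along divisibility yields $\FS(\chi)|_\Sigma\leq\psi$.

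For the main identity $\chi=\IN_\Sigma(\FS(\chi)|_\Sigma)$, I would establish the two inequalities separately. The ``$\leq$'' direction follows by writing $\chi=\IN_\Sigma(\phi_0)$ using the very definition of $\mcN_\RR^\Sigma$, applying the ``Moreover'' part to get $\FS(\chi)|_\Sigma\leq\phi_0$, and invoking the manifest monotonicity of $\IN_\Sigma$ in its argument to conclude
\[
\IN_\Sigma(\FS(\chi)|_\Sigma)\leq\IN_\Sigma(\phi_0)=\chi.
\]
The reverse inequality ``$\geq$'' is actually valid for every $\chi\in\mcN_\RR(X,L)$: the tautological bound $m^{-1}\{-v(s)+\chi(s)\}\leq\FS_m(\chi)(v)\leq\FS(\chi)(v)$ rearranges to $\chi(s)\leq v(s)+m\FS(\chi)(v)$ for all $v\in X^\an$, and taking the infimum over $v\in\Sigma$ yields $\chi(s)\leq\IN_\Sigma(\FS(\chi)|_\Sigma)(s)$.

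The ampleness hypothesis on $L$ is used only to guarantee that $R_m\neq 0$ for $m$ sufficiently divisible, so that the sup and inf operations above are taken over nonempty sets, and to ensure that $\FS(\chi)$ is a bona fide bounded function on $X^\an$ for every linearly bounded $\chi$, which is what makes the composition $\IN_\Sigma(\FS(\chi)|_\Sigma)$ well defined.
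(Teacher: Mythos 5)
Your proof is correct, and the paper itself offers no argument here: it simply cites \cite[Proposition~6.5~(i)]{BJ24}, whose proof is the same formal Galois-connection computation you give ($\IN_\Sigma\circ\FS$ dominates the identity on norms tautologically, while the ``Moreover'' inequality plus monotonicity of $\IN_\Sigma$ gives the reverse bound on $\mcN_\RR^\Sigma$). So your write-up matches the intended argument; the only implicit point worth keeping in mind is that $\chi\in\mcN_\RR^\Sigma$ is by definition a linearly bounded norm, which is what makes $\FS(\chi)$ a bounded function and the composition well defined.
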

The set of \textit{divisorial norms} $\mcN_\RR^\div$ is defined as
\begin{align*}
    \mcN_\RR^\div\coloneqq \bigcup_{\Sigma\subset X^\div\text{ finite}}\mcN_\RR^\Sigma.
\end{align*}
Concretely, a divisorial norm is of the form $\chi=\max_i\{\chi_{v_i}+c_i\}$ for a finite set of divisorial valuations $(v _i)$ and $c_i\in\RR$, where $\chi_{v_i}(s)\coloneqq v_i(s)$ for $s\in R_m$.

\begin{proposition}
\label{env_cor}
Let $(X,L)$ be a polarized variety satisfying the envelope property. Let $\pi: Y \rightarrow X$ be a resolution of $X$. Then
    \begin{enumerate}[$(1)$]
        \item $\pi^*\PSH(X^\an,L^\an) = \PSH(Y^\an,\pi^*L^\an)$. 
        \item For any normal model $(\mcY,\mcL')$ of $(Y,\pi^*L)$, ${\rm Q}_{\pi^*L}(\varphi_{\mcL'}) = {\rm P}_{\pi^*L}(\varphi_{\mcL'}) \in \PSH(Y^\an,\pi^*L^\an) \cap C^0(Y^\an)$.
        \item For any normal model $(\mcY,\mcL')$ of $(Y,\pi^*L)$, there exists a model $(\mcX,\mcL)$ of $(X,L)$, such that ${\rm P}_{\pi^*L}(\varphi_{\mcL'}) = \pi^* {\rm P}_{L}(\varphi_{\mcL})$. 
        \item For any big model $(\mcY,\mcL')$ of $(Y,\pi^*L)$, let $\fa'_m$ be the base ideal of $|m\mcL'|$. Then $\phi_m = \varphi_{\mcL'} + \frac{1}{m}\varphi_{\fa'_m}$ converges to ${\rm P}_{\pi^*L}(\varphi_{\mcL'})$ uniformly.
        \item For any normal model $(\mcX,\mcL)$, we have $\pi_*\MA^\NA(\pi^*{\rm P}_L(\varphi_\mcL)) = \MA^\NA({\rm P}_L(\varphi_\mcL))$. More generally, this also applies for mixed Monge-Amp\`ere measures, 
        \begin{align*}
            \pi_*\MA^\NA(\phi_\triv^{[i]}, \pi^*{\rm P}_L(\varphi_\mcL)^{[n-i]}) = \MA^\NA(\phi_\triv^{[i]}, {\rm P}_L(\varphi_\mcL)^{[n-i]}).
        \end{align*}
    \end{enumerate}
\end{proposition}
\begin{proof}
    For (1), the statement is a direct consequence of  \cite[Lemma 5.13]{BJ22}.

    For (2), denote $R = \oplus_{m\geq 0} R_m$, $R_m = H^0(X,mL) = H^0(Y,m\pi^*L)$. Let $\chi_{(\mcY,\mcL')} $ be the norm on $R$ induced by the model $(\mcY,\mcL')$. 
    By \cite[Proposition~A.3]{BJ24} and \cite[Lemma~5.1]{BJ23},  $\IN(\varphi_{\mcL'})=\chi_{(\mcY,\mcL')}^{\rm hom}\in \mcN_\QQ^\div(Y,\pi^*L)=\mcN_\QQ^\div(X,L)$ (see \cite[Section~2]{BJ24} for the definition of $\chi^{\rm hom}$). Thus there exists a model $(\mcX,\mcL)$ such that $\chi_{(\mcX,\mcL)}^{\rm hom}=\chi_{(\mcY,\mcL')}^{\rm hom}$. It follows that $\FS(\IN(\varphi_{\mcL'}))=\pi^*\FS(\IN(\varphi_\mcL))$. 
    By Proposition~\ref{FS-IN=Q_L},  ${\rm Q}_{\pi^* L}(\varphi_{\mcL'}) = \pi^* {\rm Q}_L(\varphi_\mcL)$. Since the envelope property holds, ${\rm Q}_L(\varphi_\mcL) \in \PSH(X^\an,L^\an)$. Then ${\rm Q}_{\pi^*L}(\varphi_{\mcL'}) \in \PSH(Y^\an, \pi^*L^\an)$. Since $(Y^\an,\pi^*L^\an)$ satisfies the envelope property (see \cite[Theorem~A]{BJ24a}), then ${\rm P}_{\pi^*L}(\varphi_{\mcL'}) \in \PSH(Y^\an,\pi^*L^\an)$. By \cite[Lemma~1.5]{BJ22}, the psh functions ${\rm P}_{\pi^*L}(\varphi_{\mcL'})$, ${\rm Q}_{\pi^*L}(\varphi_{\mcL'})$ are equal on a Zariski open set $(Y\setminus \mathbb{B}_+(\pi^*L))^\an$. Then they are equal on $Y^\an$.

    For (3),  as above in the proof of (2), there exists a model $(\mcX,\mcL)$ such that ${\rm Q}_{\pi^* L}(\varphi_{\mcL'}) = \pi^* {\rm Q}_L(\varphi_\mcL)$.
By (2) and \cite[Lemma~5.19]{BJ22}, then ${\rm P}_{\pi^*L}(\varphi_{\mcL'})={\rm Q}_{\pi^*L}(\varphi_{\mcL'}) = \pi^* {\rm Q}_L(\varphi_\mcL)=\pi^* {\rm P}_L(\varphi_\mcL)$.
    

    For (4), since the model is big, $|m\mcL'|\neq \emptyset$ for $m$ large. By \cite[Lemma 1.6]{BJ24a}, we have $\lim_{m\to\infty}\nearrow \phi_m = {\rm Q}_{\pi^*L}(\varphi_{\mcL'})$. The uniform convergence then follows from Dini's lemma.

    For (5), by (4), we have $\phi_m$ converges to ${\rm P}_L(\varphi_\mcL)$ uniformly. Then $\MA^\NA(\phi_m)$ converges to $\MA^\NA({\rm P}_L(\varphi_\mcL))$ weakly as Radon measures. Then there exists a model $(\mcX_m,\mcL_m)$ with relatively semi-ample $\QQ$-line bundle $\mcL_m$, such that $\phi_m = \varphi_{\mcL_m}$. Let $\pi: (\mcY_m,\pi^*\mcL_m) \rightarrow (\mcX_m,\mcL_m)$ be the birational morphism induced by the pullback diagram 
    \begin{equation}
        \begin{tikzcd}
        \mcY_m \arrow[r, "\rho"] \arrow[d, "\pi"] 
        & Y_{\bA^1} \arrow[d, "\pi"] \\
        \mcX_m \arrow[r, "\rho"] & X_{\bA^1} 
        \end{tikzcd}.
    \end{equation}

    For any piecewise-linear function $\psi$ on $X^\an$, we can find a model $\mu: (\mcX', \mcL')\rightarrow \mcX_m$, such that $\psi = \varphi_{\mcL'}$, and 
    \begin{align*}
        \int_{X^\an} \psi \; \MA^\NA(\phi_m) &= \mcL'\cdot (\mu^*\mcL_m)^n \\
            &= \pi^*\mcL'\cdot (\pi^*\mu^*\mcL_m)^n \\
            &= \int_{Y^\an} \pi^*\psi \; \MA^\NA(\pi^*\phi_m).
    \end{align*}
    By taking $m\to\infty$, we have
    \begin{align*}
        \int_{X^\an} \psi \; \MA^\NA({\rm P}_L(\varphi_\mcL)) = \int_{Y^\an} \pi^*\psi \; \MA^\NA(\pi^*{\rm P}_L(\varphi_\mcL)).
    \end{align*}
    Since the set of piecewise linear functions is dense in $C^0(X^\an)$, the proof is finished.
\end{proof}

\subsection{Weighted Archimedean functionals and geodesic}
\label{w-fun-pre}
In this subsection, we work on the complex field $\CC$ and follow notations in \cite{HL23}.

Let $L$ be an ample $\QQ$-line bundle on $X$.
Fix a smooth K\"ahler metric $\omega\in c_1(L)$ with $\omega=\ddc \varphi_0$, the space of $K$-invariant psh metrics on $L$ is defined as
\begin{align*}
    \PSH_K(X,L)
    \coloneqq \{\varphi=\varphi_0+u:u\in L^1_{\rm loc},\text{ and locally } \varphi\text{ is a }K\text{-invariant psh function} \}.
\end{align*}
Let $\rv$ be a smooth positive function on the moment polytope $P$. In \cite{HL23}, \cite{BWN14}, the authors defined the weighted Monge--Amp\`ere measure $\MA_\rv(\varphi)=\rv(m_\varphi)\frac{(\ddc \varphi)^n}{n!}$ for any $\varphi\in\PSH_K(X,L)$.
The weighted total volume is a constant
\begin{align}
    \VV_\rv := \int_X \rv(m_{\varphi_0})\; \frac{(\ddc\varphi_0)^n}{n!}.
\end{align}

One defines the following full mass space and finite energy space
\begin{align*}
    \mcE_K(X,L)
    \coloneqq &\left\{\varphi\in\PSH_K(X,L):\int_X\MA_\rv(\varphi)=\VV_\rv \right\}, \\
    \mcE_K^1(X,L)\coloneqq &\left\{\varphi\in\mcE_K(X,L):\int_X(\varphi-\varphi_0)\MA_\rv(\varphi)>-\infty \right\}.
\end{align*}
For $\varphi\in \mcE_K^1(X,L)$, $\varphi=\varphi_0+u$, set $\varphi_t=\varphi_0+tu$ and define
\begin{align*}
    \bfE_\rv(\varphi) 
    =& \frac{1}{\mathbb{V}_\rv} \int_0^1 \int_X (\varphi-\varphi_0) \MA_\rv(\varphi_t) d t, \\
    \bfJ_\rv(\varphi) 
        =& \frac{1}{\mathbb{V}_\rv} \int_X (\varphi-\varphi_0) \MA_\rv(\varphi_0) - \bfE_\rv(\varphi),  \\
         \bfJ_{\rv,\TT}(\varphi) 
         = &\inf_{\sigma\in \TT} \bfJ_{\rv}(\sigma^* \varphi), \\
         \bfJ_{\rv,\varphi'}(\varphi)
         =&\bfE_\rv(\varphi')-\bfE_\rv(\varphi)+\frac{1}{\VV_\rv}\int_X(\varphi-\varphi')\MA_\rv(\varphi'), \\
         \bfI_\rv(\varphi,\varphi')
         =&\frac{1}{\VV_\rv}\int_X(\varphi-\varphi')(\MA_\rv(\varphi')-\MA_\rv(\varphi)), \\
         \bfE_\rv^\eta(\varphi) 
        =& \frac{1}{\mathbb{V}_\rv} \int_0^1 \int_X (\varphi-\varphi_0) \Big( \rv(m_{\varphi_t}) \eta\wedge \frac{(\ddc \varphi_t)^{n-1}}{(n-1)!} + \langle  d\rv(m_{\varphi_t}),m_\eta \rangle \frac{(\ddc \varphi_t)^n}{n!} \Big)d t,
\end{align*}
where $\eta$ is a smooth $(1,1)$-form, which can be locally expressed as $\eta=\ddc  g$. And $m_\eta(\xi) \coloneqq  -J\xi(g)$, which is globally well-defined. And $\langle d\rv(m_\varphi),\cdot\rangle=d\rv(m_\varphi)^{\sharp} = \sum_\alpha \rv_{,\alpha} \xi^\alpha$ as a $\mathfrak{t}$-valued function on $X$, where $(\cdot)^{\sharp}$ is with respect to the K\"ahler form. Therefore, $\langle d\rv(m_\varphi),m_\eta\rangle=\sum_\alpha\rv_{,\alpha}\cdot \xi^\alpha(m_\eta)=-\sum_\alpha \rv_{,\alpha} J\xi^\alpha(g)$.

\begin{lemma}
\label{arch_J_v-relation}
    For all $\varphi_1,\varphi_2\in\mcE_K^1(X,L)$, one has
    \begin{align}
    \label{uniform_Lip_J_v}
        |\bfJ_\rv(\varphi_1)-\bfJ_\rv(\varphi_2)|
        \leq C_n \bfI_\rv(\varphi_1,\varphi_2)^{\frac{1}{2}}\max\{\bfJ_\rv(\varphi_1),\bfJ_\rv(\varphi_2)\}^{\frac{1}{2}}
    \end{align}
\end{lemma}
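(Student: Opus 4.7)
The plan is to decompose $\bfJ_\rv(\varphi_1)-\bfJ_\rv(\varphi_2)$ via a cocycle identity and then estimate each piece by a weighted Cauchy--Schwarz inequality together with the standard comparison between $\bfI_\rv$ and $\bfJ_\rv$. I would first record the cocycle identity
\[
\bfI_\rv(\varphi_1,\varphi_2) = \bfJ_{\rv,\varphi_1}(\varphi_2)+\bfJ_{\rv,\varphi_2}(\varphi_1),
\]
which falls out by direct substitution from the definitions, together with the non-negativity $\bfJ_{\rv,\varphi'}(\varphi)\geq 0$ (reflecting the concavity of $\bfE_\rv$ along affine segments of psh potentials, a weighted pluripotential fact already established in \cite{HL23}); these two facts give $\bfJ_{\rv,\varphi_2}(\varphi_1)\leq \bfI_\rv(\varphi_1,\varphi_2)$. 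Assuming without loss of generality that $\bfJ_\rv(\varphi_2)$ is the larger of the two energies and rearranging the definitions of $\bfJ_\rv$ and $\bfJ_{\rv,\varphi_2}$ yields
\[
\bfJ_\rv(\varphi_1)-\bfJ_\rv(\varphi_2)
= \frac{1}{\VV_\rv}\int_X(\varphi_1-\varphi_2)\bigl(\MA_\rv(\varphi_0)-\MA_\rv(\varphi_2)\bigr) + \bfJ_{\rv,\varphi_2}(\varphi_1),
\]
reducing the lemma to a bound on each summand.

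For the mixed integral I would invoke the weighted Cauchy--Schwarz estimate
\[
\left|\int_X(\varphi_1-\varphi_2)(\MA_\rv(\varphi_0)-\MA_\rv(\varphi_2))\right|
\leq C_n\,\VV_\rv\,\bfI_\rv(\varphi_1,\varphi_2)^{1/2}\,\bfI_\rv(\varphi_0,\varphi_2)^{1/2},
\]
which one obtains by rewriting the difference of weighted Monge--Amp\`ere measures as an integration by parts against a positive mixed current and applying Cauchy--Schwarz on $(1,1)$-forms; then the weighted comparison $\bfI_\rv(\varphi_0,\varphi_2)\leq C_n\bfJ_\rv(\varphi_2)$ (the analogue of the classical chain $\bfJ\leq \bfI\leq (n+1)\bfJ$) dominates the first summand by $C_n\bfI_\rv(\varphi_1,\varphi_2)^{1/2}\bfJ_\rv(\varphi_2)^{1/2}$. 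For the second summand, I would factor $\bfJ_{\rv,\varphi_2}(\varphi_1)\leq \bfI_\rv(\varphi_1,\varphi_2)=\bfI_\rv(\varphi_1,\varphi_2)^{1/2}\cdot\bfI_\rv(\varphi_1,\varphi_2)^{1/2}$ and apply the quasi-triangle inequality $\bfI_\rv(\varphi_1,\varphi_2)^{1/2}\leq C\bigl(\bfJ_\rv(\varphi_1)^{1/2}+\bfJ_\rv(\varphi_2)^{1/2}\bigr)$ (again a consequence of the same Cauchy--Schwarz, with $\varphi_0$ inserted as a midpoint) to bound it by $2C\,\bfI_\rv(\varphi_1,\varphi_2)^{1/2}\max\{\bfJ_\rv(\varphi_1),\bfJ_\rv(\varphi_2)\}^{1/2}$. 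Adding the two bounds gives the claim.

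The main obstacle will be justifying the weighted Cauchy--Schwarz inequality together with the comparison $\bfI_\rv\asymp \bfJ_\rv(\cdot,\varphi_0)$ on the klt variety $X$. In the smooth case the weight $\rv(m_\varphi)$ is a bounded smooth function of the Hamiltonians and the classical integration-by-parts identities extend verbatim. On a klt variety the mixed currents must be interpreted through the non-pluripolar product framework of \cite{HL23}, but the uniform two-sided bound $c\leq \rv\leq C$ on the compact moment polytope $P$ preserves finiteness and the positivity needed for Cauchy--Schwarz, so the assembly above is essentially formal once these two weighted ingredients are verified.
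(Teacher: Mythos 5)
Your algebraic skeleton is fine: the cocycle identity $\bfI_\rv(\varphi_1,\varphi_2)=\bfJ_{\rv,\varphi_1}(\varphi_2)+\bfJ_{\rv,\varphi_2}(\varphi_1)$, the non-negativity of $\bfJ_{\rv,\varphi'}$, and the exact decomposition of $\bfJ_\rv(\varphi_1)-\bfJ_\rv(\varphi_2)$ all check out by direct substitution. The genuine gap is the step you yourself flag as the ``main obstacle'' and then dismiss as formal: the weighted Cauchy--Schwarz bound for $\int_X(\varphi_1-\varphi_2)\bigl(\MA_\rv(\varphi_0)-\MA_\rv(\varphi_2)\bigr)$. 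Your proposed mechanism --- rewrite the difference of weighted Monge--Amp\`ere measures as an integration by parts against a positive mixed current --- does not exist in the weighted setting: since $\MA_\rv(\varphi)=\rv(m_\varphi)\frac{(\ddc\varphi)^n}{n!}$ and the Hamiltonian $m_\varphi$ varies with $\varphi$, the difference $\MA_\rv(\varphi_0)-\MA_\rv(\varphi_2)$ is \emph{not} of the form $\ddc(\varphi_0-\varphi_2)\wedge T$ with $T\geq 0$; extra terms involving $d\rv$ and $m_{\varphi_0}-m_{\varphi_2}$ appear, and the two-sided bound $c\leq\rv\leq C$ on $P$ is not enough to control them (you would at least need $\|\rv\|_{C^1}$ and the structure of the moment map). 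The same issue affects the quasi-triangle inequality you invoke for $\bfI_\rv^{1/2}$ and the comparison $\bfI_\rv(\varphi_0,\varphi_2)\leq C\bfJ_\rv(\varphi_2)$. These weighted estimates are in fact available (e.g.\ the quasi-triangle inequality and comparisons in \cite{HL23}, obtained via the fibration trick on $X^{[\vec{k}]}$ after expanding $\rv$ in monomials, and the energy estimate of \cite[Lemma~6.9]{AJL23}), but they are the actual content of the lemma; asserting them as ``essentially formal'' leaves the proof incomplete, and also note you work directly on $\mcE^1_K$ whereas one should argue on bounded/smooth potentials and conclude by approximation.

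For comparison, the paper takes a cruder route that sidesteps any Cauchy--Schwarz against a \emph{varying} weighted measure: it bounds
\begin{align*}
|\bfJ_\rv(\varphi_1)-\bfJ_\rv(\varphi_2)|
\leq |\bfE_\rv(\varphi_1)-\bfE_\rv(\varphi_2)|
+\frac{1}{\VV_\rv}\int_X|\varphi_1-\varphi_2|\,\MA_\rv(\varphi_0),
\end{align*}
controls the first term by $C\,\bfI_\rv(\varphi_1,\varphi_2)$ via \cite[Lemma~6.9]{AJL23}, and observes that $\MA_\rv(\varphi_0)\leq C\,\omega^n$ is a \emph{fixed smooth} measure, so the second term reduces to the classical unweighted estimate $\int_X|\varphi_1-\varphi_2|\,\omega^n\leq C\,\bfI(\varphi_1,\varphi_2)^{1/2}\max\{\bfJ(\varphi_1),\bfJ(\varphi_2)\}^{1/2}$ of Darvas/BBGZ, followed by the known two-sided comparisons $\bfI_\rv\asymp\bfI$, $\bfJ_\rv\asymp\bfJ$. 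If you want to keep your decomposition, the fix is to replace your heuristic justification by explicit citations of the weighted quasi-triangle and comparison estimates from \cite{HL23}/\cite{AJL23} (or to reprove them via the $X^{[\vec{k}]}$ construction); as written, the central inequality is unproved.
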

\begin{proof}
For the reader's convenience, we sketch the proof.
    For any $\varphi_1,\varphi_2\in\mcH_K(X,L)$, by definition,
    \begin{align*}
        |\bfJ_\rv(\varphi_1)-\bfJ_\rv(\varphi_2)|
        \leq&|\bfE_\rv(\varphi_1)-\bfE_\rv(\varphi_2)| +\frac{1}{\VV_\rv}\int_X|\varphi_1-\varphi_2|\,\MA_\rv(\varphi_0)  \\
        \leq&C\bfI_\rv(\varphi_1,\varphi_2)
        +C\int_X|\varphi_1-\varphi_2|\,\omega^n ,
    \end{align*}
    where we have used \cite[Lemma~6.9]{AJL23} for the second inequality.
    By \cite[Corollary~5.8]{Dar15} or \cite[Lemma~3.13]{BBGZ13}, one has
    \begin{align*}
        \int_X|\varphi_1-\varphi_2|\,\omega^n
        \leq& C\bfI(\varphi_1,\varphi_2)^{\frac{1}{2}}\max\{\bfJ(\varphi_1),\bfJ(\varphi_2)\}^{\frac{1}{2}} \\
        \leq& C_\rv \bfI_\rv(\varphi_1,\varphi_2)^{\frac{1}{2}}\max\{\bfJ_\rv(\varphi_1),\bfJ_\rv(\varphi_2)\}^{\frac{1}{2}}.
    \end{align*}
    Combining two inequalities above, we obtain \eqref{uniform_Lip_J_v}. By approximation, the inequality \eqref{uniform_Lip_J_v} also holds on $\mcE^1_K(X,L)$.

\end{proof}
As in \cite{Lah19}, the $\rv$-scalar curvature is defined as
\begin{align*}
    S_\rv(\varphi)
:=\rv(m_\varphi)S(\varphi)-2\rv_{,\alpha}(m_\varphi)\Delta_\varphi(m_\varphi^{\xi^\alpha})-\rv_{,\alpha\beta}(m_\varphi)\langle\xi^\alpha,\xi^\beta\rangle_\varphi.
\end{align*}
Let $\rw\in C^\infty(P,\RR)$ be another weight function on $P$. We say that $\omega_\varphi$ is a \textit{weighted-cscK metric} if $S_\rv(\omega_\varphi)=c_{\rv,\rw}(L)\rw(m_\varphi)$, where
\begin{align*}
        c_{(\rv,\rw)}(L)\coloneqq\left\{
        \begin{aligned}
            &\frac{\int_XS_\rv(\omega)\frac{\omega^n}{n!}}{\int_X\rw(m_\omega)\frac{\omega^n}{n!} }, &\text{ if } \int_X\rw(m_\omega)\frac{\omega^n}{n!}\neq0 \\
            &1, &\text{ if } \int_X\rw(m_\omega)\frac{\omega^n}{n!}=0,
        \end{aligned}\right.
    \end{align*}
    which is independent of the choice $\omega\in c_1(L)$ (see \cite[Definition~3, Lemma~2]{Lah19}).

For any $\varphi\in\mcE^1_K(X,L)$, the weighted Mabuchi functional is defined as
\begin{align*}
        \bfM_{\rv,\rw}(\varphi) 
        = \bfH_\rv(\varphi) -\bfE_\rv^{{\rm Ric}(\omega)}(\varphi) 
         +c_{(\rv,\rw)}(L) \bfE_{\rw}(\varphi) - C_0,
    \end{align*}
    where 
    \begin{align*}
        \bfH_\rv(\varphi)
        \coloneqq \int_X \log\left(\frac{\MA_\rv(\varphi)}{\omega^n}\right)  \MA_\rv(\varphi), \;  
        C_0 \coloneqq \int_X \log(\rv(m_{\omega}))\MA_\rv(\varphi_0).
    \end{align*}

    Suppose $\rw\in C^\infty(P,\RR_{>0})$. Let $\ell_\ext$ be the weighted extremal function on $(X,L)$. We say that $\omega_\varphi$ is a \textit{weighted-extremal metric} if $S_\rv(\omega_\varphi)=\rw(m_\varphi)\cdot \ell_{\rm ext}(m_\varphi)$.
    The weighted relative Mabuchi functional is defined as
    \begin{align*}
        \bfM_{\rv,\rw\cdot\ell_\ext}
        = \bfH_\rv -\bfE_\rv^{{\rm Ric}(\omega)} 
         + \bfE_{\rw\cdot\ell_\ext} - C_0.
    \end{align*}
\begin{definition}
    The weighted Mabuchi functional $\bfM_{\rv,\rw}$ is called $\bG$-\textit{coercive} over $\mcE^1_{K}$ if there exist $\gamma>0$ and $C>0$ such that for any $\varphi\in \mcE^1_{K}$
    \begin{align*}
        \bfM_{\rv,\rw}(\varphi)
        \geq \gamma\cdot\bfJ_{\rv,\TT}(\varphi)-C.
    \end{align*}
\end{definition}

Given $\varphi_1, \varphi_2\in\mcE^1(X,L)$, a map $\Phi=(\Phi(s)):(\cdot,s)\in(0,1)\rightarrow\mcE_K^1(X,L)$ is a \textit{psh subgeodesic segment} if $\limsup_{s\rightarrow0^+}\Phi(s)\leq \varphi_1$ and $\limsup_{s\rightarrow1^-}\Phi(s)\leq \varphi_2$, and if $\Phi(x,-\log|\tau|)$ is a psh metric on $p_1^*L$ over $X\times {\mathbb D}_{(0,1)}$, where ${\mathbb D}_{(0,1)}\coloneqq  \{\tau\in\CC^*:-\log|\tau|\in(0,1) \}$.
The largest psh subgeodesic segment joining $\varphi_1, \varphi_2$ is called \textit{psh geodesic}, which exists by \cite{Dar17a}, \cite{DG18}.

By the same proof as in \cite[Section 4.5]{HL23}, we have
\begin{proposition}
    The weighted Mabuchi functional is convex along the psh geodesic in $\mcE^1_K(X,L)$.
\end{proposition}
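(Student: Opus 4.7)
The statement is asserted to follow from \cite[Section~4.5]{HL23}, so the strategy is to transplant that argument to the klt setting, where it still goes through essentially verbatim once the relevant functionals are known to be well-defined. The natural flow is: decompose $\bfM_{\rv,\rw}$ into an entropy piece and ``energy'' pieces; establish convexity of each piece along smooth $\epsilon$-geodesics; and finally pass to the psh geodesic by approximation.

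\textbf{Step 1: Approximation by $\epsilon$-geodesics.} Given $\varphi_1,\varphi_2\in\mcE^1_K(X,L)$, first reduce to the case where $\varphi_1,\varphi_2$ are bounded (even smooth, approximating by a decreasing sequence in $\mcH_K(X,L)$ using the fact that on klt varieties $\bfH_\rv$, $\bfE_\rv^{\Ric(\omega)}$ and $\bfE_\rw$ are all continuous along decreasing sequences with uniformly controlled energy). Then approximate the psh geodesic $\Phi=(\Phi(s))$ by the bounded $\epsilon$-geodesics $\Phi^\epsilon$ solving $(\ddc \Phi^\epsilon)^{n+1}=\epsilon\,\omega_0^{n+1}$ on $X\times {\mathbb D}_{(0,1)}$ with boundary values $\varphi_1,\varphi_2$. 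These $\Phi^\epsilon$ are bounded, $K$-invariant in the $X$-direction, and decrease to $\Phi$ as $\epsilon\to 0^+$.

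\textbf{Step 2: Convexity for $\epsilon$-geodesics.} Along a smooth path, one computes
\begin{equation*}
\frac{d^2}{ds^2}\bfM_{\rv,\rw}(\varphi_s)\;=\;(\text{positive Dirichlet-type term})\;+\;(\text{geodesic-defect term})\,,
\end{equation*}
where the geodesic-defect term vanishes when $\ddot\varphi_s-|\partial\dot\varphi_s|^2_{\omega_{\varphi_s}}=0$. In the weighted setting this second derivative identity is the Chen--Tian/Lahdili formula, whose positive contribution is a weighted Mabuchi-type bilinear form on $T_{\varphi_s}\mcH$. For the $\epsilon$-geodesic one obtains convexity up to an $O(\epsilon)$ error. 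The key analytic input is a \emph{weighted} Berndtsson-type positivity of direct images for the entropy $\bfH_\rv$: pushing forward $\rv(m_\Phi)(\ddc\Phi)^{n+1}$ along $X\times{\mathbb D}\to{\mathbb D}$ and applying a Bochner--Kodaira computation adapted to the weight $\rv$ gives the subharmonicity of $s\mapsto\bfH_\rv(\Phi^\epsilon_s)$. The remaining terms $-\bfE_\rv^{\Ric(\omega)}$ and $c_{(\rv,\rw)}(L)\bfE_\rw$ are handled by expanding their defining integrals along $\Phi^\epsilon$ and checking by direct computation that they are affine up to $O(\epsilon)$ along the path.

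\textbf{Step 3: Passing to the limit.} As $\epsilon\to 0^+$, $\Phi^\epsilon\searrow \Phi$ uniformly in $s$. The entropy $\bfH_\rv$ is lower semicontinuous under such decreasing limits, while $\bfE_\rv^{\Ric(\omega)}$ and $\bfE_\rw$ are continuous along decreasing sequences in $\mcE^1_K(X,L)$ with uniformly bounded $\bfE_\rv$. Therefore
\begin{equation*}
\bfM_{\rv,\rw}(\Phi(s))\;\le\;\liminf_{\epsilon\to 0}\bfM_{\rv,\rw}(\Phi^\epsilon(s)),
\end{equation*}
and from this, together with Step~2, convexity of $s\mapsto\bfM_{\rv,\rw}(\Phi(s))$ on $[0,1]$ follows in the standard way (testing against affine functions and using the fact that the endpoint values $\bfM_{\rv,\rw}(\varphi_i)$ are recovered).

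\textbf{Main obstacle.} The genuinely nontrivial input is the weighted Berndtsson-type subharmonicity for $\bfH_\rv$ along $\epsilon$-geodesics on a klt variety: one needs a curvature-positivity / direct-image statement where the fiber integral is weighted by the moment-map-dependent density $\rv(m_{\Phi})$, and valid on a possibly singular $X$. Everything else is bookkeeping that has been carried out in the smooth weighted setting in \cite{HL23}, and in the unweighted klt setting elsewhere; the role of the klt hypothesis is precisely to ensure that $\bfH_\rv$ is finite and semicontinuous on $\mcE^1_K$, and that the approximation scheme above actually converges.
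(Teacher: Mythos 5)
There is a genuine gap: your argument never actually establishes the one ingredient you yourself identify as ``the genuinely nontrivial input,'' namely a weighted Berndtsson-type positivity/subharmonicity statement for $\bfH_\rv$ on a klt variety. Since the other terms are routine, that positivity \emph{is} the proposition; flagging it as an obstacle rather than proving (or correctly citing) it leaves the proof incomplete. Moreover, the route you choose to reach it is problematic in this setting: the Chen--Tian/Lahdili second-derivative formula and the $\epsilon$-geodesic regularization are smooth-category tools. On a klt variety one cannot perform the pointwise computation of $\frac{d^2}{ds^2}\bfM_{\rv,\rw}$ (the weighted scalar curvature and the Bochner--Kodaira integration by parts live on $X^{\mathrm{reg}}$ only, with no control of boundary terms along $X^{\mathrm{sing}}$), and $\epsilon$-geodesics with the regularity needed for that computation are not available; so Step~2 as written would fail, not merely require bookkeeping.

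The paper takes a different and shorter route, which is why it can dispose of the statement by invoking \cite[Section~4.5]{HL23}: the weight $\rv$ is approximated by polynomials, and for a polynomial weight the weighted functionals are, by the very definitions in Section~\ref{w-NA-MA} and \cite{HL23}, ordinary (unweighted) functionals on the fibration $(X^{[\vec k]},L^{[\vec k]})$ of diagram \eqref{projections}. A psh geodesic in $\mcE^1_K(X,L)$ induces a psh geodesic upstairs, so convexity of $\bfM_{\rv,\rw}$ reduces to Berman--Berndtsson convexity of the usual Mabuchi functional along finite-energy geodesics on the total space, applied directly to the psh geodesic --- no $\epsilon$-geodesics, no new weight-adapted Bochner--Kodaira identity, and no second-derivative formula. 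If you want to salvage your write-up, replace Steps~1--2 by this reduction (polynomial approximation of $\rv$ plus the $X^{[\vec k]}$ construction) and then quote the convexity of the unweighted entropy along weak geodesics in the singular setting; your Step~3 limiting argument in the weight-approximation parameter is then the only analytic point left, and it is the easy one.
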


A continuous map $\Phi=(\Phi(s)):(\cdot,s)\in\RR_{>0}\rightarrow\mcE_K^1(X,L)$ is a \textit{psh ray} if $\Phi(x,-\log|\tau|)$ is a psh metric on $p_1^*L$ over $X\times {\mathbb D^*}$, where ${\mathbb D}^*\coloneqq  \{\tau\in\CC^*:-\log|\tau|\in(0,\infty) \}$.
A psh ray $\Phi=(\Phi(s))$ is said to \textit{linear growth} if
\begin{align*}
    \lambda_\max(\Phi)\coloneqq\lim_{s\rightarrow\infty}\frac{\sup_X(\Phi(s)-\varphi_0)}{s}
    <\infty.
\end{align*}

Let $\Phi=(\Phi(s))_{s\geq0}$ be a psh ray of linear growth and $\bfF$ be a functional on $\mcE_K^1(X,L)$. The \textit{slope at infinity} of $\bfF$ along $\Phi$ is defined as
\begin{align*}
    \bfF^{\prime\infty}(\Phi)
    \coloneqq \lim_{s\rightarrow\infty}\frac{\bfF(\Phi(s))}{s}
\end{align*}
if the limit exists.

A continuous map $\Phi=(\Phi(t)):(\cdot,t)\in\RR_{>0}\rightarrow\mcE_K^1(X,L)$ is a \textit{psh 
geodesic} if the restriction of $\Phi$ to each compact interval  $[a,b]\subset\RR_{>0}$ coincides (up to affine reparametrization) with the psh geodesic joining $\Phi(a)$ to $\Phi(b)$.

\section{Weighted non-Archimedean Monge-Amp\`ere measure, weighted non-Archimedean functionals and geodesic rays}
\label{Weighted non-Archimedean Monge-Ampere measure, weighted non-Archimedean functionals and geodesic rays}

In this section, we define the weighted non-Archimedean Monge--Amp\`ere measure and functional based on \cite{HL23}.
Under the assumption of the envelope property (Definition~\ref{def:envelope_property}), 
we solve the weighted non-Archimedean Monge--Amp\`ere equation by adapting the variational argument from \cite[Theorem 12.8]{BJ22}. We work on complex field $\CC$.
All the statements in this section will hold if we replace the group $T$ by the group $K$.

We will follow the notations defined in \cite{HL23}. Since the torus action $T$ acts on $X$, we have the associated fiber bundle
\begin{align*}
    (X^{[\vec{k}]}, L^{[\vec{k}]}) = (X,L)\times_{T} \mathbb{S}^{[\vec{k}]}.
\end{align*}
Then we have the diagram
\begin{equation}
\begin{tikzcd}[column sep = 2.1cm]
\label{projections}
\CC^* \arrow[r, ""] \arrow[d, ""] & L\times (\CC^{k_1}\setminus\{0\})\times \cdots\times (\CC^{k_r}\setminus\{0\}) \arrow[r, ""] \arrow[d, ""] & L^{\bvk} \arrow[d, ""]\\
\CC^* \arrow[r,""]  & X\times (\CC^{k_1}\setminus\{0\})\times \cdots\times (\CC^{k_r}\setminus\{0\})
\arrow[r, ""] &  X^{\bvk}.
\end{tikzcd}
\end{equation}

The smooth functions $\rv,\rw$ on polytope $P$ can be expressed as limits of polynomials
\begin{align}
    \rv = \lim\sum_{\vec{k}} a_{\vec{k}} \underline{y}^{\vec{k}}, \; \rw = \lim \sum_{\vec{k}} b_{\vec{k}} \underline{y}^{\vec{k}}.
\end{align}

\subsection{Weighted non-Archimedean Monge-Amp\`ere measure and functionals for the polynomial weight}
\label{w-NA-MA}

\begin{definition}
    Let $\phi\in \mcH_T(X^\an,L^\an)$. Let $\rv = \sum_{\vec{k}}a_{\vec{k}} \underline{y}^{\vec{k}}$ be a polynomial.
     We define the weighted non-Archimedean Monge-Amp\`ere measure
     $\MA^\NA_\rv$ as below. For any $T$-invariant model function $\phi_D$, we define
     \begin{align}
     \label{wMA}
    \int_{X^\an} \phi_D\; \MA^\NA_\rv(\phi) = \sum_{\vec{k}}a_{\vec{k}}\cdot \frac{k_1!\ldots k_r!}{(n+k)!} \cdot \phi^{[\vec{k}]}_D\cdot {(\ddc\phi^{[\vec{k}]})^{n+k}}.
\end{align}

Since any continuous function on $X^\an$ can be approximated by model functions, formula \eqref{wMA} defines $\MA^\NA_\rv(\phi)$ as a Radon measure on $X^\an$.

\end{definition}

For a $T_\CC$-equivariant test configuration $(\mcX,\mcL)$,
define 
\begin{align}
    (\bfE^{[\vec{k}]})^\NA (\phi_\mcL) = \frac{k_1!\ldots k_r!}{(n+k+1)!}(\mcL^{[\vec{k}]})^{n+k+1}.
\end{align}

Let $(\mcX, \mathcal{Q})$ be a $T_\CC$-equivariant model.
Define
\begin{align}
    \label{intersection_formula_bfE^Q}
    ((\bfE^Q)^{[\vec{k}]})^\NA (\phi_\mcL) = \frac{k_1!\ldots k_r!}{(n+k)!} (\mathcal{Q})^{[\vec{k}]} \cdot (\mcL^{[\vec{k}]})^{n+k} .
\end{align}

If $\rv = \sum_{\vec{k}}a_{\vec{k}} \underline{y}^{\vec{k}}$, then one defines
\begin{align}
    \bfE^\NA_\rv (\phi_\mcL) \coloneqq \sum_{\vec{k}} a_{\vec{k}} \cdot (\bfE^{[\vec{k}]})^\NA (\phi_\mcL),
\end{align}

\begin{align}
\label{intersection_formula_bfE^Q_polynomial}
    (\bfE^\mathcal{Q})^\NA_\rv (\phi_\mcL) \coloneqq \sum_{\vec{k}} a_{\vec{k}} \cdot ((\bfE^\mathcal{Q})^{[\vec{k}]})^\NA (\phi_\mcL).
\end{align}

We define
\begin{align}
    \bfR^\NA_\rv (\phi_\mcL) = (\bfE^{K^{\log}_{X_\PP^1/\PP^1}})_\rv^\NA(\phi_\mcL).
\end{align}

\subsubsection{Weighted Okounkov body and weighted volume}

Let $x\in X$ be a regular point where the Lie algebra of $T_\CC$ has the maximal rank at $x$. Let $z=(z_1,\ldots,z_n)$ be a system of parameters centered at $x$.
This defines a rank-$n$ valuation:
\begin{align*}
    {\rm ord}_z = \{{\rm ord}_{z_1},\ldots,{\rm ord}_{z_n}\}: \mathcal{O}_{X,x}\setminus\{0\}\rightarrow \RR^n.
\end{align*}
For $i\in\{1,\ldots,r\}$, the valuation induced by the vector filed $\xi_i$ at $x$ can be expressed as ${\rm ord}_{\xi_i} = \sum_{1\leq j\leq n}a_{i,j}\cdot {\rm ord}_{z_j}$. Denote ${\rm ord}_\xi = ({\rm ord}_{\xi_1},\ldots, {\rm ord}_{\xi_r})$.

We have
\begin{align*}
    \Gamma_m &= \{({\rm ord}_{z}(s), m): \; s\in H^0(X,mL)\}, \\
    \underline{\Gamma}_m &= \{({\rm ord}_{\xi}(s), m): \; s\in H^0(X,mL)\}.
\end{align*}
Let $\Gamma = \{(y,m)\in \RR^{n+1}:\; (y,m)\in \Gamma_m\}$, $\underline{\Gamma} = \{(\underline{y},m)\in \RR^{r+1}:\; (\underline{y},m)\in \underline{\Gamma}_m\}$. Let $\Sigma, \underline{\Sigma}$ be closed convex cones generated by $\Gamma,\underline{\Gamma}$ respectively. 
Define
\begin{align}
    \mathbb{O} &= \{y: \; (y,1)\in \Sigma\}, \\
    P &= \{\underline{y}: \; (\underline{y},1)\in \underline{\Sigma}\}.
\end{align}
We have the map ${\rm Pr}: \mathbb{O}\rightarrow P$, by sending $(y_1,\ldots,y_n)\mapsto (\sum_{1\leq j\leq n}a_{1,j}y_j,\ldots, \sum_{1\leq j\leq n}a_{r,j}y_j)$.

For a filtration $\mcF$ on $R$, the concave transform function is defined as
$G_\mcF: \mathbb{O}\rightarrow \mathbb{R}$, $G_\mcF(y) = \sup\{t:\; y\in \mathbb{O}(R^t_\cdot)\}$.

Let $\varphi_1,\varphi_2 \in \mcH(X^\an,L^\an)$, which correspond to filtrations $\mcF_1,\mcF_2$ respectively. Then
\begin{align}
\label{d_1_v_d_1}
    \rmd_{\rv,1}(\varphi_1,\varphi_2) = \frac{1}{\vol_\rv(\mathbb{O})n!}\int_P \Big( \int_{\mathbb{O}_{{\underline{y}}}} |G_{\mcF_1}-G_{\mcF_2}| \; dy \; \Big)\rv(\underline{y}) \; d\underline{y},
\end{align}
where $\vol_\rv(\mathbb{O}) = \int_{\mathbb{O}} \rv(y)\; dy$.

Let $\chi\in\mcN_\RR(X,L)$ and $\{e_j\}_{j=1}^{N_{m,\eta}}$ be an orthogonal basis of $R_{m,\eta}$ with respect to $\chi$.
The \textit{successive minima} of $\chi|_{R_{m,\eta}}$ is the sequence $\lambda^{(m,\eta)}_j\coloneqq \chi(e_j)$. Following \cite[Section~5.6]{HL23}, the \textit{weighted volume} of $\chi$ is defined as
\begin{align*}
    \vol_\rv(\chi)
    \coloneqq \lim_{m\rightarrow\infty}\frac{1}{m^{n+1}}\sum_{\eta}\rv(\frac{\eta}{m})\sum_j\lambda^{(m,\eta)}_j.
\end{align*}
We define a pseudo-metric $\rmd_{\rv,1}$ on $\mcN_\RR$ as follows. For any two norms $\chi,\chi'\in\mcN_\RR(X,L)$,  one defines
\begin{align*}
    \rmd_{\rv,1}(\chi,\chi')
    \coloneqq \vol_\rv(\chi)+\vol_\rv(\chi')-2\vol_\rv(\chi\wedge\chi'),
\end{align*}
where $\chi\wedge\chi'(s)\coloneqq\min\{\chi(s),\chi'(s)\}$.
One easily checks $\rmd_{\rv,1}(\chi+c,\chi'+c)=\rmd_{\rv,1}(\chi,\chi')$ for any $c\in\RR$. Similar as in \cite[Section~3.5]{BJ24}, the metric $\rmd_{\rv,1}$ induces a quotient metric $\underline{\rmd}_{\rv,1}$ on $\mcN_\RR/\RR$ by
\begin{align*}
    \underline{\rmd}_{\rv,1}(\chi,\chi')
    \coloneqq\inf_{c\in\RR}\rmd_{\rv,1}(\chi,\chi'+c).
\end{align*}

Set $\cF_\chi$ as the associated filtration of $\chi$, then
\begin{align}
\label{eq:vol_v(chi)=integral}
    \vol_\rv(\chi)
=\frac{1}{\vol_\rv(\mathbb{O})n!}\int_P\left(\int_{\mathbb{O}_{\underline{y}}}G_{\mcF_\chi}\, dy\right)\rv(\underline{y})\,d \underline{y}.
\end{align}
By integration by parts, 
\begin{align*}
    \vol_\rv(\chi)
    =\int_\RR\lambda\, {\bf DH}_\rv(\chi)
    =a+\int_a^{+\infty}\vol_\rv(\chi\geq\lambda)\,d\lambda,
\end{align*}
where $\vol_\rv(\chi\geq\lambda)\coloneqq \vol_\rv(\mcF_\chi^{(\lambda)})$ and $a\leq \lambda_{\min}(\chi)$.
\begin{lemma}
    \label{vol_converge_chi_d}
    For any $\chi\in\mcN_\RR(X,L)$, then $\vol_\rv(\chi_d)\rightarrow\vol_\rv(\chi)$.
\end{lemma}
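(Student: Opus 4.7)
The plan is to combine monotonicity along divisibility with convergence of weighted Duistermaat--Heckman measures, after reducing the smooth weight to polynomial weights via Stone--Weierstrass.

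First I would establish the easy direction via monotonicity. Since $\chi_d \le \chi_{d'} \le \chi$ whenever $d \mid d'$, one has $\mcF^\lambda_{\chi_d} R_m \subseteq \mcF^\lambda_\chi R_m$ for every $\lambda$ and $m$, and combined with the integral representation
\begin{equation*}
\vol_\rv(\chi) \;=\; \int_\RR \vol_\rv(\chi \ge \lambda)\, d\lambda,
\end{equation*}
this shows that $d \mapsto \vol_\rv(\chi_d)$ is monotone increasing in divisibility and bounded above by $\vol_\rv(\chi)$. Setting $L \coloneqq \lim_d \vol_\rv(\chi_d)$, the task reduces to the reverse inequality $L \ge \vol_\rv(\chi)$.

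Next I would reduce to polynomial weights. By Stone--Weierstrass, choose polynomials $\rv_\varepsilon$ on the compact polytope $P$ with $\|\rv - \rv_\varepsilon\|_{L^\infty(P)} \to 0$. The linear bound $|\chi| \le Cm$ on $R_m$ is preserved by the quotient construction defining $\chi_d$, so $\DHM_\rv(\chi_d)$ and $\DHM_\rv(\chi)$ are all supported in a common compact interval independent of $d$. Hence
\begin{equation*}
\bigl|\vol_\rv(\chi_d) - \vol_{\rv_\varepsilon}(\chi_d)\bigr| + \bigl|\vol_\rv(\chi) - \vol_{\rv_\varepsilon}(\chi)\bigr| \;\le\; C\,\|\rv - \rv_\varepsilon\|_{L^\infty(P)},
\end{equation*}
and it suffices to prove the convergence for each monomial weight $\rv = \underline{y}^{\vec{k}}$. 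For such a monomial, the bundle $(X^{[\vec{k}]}, L^{[\vec{k}]})$ recalled at the start of Section~\ref{Weighted non-Archimedean Monge-Ampere measure, weighted non-Archimedean functionals and geodesic rays} identifies $\vol_{\underline{y}^{\vec{k}}}(\chi)$, up to an explicit combinatorial constant, with an ordinary unweighted volume of the norm on $R(X^{[\vec{k}]}, L^{[\vec{k}]})$ induced by $\chi$. Since the canonical approximation commutes with this base change, the weighted statement reduces to the unweighted continuity of volume under canonical approximation, which is obtained by upgrading the weak convergence $\DHM(\chi_d) \to \DHM(\chi)$ to convergence of first moments via the uniform compact support, as in \cite{BJ24}.

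The main obstacle is the compatibility step in the polynomial case, namely verifying that canonical approximation is intertwined with the base change to $X^{[\vec{k}]}$, so that $(\chi_d)^{[\vec{k}]}$ coincides, up to reindexing, with the canonical approximation of $\chi^{[\vec{k}]}$. This is essentially a formal statement about gradings and quotient norms under the surjection $S^m(R_d) \to R_{md}$ lifted to the bundle, but it must be tracked carefully so that the unweighted convergence applied on $X^{[\vec{k}]}$ genuinely yields the weighted convergence on $X$. Once this compatibility is in place, the remaining ingredients are standard consequences of the uniform linear bound on $\chi$.
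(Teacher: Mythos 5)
Your overall aim -- reducing the weighted statement to the unweighted convergence $\vol(\chi_d)\to\vol(\chi)$ from \cite{BJ24} -- is the same as the paper's, but the route you take through the fibre bundles $X^{[\vec k]}$ leaves the decisive step unproved. The claim that the canonical approximation commutes with the base change, i.e.\ that the norm induced by $\chi_d$ on $R(X^{[\vec k]},L^{[\vec k]})$ agrees (up to something volume-negligible) with the canonical approximation of the induced norm $\chi^{[\vec k]}$, is exactly where the work lies, and it is not purely formal: one has to use $T$-invariance of $\chi$ to keep the weight decomposition, compare quotient norms upstairs and downstairs through the surjections $S^m(R_d)\to R_{md}$ tensored with the multiplication maps on the $\PP^{k_i}$-factors, and also arrange the normalizations that make the identification $\vol_{\underline y^{\vec k}}(\chi)=c_{\vec k}\,\vol(\chi^{[\vec k]})$ legitimate (weights $\eta\geq 0$, i.e.\ $P$ in the positive orthant after twisting, and semiampleness of $L^{[\vec k]}$ so that the norm-volume theory applies there). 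Since you only flag this as the ``main obstacle'' without carrying it out, the proposal as written has a gap, even though I expect the statement itself is true.

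The paper avoids all of this (including your Stone--Weierstrass step) by staying on $X$: both $\chi$ and $\chi_d$ have concave transforms on the same Okounkov body, with $G_{\cF_{\chi_d}}\leq G_{\cF_\chi}$ on $\mathbb{O}$, so the nonnegativity of the integrand together with the bound $\rv\leq C_\rv$ on the compact polytope gives $|\vol_\rv(\chi)-\vol_\rv(\chi_d)|\leq C_\rv\,(\vol(\chi)-\vol(\chi_d))$, and then \cite[Theorem 3.18]{BJ24} finishes the proof in one line. If you want to keep your bundle approach, the missing commutation lemma must be proved; otherwise, note that the comparison of weighted and unweighted volumes via the concave transform makes the reduction to monomial weights unnecessary.
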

\begin{proof}
    By definition of canonical approximation and $G_{\cF_\chi}$, it is easy to check that $G_{\cF_{\chi_d}}\leq G_{\cF_{\chi}}$ on $\mathbb{O}$. Then, by \cite[Theorem 3.18]{BJ24},
    \begin{align*}
        |\vol_\rv(\chi)-\vol_\rv(\chi_d)|
        =&\frac{1}{\vol_\rv(\mathbb{O})n!}\int_P\left(\int_{\mathbb{O}_{\underline{y}}}(G_{\mcF_\chi}-G_{\mcF_{\chi_d}})\,dy\right)\rv(\underline{y})\,d \underline{y} \\
        \leq& C_\rv\frac{1}{\vol_\rv(\mathbb{O})n!}\int_{\mathbb O}(G_{\mcF_\chi}-G_{\mcF_{\chi_d}})\, dy
        =C_\rv(\vol(\chi)-\vol(\chi_d))\rightarrow0.
    \end{align*}
\end{proof}
\begin{lemma}
    \label{vol_E_v}
    Let $\rv= \sum_{\vec{k}}a_{\vec{k}} \underline{y}^{\vec{k}}$ be a polynomial.
    Let $\chi\in \mathcal{N}_\RR$ be a norm of finite type. Then
    \begin{align}
        \vol_\rv(\chi) &= \bfE^\NA_\rv({\rm FS}(\chi)), 
        \label{vol_v()=E_v^NA(FS)} 
    \end{align}
\end{lemma}
\begin{proof}
     Let $(\mcX,\mcL)$ be the $T_\CC$-equivariant corresponding test configuration of $\chi$ under the Rees construction. Without the loss of generality, we may assume $\lambda_{\min}(\mcL)=0$.

    Recall that, $\pi_{\vec{k}}: X^{[\vec{k}]}\rightarrow \PP^{[\vec{k}]}$, and similarly, $\pi_{[\vec{k}]}:(\mcX^{[\vec{k}]},m\mcL^{[\vec{k}]})\rightarrow \PP^{[\vec{k}]}$.
    As in the proof of \cite[Theorem 3.1]{BHJ17}, 
    \begin{align}
        (\pi_{[\vec{k}]})_*\oo_{\mcX^{[\vec{k}]}}(m\mcL^{[\vec{k}]}) = \bigoplus_{\eta_1,\ldots,\eta_r\in\ZZ} H^0(\mcX,m\mcL)_{\eta_1,\ldots,\eta_r}\otimes\oo_{\PP^{k_1}}(\eta_1)\otimes\ldots\otimes\oo_{\PP^{k_r}}(\eta_r).
    \end{align}
    Consider the basis $\{\tilde{s}_{i,j_1,\ldots,j_r}^{[\vec{k}]}\}$ of $H^0(\mcX^{[\vec{k}]}, \mcL^{[\vec{k}]})$:
    \begin{align*}
        H^0(\mcX^{[\vec{k}]}, \mcL^{[\vec{k}]}) \ni \tilde{s}_{i,j_1,\ldots,j_r}^{[\vec{k}]} = s_{i,\eta_1,\ldots,\eta_r}\cdot\sigma_{j_1}\ldots\sigma_{j_r},
    \end{align*}
    where $s_{i,\eta_1,\ldots,\eta_r}\in H^0(\mcX,m\mcL)_{\eta_1,\ldots,\eta_r}$ and $\sigma_{j_i}\in H^0(\PP^{k_i},\oo(\eta_i))$.
    Set 
    \begin{align*}
        c^{[\vec{k}]}_{i,\eta_1,\ldots,\eta_r} 
        = \prod_{1\leq i\leq r}h^0(\PP^{k_i},\oo(\eta_i)) 
        = \binom{k_1+\eta_1}{\eta_1}\ldots\binom{k_r+\eta_r}{\eta_r}.
    \end{align*}   
    Then if $(\frac{\eta_1}{m},\ldots,\frac{\eta_r}{m})\xrightarrow{m\to\infty}(y_1,\ldots,y_r)\in \RR^r$, we have
    $\frac{c^{[\vec{k}]}_{i,\eta_1,\ldots,\eta_r}}{m^k}$ converges to $\frac{y_1^{k_1}\ldots y_r^{k_r}}{k_1!\ldots k_r!}$.
    Let
    \begin{align}
        S_\rv(m\mcL) &:= \sum_{\vec{k}} \frac{a_{\vec{k}}\cdot k_1!\cdots k_r!}{m^{n+k+1}}\cdot  \Big(\sum_{\eta_1,\ldots,\eta_r\in\ZZ} \sum_{i}  c^{[\vec{k}]}_{i,\eta_1,\ldots,\eta_r} \Big)
    \end{align}
    Since
    \begin{align*}
        \frac{(\mcL^{[\vec{k}]})^{n+k+1}}{(n+k+1)!} = \frac{1}{m^{n+k+1}}  \Big(\sum_{\eta_1,\ldots,\eta_r\in\ZZ} \sum_{i}  c^{[\vec{k}]}_{i,\eta_1,\ldots,\eta_r} \Big) + O(\frac{1}{m}),
    \end{align*}
    we have
    \begin{align}
            \bfE_\rv^\NA(\phi_\mcL) &=  S_\rv(m\mcL) + O(\frac{1}{m}) \nonumber \\
            &= \frac{1}{m^{n+1}}\sum_{\vec{k}} a_{\vec{k}}\cdot k_1!\cdots k_r!\cdot \sum_{\eta_1,\ldots,\eta_r\in\ZZ} \sum_{i}\Big(\frac{1}{m^k} \cdot c^{[\vec{k}]}_{i,\eta_1,\ldots,\eta_r} \Big) + O(\frac{1}{m}) \nonumber \\
            &= \frac{1}{m^{n+1}}\sum_{P\cap (\frac{1}{m}\ZZ)^r}\sum_i\sum_{\vec{k}} \Big(a_{\vec{k}} y_1^{k_1}\ldots y_r^{k_r}\Big) + O(\frac{1}{m}) \nonumber \\
            &= \frac{1}{m^{n+1}}\sum_{s_i\in H^0(\mcX,m\mcL)} \rv(y_1(s_i),\ldots,y_r(s_i)) + O(\frac{1}{m}) \nonumber \\
            &= \frac{1}{m^{n+1}} \sum_{P\cap (\frac{1}{m}\ZZ)^r} \sum_{s_i\in H^0(\mcX,m\mcL)_{\eta_1,\ldots,\eta_r}} \rv(y_1,\ldots,y_r) +O(\frac{1}{m}) \nonumber \\
            &= \frac{1}{\vol(\mathbb{O})n!}\int_P\Big( \int_{\mathbb{O}_{\underline{y}}} G_\mcF \; dy \Big) \; \rv(\underline{y})\; d \underline{y} + O(\frac{1}{m}). \nonumber\\
            &= \vol_\rv(\chi)+O(\frac{1}{m}).  \label{S_v=vol_v}
    \end{align}
    As $m\rightarrow\infty$, we conclude \eqref{vol_v()=E_v^NA(FS)}.
\end{proof}

\subsection{Weighted non-Archimedean functional for smooth weight}
For the smooth function $\rv$, we use Stone–Weierstrass approximation $\rv = \lim \rv_i$, where each $\rv_i$ is a polynomial. Then
\begin{lemma} 
\label{lem:E_v_i_convergence}
We can define $\MA_\rv^\NA(\phi_\mcL)$, $\bfE^\NA_\rv (\phi_\mcL)$ and $(\bfE^\mathcal{Q})^\NA_\rv (\phi_\mcL)$ by the following convergent sequences
    \begin{align}
        \label{MA_v_i_convergence}
          \MA_\rv^\NA(\phi_\mcL)&=\lim_i\MA^\NA_{\rv_i}(\phi_\mcL),
    \end{align}
    \begin{align}
        \label{E_v_i_convergence}
             \bfE^\NA_\rv (\phi_\mcL) &= \lim_i \bfE^\NA_{\rv_i} (\phi_\mcL), 
    \end{align}
    \begin{align}
         \label{EQ_v_i_convergence}
            (\bfE^\mathcal{Q})^\NA_\rv (\phi_\mcL) &= \lim_i (\bfE^\mathcal{Q})^\NA_{\rv_i} (\phi_\mcL).
    \end{align}

\end{lemma}
\begin{proof}
   Let $\{ {\rm v}_i \}_{i\in\NN}$ be a sequence of positive polynomials that converge to $\rv$ smoothly on $\bar{P}$.
    To prove \eqref{MA_v_i_convergence}, it suffices to show that for any model function $\phi_E\geq 0$,
    \begin{align*}
         \int_{X^\an} \phi_E\; \MA_\rv^\NA(\phi_\mcL)&= \lim_{i\to\infty} \int_{X^\an} \phi_E \; \MA^\NA_{\rv_i}(\phi_\mcL).
    \end{align*}
    For any $\epsilon>0$, there exists $N>0$ such that for any $i,j>N$, $|\rv_i-\rv_j|<\epsilon$.
    By line $3$ and line $7$ in formula \eqref{vol_v_upbd}, we have
    \begin{align*}
         \int_{X^\an} \phi_E \; \MA^\NA_{\rv_i}(\phi_\mcL) &= \lim_{m\to\infty}\left(  \frac{1}{m^{n}}\sum_{s_k\in H^0(E,m\mcL|_E)} \rv_i(y_1(s_k),\ldots,y_r(s_k)) \right).
    \end{align*}
    This implies that 
    \begin{align*}
        \int_{X^\an} \phi_E \; \MA^\NA_{\rv_j-\epsilon}(\phi_\mcL) \leq \int_{X^\an} \phi_E \; \MA^\NA_{\rv_i}(\phi_\mcL) \leq \int_{X^\an} \phi_E \; \MA^\NA_{\rv_j+\epsilon}(\phi_\mcL).
    \end{align*}
    Then by formula \eqref{wMA},
    \begin{align*}
         &\int_{X^\an} \phi_E \; \MA^\NA_{\rv_j}(\phi_\mcL)-\epsilon\int_{X^\an}\phi_E \; \MA^\NA(\phi_\mcL)  \\
        &\leq \int_{X^\an} \phi_E \; \MA^\NA_{\rv_i}(\phi_\mcL) \leq \int_{X^\an} \phi_E \; \MA^\NA_{\rv_j}(\phi_\mcL)+\epsilon\int_{X^\an}\phi_E\; \MA^\NA(\phi_\mcL).
    \end{align*}
    Therefore, $\int_{X^\an} \phi_E \; \MA^\NA_{\rv_i}(\phi_\mcL)$ is a Cauchy sequence, its limit defines $\int_{X^\an} \phi_E \; \MA^\NA_{\rv}(\phi_\mcL)$.
    
    Formula \eqref{E_v_i_convergence} follows \cite[Lemma 5.5]{HL23}. Alternatively, it can be proved by using Okounkov body.
    By line $4$ in the following \eqref{S_v=vol_v}, we have
    \begin{align*}
        \bfE_{\rv_i}^\NA(\phi_\mcL)
        = \lim_{m\to\infty}\left(  \frac{1}{m^{n}}\sum_{s_k\in H^0(E,m\mcL)} \rv_i(y_1(s_k),\ldots,y_r(s_k)) \right).
    \end{align*}
    Applying the above argument, we obtain that $\bfE_{\rv_i}^\NA(\phi_\mcL)$ is a Cauchy sequence.

    For the proof of formula \eqref{EQ_v_i_convergence},  when $\rv_i$ is a polynomial, by using \eqref{intersection_formula_bfE^Q} and \eqref{intersection_formula_bfE^Q_polynomial}, we may express $(\bfE^\mathcal{Q})^\NA_\rv (\phi_\mcL)$ as a finite sum of intersections. Therefore by applying the argument of \cite[Theorem 3.6]{BHJ19} to each intersection, we have the slope formula for $(\bfE^\mathcal{Q})^\NA_\rv (\phi_\mcL)$
    \begin{align*}
         (\bfE^\mathcal{Q})^\NA_{\rv_i} (\phi_\mcL) 
         &= \lim_{s\to\infty}\frac{\bfE^Q_{\rv_i} (\Phi_\mcL(s))}{s}\\
        &=\lim_{s\to\infty}\frac{1}{s} \frac{1}{\mathbb{V}_{\rv_i}} \int_0^1 \int_X (\varphi(s)-\varphi_0) \Big( \rv_i(m_{\varphi_t(s)}) \alpha_s\wedge \frac{(\ddc \varphi_t(s))^{n-1}}{(n-1)!} \\
        &\qquad\qquad\qquad\qquad\qquad\qquad\qquad\quad+ \langle  d\rv_i(m_{\varphi_t(s)}),m_{\alpha_s} \rangle \frac{(\ddc \varphi_t(s))^n}{n!} \Big)d t,
    \end{align*}
    where $\alpha_s$ is a smooth metric in the class of $c_1(Q_s)$.
    
    Let $Y$ be a resolution of $X$, $\mcY$ be a smooth simple normal crossing model, which is a resolution of $\mcX$.
    Over the model $\mcY$, $\rv_i(m_{\varphi_t(s)}) \alpha_s\wedge \frac{(\ddc \varphi_t(s))^{n-1}}{(n-1)!}$ and $\langle  d\rv_i(m_{\varphi_t(s)}),m_{\alpha_s} \rangle\frac{(\ddc \varphi_t(s))^n}{n!}$ converge weakly to signed measures on $\mcY_0$. Meanwhile, $\frac{1}{s}(\varphi(s)-\varphi_0)$  extends to a continuous function on $\mcY^{\bf hyb}$. Then by \cite[Theorem B]{PS22} (see also \cite[Proposition 5.15]{BJ25}), we have
    \begin{align}
        \label{bfE^Q_integral_over_central_fiber}
         & (\bfE^\mathcal{Q})^\NA_{\rv_i} (\phi_\mcL) = \lim_{s\to\infty}\frac{\bfE^Q_{\rv_i} (\Phi_\mcL(s))}{s}\\
        &= \frac{1}{\mathbb{V}_{\rv_i}} \int_0^1 \int_{\mcY_0} (\varphi-\varphi_0) \Big( \rv_i(m_{\varphi_t}) \alpha_s\wedge \frac{(\ddc \varphi_t)^{n-1}}{(n-1)!} + \langle  d\rv_i(m_{\varphi_t}),m_{\alpha_s} \rangle \frac{(\ddc \varphi_t)^n}{n!} \Big)d t \nonumber.
    \end{align}
    As $\rv_i$ converges to $\rv$, the right-hand-side of formula \eqref{bfE^Q_integral_over_central_fiber} converges. This implies the formula \eqref{EQ_v_i_convergence}.
\end{proof}
\begin{remark}
    We are grateful to S\'ebastien Boucksom and Mattias  Jonsson for pointing out a gap in the proof of \eqref{EQ_v_i_convergence} in our previous version.
\end{remark}

For any $\phi\in \PSH_T(X^\an,L^\an)$, there exists a decreasing sequence $\phi_j\in \mcH_T(X^\an,L^\an)$ that converges to $\phi$.
The weighted non-Archimedean Monge-Amp\`ere  measure $\MA^\NA_\rv(\phi)$ is defined as a weak limit
$\MA^\NA_\rv(\phi_j)$.
In addition, one defines
\begin{align*}
    \bfE_\rv^\NA(\phi)
    \coloneqq\inf\{\bfE_\rv^\NA(\psi):\psi\in\mcH_T(X^\an,L^\an) \text{ and }\psi\geq\phi\}.
\end{align*}
This endows $\bfE_\rv^\NA$ a unique u.s.c and monotone increasing extension over $\PSH_T(X^\an,L^\an)$.
\begin{definition}
    We define the weighted finite energy space as
    \begin{align}
        \mcE^{1}_\rv(X^\an,L^\an) = \{ \phi\in \PSH_T(X^\an,L^\an): \; \bfE^\NA_\rv(\phi)>-\infty \}.
    \end{align}
    A strong topology of $\mcE^{1}_\rv(X^\an,L^\an)$ is induced by $\bfE^\NA_\rv$. 
\end{definition}

For $\phi\in \mcE_\rv^1(X^\an,L^\an)$, let $\{\phi_i\}$ be a decreasing sequence of Fubini-Study functions that converges to $\phi$. Since $\bfE^\NA_\rv$ is upper semi-continuous, it converges along a decreasing sequence. Using formula \eqref{bfE^Q_integral_over_central_fiber}, by fixing a positive constant $C$, for each polynomial $\rv_i$, $(\bfE^\mathcal{Q})^\NA_{\rv_i}+ C\cdot \bfE^\NA_{\rv_i}$ is also upper semi-continuous. Therefore we have the following lemma.
\begin{lemma}
    \begin{align}
    \bfE^\NA_\rv (\phi) &= \lim_i \bfE^\NA_\rv (\phi_i), \\
    (\bfE^\mathcal{Q})^\NA_\rv (\phi) &= \lim_i (\bfE^\mathcal{Q})^\NA_\rv (\phi_i).
\end{align}
\end{lemma}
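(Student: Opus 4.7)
The plan is to leverage the monotone increasing u.s.c.\ extension defined just above for $\bfE_\rv^\NA$, reducing the claim to continuity of the intersection-number expression on $\mcH_T$ along decreasing sequences. The first, easy direction is immediate: since $\phi_i \downarrow \phi$ with $\phi_i \in \mcH_T$ and $\phi_i \geq \phi$, monotonicity of the extension yields $\bfE_\rv^\NA(\phi_i) \geq \bfE_\rv^\NA(\phi)$ for every $i$, and $\{\bfE_\rv^\NA(\phi_i)\}$ is itself a decreasing sequence bounded below by $\bfE_\rv^\NA(\phi) > -\infty$. Hence $L \coloneqq \lim_i \bfE_\rv^\NA(\phi_i) = \inf_i \bfE_\rv^\NA(\phi_i)$ exists and satisfies $L \geq \bfE_\rv^\NA(\phi)$.

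For the reverse inequality, I would fix any $\psi \in \mcH_T$ with $\psi \geq \phi$, set $\psi_i \coloneqq \max(\psi, \phi_i) \in \mcH_T$, and observe that $\psi_i \geq \phi_i$, $\psi_i \geq \psi$, and $\psi_i \downarrow \max(\psi, \phi) = \psi$. Granting for the moment that $\bfE_\rv^\NA$ restricted to $\mcH_T$ is continuous along decreasing sequences, we obtain $\bfE_\rv^\NA(\psi_i) \to \bfE_\rv^\NA(\psi)$; combined with the monotonicity bound $\bfE_\rv^\NA(\phi_i) \leq \bfE_\rv^\NA(\psi_i)$, this gives $L \leq \bfE_\rv^\NA(\psi)$. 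Taking the infimum over such $\psi$ in the defining formula for the extension then yields $L \leq \bfE_\rv^\NA(\phi)$, completing the first identity.

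The main obstacle is the continuity of $\bfE_\rv^\NA$ on $\mcH_T$ along decreasing sequences. When $\rv = \sum_{\vec k} a_{\vec k} \underline{y}^{\vec k}$ is a polynomial, this reduces to the continuity of the standard non-Archimedean Monge--Amp\`ere energy on each associated fiber bundle $(X^{[\vec k]}, L^{[\vec k]})$, which is a consequence of the envelope property established in Theorem~\ref{env_thm} (inherited by the fiber bundles) combined with the variational arguments of \cite{BJ22}. The general smooth-weight case then follows from the uniform polynomial approximation $\rv = \lim_m \rv_m$, the preceding polynomial case applied termwise, and the uniform convergence of the coefficientwise intersection-number approximations guaranteed by \cite[Lemma~5.5]{HL23}.

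Finally, the second identity for $(\bfE^\mathcal{Q})^\NA_\rv$ follows verbatim: its extension to $\mcE^1_\rv(X^\an,L^\an)$ is of the same monotone u.s.c.\ type, and the intersection-number formula depends linearly on the extra factor $\mathcal{Q}$, so continuity under decreasing sequences $\phi_i$ is inherited directly from the pure-$\mcL$ case treated above.
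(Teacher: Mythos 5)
Your proof is correct, and at its core it is the same argument the paper leaves implicit: the paper offers no proof beyond the remark that the monotone, u.s.c.\ inf-extension of $\bfE^\NA_\rv$ converges along decreasing sequences, and your two inequalities (monotonicity of the extension for one direction, comparison against an arbitrary competitor $\psi\in\mcH_T$ with $\psi\geq\phi$ for the other) are exactly how that remark gets made precise. Where you diverge is in how you justify the one nontrivial ingredient, continuity of $\bfE^\NA_\rv$ along decreasing sequences inside $\mcH_T$: you route this through the fiber bundles $(X^{[\vec k]},L^{[\vec k]})$, the envelope property of Theorem~\ref{env_thm}, and a polynomial approximation of $\rv$. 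That detour is unnecessary and slightly misaimed — the envelope property is what one needs for \emph{increasing} limits (cf.\ the use of \cite[Proposition~8.3~(v)]{BJ22} elsewhere in the paper), not for decreasing ones. Since $X^\an$ is compact and your $\psi_i=\max(\psi,\phi_i)$ are continuous and decrease pointwise to the continuous function $\psi$, Dini's lemma gives uniform convergence, and $\bfE^\NA_\rv$ is $\sup$-norm Lipschitz on $\mcH_T$ (from monotonicity together with $\bfE^\NA_\rv(\psi+c)=\bfE^\NA_\rv(\psi)+c\,\VV_\rv$), so $\bfE^\NA_\rv(\psi_i)\to\bfE^\NA_\rv(\psi)$ for every smooth weight at once, with no fiber-bundle bookkeeping or appeal to Theorem~\ref{env_thm}; equivalently, Dini applied to the statement that $\phi_i\leq\psi+\epsilon$ for $i$ large makes the max-trick itself dispensable. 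One caveat for your final paragraph (which the paper also glosses over): the ``verbatim'' transfer to $(\bfE^{\mcQ})^\NA_\rv$ uses monotonicity in $\phi$, which requires some positivity of $\mcQ$; for a general $\mcQ$ one should first decompose it as a difference of semiample (or ample) classes and argue termwise, or else restrict to the semipositive case as the paper implicitly does later when it assumes $0\leq\ddc\psi_Q\leq C\omega$.
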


\begin{corollary}
\label{cor:vol_v_E_v_generalcase}
    Let $\chi\in \mathcal{N}_\RR$, $\phi\in L^\infty(X^\an)$. Then
    \begin{align}
        \vol_\rv(\chi) &= \bfE^\NA_\rv({\rm FS}(\chi)), 
        \label{vol_v()=E_v^NA(FS)_any_chi} \\
        \vol_\rv({\rm IN}(\phi)) &= \bfE^\NA_\rv(Q_L(\phi)),
    \end{align}
    where $\bfE^\NA_\rv(\cdot)$ denotes the extended weighted Monge--Amp\`ere energy of an arbitrary function $\phi:X^\an\rightarrow\RR$, defined by
    \begin{align} 
    \label{eq:extended_E_v^NA}
        \bfE_\rv^\NA(\phi)
        \coloneqq \sup\{\bfE_\rv^\NA(\phi):\phi\in\mcH_T(X^\an,L^\an), \phi\leq \phi\}.
    \end{align}
\end{corollary}
\begin{proof}
By Proposition~\ref{FS-IN=Q_L}, we only need to show \eqref{vol_v()=E_v^NA(FS)_any_chi}.
    We follow the argument in the proof of \cite[Theorem 5.1]{BJ24}.

    We may assume that $\chi\in\mcN_\ZZ(X,L)$. Then the canonical approximation $\chi_d\in\mcT_\ZZ(X,L)$ and $\FS(\chi_d)=\FS_d(\chi)$ increases pointwise to $\FS(\chi)$.
    
    A Dini-type argument (\cite[Proposition~8.3~(v)]{BJ22}) yields that $\bfE_\rv^\NA(\cdot)$ is continuous along increasing sequences of bounded-below lsc functions.
    Hence $\bfE_\rv^\NA(\FS(\chi_d))\rightarrow\bfE_\rv^\NA(\FS(\chi))$. Together with Lemma~\ref{vol_converge_chi_d}, we reduce to the case $\chi\in\mcT_\ZZ(X,L)$.

    By Lemma~\ref{vol_E_v}, one has
    \begin{align*}
        \vol_{\rv_i}(\chi)=\bfE_{\rv_i}^\NA(\FS(\chi)).
    \end{align*}
    By \eqref{eq:vol_v(chi)=integral} and Lemma~\ref{lem:E_v_i_convergence}, taking $i\rightarrow$ implies \eqref{vol_v()=E_v^NA(FS)_any_chi}.
\end{proof}

For any $\phi\in\mcE_\rv^1(X^\an,L^\an)$, one defines the weighted non-Archimedean entropy as follows
\begin{align}
    \bfH^\NA_\rv(\phi) \coloneqq \int_{X^\an} A_X(x) \; \MA^\NA_\rv(\phi).
\end{align}
Then the weighted non-Archimedean Mabuchi functional is defined as
\begin{align}
    \bfM_{\rv,\rw}^\NA(\phi) &
         = \bfH_\rv^\NA(\phi) + \bfR_\rv^\NA(\phi) 
           +c_{(\rv,\rw)}(L)  \bfE_{\rw}^\NA(\phi).
\end{align}
As proved in  \cite[Proposition~5.8, Proposition~B.8]{HL23}, we have the following slope formula for test configurations.
\begin{lemma}
    \label{lemma_slope_formula}
    Let $\phi\in\mcH_T(X^\an,L^\an)$, $\Phi$ be a $T$-invariant geodesic ray with $\Phi^\NA = \phi$ $($see \textup{Definition~\ref{Phi_NA_div}}$)$. Then
    \begin{align}
        \bfF^{\prime\infty}(\Phi) = \bfF^\NA(\phi),
    \end{align}
    where $\bfF$ can be $\bfE_\rv, \bfE^\mathcal{Q}_\rv, \bfJ_\rv,\bfH_\rv$.
\end{lemma}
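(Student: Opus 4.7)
The plan is to follow the $T$-equivariant fiber-bundle strategy of \cite[Propositions~5.8, B.8]{HL23} and transport it to the klt setting by means of the envelope property established in Theorem~\ref{env_thm}. Fix a $T_\CC$-equivariant ample test configuration $(\mcX,\mcL)$ representing $\phi\in\mcH_T(X^\an,L^\an)$; then $\Phi$ is the Phong--Sturm geodesic ray attached to $(\mcX,\mcL)$ with $\Phi^\NA = \phi$. First, uniformly approximate the smooth weights on the moment polytope $P$ by polynomials $\rv_i=\sum_{\vec k}a^{(i)}_{\vec k}\underline{y}^{\vec k}$, reducing the statement for each of $\bfE_\rv,\bfE^{\mathcal Q}_\rv,\bfJ_\rv,\bfH_\rv$ to the monomial case $\rv=\underline{y}^{\vec k}$.

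For each $\vec k$, pass to the fibered variety $(X^{[\vec k]}, L^{[\vec k]}) = (X,L)\times_T\bS^{[\vec k]}$ of diagram~\eqref{projections}. By the identities of \cite[Section~5]{HL23}, the $\underline{y}^{\vec k}$-weighted Archimedean (resp.\ non-Archimedean) functional on $(X,L)$ equals, up to the combinatorial factor $k_1!\cdots k_r!/(n+k)!$, the corresponding \emph{unweighted} functional on $(X^{[\vec k]},L^{[\vec k]})$; moreover $\Phi$ and $(\mcX,\mcL)$ induce a geodesic ray $\Phi^{[\vec k]}$ and a test configuration $(\mcX^{[\vec k]},\mcL^{[\vec k]})$ on the bundle satisfying $\Phi^{[\vec k],\NA}=\phi^{[\vec k]}$. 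On this bundle I would then apply the classical unweighted slope formulas: for $\bfE,\bfE^{\mathcal Q},\bfJ$ these are direct intersection-theoretic identities on the compactified test configuration and hold in the klt setting without modification, while for $\bfH$ I would invoke the entropy slope formula of \cite{BHJ17}. The proof of the latter on the klt variety $X^{[\vec k]}$ requires the envelope property; this is available by Theorem~\ref{env_thm} applied to $X^{[\vec k]}$, which inherits klt singularities from $X$ since $\bS^{[\vec k]}$ is smooth and the construction is a fiber bundle.

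Summing the monomial formulas against the coefficients $a^{(i)}_{\vec k}$ produces the slope formula for the polynomial weight $\rv_i$, and letting $i\to\infty$ completes the proof. For $\bfE_\rv,\bfE^{\mathcal Q}_\rv,\bfJ_\rv$ both sides are continuous (in fact polynomial) in the coefficients $a_{\vec k}$ via integration against the Duistermaat--Heckman measure on the Okounkov body, so the passage to the limit is immediate from uniform convergence $\rv_i\to\rv$ on $P$. For $\bfH_\rv$ one combines the weak convergence $\MA^\NA_{\rv_i}(\phi)\to\MA^\NA_{\rv}(\phi)$ (cf.\ Section~\ref{w-NA-MA}) and lower semicontinuity of $A_X$ on the non-Archimedean side with the analogous Archimedean limit from Section~\ref{w-fun-pre}. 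The main obstacle is the klt extension of the entropy slope formula on $X^{[\vec k]}$: Theorem~\ref{env_thm} is precisely what makes the Boucksom--Jonsson pluripotential calculus (continuity of ${\rm P}_L(\varphi_\mcL)$, integration-by-parts, and convergence of $\MA^\NA$ along decreasing nets of model functions) available on the non-smooth total space, and without it the bundle-reduction step fails to deliver the entropy slope identity.
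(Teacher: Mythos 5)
Your overall strategy --- polynomial approximation of the weight followed by reduction to unweighted functionals on the fibration $(X^{[\vec k]},L^{[\vec k]})$ --- is the same one underlying the proof the paper relies on: the paper gives no argument of its own and simply invokes \cite[Propositions~5.8, B.8]{HL23}, whose mechanism is exactly this bundle reduction, and your use of Theorem~\ref{env_thm} to make the non-Archimedean calculus available on the klt total spaces is the right supplement. So the route is not genuinely different; the issue is whether your execution closes the two points where the reduction is delicate, and there it has a real gap.

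The gap is the entropy case. You invoke ``the entropy slope formula of \cite{BHJ17}'' on $X^{[\vec k]}$, but the asymptotic results of Boucksom--Hisamoto--Jonsson (\cite{BHJ17}, \cite{BHJ19}) give the slope of $\bfH$ along rays of \emph{smooth} $S^1$-invariant metrics compatible with the test configuration, not along the weak geodesic ray. For $\bfE$, $\bfE^{\mathcal{Q}}$ and $\bfJ$ this discrepancy is harmless: the geodesic and a smooth compatible ray are both locally bounded metrics on $\mcL$ over the total space, hence differ by $O(1)$ in sup-norm, and these energy-type functionals change by $O(1)$ under such perturbations, so the slopes agree. The entropy is not continuous under $O(1)$ perturbations, so this comparison does not transfer the smooth-ray formula to the geodesic; proving $\bfH^{\prime\infty}(\Phi)=\bfH^{\NA}(\phi)$ for the weak geodesic (and on a merely klt total space) is precisely the nontrivial content of \cite[Proposition~B.8]{HL23} and needs a separate argument, which your sketch does not supply. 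Relatedly, you assert without justification that the induced ray $\Phi^{[\vec k]}$ is a geodesic for $(X^{[\vec k]},L^{[\vec k]})$: since the metric on $L^{[\vec k]}$ twists the Fubini--Study factors by the time-dependent Hamiltonians $m_{\Phi(s)}$, the induced ray is a priori only a locally bounded psh ray compatible with $(\mcX^{[\vec k]},\mcL^{[\vec k]})$; again this suffices for the energy-type functionals by the $O(1)$ argument, but not for $\bfH$. A smaller point: in the limit $\rv_i\to\rv$ you appeal to weak convergence of $\MA^{\NA}_{\rv_i}(\phi)$ together with lower semicontinuity of $A_X$, which only yields an inequality; what actually saves this step is that for $\phi\in\mcH_T(X^\an,L^\an)$ these measures are atomic on a fixed finite set of divisorial valuations with coefficient-wise convergent masses, and on the Archimedean side the exchange of the limits $i\to\infty$ and $s\to\infty$ requires a bound on $\bfH_{\rv_i}(\Phi(s))-\bfH_{\rv}(\Phi(s))$ that is uniform in $s$, e.g.\ of the form $\epsilon_i\,O(s)$.
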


When $\phi = \phi_\mcL$ is induced by a test configuration $(\mcX,\mcL)$,  then
\begin{align}
\label{M_v,w^NA_(mcL)}
    \bfM_{\rv,\rw}^\NA(\phi_\mcL) = (K^{\log}_{\mcX/\PP^1}\cdot \mcL^n)_\rv + {c_{\rv,\rw}(L)}(\mcL^{n+1})_{\rw},
\end{align}
where we denote
\begin{align}
    \label{E_L_v}
        (\mcL^{n+1})_{\rw} &\coloneqq \lim \sum_{\vec{k}}b_{\vec{k}}\frac{k_1!\cdots k_r!}{(n+k+1)!} (\mcL^{[\vec{k}]})^{n+k+1},\\
        (K^{\log}_{\mcX/\PP^1}\cdot\mcL^{n})_\rv
        &\coloneqq \lim \sum_{\vec{k}}a_{\vec{k}}\frac{k_1!\cdots k_r!}{(n+k)!} (K^{\log}_{\mcX/\PP^1})^{[\vec{k}]}\cdot(\mcL^{[\vec{k}]})^{n+k}, 
\end{align}
where the existence of the right-hand-sides is by Lemma \ref{lem:E_v_i_convergence}.

When the test configuration $(X_{\PP^1},\mcL_\xi)$ is induced by a holomorphic vector field $\xi$, we define the weighted Futaki invariant as
\begin{align}
    \Fut_{\rv,\rw}(\xi) &
         = (K^{\log}_{X_{\PP^1}/\PP^1}\cdot \mcL_\xi^n)_\rv + {c_{\rv,\rw}(L)}(\mcL_\xi^{n+1})_{\rw}.
\end{align}

In \cite[Section~3]{Li22b}, the author defines a twist of non-Archimedean potential $\phi_\xi$ for any $\xi\in N_\RR$ and computes the difference $\bfE^\NA(\phi_\xi)-\bfE^\NA(\phi)$, which is the Futaki invariant of $\xi$.
By modifying the arguments in \cite[Lemma 2.24]{Li22a}, one has
\begin{align}
    \bfE^\NA_\rv(\phi_\xi) &= \bfE^\NA_\rv(\phi) + {\rm CW}_{L,\rv}(\xi), \\
     \bfM^\NA_{\rv,\rw}(\phi_\xi) &= \bfM^\NA_{\rv,\rw}(\phi) + \Fut_{\rv,\rw}(\xi),
\end{align}
where ${\rm CW}_{L,\rv}(\xi) = \frac{1}{\vol_\rv(\mathbb{O}) n!}\int_P \big(\int_{\mathbb{O}_{\underline{y}}} G_{\mathcal{F}_\xi} \; dy\big)\; \rv(\underline{y})\; d \underline{y}$.

For any $\phi,\psi\in\mcE_\rv^1(X^\an,L^\an)$, we set
\begin{align}
\bfJ_\rv^\NA(\phi) 
        \coloneqq& \VV_\rv\cdot \sup_{X^\an}(\phi-\phi_\triv) 
          - \bfE_\rv^\NA(\phi), \\
    \bfJ^\NA_{\rv,\TT}(\phi) \coloneqq& \inf_{\xi\in N_\RR} \bfJ^\NA_\rv(\phi_\xi),  \\
    \bfI_\rv^\NA(\phi,\psi)
    \coloneqq& \int_{X^\an}(\phi-\psi)(\MA_\rv^\NA(\psi)-\MA_\rv^\NA(\phi)),
\end{align}
where the Lie algebra of $\TT$ is identified with $N_\RR$.

By a direct modification of the proof of \cite[Lemma 6.4]{Li22a} to the weighted case, we have
\begin{lemma}
\label{reduced_J_v_convergence}
    Let $\phi_j,\phi\in\mcE_\rv^1(X^\an,L^\an)$ and $\phi_j\rightarrow\phi$ in the strong topology, then $\bfJ_{\rv,\TT}^\NA(\phi_j)\rightarrow\bfJ_{\rv,\TT}^\NA(\phi)$.
\end{lemma}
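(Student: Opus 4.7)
The plan is to mimic the reduced-$\bfJ$ continuity argument of \cite[Lemma~6.4]{Li22a}, treating the weight $\rv$ as a perturbation that does not destroy the convexity/coercivity structure of $\xi\mapsto\bfJ^\NA_\rv(\phi_\xi)$. First I would verify that the unreduced functional $\phi\mapsto\bfJ^\NA_\rv(\phi)$ is continuous with respect to the strong topology on $\mcE^1_\rv(X^\an,L^\an)$: the energy part $\bfE^\NA_\rv$ is continuous by the very definition of the strong topology, while the $\sup$-term $\sup_{X^\an}(\phi-\phi_\triv)$ is continuous along strong limits by the standard argument that strong convergence controls the evaluation at the trivial valuation (the sup is attained there up to normalization; cf.~the non-Archimedean analogue of Darvas' $d_1$-convergence).

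Next I would study the map $\xi\mapsto\bfJ^\NA_\rv(\phi_\xi)$ on $N_\RR$. Using the twisting formula
\begin{equation*}
\bfJ^\NA_\rv(\phi_\xi)=\VV_\rv\cdot\sup_{X^\an}(\phi_\xi-\phi_\triv)-\bfE^\NA_\rv(\phi)-{\rm CW}_{L,\rv}(\xi),
\end{equation*}
the dependence on $\phi$ decouples into an affine piece and a term that depends only on $\phi$ through $\sup_{X^\an}(\phi_\xi-\phi_\triv)$. Since $\rv>0$ on $P$ and $P$ is compact, the Cartan--Walter weighted term ${\rm CW}_{L,\rv}(\xi)$ differs from its unweighted counterpart by a uniformly bounded factor, so the function $\xi\mapsto\bfJ^\NA_\rv(\phi_\xi)$ is convex and coercive (proper) on $N_\RR$; in particular the infimum defining $\bfJ^\NA_{\rv,\TT}(\phi)$ is attained at some $\xi_\phi\in N_\RR$, and the weighted coercivity can be made uniform in $\phi$ restricted to strongly bounded subsets of $\mcE^1_\rv(X^\an,L^\an)$.

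Given the strongly convergent sequence $\phi_j\to\phi$, I would then let $\xi_j\in N_\RR$ be a minimizer for $\bfJ^\NA_{\rv,\TT}(\phi_j)$. Strong convergence implies that $\{\phi_j\}$ is strongly bounded, hence by the uniform coercivity established above the sequence $\{\xi_j\}$ stays in a compact subset of $N_\RR$. After passing to a subsequence, $\xi_j\to\xi^*$. Using the continuity of $\phi\mapsto\bfJ^\NA_\rv(\phi)$ from Step~1 together with the joint continuity in $(\phi,\xi)$ of the twisting (the sup term is continuous in $\xi$ since $\phi_\xi-\phi_\triv$ varies continuously in $\xi$, and $\mathrm{CW}_{L,\rv}$ is polynomial in $\xi$), we get $\bfJ^\NA_\rv((\phi_j)_{\xi_j})\to\bfJ^\NA_\rv(\phi_{\xi^*})\geq\bfJ^\NA_{\rv,\TT}(\phi)$. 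The reverse inequality $\limsup_j\bfJ^\NA_{\rv,\TT}(\phi_j)\leq\bfJ^\NA_{\rv,\TT}(\phi)$ follows by testing with any fixed minimizer $\xi_\phi$ for $\phi$ and using continuity in $j$ with $\xi$ frozen.

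The main obstacle I anticipate is the uniform coercivity in Step~2: in the unweighted setting one has the clean identity of \cite{Li22a} relating $\bfJ^\NA(\phi_\xi)$ to a sum of a piecewise linear convex function of $\xi$ and the original $\bfJ^\NA(\phi)$, and one must check that the weighted Cartan--Walter integral $\mathrm{CW}_{L,\rv}$ retains enough convexity/growth in $\xi$ to dominate the $\sup$-term uniformly. Here the positivity of $\rv$ on the compact polytope $P$ and the fact that the weighted Duistermaat--Heckman measure $\rv(\underline{y})\,d\underline{y}$ has the same support as the unweighted one are what save the argument, allowing one to sandwich the weighted quantities between constant multiples of their unweighted analogues and then invoke the original estimates of \cite{Li22a} verbatim.
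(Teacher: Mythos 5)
Your proposal is essentially the paper's own argument: the paper proves this lemma simply as ``a direct modification of the proof of \cite[Lemma~6.4]{Li22a} to the weighted case'', and the key input making that modification work is exactly the sandwiching you invoke at the end, namely the uniform comparability of the weighted functionals with their unweighted counterparts ($\bfJ^\NA_\rv\asymp\bfJ^\NA$, $\bfI^\NA_\rv\asymp\bfI^\NA$), which lets one run Li's compactness-of-minimizers and uniform-convergence-in-$\xi$ scheme verbatim. One minor correction: the uniform coercivity of $\xi\mapsto\bfJ^\NA_\rv(\phi_\xi)$ comes from the quasi-triangle inequality for $\bfI_\rv$ (comparing with $\bfI_\rv(\phi_\triv,\phi_{\triv,\xi})$, which grows with $|\xi|$), not from growth of the ${\rm CW}_{L,\rv}(\xi)$ term, but your final reduction to the unweighted estimates supplies the correct mechanism anyway.
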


\begin{definition}
    Let $L$ be an ample $\QQ$-line bundle on $X$. We say $(X,L)$ is $\bG$-\textit{uniformly weighted K-stable $($for models$)$} if there exists $\gamma>0$ such that for any $\bG$-equivariant test configuration $(\mcX,\mcL)$ of $(X,L)$ ($\bG$-equivariant model $(\mcX,\mcL)$ of $(X,L)$)
    \begin{align*}
        \bfM_{\rv,\rw}^\NA(\phi_\mcL)
        \geq \gamma\  \bfJ^\NA_{\rv,\TT}(\phi_\mcL).
    \end{align*}
\end{definition}

\begin{lemma}
\label{I_v_I}
    There exists a constant $C>1$, such that for any $\phi_1,\phi_2\in \mcH(X^\an,L^\an)$,
    \begin{align}
        \frac{1}{C} \bfI^\NA(\phi_1,\phi_2) \leq \bfI^\NA_\rv(\phi_1,\phi_2) \leq C\bfI^\NA(\phi_1,\phi_2),
    \end{align}
    \begin{align}
        \frac{1}{C}\rmd_1(\phi_1,\phi_2)\leq \rmd_{\rv,1}(\phi_1,\phi_2)\leq C \rmd_1(\phi_1,\phi_2).
    \end{align}
\end{lemma}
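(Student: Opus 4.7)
The plan is to express $\bfI^\NA_\rv(\phi_1,\phi_2)$ as an integral over the moment polytope $P$ with weight $\rv$, sharing a non-negative kernel (independent of $\rv$) with the corresponding representation of $\bfI^\NA(\phi_1,\phi_2)$. Since $P$ is compact and $\rv\in C^\infty(P,\RR_{>0})$, there exist $0<m:=\min_P\rv\leq\rv\leq\max_P\rv=:M$, and the desired two-sided inequality then follows by sandwiching inside the integral.

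First I would reduce to $\phi_i=\phi_{\mcL_i}$ for $T_\CC$-equivariant ample test configurations $(\mcX,\mcL_i)$ on a common model $\mcX$ by Proposition~\ref{equiv_Fubibi-Study_func}, with associated graded filtrations $\cF_1,\cF_2$ on $R=\bigoplus_m H^0(X,mL)$ and concave transforms $G_{\cF_1},G_{\cF_2}$ on the Okounkov body $\mathbb{O}$. Using the decomposition $\bfI^\NA_\rv(\phi_1,\phi_2)=\bfJ^\NA_{\rv,\phi_1}(\phi_2)+\bfJ^\NA_{\rv,\phi_2}(\phi_1)$ together with the Okounkov body representation for $\bfE^\NA_\rv$ supplied by Lemma~\ref{vol_E_v} and its mixed variant $(\bfE^\mcQ)^\NA_\rv$ with $\mcQ=\mcL_j$, I would obtain a representation
\begin{align*}
    \bfI^\NA_\rv(\phi_1,\phi_2) = \frac{1}{\vol_\rv(\mathbb{O})\,n!}\int_P \rv(\underline{y})\,H(\underline{y};G_{\cF_1},G_{\cF_2})\,d\underline{y},
\end{align*}
where the kernel $H(\underline{y};\cdot,\cdot)\geq 0$ depends only on the pair $(G_{\cF_1},G_{\cF_2})$ and on the Okounkov fiber $\mathbb{O}_{\underline{y}}$. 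Specializing $\rv\equiv 1$ recovers the analogous formula for $\bfI^\NA(\phi_1,\phi_2)$ with $\vol_\rv(\mathbb{O})$ replaced by $\vol(\mathbb{O})$, and the bounds $m\leq\rv\leq M$ give
\begin{align*}
    \tfrac{m}{M}\cdot\tfrac{\vol(\mathbb{O})}{\vol_\rv(\mathbb{O})}\,\bfI^\NA(\phi_1,\phi_2)
    \leq \bfI^\NA_\rv(\phi_1,\phi_2)
    \leq \tfrac{M}{m}\cdot\tfrac{\vol(\mathbb{O})}{\vol_\rv(\mathbb{O})}\,\bfI^\NA(\phi_1,\phi_2).
\end{align*}
The case of general smooth $\rv$ is then handled by uniform polynomial approximation on $P$.

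The main obstacle is producing the shared non-negative kernel $H$: Lemma~\ref{vol_E_v} handles $\bfE^\NA_\rv$, but $\bfI$ involves two distinct filtrations rather than one. The key observation is that for a monomial weight $\rv=\underline{y}^{\vec{k}}$, the functional $\bfI^\NA_\rv(\phi_1,\phi_2)$ coincides (up to the combinatorial factor $k_1!\cdots k_r!/(n+k)!$) with the unweighted $\bfI$ on the fiber bundle $X^{[\vec{k}]}$, where the polarization identity
\begin{align*}
    \bfI^{[\vec{k}]}(\phi_1,\phi_2) \propto \sum_{j=0}^{n+k-1}\int du\wedge d^c u\wedge(\ddc\phi_1^{[\vec{k}]})^{j}\wedge(\ddc\phi_2^{[\vec{k}]})^{n+k-1-j},
\end{align*}
with $u=\phi_1^{[\vec{k}]}-\phi_2^{[\vec{k}]}$, exhibits every summand as non-negative. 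Pushing this back through the Okounkov formalism extracts $H$ as a non-negative function of $(\underline{y};G_{\cF_1},G_{\cF_2})$ independent of $\vec{k}$, so that the weight $\underline{y}^{\vec{k}}$, and hence by linearity and polynomial approximation the weight $\rv(\underline{y})$, factors cleanly outside the positive integrand.
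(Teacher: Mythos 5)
Your overall strategy fails at the one point where the real work lies: the claimed representation
\begin{align*}
    \bfI^\NA_\rv(\phi_1,\phi_2) \;=\; \frac{1}{\vol_\rv(\mathbb{O})\,n!}\int_P \rv(\underline{y})\,H(\underline{y};G_{\mcF_1},G_{\mcF_2})\,d\underline{y}
\end{align*}
with a non-negative kernel $H$ depending only on the two concave transforms and independent of the weight. Nothing in Lemma~\ref{vol_E_v} (or in the Okounkov formalism generally) yields this: that lemma computes the \emph{pure} energy $\bfE^\NA_\rv(\phi)$ from the single concave transform $G_{\mcF_\chi}$, whereas $\bfI^\NA_\rv$ contains the mixed terms $\int_{X^\an}(\phi_1-\phi_2)\,\MA^\NA_\rv(\phi_i)$, i.e.\ mixed intersection numbers of type $(\mcL_2\cdot\mcL_1^{n})_\rv$, and such mixed quantities are \emph{not} determined by the pair $(G_{\mcF_1},G_{\mcF_2})$ fiberwise on $\mathbb{O}$. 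Your "key observation" only shows, via the polarization identity on $X^{[\vec{k}]}$, that $\bfI_{\underline{y}^{\vec{k}}}^\NA\geq 0$ for each monomial weight; it does not produce a $\vec{k}$-independent kernel, and "pushing this back through the Okounkov formalism" is precisely the unproved step. Moreover, the polynomial approximants $\rv_i=\sum_{\vec{k}}a_{\vec{k}}\underline{y}^{\vec{k}}$ have coefficients of both signs, so even termwise positivity or termwise two-sided bounds for monomial weights cannot be summed into a comparison for $\rv$; only the positivity of $\rv$ itself on $P$ can be used, and that requires $\rv$ to appear as an explicit weight against a non-negative integrand, which is exactly what you have not established for $\bfI_\rv^\NA$.

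The paper's proof routes around this by never trying to write $\bfI_\rv^\NA$ itself as such an integral: it uses that $\bfI^\NA$ is comparable to the metric $\rmd_1$ and that $\bfI^\NA_\rv$ is comparable to $\rmd_{\rv,1}$, and then compares $\rmd_{\rv,1}$ with $\rmd_1$ via formula \eqref{d_1_v_d_1}, where the non-negative, $\rv$-independent kernel $\int_{\mathbb{O}_{\underline{y}}}|G_{\mcF_1}-G_{\mcF_2}|\,dy$ genuinely exists because the $\rmd_1$-type metrics (unlike $\bfI$) are computed from the concave transforms. If you want to keep the spirit of your argument, you must insert this intermediate comparison (or otherwise prove the kernel identity you assert, which I do not believe holds); as written, the proposal has a genuine gap. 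A minor additional point: your normalization is off, since $\bfI^\NA_\rv$ as defined in the paper carries total mass $\VV_\rv$ and is not divided by $\vol_\rv(\mathbb{O})\,n!$, but this is cosmetic compared to the main issue.
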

\begin{proof}
    The first formula follows from \cite[(2.31)]{HL23} and the slope formula.   

    Meanwhile, since $\rv$ has positive upper and lower bounds, by formula \eqref{d_1_v_d_1}, $\rmd_{\rv,1}$ is comparable to $\rmd_1$. The second formula follows.
\end{proof}

Similarly, there also exists a constant $C>1$ such that for any $\phi\in\mcH(X^\an,L^\an)$, $\frac{1}{C}\bfJ^\NA(\phi)\leq \bfJ^\NA_\rv(\phi)\leq C\bfJ^\NA(\phi)$.
Therefore, we have the following corollary.

\begin{corollary}
\label{E_v_E1}
    The two spaces $\mcE^{1}(X^\an,L^\an)$, $ \mcE^{1}_\rv(X^\an,L^\an)$ are homeomorphic with each other with respect to strong topologies.
\end{corollary}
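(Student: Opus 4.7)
The plan is to reduce Corollary~\ref{E_v_E1} to the comparison inequalities already established in Lemma~\ref{I_v_I} and the remark following it. Both $\mcE^1(X^\an,L^\an)$ and $\mcE^1_\rv(X^\an,L^\an)$ sit inside $\PSH_T(X^\an,L^\an)$, and the identity map on the latter will be shown to restrict to a homeomorphism between the two finite-energy spaces.

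For set equality, note that for any $\phi\in\PSH_T(X^\an,L^\an)$ the function $\phi-\phi_\triv$ is bounded above on $X^\an$, so from $\bfJ^\NA_\rv(\phi)=\VV_\rv\sup_{X^\an}(\phi-\phi_\triv)-\bfE^\NA_\rv(\phi)$ one reads off $\bfE^\NA_\rv(\phi)>-\infty$ iff $\bfJ^\NA_\rv(\phi)<+\infty$, and analogously for $\rv\equiv 1$. The comparison $\tfrac{1}{C}\bfJ^\NA\leq\bfJ^\NA_\rv\leq C\bfJ^\NA$ on $\mcH_T$ (stated just after Lemma~\ref{I_v_I}) extends to $\PSH_T$ by taking a decreasing sequence $\phi_j\in\mcH_T$ with $\phi_j\downarrow\phi$: the convergence $\bfE^\NA_\rv(\phi_j)\downarrow\bfE^\NA_\rv(\phi)$ is built into the definition of the extension, while $\sup_{X^\an}(\phi_j-\phi_\triv)\downarrow\sup_{X^\an}(\phi-\phi_\triv)$ follows from compactness of $X^\an$ together with continuity of $\phi_\triv$ and upper-semicontinuity of $\phi$. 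Consequently $\bfJ^\NA\asymp\bfJ^\NA_\rv$ on all of $\PSH_T$, which gives $\mcE^1=\mcE^1_\rv$ as subsets.

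For the topological equivalence, recall that by the standard Boucksom--Jonsson theory (adapted to the weighted setting as in \cite{HL23}) the strong topology on $\mcE^1_\rv$ is metrized by the pseudo-distance $\bfI^\NA_\rv$, and similarly for the unweighted case. It therefore suffices to extend Lemma~\ref{I_v_I} from $\mcH$ to $\mcE^1=\mcE^1_\rv$. For $\phi_1,\phi_2\in\mcE^1$, choose decreasing approximants $\phi_i^{(m)}\in\mcH_T$ with $\phi_i^{(m)}\downarrow\phi_i$; by the defining weak convergence of $\MA^\NA_\rv$ and $\MA^\NA$ along such sequences, and a monotone-convergence argument applied to the defining integral of $\bfI^\NA_\rv$ and $\bfI^\NA$, both quantities $\bfI^\NA_\rv(\phi_1^{(m)},\phi_2^{(m)})$ and $\bfI^\NA(\phi_1^{(m)},\phi_2^{(m)})$ converge to their values at $(\phi_1,\phi_2)$. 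Passing the bilateral bound of Lemma~\ref{I_v_I} to the limit yields $\bfI^\NA\asymp\bfI^\NA_\rv$ on $\mcE^1$, and the identity map is therefore a homeomorphism.

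The expected main obstacle lies in the final limiting step: rigorously justifying the continuity of $\bfI^\NA_\rv$ under decreasing $\mcH_T$-approximations. Once one has weak convergence $\MA^\NA_\rv(\phi_j)\rightharpoonup\MA^\NA_\rv(\phi)$ for $\phi_j\downarrow\phi$, which is essentially the defining property of $\MA^\NA_\rv$ on $\PSH_T$, the usual Dini/integration-against-usc argument from \cite[Chapter 8]{BJ22} adapts directly to the weighted setting with no essentially new phenomenon, since the envelope property needed to make such limits behave well has already been established in Section~\ref{envelope}.
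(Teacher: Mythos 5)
Your proposal is correct and takes essentially the same route as the paper: the paper deduces the corollary directly from the two-sided comparisons $\frac1C\bfJ^\NA\leq\bfJ^\NA_\rv\leq C\bfJ^\NA$ and Lemma~\ref{I_v_I} for $\bfI^\NA$ versus $\bfI^\NA_\rv$, which is exactly what you use. Your extra step of extending these bounds from $\mcH_T$ to the finite-energy spaces via decreasing approximation (using continuity of $\bfE^\NA_\rv$ and of the mixed Monge--Amp\`ere pairings along decreasing sequences) simply fills in the details the paper leaves implicit.
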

\begin{lemma}
    \label{M_v_M}
    Let $\phi\in\mcH(X^\an,L^\an)$, $\mu = \MA^\NA_\rv(\phi)$. Then
    \begin{align}
        \bfE_\rv^{*,\NA}(\mu) = \bfI^\NA_\rv(\phi)-\bfJ^\NA_\rv(\phi),
    \end{align}
    where $\bfI_\rv^\NA(\phi)\coloneqq\bfI_\rv^\NA(\phi,0)$ and
    \begin{align}
    \label{E_v^*}
        \bfE_\rv^{*,\NA}(\mu)
        \coloneqq\sup_{\psi\in\mcH(X^\an,L^\an)}\Big( \bfE^\NA_\rv(\psi)-\int_{X^\an}\psi\;\mu\Big)
    \end{align}
    And there exists a constant $C>0$, such that
    \begin{align}
        \frac{1}{C}\bfJ^\NA_\rv(\phi)\leq \bfE^{*,\NA}_\rv(\mu)\leq C\bfJ^\NA_\rv(\phi).
    \end{align}
\end{lemma}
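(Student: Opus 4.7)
The plan is to first establish the duality identity $\bfE^{*,\NA}_\rv(\mu) = \bfI^\NA_\rv(\phi) - \bfJ^\NA_\rv(\phi)$ (interpreting $\bfI^\NA_\rv(\phi) = \bfI^\NA_\rv(\phi,\phi_\triv)$), and then deduce the two-sided bound against $\bfJ^\NA_\rv(\phi)$ by reducing to the unweighted case via Lemma~\ref{I_v_I}.

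The first building block is the first variation formula
\[
\frac{d}{dt}\Big|_{t=0^+}\bfE^\NA_\rv\big((1-t)\phi+t\psi\big) = \int_{X^\an}(\psi-\phi)\,\MA^\NA_\rv(\phi)
\]
for $\phi,\psi\in\mcH(X^\an,L^\an)$. For a polynomial weight $\rv=\sum_{\vec k}a_{\vec k}\underline{y}^{\vec k}$ this is just the sum over $\vec k$ of the standard first variation of $\bfE^\NA$ on the fiber bundle $(X^{[\vec k]},L^{[\vec k]})$, and the case of smooth $\rv$ follows by uniform polynomial approximation using Section~\ref{w-NA-MA}. Concavity of $\bfE^\NA_\rv$ along linear segments follows because the right-hand side is nonincreasing in $t$ by monotonicity of mixed weighted intersections. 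Taking $\psi=\phi$ in the defining supremum gives $\bfE^{*,\NA}_\rv(\mu)\geq\bfE^\NA_\rv(\phi)-\int\phi\,\mu$, while concavity gives the reverse inequality $\bfE^\NA_\rv(\psi)-\int\psi\,\mu\leq\bfE^\NA_\rv(\phi)-\int\phi\,\mu$ for every $\psi$, so $\bfE^{*,\NA}_\rv(\mu) = \bfE^\NA_\rv(\phi) - \int_{X^\an}\phi\,\MA^\NA_\rv(\phi)$.

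To match this with $\bfI^\NA_\rv(\phi)-\bfJ^\NA_\rv(\phi)$, note that $\phi_\triv\equiv 0$ under the convention $\varphi_\mcL(v)=\sigma(v)(\mcL-\rho^*L_{\AA^1})$, so it suffices to establish the identity $\MA^\NA_\rv(\phi_\triv) = \VV_\rv\,\delta_{v_\triv}$ on $X^\an$. For polynomial weights this reduces fiberwise to the standard fact that on each $(X^{[\vec k]},L^{[\vec k]})$ the Monge--Amp\`ere measure of the trivial potential is a Dirac mass at the Gauss valuation whose total mass is $(L^{[\vec k]})^{n+k}/(n+k)!$; summing against $a_{\vec k}\cdot k_1!\cdots k_r!$ gives $\VV_\rv$, and the smooth case follows by polynomial approximation. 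Since every $\phi\in\PSH(X^\an,L^\an)$ satisfies $\phi(v_\triv)=\sup_{X^\an}\phi$, one concludes $\int\phi\,\MA^\NA_\rv(\phi_\triv)=\VV_\rv\sup_{X^\an}\phi$. Expanding $\bfI^\NA_\rv(\phi,\phi_\triv)$ and $\bfJ^\NA_\rv(\phi)$ from their definitions then gives
\[
\bfI^\NA_\rv(\phi)-\bfJ^\NA_\rv(\phi) = \VV_\rv\sup\phi - \int\phi\,\MA^\NA_\rv(\phi) - \VV_\rv\sup\phi + \bfE^\NA_\rv(\phi) = \bfE^{*,\NA}_\rv(\mu).
\]

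For the final two-sided comparison, Lemma~\ref{I_v_I} and the remark immediately following it yield $\bfI^\NA_\rv(\phi,\phi_\triv)\sim\bfI^\NA(\phi,\phi_\triv)$ and $\bfJ^\NA_\rv(\phi)\sim\bfJ^\NA(\phi)$ with constants depending only on $\rv$. In the unweighted setting the classical inequalities $\tfrac{1}{n}\bfJ^\NA\leq \bfI^\NA-\bfJ^\NA\leq n\,\bfJ^\NA$ from \cite{BHJ17} transfer immediately to give $\frac{1}{C}\bfJ^\NA_\rv(\phi)\leq\bfE^{*,\NA}_\rv(\mu)\leq C\bfJ^\NA_\rv(\phi)$. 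The principal technical obstacle is the third step: rigorously identifying $\MA^\NA_\rv(\phi_\triv)$ as a Dirac mass at the Gauss point together with the first variation formula, both of which require carefully pushing Stone--Weierstrass approximation through the fiber-bundle definition of $\MA^\NA_\rv$ and the continuity properties recorded in Section~\ref{w-NA-MA}.
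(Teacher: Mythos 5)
Your overall shape is reasonable---prove $\bfE^{*,\NA}_\rv(\mu)=\bfE^\NA_\rv(\phi)-\int_{X^\an}\phi\,\mu$ by a tangent-line argument and then identify this with $\bfI^\NA_\rv(\phi)-\bfJ^\NA_\rv(\phi)$ (your explicit verification that $\MA^\NA_\rv(\phi_\triv)=\VV_\rv\,\delta_{v_\triv}$ and that the sup of a psh function is attained at $v_\triv$ is a point the paper leaves implicit, and is welcome). But the key inequality $\bfE^\NA_\rv(\psi)-\int\psi\,\mu\le\bfE^\NA_\rv(\phi)-\int\phi\,\mu$ is not justified by your concavity argument. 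In the Stone--Weierstrass expansion $\rv=\lim\sum_{\vec k}a_{\vec k}\underline{y}^{\vec k}$ the coefficients $a_{\vec k}$ carry no sign, so concavity of the unweighted energy on each fiber bundle $(X^{[\vec k]},L^{[\vec k]})$ does not sum to concavity of $\bfE^\NA_\rv$, and ``monotonicity of mixed weighted intersections'' is not a fact you can invoke: even in the unweighted case the monotonicity of $t\mapsto\int(\psi-\phi)\,\MA^\NA(\phi_t)$ \emph{is} the concavity statement and rests on negativity of vertical self-intersections. The positivity of $\rv$ must enter non-formally. The paper gets the inequality differently: by \cite[(2.31)]{HL23}, along geodesic rays $\Phi,\Psi$ with $\Phi^\NA=\phi$, $\Psi^\NA=\psi$ one has $\bfE_\rv(\Psi(s))-\bfE_\rv(\Phi(s))-\int_X(\Psi(s)-\Phi(s))\MA_\rv(\Phi(s))=-\bfJ_{\rv,\Phi(s)}(\Psi(s))\le 0$, and the slope formula (Lemma~\ref{lemma_slope_formula}) then yields exactly the non-Archimedean inequality, replacing both your first-variation formula and the concavity claim. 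To stay purely non-Archimedean you would need a weighted Hodge-index/Zariski-type negativity, which is not available in the paper.

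The final two-sided bound also does not ``transfer immediately.'' The upper bound is fine ($\bfI^\NA_\rv-\bfJ^\NA_\rv\le\bfI^\NA_\rv\le C\,\bfI^\NA\le C(n+1)\bfJ^\NA\le C'\bfJ^\NA_\rv$), but for the lower bound the comparisons $\bfI^\NA_\rv\ge C^{-1}\bfI^\NA$ and $\bfJ^\NA_\rv\le C\,\bfJ^\NA$ only give $\bfI^\NA_\rv-\bfJ^\NA_\rv\ge C^{-1}\bfI^\NA-C\,\bfJ^\NA$, whose right-hand side is in general negative: two-sided multiplicative comparisons are not stable under subtraction. The paper instead compares the combined quantities directly, $\tfrac1{C'}(\bfI^\NA-\bfJ^\NA)\le\bfI^\NA_\rv-\bfJ^\NA_\rv\le C'(\bfI^\NA-\bfJ^\NA)$, using \cite[(2.3.2)]{HL23} together with the slope formula (note $\bfI^\NA_\rv-\bfJ^\NA_\rv=\bfE^\NA_\rv(\phi)-\int\phi\,\MA^\NA_\rv(\phi)$ is itself a ``reversed $\bfJ$'' quantity, comparable to its unweighted counterpart because $\rv$ is bounded above and below), and only then invokes the unweighted inequality between $\bfI^\NA-\bfJ^\NA$ and $\bfJ^\NA$ and the comparability $\bfJ^\NA\sim\bfJ^\NA_\rv$. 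You need such a direct comparison to close this step.
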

\begin{proof}
    Let $\psi$ be any function in $\mcH(X^\an,L^\an)$.
    Let $\Phi, \Psi$ be geodesic rays with $\Phi^\NA=\phi, \Psi^\NA = \psi$.
    By \cite[(2.31)]{HL23}, 
    \begin{align*}
        \bfE_{\rv}(\Psi(s))-\bfE_{\rv}(\Phi(s))-\int_X (\Psi(s)-\Phi(s))\;\MA_\rv(\Phi(s)) 
        = -\bfJ_{\rv,\Phi(s))}(\Psi(s)) \leq 0.
    \end{align*}
    Apply Lemma~\ref{lemma_slope_formula} to the inequality above, we have
    \begin{align}
        \bfE^\NA_\rv(\phi)-\bfE^\NA_\rv(\psi) \geq \int_{X^\an} (\phi-\psi)\; \MA_\rv^\NA(\phi).
    \end{align}
    Then
    \begin{align*}
        \bfE^{*,\NA}(\mu)=\sup_{\psi\in\mcH(X^\an,L^\an)}\Big( \bfE^\NA_\rv(\psi)-\int_{X^\an}\psi\;\MA^\NA_\rv(\phi)\Big) \leq \bfE^\NA_\rv(\phi)-\int_{X^\an}\phi\;\mu.
    \end{align*}
    On the other hand, by definition,
    \begin{align*}
        \bfE^{*,\NA}_\rv(\mu) \geq \bfE^\NA_\rv(\phi)-\int_{X^\an}\phi\; \mu.
    \end{align*}
    Therefore
    \begin{align}
        \bfE^{*,\NA}_\rv(\mu) = \bfE^\NA_\rv(\phi)-\int_{X^\an}\phi\; \mu = \bfI^\NA_\rv(\phi)-\bfJ^\NA_\rv(\phi).
    \end{align}
    Apply \cite[(2.28), (2.29)]{HL23} and Lemma~\ref{lemma_slope_formula}, there exist constants $C,C'>1$, such that
    \begin{align}
        \frac{1}{C}\bfJ^\NA_\rv(\phi)\leq \frac{1}{C'}(\bfI^\NA(\phi)-\bfJ^\NA(\phi))
        &\leq (\bfI^\NA_\rv(\phi)-\bfJ^\NA_\rv(\phi)) \nonumber \\
        &\leq {C'}(\bfI^\NA(\phi)-\bfJ^\NA(\phi)) \leq {C}\bfJ^\NA_\rv(\phi).
    \end{align}
\end{proof}

\begin{proposition}
\label{E_v_E}
    There exists a bi-Lipschitz map:
    \begin{align}
        & (\mcE^{1,\NA}(X^\an,L^\an)/\RR, \bfI^\NA) \rightarrow (\mcE^{1,\NA}_\rv(X^\an,L^\an)/\RR, \bfI_\rv^\NA).
    \end{align}
\end{proposition}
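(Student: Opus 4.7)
The plan is to exhibit the bi-Lipschitz map as the identity. By Corollary~\ref{E_v_E1}, the two spaces $\mcE^{1,\NA}(X^\an,L^\an)$ and $\mcE^{1,\NA}_\rv(X^\an,L^\an)$ coincide as sets, and a direct computation (using that the total masses $\VV$ and $\VV_\rv$ of $\MA^\NA(\cdot)$ and $\MA^\NA_\rv(\cdot)$ are constant in their potentials) shows that both $\bfI^\NA$ and $\bfI^\NA_\rv$ are invariant under independent constant shifts of either argument. Hence the identity descends to a well-defined map $\mcE^{1,\NA}(X^\an,L^\an)/\RR \to \mcE^{1,\NA}_\rv(X^\an,L^\an)/\RR$ between the quotient pseudo-metric spaces.

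To prove bi-Lipschitz, I would combine Lemma~\ref{I_v_I} with a density/approximation argument. On the dense subspace $\mcH_T(X^\an,L^\an) \subset \mcE^{1,\NA}(X^\an,L^\an)$, Lemma~\ref{I_v_I} supplies the bound $C^{-1}\bfI^\NA(\phi_1,\phi_2) \leq \bfI^\NA_\rv(\phi_1,\phi_2) \leq C\,\bfI^\NA(\phi_1,\phi_2)$ directly. Given arbitrary $\phi,\psi \in \mcE^{1,\NA}$, pick decreasing sequences $\phi_j,\psi_j \in \mcH_T(X^\an,L^\an)$ converging strongly to $\phi,\psi$ respectively; these exist by the construction of the strong topology via the increasing $\bfE^\NA$-approximations of Fubini--Study functions, and the strong topologies for $\rv$ and the trivial weight coincide via Corollary~\ref{E_v_E1}.

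The main step, and the main obstacle, is passing to the limit $j\to\infty$ in the bi-Lipschitz bound. This requires continuity of both $\bfI^\NA$ and $\bfI^\NA_\rv$ along strong decreasing approximations in $\mcH_T$. An efficient route uses Lemma~\ref{M_v_M}: writing
\[
\bfI^\NA_\rv(\phi,\psi) - \bfJ^\NA_\rv(\phi) = \bfE^{*,\NA}_\rv(\MA^\NA_\rv(\phi)) - \bigl(\bfE^\NA_\rv(\psi) - \textstyle\int\psi\,\MA^\NA_\rv(\phi)\bigr) + (\text{symmetric in }\phi,\psi),
\]
the continuity of the right-hand side under decreasing strong approximation reduces to (a) continuity of $\bfE^\NA_\rv$ along decreasing sequences, which is built into its usc extension from $\mcH_T$; (b) weak convergence $\MA^\NA_\rv(\phi_j) \to \MA^\NA_\rv(\phi)$ under decreasing strong convergence, which follows from the same construction; and (c) continuity of pairings $\int\phi\,\MA^\NA_\rv(\psi)$, handled by dominated convergence together with the uniform linear-growth control provided by the strong topology. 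The analogous argument with $\rv \equiv 1$ gives the continuity of $\bfI^\NA$.

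Once these continuities are in place, passing to the limit $j \to \infty$ in the bound of Lemma~\ref{I_v_I} on $\mcH_T$ yields the same bi-Lipschitz estimate for $\phi,\psi \in \mcE^{1,\NA}(X^\an,L^\an)$. Since both $\bfI^\NA$ and $\bfI^\NA_\rv$ descend to genuine metrics on the respective quotients by $\RR$, this gives the desired bi-Lipschitz homeomorphism, with bi-Lipschitz constant equal to that of Lemma~\ref{I_v_I}.
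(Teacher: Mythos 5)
Your overall strategy is the same as the paper's: everything rests on Lemma~\ref{I_v_I} on the dense subspace $\mcH(X^\an,L^\an)$, extended by density to the finite-energy spaces, with Corollary~\ref{E_v_E1} identifying the underlying spaces. The paper, however, dispatches the extension in one line: since $\mcE^{1,\NA}(X^\an,L^\an)/\RR$ and $\mcE^{1,\NA}_\rv(X^\an,L^\an)/\RR$ are the completions of $\mcH/\RR$ with respect to $\bfI^\NA$ and $\bfI^\NA_\rv$ respectively, the two-sided comparability of the quasi-metrics on $\mcH$ immediately forces the completions to coincide and the identity to be bi-Lipschitz, with the same constant. Your explicit limiting argument reaches the same conclusion but does more work, and its middle step is not quite right as written: Lemma~\ref{M_v_M} is a one-variable statement, namely $\bfE^{*,\NA}_\rv(\MA^\NA_\rv(\phi))=\bfI^\NA_\rv(\phi)-\bfJ^\NA_\rv(\phi)$ where $\bfI^\NA_\rv(\phi)$ and $\bfJ^\NA_\rv(\phi)$ are taken relative to $\phi_\triv$, so the displayed ``decomposition'' of the two-variable quantity $\bfI^\NA_\rv(\phi,\psi)$ is not what that lemma provides, and ``dominated convergence'' is not the mechanism for continuity of the pairings $\int\phi\,\MA^\NA_\rv(\psi)$ — what one actually needs is monotone continuity of the weighted energy pairings along decreasing sequences (the weighted analogue of the Boucksom--Jonsson continuity results), which is true but should be cited or proved rather than reduced to Lemma~\ref{M_v_M}. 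If you replace that paragraph either by the completion characterization of $\mcE^{1}/\RR$ or by a correct appeal to monotone continuity of $\bfE^\NA_\rv$ and the mixed energy pairings, your argument closes and agrees with the paper's.
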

\begin{proof}
    By Lemma~\ref{I_v_I}, the completions of $\mcH(X^\an,L^\an)$ with respect to $\bfI^\NA$ and $\bfI_\rv^\NA$ are the same. Therefore $\mcE^{1,\NA}(X^\an,L^\an)$ is homeomorphic to $\mcE^{1,\NA}_\rv(X^\an,L^\an)$. The bi-Lipschitz property is also induced by Lemma~\ref{I_v_I}. 
\end{proof}

\begin{remark}
    Darvas and Xia produce an alternative formula for $\bfE^\NA, (\bfE^{Q})^\NA, \bfJ^\NA$ by employing the method of the test curve. By a straightforward equivariant modification of their results (\cite[Theorem~3.7]{DX22}, \cite[Theorem~6.7]{Xia23}, \cite[Proposition~3.10]{DZ24}), we also have the following alternative definitions of the non-Archimedean functionals.

    Let $\phi$ be a $T_\CC$-equivariant test configuration. Let $\psi_\bullet$ be the corresponding test curve that emanates from $\varphi_0\in \mcH(X,L)$. Then by working on the resolution $Y$, we have
    \begin{align*}
        \bfE^\NA_\rv(\phi) &= \frac{1}{\mathbb{V}_\rv}\int_{-\infty}^0 \Big(\int_Y \rv(m_{\psi_\tau})\frac{\omega_{\psi_\tau}^n}{n!}-\int_Y \rv(m_{\varphi_0})\frac{\omega_{\varphi_0}^n}{n!} \Big) \; d\tau. \\
        (\bfE^Q_\rv)^\NA(\phi) 
        &= \frac{1}{\mathbb{V}_\rv}\int_{-\infty}^0 \Big(\int_Y \rv(m_{\psi_\tau})\frac{\alpha\wedge\omega_{\psi_\tau}^{n-1}}{(n-1)!}+\int_Y \langle d\rv(m_{\psi_\tau}), m_\alpha\rangle\frac{\omega_{\psi_\tau}^n}{n!} \Big) \\
        &\qquad-\Big( \int_Y \rv(m_{\varphi_0})\frac{\alpha\wedge\omega_{\varphi_0}^{n-1}}{(n-1)!}+\int_Y \langle d\rv(m_{\varphi_0}), m_\alpha\rangle\frac{\omega_{\varphi_0}^n}{n!} \Big) \; d\tau,
    \end{align*}
    where $\alpha\in c_1(Q)$ is a smooth closed $(1,1)$-form.
    If the conjecture 8.8 in \cite{Xia25} holds, then
    there is also a chance to express $(\bfE_\rv^Q)^\NA$ by using the weighted mixed volume of Okounkov body.
\end{remark}

\subsection{Weighted non-Archimedean Monge--Amp\`ere equations}

As in \cite[\S~8]{BJ22}, for any $\phi:X^\an\rightarrow\RR$, one defines
\begin{align*}
    \bfE^{\downarrow,\NA}_\rv(\phi)
    \coloneqq\inf\{\bfE_\rv^\NA(\psi):\psi\in C^0(X^\an), \psi\geq \phi\} 
\end{align*}

\begin{remark}
    By a weighted version of \cite[Proposition 8.3(iv)]{BJ22}, if the envelope property holds and $\phi$ is an u.s.c function that is bounded above, or $\phi\in C^0(X^\an)$, we have $\bfE^{\downarrow,\NA}_\rv(\phi) 
    = \bfE^\NA_\rv({\rm P}_L(\phi))$.
\end{remark}

\begin{lemma}
\label{difference_E_v_main}
    Let $f\in {\rm PL}(X^\an)$. For any $\phi\in\mcE^{1,\NA}_\rv(X^\an,L^\an)$,
\begin{align}
\label{difference_of_E_v_formula_1}     
\bfE^{\downarrow,\NA}_\rv(\phi+\epsilon f) 
= \bfE^\NA_\rv(\phi)+\epsilon\int_{X^\an}f\; \MA^\NA_\rv(\phi)+O(\epsilon^2),
    \end{align}
    where $O(\epsilon^2)$ is uniform with respect to $\phi$. In particular, $\bfE^\NA_\rv$ is differentiable,
    \begin{align}
        \left.\frac{d}{d\epsilon}\right|_{\epsilon=0}\bfE^{\downarrow,\NA}_\rv(\phi+\epsilon f) 
        = \int_{X^\an}f\;\MA^\NA_\rv(\phi).
    \end{align}
\end{lemma}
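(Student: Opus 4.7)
The plan is to adapt the standard ``concavity plus orthogonality'' strategy for envelopes from \cite{BJ22} to the weighted setting, combining the weighted non-Archimedean Monge--Amp\`ere formalism of Subsection~\ref{w-NA-MA} with the envelope property established in Section~\ref{envelope}. The goal is to sandwich $\bfE^\NA_\rv({\rm P}_L(\phi+\epsilon f))-\bfE^\NA_\rv(\phi)$ between two quantities that agree modulo $O(\epsilon^2)$.

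Write $\psi_\epsilon \coloneqq {\rm P}_L(\phi+\epsilon f)$; this is an element of $\PSH(X^\an,L^\an)$ by Theorem~\ref{env_thm}, and it satisfies $\|\psi_\epsilon-\phi\|_\infty \leq \epsilon\|f\|_\infty$. I would first reduce to $\phi\in\mcH_T(X^\an,L^\an)$: both sides of the desired expansion are continuous under decreasing Fubini--Study approximations of $\phi$ (the left side by the extension of $\bfE^\NA_\rv$ to $\mcE^{1,\NA}_\rv$, the right side by weak continuity of $\MA^\NA_\rv$ along such sequences combined with boundedness of $f$), and Corollary~\ref{env_cor} guarantees that $\psi_\epsilon$ and its weighted Monge--Amp\`ere measure behave well under such approximation. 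Next, the concavity of $\bfE^\NA_\rv$ along affine segments in $\PSH_T(X^\an,L^\an)$, which is a polynomial consequence of positivity of mixed intersection numbers on the $[\vk]$-bundles via \eqref{wMA}, yields
\[
\int_{X^\an}(\psi_\epsilon-\phi)\,\MA^\NA_\rv(\psi_\epsilon) \leq \bfE^\NA_\rv(\psi_\epsilon)-\bfE^\NA_\rv(\phi) \leq \int_{X^\an}(\psi_\epsilon-\phi)\,\MA^\NA_\rv(\phi).
\]
The orthogonality of envelopes (that $\MA^\NA_\rv({\rm P}_L\psi)$ concentrates on the contact set $\{{\rm P}_L\psi=\psi\}$) gives $\int_{X^\an}(\psi_\epsilon-\phi)\,\MA^\NA_\rv(\psi_\epsilon) = \epsilon\int_{X^\an} f\,\MA^\NA_\rv(\psi_\epsilon)$, while $\psi_\epsilon\leq\phi+\epsilon f$ together with $\MA^\NA_\rv(\phi)\geq 0$ gives $\int_{X^\an}(\psi_\epsilon-\phi)\,\MA^\NA_\rv(\phi) \leq \epsilon\int_{X^\an} f\,\MA^\NA_\rv(\phi)$, so
\[
\epsilon\int_{X^\an} f\,\MA^\NA_\rv(\psi_\epsilon) \leq \bfE^\NA_\rv(\psi_\epsilon)-\bfE^\NA_\rv(\phi) \leq \epsilon\int_{X^\an} f\,\MA^\NA_\rv(\phi).
\]

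The remaining and most delicate step is a Lipschitz estimate
\[
\Big|\int_{X^\an} f\,\big(\MA^\NA_\rv(\phi)-\MA^\NA_\rv(\psi_\epsilon)\big)\Big| \leq C_f\,\|\psi_\epsilon-\phi\|_\infty \leq C_f\|f\|_\infty\,\epsilon,
\]
with $C_f$ depending on $f$, $\rv$, and intersection data of $(X,L)$, but \emph{not} on $\phi$; multiplying by $\epsilon$ produces the required $O(\epsilon^2)$. For polynomial weights $\rv=\underline{y}^{\vk}$, this reduces through \eqref{wMA} to a telescoping identity on the $[\vk]$-bundle: write $\MA^{[\vk]}(\psi_\epsilon)-\MA^{[\vk]}(\phi)$ as a finite sum of mixed terms involving $\ddc(\psi_\epsilon-\phi)$, and integrate $f$ against each summand by parts to transfer the current onto $\ddc f$, which (since $f$ is PL) is a signed measure of finite total variation controlled by the intersection data of $f$. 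The resulting bound is $\|\psi_\epsilon-\phi\|_\infty$ times an intersection number, hence independent of $\phi$. The main obstacle I anticipate is making this parts-integration rigorous on the singular klt variety: since $\ddc\psi_\epsilon$ and $\ddc\phi$ are only bounded psh currents, one should first argue for $\phi\in\mcH_T$ where everything reduces to intersections on a dominating model, and then extend via the approximation and pushforward-compatibility of mixed weighted Monge--Amp\`ere measures provided by Corollary~\ref{env_cor}(5). A polynomial approximation of a general smooth $\rv$ then transfers the estimate to the full weighted case as in \cite[Lemma~5.5]{HL23}. Once the $O(\epsilon^2)$ expansion \eqref{difference_of_E_v_formula_1} is established, the differentiability of $\bfE^\NA_\rv$ along $\epsilon\mapsto {\rm P}_L(\phi+\epsilon f)$ follows immediately.
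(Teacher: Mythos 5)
Your route is genuinely different from the paper's. The paper never uses orthogonality: it writes $f=\psi_1-\psi_2$ with $\psi_1,\psi_2\in\mcH(X^\an,M^\an)$ for an auxiliary ample $M$, identifies $\bfE^\NA_\rv({\rm P}_L(\phi+\epsilon f))$ with the weighted volume $\vol_\rv(\IN(\phi+\epsilon f))$ via Lemma~\ref{vol_E_v}, and then invokes the section-counting estimate of Lemma~\ref{difference_of_E_v}, which bounds $\int_{X^\an}f\,\MA^\NA_\rv(\phi+\epsilon\psi_1)-\vol_\rv(L,(\phi+\epsilon f),\phi)$ by $(\vol_\rv(L+\epsilon M)-\vol_\rv(L))\cdot\sup|f|$; since that constant is purely cohomological and of size $O(\epsilon)$, the $O(\epsilon^2)$ error is automatically uniform in $\phi$. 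Your plan instead is the classical ``concavity sandwich $+$ orthogonality $+$ $C^0$-Lipschitz continuity of $\psi\mapsto\int f\,\MA^\NA_\rv(\psi)$'' argument. The sandwich inequalities and the upper bound via $\psi_\epsilon\le\phi+\epsilon f$ are fine (they follow, for $\mcH_T$-functions, from the inequality established in the proof of Lemma~\ref{M_v_M} and pass to the continuous envelope by approximation), and your Lipschitz estimate would indeed deliver the required uniformity.

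The genuine gap is the orthogonality step: you use, as if known, that $\MA^\NA_\rv({\rm P}_L(\phi+\epsilon f))$ is concentrated on the contact set $\{{\rm P}_L(\phi+\epsilon f)=\phi+\epsilon f\}$. This is nowhere proved in the paper, and it does not follow formally from the unweighted Boucksom--Jonsson orthogonality: $\MA^\NA_\rv$ is defined through the fibrations $X^{[\vec k]}$, and the pullback of an envelope on $X$ is not an envelope upstairs, so the statement does not transfer. To salvage it you would need an extra input, e.g.\ a two-sided comparison $c\,\MA^\NA\le\MA^\NA_\rv\le C\,\MA^\NA$ as measures (plausible from $\MA^\NA_\rv(\phi_\mcL)=\sum_E b_E (E\cdot\mcL^n)_\rv\,\delta_{v_E}$ and monotonicity of the weighted intersection numbers in the weight, then a passage to the limit defining $\MA^\NA_\rv$ of the continuous envelope), combined with the unweighted orthogonality on a klt variety, which itself requires the envelope property of Theorem~\ref{env_thm}; and you must check that the source you cite for orthogonality does not derive it from the very differentiability you are proving, on pain of circularity. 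A secondary, fixable issue is the constant in your Lipschitz estimate for non-polynomial $\rv$: the termwise bound $\sum_{\vec k}|a_{\vec k}|\cdot(\text{intersection numbers})$ for a polynomial approximant is not automatically uniform in the approximation, so you should fix one polynomial $\rv_i$ close to $\rv$, prove the estimate for $\rv_i$, and control the replacement error by $\|\rv-\rv_i\|_{C^0(P)}$ times unweighted masses, rather than passing the integration-by-parts bound through the limit of polynomials.
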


\begin{proof}
    For any $\phi\in\mcE^{1,\NA}_\rv(X^\an,L^\an)$, there exists a decreasing sequence $\phi_i\in\mcH_T(X^\an,L^\an)$ converging to $\phi$.
    Then $\bfE_\rv^\NA(\phi_i)\rightarrow\bfE_\rv^\NA(\phi)$ and $\MA_\rv^\NA(\phi_i)\rightarrow\MA_\rv^\NA(\phi_i)$ weakly. By a weighted version of \cite[Proposition 8.3(v)]{BJ22}, then $\bfE_\rv^\NA(\phi_i+\epsilon f)\rightarrow \bfE_\rv^{\downarrow,\NA}(\phi+\epsilon f)$.
    
    Without the loss of generality, we may assume $\phi\in\mcH_T(X^\an,L^\an)$.
There exists an ample line bundle $M$ over $X$ and  $\psi_1,\psi_2\in \mcH(X^\an,M^\an)$, such that $f = \psi_1-\psi_2$. 
    Let $C =  \vol_\rv(L+\epsilon M)- \vol_\rv( L)>0$, 
    where
    \begin{align*}
        \vol_\rv(L) = \lim\sum_{\vec{k}}a_{\vec{k}} \frac{k_1!\cdots k_r!}{(n+k)!} (L^{[\vec{k}]})^{n+k}.
    \end{align*}
    Note that $C = O(\epsilon)$.
    By Lemma~\ref{difference_of_E_v}, we have
\begin{align}
\label{difference_of_E_v_middle_step_1}
    \left|\bfE_\rv^\NA({\rm P}_L(\phi+\epsilon f))-\bfE^\NA_\rv(\phi)-\epsilon \int_{X^\an}f\;\MA^\NA_\rv(\phi+\epsilon\psi_1)\right| 
    \leq \epsilon \cdot C\cdot \sup_{X^\an} |f| \leq O(\epsilon^2).
\end{align}
Meanwhile,
\begin{align*}
    & \left|\int_{X^\an}f\; \MA^\NA_\rv(L+\epsilon M, \phi+\epsilon\psi_1) - \int_{X^\an} f\; \MA^\NA_\rv(L,\phi)\right| \\
    \leq & \sup_{X^\an}|f|\cdot \Big(\int_{X^\an} \MA^\NA_\rv(L+\epsilon M, \phi+\epsilon\psi_1) - \MA^\NA_\rv(L,\phi)\Big) = O(\epsilon).
\end{align*}
Plug into \eqref{difference_of_E_v_middle_step_1}, we have
\begin{align*}
    \left|\bfE^{\NA}_\rv({\rm P}_L(\phi+\epsilon f))-\bfE^\NA_\rv(\phi)-\epsilon\int_{X^\an}f\; \MA^\NA_\rv(\phi)\right| 
    \leq O(\epsilon^2),
\end{align*}
which concludes the inequality \eqref{difference_of_E_v_formula_1}.
\end{proof}

\begin{lemma}
\label{difference_of_E_v}
Let $L,M$ be ample line bundles on $X$, $\phi\in \mcH_T(X^\an,L^\an)$, $\psi_1,\psi_2\in \mcH(X^\an,M^\an)$, $f=\psi_1-\psi_2$.
Let $\rv$ be a smooth positive function on $P$. Then
     \begin{align}
     \label{bound_E_quantization}
        C \inf_{x\in X^\an} f(x) \leq \int_{X^\an}f\; \MA^\NA_\rv(\phi+\psi_1) 
        - {\rm vol}_\rv\big( L, (\phi+f), \phi \big) \leq C \sup_{x\in X^\an} f(x) ,
    \end{align}
    where $C = \vol_\rv(L+M)-\vol_\rv(L)$ and
    \begin{align*}
        \vol_\rv( L, (\phi+f), \phi )
        \coloneqq\vol_\rv(\IN(\phi+f))-\vol_\rv(\IN(\phi)).
    \end{align*}
\end{lemma}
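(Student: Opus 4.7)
The plan is to reduce the estimate to a finite-dimensional comparison of norms at level $m$ and pass to the asymptotic limit. First, by Stone--Weierstrass approximation of $\rv$ by polynomials and linearity in $\rv$ (as in the final step of the proof of Lemma~\ref{vol_E_v}), it suffices to treat the case of a single monomial weight $\rv = \underline{y}^{\vec{k}}$. For such a weight, both the weighted non-Archimedean Monge--Amp\`ere measure and the weighted volume reduce, up to a fixed combinatorial factor, to their unweighted counterparts on the associated fiber bundle $X^{[\vec{k}]}$ with line bundles $L^{[\vec{k}]}$ and $M^{[\vec{k}]}$. Since $f$ is pulled back from $X^\an$, its sup and inf are preserved, and the constant $C$ becomes $\vol(L^{[\vec{k}]} + M^{[\vec{k}]}) - \vol(L^{[\vec{k}]})$ (times a combinatorial factor). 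The problem thereby reduces to the analogous unweighted statement on $X^{[\vec{k}]}$.

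In the unweighted setting, choose $m$ sufficiently divisible so that $\psi_1, \psi_2$ are realized as Fubini--Study functions at level $m$. Let $\chi_m^L, \chi_m^{L,f}, \chi_m^{L+M}$ denote the restrictions to the appropriate graded pieces of the norms $\IN(\phi), \IN(\phi+f), \IN(\phi+\psi_1)$ (for the line bundles $L, L$, and $L+M$, respectively). Substituting the Fubini--Study expression $\psi_i = \FS(\IN(\psi_i))$ (Proposition~\ref{FS-IN=Q_L}) into the definition of $\IN$ and using valuation additivity $v(st) = v(s) + v(t)$ yields the exact quotient-norm identities
\[
\chi_m^L(s) = \min_{t \in R_m(M)\setminus\{0\}}\bigl[\chi_m^{L+M}(st) - \IN(\psi_1)(t)\bigr],\qquad \chi_m^{L,f}(s) = \min_{t}\bigl[\chi_m^{L+M}(st) - \IN(\psi_2)(t)\bigr].
\]
Hence $\chi_m^L$ and $\chi_m^{L,f}$ arise as two quotient norms on $R_m(L)$ of the single norm $\chi_m^{L+M}$ with respect to $\IN(\psi_1)$ and $\IN(\psi_2)$, whose ``difference'' corresponds to $f = \psi_1 - \psi_2$.

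The main step is to estimate $\vol(\chi_m^{L,f}) - \vol(\chi_m^L)$ and compare it with a finite-level approximation of $\int f\,\MA^\NA(\phi+\psi_1)$, obtained as a Riemann-sum expression over an orthogonal basis of $R_m(L+M)$ with respect to $\chi_m^{L+M}$, in the spirit of the computation in the proof of Lemma~\ref{vol_E_v}. Writing both quantities as sums over sections via the quotient-norm identities, the terms indexed by sections in the image of the multiplication map $R_m(L)\otimes R_m(M) \to R_m(L+M)$ cancel, leaving only the contribution from the ``excess'' sections of $R_m(L+M)$ outside this image. Each term in the excess sum is an $f$-value and hence lies in $[\inf f,\sup f]$, while the asymptotic cardinality of the excess is $[\vol(L+M)-\vol(L)]m^n/n! + o(m^n)$; taking $m\to\infty$ yields the two-sided bound \eqref{bound_E_quantization}. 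The main obstacle is the finite-dimensional bookkeeping: showing precisely that the quotient-norm representation of $\vol(\chi_m^{L,f}) - \vol(\chi_m^L)$ and the orthogonal-basis approximation of $\int f\,\MA^\NA(\phi+\psi_1)$ differ exactly by the excess sum, with error terms controlled by the standard asymptotic estimates for graded norms developed in \cite{BJ22, BJ24}.
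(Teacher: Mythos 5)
Your reduction to monomial weights and the quotient-norm identities for $\IN(\phi)$ and $\IN(\phi+f)$ in terms of $\IN(\phi+\psi_1)$ are fine as far as they go (the identity does hold, by splitting $v(st)=v(s)+v(t)$ and using the Fubini--Study representation of $\psi_1$, $\psi_2$ at level $m$). The genuine gap is in what you call the ``main step''. You assert that (a) $\int_{X^\an} f\,\MA^\NA(\phi+\psi_1)$ admits a finite-level representation as a Riemann sum over an orthogonal basis of $R_m(L+M)$ for $\chi_m^{L+M}$, and (b) after rewriting $\vol(\chi_m^{L,f})-\vol(\chi_m^L)$ via the quotient-norm identities, the two expressions cancel except for an ``excess'' indexed by sections outside the image of $R_m(L)\otimes R_m(M)\to R_m(L+M)$, each excess term being an $f$-value, with excess cardinality $[\vol(L+M)-\vol(L)]\,m^n/n!+o(m^n)$. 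Neither claim is justified, and both are problematic. For (a), the natural orthogonal-basis sum computes the volume/energy $\vol(\chi_m^{L+M})=\bfE^\NA(\phi+\psi_1)$, not the pairing of $f$ against the Monge--Amp\`ere measure; producing a section-counting expression for $\int f\,\MA^\NA(\phi+\psi_1)$ is essentially the differentiability statement this lemma is designed to feed into (Lemma~\ref{difference_E_v_main}), so without an independent mechanism your argument risks circularity. For (b), $\vol(\chi_m^{L,f})$ and $\vol(\chi_m^{L})$ are sums of successive minima over bases of the \emph{same} space $R_m(L)$; the quotient-norm identity expresses each value as a minimum over products $st$, but these products are not part of an orthogonal basis of $R_m(L+M)$ and the minimizing $t$ varies with $s$, so there is no term-by-term matching with a basis of $R_m(L+M)$ to cancel against. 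Moreover the ``excess'' you count is not the cokernel of the multiplication map (whose codimension has no reason to be $[\vol(L+M)-\vol(L)]\,m^n/n!$); the numerology happens to match the constant $C$, but no bookkeeping scheme is exhibited that realizes it.

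For comparison, the paper avoids all of this by proving only a one-sided inequality for normalized $f\ge 0$: writing $f=\varphi_{\mcM_1}-\varphi_{\mcM_2}$ with $\oo_\mcX(E)=\mcM_1-\mcM_2$ on a common test configuration, it telescopes the weighted section counts of $m\mcL+jE$, $j=1,\dots,m$, through the restriction exact sequence to $E$, bounds the increments by weighted section counts of $m(\mcL+\mcM_1)|_E$, and identifies the limit of that restricted count with $\int_{X^\an}\phi_E\,\MA^\NA_\rv(\phi+\psi_1)$ via the $X^{[\vec{k}]}$ intersection-number computation (as in the proof of Lemma~\ref{vol_E_v}, together with \cite[Theorem~2.4]{BGM22}). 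The constant $C=\vol_\rv(L+M)-\vol_\rv(L)$ then enters only through the elementary renormalization $f\mapsto f-\inf f$, because the two sides scale under constants by the total masses $\vol_\rv(L+M)$ and $\vol_\rv(L)$ respectively. If you want to salvage your route, you would need to either supply a rigorous finite-level formula for $\int f\,\MA^\NA(\phi+\psi_1)$ (which in practice means reproducing the restriction-to-$E$ argument), or reduce the unweighted statement on $X^{[\vec{k}]}$ to an existing result in \cite{BJ22,BJ24} and carefully verify that the weighted volume of $\IN$-norms on $X$ coincides with the unweighted volume of the induced norms on $R(X^{[\vec{k}]},L^{[\vec{k}]})$; as written, the proposal leaves the central estimate unproven.
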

\begin{proof}
    We will first assume that $\rv$ is a polynomial.
    Let $(\mcX,\mcL), (\mcX,\mcM_1),(\mcX,\mcM_2)$ be test configurations corresponding to $\phi,\psi_1,\psi_2$. Without the loss of generality, we may assume $\lambda_{\min}(\mcL)=0$.

    Replace $f$ by $f-\inf_{X^\an}f$, let $\phi_E = f \geq 0$ and $\mcM_1-\mcM_2=\oo_\mcX(E)$.
    For $1\leq j\leq m$, consider
    \begin{align*}
        0\rightarrow H^0(\mcX, m\mcL+(j-1)E)\rightarrow H^0(\mcX,m\mcL+jE)\rightarrow H^0(E,(m\mcL+jE)|_E).
    \end{align*}
    One observes
    \begin{align*}
        &\frac{1}{m^{n+1}}\sum_{s_i\in H^0(\mcX,m(\mcL+E))} \rv(y_1(s_i),\ldots,y_r(s_i))-\frac{1}{m^{n+1}}\sum_{s_i\in H^0(\mcX,m\mcL)} \rv(y_1(s_i),\ldots,y_r(s_i)) \\
        =&\frac{1}{m^{n+1}}\sum_{j=1}^m\left(\sum_{s_i\in H^0(\mcX,m\mcL+jE)} \rv(y_1(s_i),\ldots,y_r(s_i))-\sum_{s_i\in H^0(\mcX,m\mcL+(j-1)E)} \rv(y_1(s_i),\ldots,y_r(s_i)) \right) \\
        \leq&\frac{1}{m^{n+1}}\sum_{j=1}^m\left(\sum_{s_i\in H^0(E,(m\mcL+jE)|_E)} \rv(y_1(s_i),\ldots,y_r(s_i)) \right)   \\
        \leq&\frac{1}{m^{n+1}}\sum_{j=1}^m \left(\sum_{s_i\in H^0(E,m(\mcL+\mcM_1)|_E)} \rv(y_1(s_i),\ldots,y_r(s_i))\right)  \\
        \leq&\frac{1}{m^{n}}\sum_{s_i\in H^0(E,m(\mcL+\mcM_1)|_E)} \rv(y_1(s_i),\ldots,y_r(s_i)). 
    \end{align*}
    By formula \eqref{S_v=vol_v} and \cite[Theorem~2.4]{BGM22}, we have 
    \begin{align}
        \label{vol_v_upbd}
        &\vol_\rv(L,(\phi+f),\phi)  \\
       =& \lim_{m\to\infty}\left(  \frac{1}{m^{n+1}}\sum_{s_i\in H^0(\mcX,m(\mcL+E))} \rv(y_1(s_i),\ldots,y_r(s_i)) 
       -\frac{1}{m^{n+1}}\sum_{s_i\in H^0(\mcX,m\mcL)} \rv(y_1(s_i),\cdots,y_r(s_i))  \right) \nonumber\\
       \leq& \lim_{m\to\infty}\left(  \frac{1}{m^{n}}\sum_{s_i\in H^0(E,m(\mcL+\mcM_1)|_E)} \rv(y_1(s_i),\ldots,y_r(s_i)) \right) \nonumber \\
        =&\lim_{m\rightarrow\infty}\left( \frac{1}{m^n}\sum_{\vec{k}} a_{\vec{k}}\cdot\frac{k_1!\cdots k_r!}{m^k}\cdot \sum_{\eta_1,\ldots,\eta_r\in\ZZ}\sum_{s_i\in H^0(E,m(\mcL+\mcM_1)|_E)_{\eta_1,\ldots,\eta_r}} c^{[\vec{k}]}_{i,\eta_1,\ldots,\eta_r} \right) \nonumber \\
            =& \lim_{m\to\infty} \sum_{\vec{k}} a_{\vec{k}}\cdot \frac{k_1!\cdots k_r!}{m^{n+k}}\cdot h^0(E^{[\vec{k}]},m(\mcL+\mcM_1)^{[\vec{k}]}|_{E^{[\vec{k}]}})  \nonumber \\
            =&  \sum_{\vec{k}} a_{\vec{k}} \cdot \frac{k_1!\cdots k_r!}{(n+k)!} \int_{(X^{[\vec{k}]})^\an} \phi_E^{[\vec{k}]}\; {(\ddc\phi^{[\vec{k}]}_{\mcL+\mcM_1})^{n+k}} \nonumber\\
            =&  \int_{X^\an}\phi_E \; \MA^\NA_\rv(\phi+\psi_1). \nonumber
    \end{align}
     Hence
    \begin{align*}
        C\cdot\inf_{X^\an} f(x) \leq \int_{X^\an}f\; \MA^\NA_\rv(\phi+\psi_1) - {\rm vol}_\rv(L,(\phi+f),\phi),
    \end{align*}
    where $C =  \vol_\rv(L+M) - \vol_\rv(L)$.
    The second inequality of \eqref{bound_E_quantization} is proved similarly.
  
    This concludes the proof when $\rv$ is a polynomial. When $\rv$ is a smooth function, $\rv$ can be approximated by a sequence of polynomials. Since the constant $C$ in \eqref{bound_E_quantization} is uniformly bounded, by taking a limit, we obtain \eqref{bound_E_quantization} for smooth function $\rv$.

\end{proof}

The space $\mcM_\rv^{1,\NA}\subset\mcM$ of \textit{measures of finite energy} is defined as
\begin{align*}
    \mcM_\rv^{1,\NA}\coloneqq\{\mu\in\mcM:\bfE_\rv^{*,\NA}(\mu)<+\infty\}.
\end{align*}
A \textit{maximal sequence} for $\mu$ is a sequence $\phi_i\in\mcH(X^\an,L^\an)$ satisfying
\begin{align*}
    \bfE_\rv^{*,\NA}(\mu)-\bfE_\rv^\NA(\phi_i)+\int_{X^\an}\phi\mu\rightarrow0.
\end{align*}
We have the following weighted non-Archimedean Calabi--Yau theorem.
\begin{theorem}
    \label{weighted_NA_Calabi_problem}
    Assume $(X,L)$ satisfies the envelope property.
    Then the weighted Monge-Amp\`ere operator is a Bi-Lipschitz map 
    \begin{align}
    \label{MA_v_formula}
        \MA^\NA_\rv: (\mcE^{1,\NA}_\rv(X^\an,L^\an)/\RR,\bfI^\NA_\rv) \rightarrow (\mathcal{M}^{1,\NA}_\rv, \bfE^{*,\NA}_\rv).
    \end{align}
    If in addition, ${\rm supp}(\mu)\subset \{T\leq C\}$ $($see \textup{\cite[Definition~1.26]{BJ22}} for the definition of $T$$)$, then the solution of $\MA^\NA_\rv(\phi) = \mu$ is continuous.
\end{theorem}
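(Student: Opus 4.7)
The plan is to adapt the variational proof of \cite[Theorem~12.8]{BJ22} to the weighted setting, using three key ingredients developed above: the envelope property on klt varieties (Theorem~\ref{env_thm}), the differentiability identity for $\bfE^\NA_\rv$ along envelopes (Lemma~\ref{difference_E_v_main}), and the universal comparability between weighted and unweighted energy functionals (Lemma~\ref{I_v_I}, Proposition~\ref{E_v_E}, and Lemma~\ref{M_v_M}).

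To solve $\MA^\NA_\rv(\phi) = \mu$ for $\mu \in \mcM^{1,\NA}_\rv$, I would maximize the functional $F_\mu(\phi) \coloneqq \bfE^\NA_\rv(\phi) - \int_{X^\an}\phi\,d\mu$ on $\mcE^{1,\NA}_\rv(X^\an,L^\an)/\RR$, whose supremum equals $\bfE^{*,\NA}_\rv(\mu) < +\infty$ by definition \eqref{E_v^*}. Coercivity in $\bfJ^\NA_\rv$ transfers from the unweighted case via Proposition~\ref{E_v_E} together with Lemma~\ref{M_v_M}, while upper semicontinuity of $\bfE^\NA_\rv$ along decreasing limits yields a maximizer $\phi \in \mcE^{1,\NA}_\rv$. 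Perturbing by $\epsilon f$ with $f \in \PL(X^\an)$ and differentiating via Lemma~\ref{difference_E_v_main} produces the Euler--Lagrange identity $\int_{X^\an} f\,\MA^\NA_\rv(\phi) = \int_{X^\an} f\,d\mu$; density of $\PL(X^\an)$ in $C^0(X^\an)$ then gives $\MA^\NA_\rv(\phi) = \mu$. Injectivity modulo constants is the weighted domination principle, which follows from the comparison principle applied to $P_L(\max(\phi_1,\phi_2))$ together with strict convexity of $\bfE^{*,\NA}_\rv$ along segments in $\mcM^{1,\NA}_\rv$. The Bi-Lipschitz property is then a direct consequence of Lemma~\ref{M_v_M} combined with the equivalence $\bfJ^\NA_\rv \asymp \bfI^\NA_\rv(\,\cdot\,,\phi_\triv)$ (from Lemma~\ref{I_v_I} and a polarization argument comparing pairs of potentials through their symmetric maximum).

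For the continuity statement under the additional assumption $\mathrm{supp}(\mu) \subset \{T \leq C\}$, I would approximate $\mu$ by measures $\mu_k = \MA^\NA_\rv(\phi_k)$ with $\phi_k \in \mcH(X^\an,L^\an)$ continuous, and use the envelope property (Theorem~\ref{env_thm}) and the uniform-convergence of model envelopes provided by Corollary~\ref{env_cor}~(4) to show that the corresponding variational solutions converge uniformly to $\phi$. The main obstacle I anticipate is obtaining a uniform $L^\infty$ bound on the sequence $\phi_k$: this is precisely where the support condition $\mathrm{supp}(\mu) \subset \{T \leq C\}$ enters essentially, through a truncation/regularization argument that controls the polar locus of the solution away from the concentration set of $\mu$. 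Carrying out this uniform estimate simultaneously with the weighted structure and the klt singularities of $X$ — so that the envelope estimates of Section~\ref{envelope} combine cleanly with the weighted Monge--Amp\`ere formalism — is the most delicate point of the argument.
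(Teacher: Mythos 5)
The variational core of your proposal — maximizing $\bfE^\NA_\rv(\phi)-\int_{X^\an}\phi\,d\mu$, using the envelope property for compactness, differentiating via Lemma~\ref{difference_E_v_main} to get the Euler--Lagrange equation, and transferring coercivity and the bi-Lipschitz estimates from the unweighted case through Lemma~\ref{I_v_I}, Lemma~\ref{M_v_M} and Proposition~\ref{E_v_E} — is essentially the paper's proof of the surjectivity and bi-Lipschitz statements (the paper additionally records the weighted quasi-concavity inequality for $\bfE^\NA_\rv$, obtained from the Archimedean inequality $\bfJ_{\rv,\varphi_1}(\varphi_2)\geq C\,\bfI_\rv(\varphi_2,\varphi_1)$ via the slope formula, to bound $\bfJ^\NA_\rv$ along the maximizing sequence; your ``u.s.c.\ along decreasing limits'' phrasing glosses over this, since a maximizing sequence is not decreasing, but the intended mechanism is the same).

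The genuine gap is in the continuity statement. Your plan — approximate $\mu$ by $\mu_k=\MA^\NA_\rv(\phi_k)$ with $\phi_k$ continuous and show the corresponding solutions converge uniformly — begs the question: uniform stability of solutions with respect to the measure is essentially equivalent to the continuity assertion itself, and neither Theorem~\ref{env_thm} nor Corollary~\ref{env_cor}~(4) provides it (they give uniform convergence of envelopes of fixed model functions, not an $L^\infty$ estimate for solutions of perturbed equations); you yourself flag the uniform bound as unresolved, and the ``truncation/regularization controlling the polar locus'' step is not an argument. The paper's route is different and much shorter: it observes that the proof of \cite[Theorem~12.12]{BJ22} applies verbatim under the hypothesis ${\rm supp}(\mu)\subset\{T\leq C\}$ once one has a \emph{weighted domination principle}, and then proves that principle in two lines — replacing $\phi$ by $\sup(\phi,\psi)$ one gets $\bfI^\NA_\rv(\phi,\psi)\leq\int(\phi-\psi)\,\MA^\NA_\rv(\psi)=0$, hence $\bfI^\NA(\phi,\psi)\leq C\,\bfI^\NA_\rv(\phi,\psi)=0$ by Lemma~\ref{I_v_I}, so $\phi=\psi+C$ by \cite[Corollary~10.4]{BJ22} and $C=0$. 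You should replace your approximation scheme by this domination-principle argument (which also cleanly settles injectivity modulo constants, where your appeal to ``strict convexity of $\bfE^{*,\NA}_\rv$'' is left unjustified).
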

\begin{proof}
    The proof follows the same lines as the proofs \cite[Theorem 12.8, Theorem 12.12]{BJ22}. For readers' convenience, we provide a sketch the proof. The injectivity is implied by Lemma~\ref{M_v_M}. For any $\mu\in\mcM^{1,\NA}_\rv$, let $\phi_i\in\mcH_T(X^\an,L^\an)$ be a maximizing sequence for $\mu$, with $\sup_{X^\an}\phi_i=0$. The envelope property implies that $\phi_i$ converges to $\phi\in \PSH_T(X^\an,L^\an)$ with respect to the weak topology. 
    
    For any smooth $\omega$-psh functions $\varphi_1,\varphi_2$, by \cite[(2.31),(3.36),(2.38)]{HL23}, $\bfJ_{\rv,\varphi_1}(\varphi_2)\geq C\cdot\bfI_\rv(\varphi_2,\varphi_1)$. Then by slope formula Lemma~\ref{lemma_slope_formula} and \cite[Lemma 7.29]{BJ22}, for any $\phi_0,\phi_1\in\mcH_T(X^\an,L^\an)$, $t\in [0,1]$, we have
    \begin{align*}
        \bfE^\NA_\rv\big((1-t)\phi_1+t\phi_0\big)-\big((1-t)\bfE^\NA_\rv(\phi_1)+t\bfE^\NA_\rv(\phi_0)\big) \geq C\cdot t(1-t)\bfI^\NA_\rv(\phi_1,\phi_0).
    \end{align*}
    Then by the approximation of piecewise-linear psh functions, the inequality above holds for any $\phi_0,\phi_1\in\mcE^{1,\NA}_\rv(X^\an,L^\an)$.
    
    Let $t=\frac{1}{2}$, $\phi_1=\phi_i$, $\phi_0=0$, we have
    \begin{align*}
        \bfE^{*,\NA}_\rv(\mu)\geq \bfE^\NA_\rv(\frac{1}{2}\phi_i)-\int_{X^\an}\frac{1}{2}\phi_i\;\mu \geq \frac{1}{2}\Big(\bfE^\NA_\rv(\phi_i)-\int_{X^\an}\phi_i\;\mu\Big)+C\cdot \bfI^\NA_\rv(\phi_i),
    \end{align*}
    which implies that $\bfJ^\NA_\rv(\phi_i)$ is uniformly bounded from above by a constant $C$. Along the weakly convergent sequence $\phi_i$, $\bfE^\NA_\rv$ is u.s.c, therefore $\bfJ_\rv^\NA$ is l.s.c, and $\bfJ_\rv^\NA(\phi) \leq C$. Then $\phi\in \mcE^{1,\NA}_\rv(X^\an,L^\an)$. 

    By the same argument as the proof of \cite[Proposition 9.19]{BJ22}, $\lim_{i\to\infty}\int_{X^\an}\phi_i\;\mu = \int_{X^\an}\phi\;\mu$. Then, by the upper-semi continuity of $\bfE^\NA_\rv$, $\bfE^\NA_\rv(\phi)-\int_{X^\an} \phi\; \mu \geq \bfE^{*,\NA}_\rv(\mu)$, and consequently, the equality holds.

    Next, we show that $\bfE^\NA_\rv(\phi)-\int_{X^\an} \phi\; \mu = \bfE^{*,\NA}_\rv(\mu)$ implies that $\MA^\NA_\rv(\phi)=\mu$.
    
    For any $f\in {\rm PL}(X^\an)$, 
    \begin{align*}
        \bfE^\NA_{\rv}({\rm P}_L(\phi+\epsilon f)) - \int_{X^\an}(\phi+\epsilon f) \; \mu 
        \leq \bfE_\rv^{*,\NA}(\mu)
        =\bfE^\NA_\rv(\phi)-\int_{X^\an}\phi \; \mu.
    \end{align*}
    Together with Lemma~\ref{difference_E_v_main}, we have
    \begin{align*}
        \epsilon \int_{X^\an}f\; \MA_\rv^\NA(\phi) = \bfE^\NA_\rv({\rm P}_L(\phi+\epsilon f))-\bfE^\NA_\rv(\phi)+O(\epsilon^2) 
        \leq \epsilon\int_{X^\an}f\;\mu+O(\epsilon^2).
    \end{align*}
    As $\epsilon\to 0$, we have $\int_{X^\an}f\;\MA^\NA_\rv(\phi) \leq \int_{X^\an}f\;\mu$. Replace $f$ by $-f$, we obtain $\int_{X^\an}f\;\MA^\NA_\rv(\phi) = \int_{X^\an}f \; \mu$. This implies that $\MA^\NA_\rv(\phi)=\mu$ as measures. This concludes the surjectivity of \eqref{MA_v_formula}.

    Next we assume ${\rm supp}(\mu)\subset \{T\leq C\}$. As long as the domination principle \cite[Corollary~10.6]{BJ22} also holds for the weighted Monge--Amp\`ere measures, the same proof of \cite[Theorem~12.12]{BJ22} would imply that the solution of $\MA^\NA_\rv(\phi)=\mu$ is continuous.
    
    It suffices to show the weighted domination principle, that is, for $\phi\in\PSH_T(X^\an,L^\an)$, $\psi\in\mcE^{1,\NA}_\rv(X^\an,L^\an)$, if $\phi\leq\psi$ a.e. with respect to $\MA_\rv^\NA(\psi)$, then $\phi\leq \psi$ on $X^\an$.

    Replace $\phi$ by $\sup(\phi,\psi)$, then $\phi\in\mcE^{1,\NA}_\rv(X^\an,L^\an)$. Then
    \begin{align*}
        \bfI^\NA_\rv(\phi,\psi) = \int_{X^\an}(\phi-\psi)\big(\MA^\NA_\rv(\psi)-\MA^\NA_\rv(\phi)\big)\leq \int_{X^\an}(\phi-\psi)\MA^\NA_\rv(\psi)=0.
    \end{align*}
    Then $\bfI^\NA(\phi,\psi)\leq C\bfI^\NA_\rv(\phi,\psi)=0$. Then by \cite[Corollary 10.4]{BJ22}, $\phi = \psi+C$, and $C=0$ since $\psi=\psi$ a.e. with respect to $\MA^\NA_\rv(\psi)$.
\end{proof}
\begin{remark}
    Theorem~\ref{weighted_NA_Calabi_problem} induces a bi-Lipschitz map from 
    \begin{align*}
        (\mcM^{1,\NA}, \bfE^{*,\NA})\rightarrow (\mcM^{1,\NA}_\rv(X^\an),\bfE_\rv^{*,\NA}).
    \end{align*}
\end{remark}

\begin{theorem}
    \label{Thm_M_div_MA}
    There exists a bi-Lipschitz map
    \begin{align}
        \MA^\NA_\rv\circ {\rm FS}: (\mcN^\div_\RR/\RR, \underline{\rmd}_{\rv,1})\rightarrow (\mcM^{\div}_\rv, \rmd_{\rv,1}).
    \end{align}
\end{theorem}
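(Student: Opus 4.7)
The plan is to obtain the statement by composing bi-Lipschitz equivalences: the non-weighted analogue proved in \cite{BJ24}, the weighted-versus-unweighted comparison of Proposition~\ref{E_v_E}, and the weighted Calabi--Yau correspondence of Theorem~\ref{weighted_NA_Calabi_problem}, together with an elementary comparison between the weighted and unweighted $d_1$-metrics on the space of norms.

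First, I would observe that on $\mcN_\RR^\div/\RR$ the metrics $\underline{\rmd}_1$ and $\underline{\rmd}_{\rv,1}$ are bi-Lipschitz equivalent. Indeed, the formula analogous to \eqref{d_1_v_d_1} reads
\[
\rmd_{\rv,1}(\chi,\chi')=\frac{1}{\vol_\rv(\mathbb{O})\,n!}\int_P\Big(\int_{\mathbb{O}_{\underline y}}|G_{\cF_\chi}-G_{\cF_{\chi'}}|\,dy\Big)\rv(\underline y)\,d\underline y,
\]
and since $0<\min_P\rv\le\rv\le\max_P\rv<\infty$, one obtains $c^{-1}\,\underline{\rmd}_1\le\underline{\rmd}_{\rv,1}\le c\,\underline{\rmd}_1$ for some $c=c(\rv,P)>0$. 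By \cite{BJ24}, $\MA^\NA\circ\FS\colon(\mcN_\RR^\div/\RR,\underline{\rmd}_1)\to(\mcM^\div,\rmd_1)$ is bi-Lipschitz, and a fortiori so is $\FS\colon(\mcN_\RR^\div/\RR,\underline{\rmd}_1)\to(\mcE^{1,\NA}/\RR,\bfI^\NA)$. Combining with the bi-Lipschitz identification $(\mcE^{1,\NA}/\RR,\bfI^\NA)\simeq(\mcE^{1,\NA}_\rv/\RR,\bfI^\NA_\rv)$ of Proposition~\ref{E_v_E}, I conclude that
\[
\FS\colon(\mcN_\RR^\div/\RR,\underline{\rmd}_{\rv,1})\longrightarrow(\mcE^{1,\NA}_\rv/\RR,\bfI^\NA_\rv)
\]
is bi-Lipschitz.

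The final step is to post-compose with $\MA^\NA_\rv$. By Theorem~\ref{weighted_NA_Calabi_problem} and Lemma~\ref{M_v_M}, this operator is a bi-Lipschitz map $(\mcE^{1,\NA}_\rv/\RR,\bfI^\NA_\rv)\to(\mcM^{1,\NA}_\rv,\bfE^{*,\NA}_\rv)$, with the two-sided comparison $\bfE^{*,\NA}_\rv(\MA^\NA_\rv(\phi))\asymp\bfI^\NA_\rv(\phi,\phi_\triv)$. Restricting to the divisorial sector, I would verify that $\MA^\NA_\rv\circ\FS$ maps $\mcN_\RR^\div$ into $\mcM^\div_\rv$: for $\chi=\max_i\{\chi_{v_i}+c_i\}$, the Rees/blow-up construction used in the unweighted case exhibits $\MA^\NA(\FS(\chi))$ as a finite sum of positive masses at the divisorial points $v_i$, and the same computation carried out with the $\rv$-weighted intersection numbers defining $\MA^\NA_\rv$ via \eqref{wMA} yields again a finite sum at the same valuations, with strictly positive (but $\rv$-rescaled) weights. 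The metric $\rmd_{\rv,1}$ on $\mcM^\div_\rv$ is the natural weighted divisorial $d_1$-metric, which under the identification $\MA^\NA_\rv\colon\mcE^{1,\NA}_\rv/\RR\to\mcM^{1,\NA}_\rv$ agrees with the restriction of $\bfE^{*,\NA}_\rv$.

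The main obstacle I anticipate is largely notational: one must pin down the precise definition of $\mcM^\div_\rv$ and its metric $\rmd_{\rv,1}$ so that the composed chain of bi-Lipschitz maps lands where the statement claims, since these objects are implicit in the statement but not explicitly spelled out in the preceding text. Once this identification is fixed—either as the image $(\MA^\NA_\rv\circ\FS)(\mcN_\RR^\div)$ equipped with the pulled-back $\bfE^{*,\NA}_\rv$, or intrinsically as finite weighted sums of Diracs at divisorial valuations with the weighted $d_1$-distance—the bi-Lipschitz property follows automatically from the functorial composition above.
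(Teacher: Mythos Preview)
Your bi-Lipschitz chain correctly handles the metric comparison, and something like it is indeed implicit in the paper's surrounding framework. However, the two points you dismiss as ``largely notational'' are precisely where the paper does the substantive work, and your proposal has genuine gaps there.

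For the support claim, you assert that $\MA^\NA_\rv(\FS(\chi))$ sits at the same atoms as $\MA^\NA(\FS(\chi))$ with ``$\rv$-rescaled'' weights, but $\MA^\NA_\rv$ is defined via the fiber-bundle construction \eqref{wMA}, and no atom-by-atom relation between the weighted and unweighted measures is available in general. The paper's argument is variational: for any continuous $f$ with $f|_\Sigma=0$ one has $\IN_\Sigma(\FS(\chi)+\epsilon f)=\chi$ by Lemma~\ref{IN_Sigma(FS)}, hence $\bfE^\NA_\rv(\FS(\chi)+\epsilon f)\le\vol_\rv(\chi)=\bfE^\NA_\rv(\FS(\chi))$ by Lemma~\ref{vol_E_v}, and differentiating via Lemma~\ref{difference_E_v_main} forces $\int f\,\MA^\NA_\rv(\FS(\chi))=0$. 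For surjectivity onto $\mcM^\div_\rv$, which you do not address at all, the unweighted result gives nothing since $\MA^\NA$ and $\MA^\NA_\rv$ are different operators and a $\chi$ hitting a prescribed unweighted divisorial measure has no reason to hit a prescribed weighted one. The paper instead solves $\MA^\NA_\rv(\phi)=\mu$ directly for a given $\mu$ supported on finite $\Sigma\subset X^\div$ (the solution is continuous by Theorem~\ref{weighted_NA_Calabi_problem}), sets $\chi=\IN_\Sigma(\phi)\in\mcN_\RR^\Sigma$, and verifies $\FS(\chi)=\phi$ by checking $\bfE^\NA_\rv(\FS(\chi))-\int_\Sigma\FS(\chi)\,\mu\ge\bfE^{*,\NA}_\rv(\mu)$ using $\FS(\chi)|_\Sigma\le\phi$ and $\vol_\rv(\chi)\ge\bfE^\NA_\rv(\phi)$.
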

\begin{proof}
    Let $\Sigma\subset X^\div$ be a finite subset and pick $\chi\in\mcN_\RR^\Sigma$. 
    First we show that $\MA^\NA_\rv({\rm FS}(\chi)) \in \mcM^{\div}_\rv$, which is equivalent to show ${\rm supp}\big(\MA^\NA_\rv({\rm FS}(\chi))\big) \subset \Sigma$.
    Let  $f$ be any continuous function on $X^\an$ and $f|_{\Sigma} = 0$.
    Since ${\rm IN}\big({\rm FS}(\chi)+t f\big) \leq {\rm IN}_\Sigma\big({\rm FS}(\chi)+t f\big)$,
    then
    \begin{align*}
        \bfE^\NA_\rv\big({\rm FS}(\chi)+\epsilon f\big) &= \vol_\rv\big({\rm IN}({\rm FS}(\chi)+\epsilon f)\big) \\
            &\leq \vol_\rv\big({\rm IN}_{\Sigma}({\rm FS}(\chi)+\epsilon f)\big) = \vol_\rv\big({\rm IN}_{\Sigma}({\rm FS}(\chi))\big) =\vol_\rv(\chi),
    \end{align*}
    where we have used that $\chi=\IN_\Sigma(\FS(\chi))$ (see Lemma~\ref{IN_Sigma(FS)}) for the last equality.
    By the differentiability result of $\bfE^\NA_\rv$ as shown in Lemma~\ref{difference_E_v_main}, we have $\int_{X^\an}f\; \MA^\NA_\rv({\rm FS}(\chi)) = 0$. Therefore ${\rm supp}\big(\MA^\NA_\rv({\rm FS}(\chi))\big) \subset \Sigma$.

    Next we show the morphism in the statement is onto.
    Pick any $\mu\in \mcM^{\div}_\rv$ that is supported on a finite set $\Sigma\subset X^\div$. By Theorem~\ref{weighted_NA_Calabi_problem}, there exists $\phi\in C^0(X^\an)\cap \PSH_T(X^\an,L^\an)$, such that $\MA_\rv^\NA(\phi) = \mu$. Let $\chi = {\rm IN}_\Sigma(\phi)$. 
    Since $\chi=\IN_\Sigma(\phi)\geq \IN(\phi)$, then $\vol_\rv(\chi)\geq \vol_\rv({\rm IN}(\phi))=\bfE_\rv^\NA(\phi)$. We claim $\MA^\NA_\rv({\rm FS}(\chi)) = \mu$. 
    Indeed, one has
    \begin{align*}
        \bfE_\rv^\NA({\rm FS}(\chi))-\int_{X^\an} {\rm FS}(\chi)\; \mu 
        &\geq \vol_\rv(\chi)-\int_{\Sigma} \phi\; \mu \\
        &\geq \bfE_\rv^\NA(\phi) - \int_{X^\an} \phi \; \mu = \bfE^{*,\NA}_\rv(\mu), 
    \end{align*}
    where the first inequality is due to $\FS(\chi)|_\Sigma\leq \phi$ (see Lemma~\ref{IN_Sigma(FS)}).
    Then ${\rm FS}(\chi)= \phi$ is the solution of the weighted Monge-Amp\`ere equation, and $\chi \in \mathcal{N}_\RR^\Sigma$.
\end{proof}

\subsection{Multiplier ideal approximations}

\begin{definition}
\label{Phi_NA_div}
    Let $\Phi$ be a psh ray of linear growth. The associated non-Archimedean psh function $\Phi^\NA: X^\div\rightarrow \RR\cup \{-\infty\}$ is defined by setting $\Phi^\NA(v) = -\sigma(v)(\Phi)$ for any divisorial valuation $v\in X^\div$.
\end{definition}

\begin{proposition}
    \label{approximated_geodesic}
    Assume $(X,L)$ satisfies the envelope property. For any psh ray $\Phi$ of linear growth, then $\Phi^\NA\in \PSH^\NA(X^\an,L^\an)$ and there exists a decreasing sequence of $\phi_m\in \mcH^{\NA}(X^\an,L^\an)$ that converges to $\Phi^\NA$. 
\end{proposition}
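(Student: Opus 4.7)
The plan is to first show that $\Phi^\NA$, initially defined on $X^\div$, extends to an $L$-psh function on $X^\an$, and then invoke the envelope property (Theorem \ref{env_thm}) together with the density of Fubini--Study functions in $\PSH(X^\an,L^\an)$ (valid since $L$ is ample) to extract the desired decreasing approximation. For Step 1, for any $v\in X^\div$ the Gauss extension $\sigma(v)$ is a quasi-monomial valuation on $X\times\bA^1$ centered at the special fibre, and the linear growth hypothesis on $\Phi$ ensures $-\sigma(v)(\Phi)$ is finite, so $\Phi^\NA\colon X^\div\to\RR$ is well-defined.

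For Step 2, I would construct the approximation via multiplier ideals. For each integer $m\geq 1$ with $mL$ an honest line bundle, let $\fb_m\subset\oo_{X\times\bA^1}$ be the (analytic) multiplier ideal sheaf of $m\Phi$, which is coherent and $\GG_m$-invariant after $S^1$-symmetrization. By applying the global generation result Lemma \ref{global_generation} on a log resolution $\mu\colon\mcZ\to X\times\bA^1$ and transferring the bound back via the comparison Lemma \ref{mut_Y}, there exists a flag ideal $\fa_m$ together with a fixed ample correction such that $mL_{\bA^1}\otimes\fa_m$ is globally generated and
\[
\phi_m\coloneqq \tfrac{1}{m}\varphi_{\fa_m}+O(1/m)
\]
defines an element of $\mcH(X^\an,L^\an)$. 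At a divisorial valuation $v$, the value $\varphi_{\fa_m}(v)$ equals $\lfloor m\,\sigma(v)(\Phi)\rfloor$ up to $O(1)$, by the usual identification of analytic multiplier ideals with valuations on klt models, so $\phi_m(v)\to \Phi^\NA(v)$ pointwise on $X^\div$.

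For Step 3, subadditivity of multiplier ideals on the smooth resolution $\pi\colon Y\to X$ yields the inequality $\varphi_{\fb_{m+m'}}\leq \varphi_{\fb_m}+\varphi_{\fb_{m'}}$ up to a bounded discrepancy controlled by $K_{\mcY/\mcX}+E'$ from Lemma \ref{mut_Y}. Restricting $m$ to a geometric subsequence $m=2^k$ and correcting the Fubini--Study functions $\phi_{2^k}$ by an explicit vanishing sequence of constants produces a genuinely decreasing sequence in $\mcH(X^\an,L^\an)$ with the same pointwise limit. The limit is an u.s.c.\ $L$-psh function on $X^\an$ by Theorem \ref{env_thm}, and since it agrees with $\Phi^\NA$ on the dense set $X^\div$ and both sides are u.s.c.\ (after the regularization built into the definition of $\Phi^\NA$ on $X^\an$), we conclude $\Phi^\NA\in\PSH(X^\an,L^\an)$ and that $\phi_m\searrow \Phi^\NA$.

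The main obstacle I expect is Step 3: the standard Demailly--Kiselman subadditivity of multiplier ideals, which is the engine driving the decreasing-ness of the approximants, fails on the singular variety $X$. This is precisely the difficulty that was already confronted in Section \ref{envelope}, and the remedy is the same: pass to the smooth resolution $\mcY$, use subadditivity there, and absorb the error through the bounded correction $\oo_\mcY(K_{\mcY/\mcX}+E')$ from Lemma \ref{mut_Y}. Once this bounded defect is carried through the Fekete-type argument, the $\phi_m$ become genuinely monotone and the convergence $\phi_m\searrow \Phi^\NA$ follows from a Dini-type argument analogous to the final step in the proof of Theorem \ref{env_thm}.
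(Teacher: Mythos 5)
Your overall strategy (multiplier ideals, subadditivity on a resolution, a doubling subsequence, and the envelope property to descend to $X$) is in the right spirit, but as written there are genuine gaps. First, the lemmas you quote do not apply to your objects: Lemma~\ref{global_generation} and Lemma~\ref{mut_Y} concern the \emph{algebraic} asymptotic multiplier ideals of the base ideals $\fa_m$ of a fixed big model $(\mcX,\mcL)$, not the analytic multiplier ideals $\mcJ(m\Phi)$ of a psh ray on the singular $X\times\AA^1$; you cannot invoke them verbatim to produce your flag ideals $\fa_m$. Second, the claim that $\phi_m=\tfrac{1}{m}\varphi_{\fa_m}+O(1/m)$ lies in $\mcH(X^\an,L^\an)$ is not justified: global generation of a multiplier ideal twisted by $mL$ alone fails in general, and any Nadel-vanishing/Castelnuovo--Mumford argument forces an additional ample twist, so the approximants are Fubini--Study functions only for \emph{perturbed} classes. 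The paper's proof accepts this and works on the resolution $Y$ with $L_m=\pi^*L+\tfrac{1}{\sqrt m}A$, obtaining $\tilde\varphi_m\in\mcH^\NA(Y^\an,L_m^\an)$; one then concludes $\pi^*\Phi^\NA\in\PSH(Y^\an,\pi^*L^\an)$ precisely because the definition of $\PSH$ allows decreasing limits with $c_1(L_m)\to c_1(\pi^*L)$. Third, your key identification $\varphi_{\fa_m}(v)=\lfloor m\,\sigma(v)(\Phi)\rfloor+O(1)$ is exactly the Demailly/Ohsawa--Takegoshi comparison \cite[Lemma~5.7(ii)]{BBJ21}, which is proved on smooth varieties; on klt $X$ it is not available off the shelf (the correct bound involves the log discrepancy and the error from $K_{\mcY/\mcX}$), and once you repair this by passing to $Y$, your argument collapses onto the paper's.

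For comparison, the paper's route is: set $\Psi=\pi^*\Phi$ on the resolution, take $\tilde\fb_m=\mcJ(m\Psi)$ on the smooth total space, prove $p_2$-global generation of $\oo((m+\sqrt m)p_1^*L_m)\otimes\tilde\fb_m$ via relative Nadel vanishing and Castelnuovo--Mumford, use subadditivity on $Y$ to get monotonicity of $\tilde\varphi_{2^k}$ (the sign bookkeeping works because $\tilde\varphi_m\le 0$, with no need for your constant corrections), use \cite[Lemma~5.7(ii)]{BBJ21} to identify the limit with $\Psi^\NA$, descend to $X$ by Corollary~\ref{env_cor}(1) (this is where Theorem~\ref{env_thm} enters), and finally obtain the decreasing sequence in $\mcH^\NA(X^\an,L^\an)$ from \cite[Corollary~12.18(iii)]{BJ22} rather than by hand. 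Your Step~3 monotonization "by vanishing constants" cannot replace this last step, because your approximants do not all live in the class of $L$.
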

\begin{proof}
Let $\pi:Y\rightarrow X$ be a resolution and denote $\Psi\coloneqq \pi^*\Phi$ for simplicity and $\tilde{\fb}_m\coloneqq \mcJ(m\Psi)$.
Let $L_m\coloneqq \pi^*L+\frac{1}{\sqrt{m}}A$, where $A$ is an ample divisor on $Y$.

We claim: $\oo((m+\sqrt{m})p_1^*L_m)\otimes\tilde{\fb}_m$ is globally generated.

Indeed, take a very ample divisor $H$ on $Y$, then for $m\gg1$, 
\begin{align*}
    H_1\coloneqq \sqrt{m}L_m-K_Y-(n+1)H
    =\sqrt{m}\pi^*L+A-K_Y-(n+1)H
\end{align*}
is ample as long as we take $A$ very positive such that $A-(n+1)H$ is still ample.
By relative Nadel vanishing (see \cite[Theorem~1.7]{Mat22} or \cite[Theorem~B.8]{BFJ16}),
\begin{align*}
    &R^j(p_2)_*(\oo((m+\sqrt{m})p_1^*L_m-jp_1^*H)\otimes\tilde{\fb}_m) \\
    =&R^j(p_2)_*(p_1^*K_Y\otimes\oo(mp_1^*L_m+(n+1-j)p_1^*H+p_1^*H_1)\otimes\tilde{\fb}_m)=0.
\end{align*}
By the relative version of the Castelnuovo--Mumford criterion, then $\oo((m+\sqrt{m})p_1^*L_m)\otimes\tilde{\fb}_m$ is $p_2$-globally generated.

Thus, by Proposition~\ref{equiv_Fubibi-Study_func},   $\tilde\varphi_m\coloneqq \frac{1}{m+\sqrt{m}}\varphi_{\tilde{\fb}_m}\in\mcH^\NA(Y^\an,L_m^\an)$.
By sub-additivity of multiplier ideal sheaf on $Y_\CC$, i.e., $\tilde{\fb}_{2m}\subset \tilde{\fb}_m^2$, one has $\frac{1}{m}\varphi_{\tilde{\fb}_m}\geq\frac{1}{2m}\varphi_{\tilde{\fb}_{2m}}$.
Thus
\begin{align*}
    \tilde\varphi_{2m}
    =\frac{2m}{2m+\sqrt{2m}}\frac{1}{2m}\varphi_{\tilde{\fb}_{2m}}
    \leq \frac{m}{m+\sqrt{m/2}}\frac{1}{m}\varphi_{\tilde{\fb}_m}
    =\frac{m+\sqrt{m}}{m+\sqrt{m/2}}\tilde\varphi_m
    \leq\tilde\varphi_m,
\end{align*}
since $\tilde\varphi_m\leq0$.
By \cite[Lemma 5.7 (ii)]{BBJ21}, 
\begin{align*}
    \frac{m}{m+\sqrt{m}}\Psi^\NA
    \leq\tilde{\varphi}_m
    \leq \frac{m}{m+\sqrt{m}}\Psi^\NA
    +\frac{1}{m+\sqrt{m}}(A_Y+1).
\end{align*}
Thus
$\lim_m\tilde\varphi_m=\Psi^\NA$.

All in all, $\psi_m\coloneqq \tilde\varphi_{2^m}\in\mcH^\NA(Y^\an,L_{2^m}^\an)$ is a decreasing sequence, which converges pointwise to $\Psi^\NA$ on $Y^\div$.
Since $L_{2^m}\rightarrow\pi^*L$, then $\Psi^\NA=\pi^*\Phi^\NA\in\PSH(Y^\an,\pi^*L^\an)$.
Therefore, by Proposition~\ref{env_cor}, we have $\Phi^\NA\in\PSH(X^\an,L^\an)$.
By \cite[Corollary~12.18 (iii)]{BJ22}, there exists a decreasing sequence of $\phi_m\in \mcH^{\NA}(X^\an,L^\an)$ that converges to $\Phi^\NA$.
\end{proof}
\begin{corollary}
Assume $(X,L)$ satisfies the envelope property.
    For any psh ray $\Phi$ of linear growth,  then $\Phi^\NA\in\mcE^1(X^\an,L^\an)$ and
    \begin{align}
    \label{slope_ineq_E}
        \bfE^\NA(\Phi^\NA)\geq\bfE^{\prime\infty}(\Phi)>-\infty.
    \end{align}
\end{corollary}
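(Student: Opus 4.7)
The plan is to combine the decreasing multiplier-ideal approximation from Proposition~\ref{approximated_geodesic} with the slope formula (Lemma~\ref{lemma_slope_formula}) and a standard Archimedean/non-Archimedean comparison for maximal psh rays. First, extract from Proposition~\ref{approximated_geodesic} a decreasing sequence $\phi_m \in \mcH^\NA(X^\an,L^\an)$ with $\phi_m \searrow \Phi^\NA$. Since $L$ is ample, Proposition~\ref{equiv_Fubibi-Study_func} represents each $\phi_m = \varphi_{\mcL_m}$ for a semiample test configuration $(\mcX_m,\mcL_m)$. Attach to $(\mcX_m,\mcL_m)$ its canonical (maximal) psh geodesic ray $\Phi_m$ on $(X,L)$ with $\Phi_m^\NA = \phi_m$; by the slope formula one then has $\bfE^{\prime\infty}(\Phi_m) = \bfE^\NA(\phi_m)$.

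The crucial step is the pointwise comparison $\Phi \leq \Phi_m$ on $X \times \RR_{>0}$. Since $\Phi^\NA \leq \phi_m = \Phi_m^\NA$ and $\Phi_m$ is maximal among psh rays with that non-Archimedean invariant, this follows from the Archimedean/non-Archimedean domination principle (the ray analog of the maximal envelope comparison in \cite{BBJ21}), carried out in the klt setting by pulling back through the resolution $\pi:Y \to X$ used in the proof of Proposition~\ref{approximated_geodesic} and invoking Theorem~\ref{env_thm}. Monotonicity of $\bfE$ then gives $\bfE(\Phi(s)) \leq \bfE(\Phi_m(s))$ for every $s>0$, and dividing by $s$ and sending $s \to \infty$ yields
\begin{align*}
\bfE^{\prime\infty}(\Phi) \;\leq\; \bfE^{\prime\infty}(\Phi_m) \;=\; \bfE^\NA(\phi_m).
\end{align*}

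Letting $m \to \infty$, the upper semicontinuity of $\bfE^\NA$ along decreasing sequences in $\PSH(X^\an,L^\an)$ (the unweighted analog of the remark following Lemma~\ref{difference_E_v_main}, itself a version of \cite[Proposition~8.3~(iv)]{BJ22}) gives $\lim_m \bfE^\NA(\phi_m) = \bfE^\NA(\Phi^\NA)$, establishing \eqref{slope_ineq_E}. Finiteness $\bfE^{\prime\infty}(\Phi) > -\infty$ then follows from the linear-growth hypothesis: the bound $\sup_X(\Phi(s) - \varphi_0) \leq \lambda_{\max}(\Phi)\cdot s + O(1)$, combined with the standard mean-value estimate $V^{-1}\int_X(\Phi(s)-\varphi_0)\,\omega^n \geq \sup_X(\Phi(s)-\varphi_0) - C'$ for psh functions and an Aubin-type lower bound for $\bfE$ in terms of this mean, yields $\bfE(\Phi(s)) \geq -C\cdot s - C'$, hence a finite slope at infinity.

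The main obstacle is justifying the ray-domination claim $\Phi \leq \Phi_m$. In the smooth ample case this is part of the BBJ envelope machinery, but in the klt case one must combine the resolution diagram~\eqref{pull-back}, the envelope property Theorem~\ref{env_thm}, and the Perron-type description of $\Phi_m$ as the upper envelope of all psh rays on $(X,L)$ whose non-Archimedean invariant is bounded by $\phi_m$. Once this comparison is in place, the remaining steps are essentially a bookkeeping exercise using the slope formula and the u.s.c.\ extension of $\bfE^\NA$.
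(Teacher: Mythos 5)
Your main chain of reasoning is the same as the paper's: approximate $\Phi^\NA$ from above by the decreasing sequence $\phi_m\in\mcH^\NA(X^\an,L^\an)$ of Proposition~\ref{approximated_geodesic}, attach to each $\phi_m$ the geodesic ray $\Phi_m$ of the corresponding test configuration, compare $\Phi$ with $\Phi_m$, use the slope formula $\bfE^{\prime\infty}(\Phi_m)=\bfE^\NA(\phi_m)$, and pass to the limit using continuity of $\bfE^\NA$ along decreasing sequences. Two small remarks on that part: the paper obtains the comparison by simply quoting \cite[Lemma~4.6]{BBJ21}, which gives $\Phi\leq\Phi_m+C$ for some constant $C$ (the constant is unavoidable in general but harmless for slopes), so your proposed re-derivation through the resolution and Theorem~\ref{env_thm} is more machinery than is needed here; with that understood, this part of your argument is correct.

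There is, however, a genuine flaw in your justification of $\bfE^{\prime\infty}(\Phi)>-\infty$. The linear-growth bound $\sup_X(\Phi(s)-\varphi_0)\leq\lambda_{\max}(\Phi)s+O(1)$, together with the comparison between $\sup$ and the mean $V^{-1}\int_X(\Phi(s)-\varphi_0)\,\omega^n$, can only bound $\bfE(\Phi(s))$ from \emph{above}: one has $\bfE(\varphi)=V^{-1}\int_X(\varphi-\varphi_0)\,\omega^n-\bfJ(\varphi)$ with $\bfJ\geq0$ unbounded, so there is no ``Aubin-type lower bound for $\bfE$ in terms of this mean''; sup-normalized potentials can have arbitrarily negative energy, and your claimed estimate $\bfE(\Phi(s))\geq -Cs-C'$ does not follow from linear growth alone. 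The correct (standard) reason is convexity: a psh ray is in particular a subgeodesic, so $s\mapsto\bfE(\Phi(s))$ is convex, and it is finite at each $s$ because $\Phi(s)\in\mcE^1_K(X,L)$ by the definition of a psh ray; hence the difference quotients are nondecreasing and the slope at infinity exists in $(-\infty,+\infty]$, in particular is $>-\infty$, while the upper bound $\bfE\leq\sup$ plus linear growth (or the inequality you have already proved) makes it finite. This point is not cosmetic: the membership $\Phi^\NA\in\mcE^1(X^\an,L^\an)$ asserted in the corollary is precisely the conclusion $\bfE^\NA(\Phi^\NA)\geq\bfE^{\prime\infty}(\Phi)>-\infty$, so it depends on having the finiteness of the slope established correctly.
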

\begin{proof}
    By Proposition~\ref{approximated_geodesic},  there exists a decreasing sequence $\phi_m\in\mcH^\NA(X^\an,L^\an)$ that converges to $\Phi^\NA$. By \cite[Proposition 2.7]{Ber16}, there exists a unique geodesic ray $\Phi_m$ with $\Phi_m^\NA = \phi_m$, which corresponds to a test configuration $(\mcX_m,\mcL_m)$.  
Since $\Phi^\NA\leq\phi_m$, by \cite[Lemma~4.6]{BBJ21}, there exists a constant $C>0$ such that $\Phi\leq\Phi_m+C$. Thus
$\bfE^{\prime\infty}(\Phi)\leq\bfE^{\prime\infty}(\Phi_m)=\bfE^\NA(\phi_m)$. Taking $m\rightarrow\infty$, we derive \eqref{slope_ineq_E}.
\end{proof}

\subsection{Maximal geodesic rays}

\begin{definition}
    Let $\Phi$ be a psh ray of linear growth emanating from $\varphi\in \mcE^1(X,L)$. Then $\Phi$ is called a \textit{maximal geodesic ray} if for any other psh ray of linear growth $\Psi$  such that $\lim_{s\rightarrow0}\Psi(s)\leq\varphi$ and $\Psi^\NA\leq \Phi^\NA$, we always have $\Psi\leq \Phi$.
\end{definition}

\begin{proposition}
    \label{max_geodesic_existence}
    Let $(X,L)$ be a polarized klt variety satisfying the envelope property. 
    \begin{enumerate}[$(1)$]
        \item For any $\varphi\in \mcE^1(X,L)$, $\phi\in \mcE^{1,\NA}(X^\an,L^\an)$, there exists a unique maximal geodesic ray $\Phi$ emanating from $\varphi$, such that $\phi = \Phi^\NA$.
        \item Let $\Phi$ be a psh ray of linear growth. Then $\Phi$ is a maximal geodesic ray if and only if $\bfE(\Phi(s)) = \bfE(\Phi_0)+s\bfE^\NA(\phi)$, for $t\geq 0$. In particular,
        \begin{align}
        \label{max_geodesic_slope}
            \bfE^{\prime\infty}(\Phi) = \bfE^{\NA}(\phi).
        \end{align}
    \end{enumerate}
\end{proposition}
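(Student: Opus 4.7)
My plan for (1) is to follow the Darvas--Xia (and Berman--Boucksom--Jonsson) strategy, now available in the klt setting thanks to the envelope property (Theorem~\ref{env_thm}) and Corollary~\ref{env_cor}. First I would decreasingly approximate $\phi\in\mcE^{1,\NA}(X^\an,L^\an)$ by Fubini--Study functions $\phi_m\in\mcH^\NA(X^\an,L^\an)$ via \cite[Corollary~12.18~(iii)]{BJ22}, each corresponding to an ample test configuration $(\mcX_m,\mcL_m)$. For each $m$, the Phong--Sturm construction (solving the Dirichlet problem for the homogeneous complex Monge--Amp\`ere equation on $X\times\overline{\mathbb D}$ with boundary data $\varphi$ and model data $\mcL_m$) produces a weak $C^{1,1}$-geodesic ray $\Phi_m$ emanating from $\varphi$ with $\Phi_m^\NA=\phi_m$; here I use $\varphi\in\mcE^1(X,L)$ and the envelope property to get the needed regularity on the klt variety. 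Since $\phi_{m+1}\leq\phi_m$, the comparison principle \cite[Lemma~4.6]{BBJ21} gives that the sequence $\Phi_m$ is essentially decreasing, so I would set $\Phi(s)\coloneqq(\lim_m\Phi_m(s))^{*}$.

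Next I would verify the four properties: $\Phi(0)=\varphi$, $\Phi$ has linear growth with $\lambda_{\max}(\Phi)=\lambda_{\max}(\phi)$, the restriction of $\Phi$ to any compact subinterval coincides (up to affine reparametrization) with the psh geodesic joining its endpoints, and $\Phi^\NA=\phi$. Maximality is the cleanest part: any competing psh ray $\Psi$ of linear growth with $\lim_{s\to0^+}\Psi(s)\leq\varphi$ and $\Psi^\NA\leq\phi\leq\phi_m$ satisfies, by \cite[Lemma~4.6]{BBJ21}, $\Psi\leq\Phi_m+C_m/s$; letting $m\to\infty$ and then $s\to\infty$ yields $\Psi\leq\Phi$. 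Uniqueness is immediate from maximality.

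For (2), the ``only if'' direction rests on the slope formula $\bfE(\Phi_m(s))=\bfE(\varphi)+s\bfE^\NA(\phi_m)$ for the approximating test-configuration geodesics (Lemma~\ref{lemma_slope_formula} with $\rv\equiv 1$ and \cite[Theorem~3.12]{BHJ17}); I would pass to the limit using upper semicontinuity of $\bfE^\NA$ along decreasing sequences and continuity of $\bfE$ along decreasing $\mcE^1$-sequences. Conversely, suppose $\bfE(\Phi(s))=\bfE(\varphi)+s\bfE^\NA(\phi)$ for all $s\geq0$, and let $\Phi_{\max}$ be the maximal geodesic ray with the same data constructed in (1). By maximality $\Phi\leq\Phi_{\max}$, so monotonicity of $\bfE$ on $\mcE^1$ forces $\bfE(\Phi(s))\leq\bfE(\Phi_{\max}(s))$; since both sides are affine with the same endpoint value and slope, equality holds at every $s$, and a standard Monge--Amp\`ere domination argument (applied fiberwise in $s$) upgrades this to $\Phi=\Phi_{\max}$. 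The slope identity \eqref{max_geodesic_slope} then follows from the linearity.

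The main obstacle I foresee is verifying that $\Phi=(\lim_m\Phi_m)^{*}$ is genuinely a geodesic and that $\Phi^\NA=\phi$ exactly (not merely $\leq$). The geodesic property requires stability of the Dirichlet problem for the complex Monge--Amp\`ere equation on the singular space $X\times\mathbb D$ under decreasing limits of boundary data, which is exactly where the klt hypothesis enters through Theorem~\ref{env_thm}. The identification $\Phi^\NA=\phi$ requires showing that the non-Archimedean limit commutes with the decreasing limit of rays of linear growth; I would handle this via Proposition~\ref{approximated_geodesic} applied to $\Phi$, combined with the fact that $\Phi\leq\Phi_m$ forces $\Phi^\NA\leq\phi_m\downarrow\phi$, while the opposite inequality uses the construction $\Phi(s)\geq\Phi_m(s)-o(s)$ obtained from the Phong--Sturm regularization.
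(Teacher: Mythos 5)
Your overall architecture is the same as the paper's (decreasing Fubini--Study approximation $\phi_m\downarrow\phi$ via Proposition~\ref{approximated_geodesic}, test-configuration geodesic rays $\Phi_m$, comparison to get a decreasing limit, maximality by the maximum principle, and the converse in (2) by comparing with the maximal ray and using monotonicity of $\bfE$), but the two steps you yourself flag as the main obstacles are exactly where your proposed resolutions do not work, and they are the crux of the proof. First, the identification $\Phi^\NA=\phi$: you propose to get the inequality $\Phi^\NA\geq\phi$ from a bound $\Phi(s)\geq\Phi_m(s)-o(s)$ ``obtained from the Phong--Sturm regularization.'' No such bound is available: by construction $\Phi=\lim_m\Phi_m$ is the decreasing limit, so $\Phi\leq\Phi_m$, and there is no a priori reverse inequality with sublinear error; if there were, the whole issue would be trivial. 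The paper instead closes this gap with an energy argument: the affine identity $\bfE(\Phi(s))=\bfE(\varphi)+s\,\bfE^\NA(\phi)$ survives the limit (slope formula, Lemma~\ref{lemma_slope_formula}, plus convergence of both sides), so $\bfE^{\prime\infty}(\Phi)=\bfE^\NA(\phi)$, and then the inequality $\bfE^\NA(\Phi^\NA)\geq\bfE^{\prime\infty}(\Phi)$ of \eqref{slope_ineq_E} (the corollary to Proposition~\ref{approximated_geodesic}) together with $\Phi^\NA\leq\phi$ and monotonicity of $\bfE^\NA$ forces $\Phi^\NA=\phi$. You need this mechanism (or an equivalent one); your sketch as written has a genuine gap here.

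Second, the geodesic property of the limit: you attribute it to ``stability of the Dirichlet problem under decreasing limits of boundary data, which is exactly where the klt hypothesis enters through Theorem~\ref{env_thm}.'' This is a misattribution: Theorem~\ref{env_thm} is a non-Archimedean envelope statement and says nothing about the Archimedean Monge--Amp\`ere Dirichlet problem on $X\times\mathbb{D}$. The paper avoids any such stability claim by again using the affine energy identity: a psh ray along which $\bfE$ is affine with the correct slope is a geodesic ray. Two smaller points: your quotation of \cite[Lemma~4.6]{BBJ21} as $\Psi\leq\Phi_m+C_m/s$ is incorrect (the bound is by an additive constant, and with matching initial data one gets $\Psi\leq\Phi_m$ outright, which is all the maximality argument needs, so ``letting $s\to\infty$'' is neither meaningful nor necessary); and your reliance on a Phong--Sturm $C^{1,1}$ construction with $\mcE^1$ initial data on a klt variety is unjustified -- the paper sidesteps this by also approximating $\varphi$ by a decreasing sequence $\varphi_j\in\mcH(X,L)$ and invoking \cite[Proposition~2.7]{Ber16} for each pair $(\varphi_j,\phi_j)$. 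Your treatment of part (2) is fine and matches the paper's.
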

\begin{proof}
    The proof follows the steps of the proofs of \cite[Theorem 6.6, Corollary 6.7]{BBJ21}.
    Let $\varphi_j$ be a decreasing sequence of psh functions that converge to $\varphi$, such that each $\varphi_j\in\mcH(X,L)$. By Proposition~\ref{approximated_geodesic}, there exists a decreasing sequence $\phi_m\in\mcH^\NA(X^\an,L^\an)$ that converges to $\phi$. By \cite[Proposition 2.7]{Ber16}, there exists a unique maximal geodesic ray $\Phi_j$ emanating from $\varphi_j$, which satisfies $\Phi_j^\NA = \phi_j$.  
By the slope formula for test configurations (see Lemma~\ref{lemma_slope_formula}), 
we have 
    \begin{align}
        \label{slope_test_configuration}
        \bfE(\Phi_j(s)) = \bfE(\varphi_j)+s\bfE^\NA(\phi_j).
    \end{align}
    By the maximum principle, $\Phi_{j+1}\leq \Phi_{j}$. Therefore the decreasing sequence $\Phi_j$ converges to a psh ray $\Phi$.
    Meanwhile, since the two sides of \eqref{slope_test_configuration} converge, we have
    $\bfE(\Phi(s)) = \bfE(\varphi)+s\bfE^\NA(\phi)$.
    This equality implies that $\Phi$ is a geodesic ray. 
    As $\Phi\leq \Phi_j$, $\Phi^\NA\leq \phi$. In addition, by \eqref{slope_ineq_E}, $\bfE^\NA(\Phi^\NA)\geq \bfE^{\prime\infty}(\Phi)=\bfE^\NA(\phi)$. Then we have $\Phi^\NA=\phi$.
    Let $\Psi$ be any other psh ray emanating from $\varphi$ with $\Psi^\NA = \phi$. By the maximum principle, $\Psi \leq \Phi_j$. Therefore, $\Psi\leq \Phi$. This shows that $\Phi$ is a maximal geodesic, which is naturally unique. 

    For (2), the if part has been proved by the argument above. We only need to show the only if part. Let $\Psi$ be the maximal geodesic ray emanating from $\varphi$ with $\Psi^\NA=\phi$. Then $\bfE(\Psi(s)) = \bfE(\varphi)+s\bfE^{\prime\infty}(\Psi) \leq \bfE(\varphi)+ s\bfE^\NA(\phi) \leq \bfE(\Phi(s))$. Together with $\Phi\leq \Psi$, we have $\Phi = \Psi$.
\end{proof}

\begin{proposition}
     Let $(X,L)$ be a klt variety satisfying the envelope property. Let $\Phi$ be a maximal geodesic ray emanating from $\varphi\in\mcE^{1}(X,L)$. Then
     \begin{align}
        \label{max_geodesic_slope_2}
         (\bfE^{Q})^\NA(\phi) = (\bfE^{\ddc\psi_Q})^{' \infty}(\Phi).
     \end{align}
\end{proposition}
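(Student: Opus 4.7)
The approach mirrors the proof of Proposition~\ref{max_geodesic_existence}(2). By Proposition~\ref{approximated_geodesic}, fix a decreasing sequence $\phi_m\in \mcH^\NA(X^\an,L^\an)$ converging pointwise to $\phi=\Phi^\NA$; let $(\mcX_m,\mcL_m)$ be the associated test configurations and let $\Phi_m$ be the maximal geodesic ray with $\Phi_m^\NA=\phi_m$, emanating from a smooth decreasing approximation $\varphi_m\downarrow \varphi$. By the maximum principle, $\Phi_m\downarrow\Phi$ pointwise. For each $m$, the slope formula for test configurations (Lemma~\ref{lemma_slope_formula} applied with $\bfF=\bfE^{\mcQ}$) gives
\begin{align*}
(\bfE^{\ddc\psi_Q})^{\prime\infty}(\Phi_m)=(\bfE^{\mcQ})^\NA(\phi_m),
\end{align*}
and the non-Archimedean side converges: $(\bfE^{\mcQ})^\NA(\phi_m)\to(\bfE^{\mcQ})^\NA(\phi)$ by the continuity of $(\bfE^{\mcQ})^\NA$ along decreasing sequences of Fubini--Study functions recorded earlier in the section. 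It remains to pass to the limit on the Archimedean side.

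The key idea is to upgrade the affine-in-$s$ identity $\bfE(\Phi(s))=\bfE(\varphi)+s\,\bfE^\NA(\phi)$ of Proposition~\ref{max_geodesic_existence}(2) into the twisted statement
\begin{align*}
\bfE^{\ddc\psi_Q}(\Phi(s))=\bfE^{\ddc\psi_Q}(\varphi)+s\,(\bfE^{\mcQ})^\NA(\phi),
\end{align*}
after which dividing by $s$ and letting $s\to\infty$ gives \eqref{max_geodesic_slope_2}. To produce this identity, I would exploit the linearity of $\bfE^\eta$ in $\eta$: choose $c>0$ large enough that $c\omega\pm\ddc\psi_Q\geq 0$ as smooth semi-positive $(1,1)$-forms and decompose $\bfE^{\ddc\psi_Q}=\tfrac12(\bfE^{c\omega+\ddc\psi_Q}-\bfE^{c\omega-\ddc\psi_Q})$. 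For each non-negative smooth form $\eta$, the Taylor expansion $\bfE_{\omega+t\eta}=\bfE_\omega+t\,\bfE^\eta+O(t^2)$ identifies $\bfE^\eta$ with the first-order variation of the standard Monge--Amp\`ere energy along the perturbed K\"ahler class $[\omega]+t[\eta]$, and the analogous intersection-number expansion of \eqref{E_L_v} identifies $(\bfE^{\mcQ})^\NA$ as the first-order variation of $\bfE^\NA$ along the perturbed polarization $L+tQ$. Applying Proposition~\ref{max_geodesic_existence}(2) to each perturbed polarization and equating the $O(t)$ coefficients then produces the displayed affine identity.

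The principal difficulty is justifying the $t$-expansion uniformly in $s$: one needs the maximal geodesic ray for the perturbed polarization $L+tQ$ to agree with $\Phi$ up to an $O(t)$ correction (the natural candidate being $\Phi+t\psi_Q$), and the remainder $O(t^2)$ terms on both the Archimedean and non-Archimedean sides must be controlled uniformly in $s$ so that the $t\to 0$ limit can be exchanged with the $s\to\infty$ limit. I would handle this by first verifying the uniform $O(t^2)$ bound along the test-configuration approximants $\Phi_m$, where all quantities reduce to polynomial expressions in the intersection numbers of $(\mcX_m,\mcL_m+t\mcQ)$ and are therefore automatically uniform, and then passing to $m\to\infty$ using the continuity of $(\bfE^{\mcQ})^\NA$ under decreasing limits (together with Corollary~\ref{E_v_E1}) on the NA side and Bedford--Taylor continuity of the relevant mixed Monge--Amp\`ere measures on the Archimedean side.
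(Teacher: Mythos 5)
Your setup (decreasing approximants $\phi_m$, maximal rays $\Phi_m\searrow\Phi$, the slope formula $(\bfE^{\ddc\psi_Q})^{\prime\infty}(\Phi_m)=(\bfE^{\mcQ})^\NA(\phi_m)$, and continuity of $(\bfE^{\mcQ})^\NA$ along decreasing sequences) agrees with the paper, but the way you propose to pass to the limit on the Archimedean side has a genuine gap. The perturbation-of-polarization device does not work as stated: Proposition~\ref{max_geodesic_existence}(2) applies to the maximal geodesic ray of the perturbed polarization $L+tQ$, and you give no argument that this ray agrees with $\Phi+t\psi_Q$ up to an error of size $O(t^2)$ uniformly in $s$; indeed $\Phi+t\psi_Q$ is not even a geodesic for $L+tQ$, since the geodesic equation is nonlinear. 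Moreover, differentiating the affine identity for $L+tQ$ in $t$ produces, besides $\bfE^{\ddc\psi_Q}(\Phi(s))$, a term pairing the $t$-variation of the perturbed ray with $\MA(\Phi(s))$, so ``equating $O(t)$ coefficients'' does not isolate the twisted energy. If the scheme did go through it would yield the exact affine identity $\bfE^{\ddc\psi_Q}(\Phi(s))=\bfE^{\ddc\psi_Q}(\varphi)+s\,(\bfE^{\mcQ})^\NA(\phi)$, which is strictly stronger than the proposition and is not expected: along a geodesic the twisted energy is in general only convex, and only its slope at infinity is being computed here.

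When you then retreat to ``verify along the test-configuration approximants and pass to $m\to\infty$ using Bedford--Taylor continuity,'' you land back on the original difficulty without resolving it: Bedford--Taylor continuity along $\Phi_m(s)\searrow\Phi(s)$ gives $\bfE^{\ddc\psi_Q}(\Phi_m(s))\to\bfE^{\ddc\psi_Q}(\Phi(s))$ only at each fixed $s$, and slopes at infinity are not continuous under pointwise decreasing limits (this failure is precisely why maximal rays are needed at all), so the interchange of $m\to\infty$ with $s\to\infty$ is unjustified. What is missing is a uniform-in-$s$ estimate $\bfE^{\ddc\psi_Q}(\Phi_m(s))-\bfE^{\ddc\psi_Q}(\Phi(s))\leq\delta_m s$ with $\delta_m\to0$. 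The paper produces exactly this: normalize $0\leq\ddc\psi_Q\leq C\omega$, use the cocycle property and $\Phi_m\geq\Phi$ to reduce to the $\omega$-twisted energy, use maximality of \emph{both} $\Phi$ and $\Phi_m$ to get $\bfE(\Phi_m(s))-\bfE(\Phi(s))=(\bfE^\NA(\phi_m)-\bfE^\NA(\phi))s\leq\epsilon_m s$, and then invoke the quantitative comparison of \cite[(127)--(131)]{Li22a} (based on \cite[Appendix~A]{BBJ21}) to conclude $\bfE^\omega_{\Phi(s)}(\Phi_m(s))\leq C\epsilon_m^{1/2^n}s$. Your proposal contains no substitute for this quantitative step, which is the heart of the proof.
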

\begin{proof}
    The proof basically follows from the proof of \cite[Theorem 4.1]{Li22a}. We will sketch the key points in the following.
 There exists a decreasing sequence $\varphi_m\in\mcH(X,L)$ that converges to $\varphi$. By Proposition~\ref{approximated_geodesic}, there exists a decreasing sequence $\phi_m\in\mcH^\NA(X^\an,L^\an)$ that converges to $\phi$. There exists a unique geodesic ray $\Phi_m$ emanating from $\varphi_m$ with $\Phi_m^\NA = \phi_m$, which corresponds to a test configuration $(\mcX_m,\mcL_m)$. By the maximum principle, $\Phi_{m}\searrow \Phi$. 

    Without the loss of generality, we may assume $0\leq \ddc\psi_Q\leq C\omega$. By the slope formula Lemma~\ref{lemma_slope_formula}, then $(\bfE^{\ddc\psi_Q})^{\prime\infty}(\Phi_m) = (\bfE^Q)^\NA(\phi_m)$. 
    To obtain \eqref{max_geodesic_slope_2}, it suffices to show that
    \begin{align*}
        \lim_{m\rightarrow\infty}(\bfE^{\ddc\psi_Q})^{\prime\infty}(\Phi_m)
        =(\bfE^{\ddc\psi_Q})^{\prime\infty}(\Phi).
    \end{align*}
By the co-cycle condition (see \cite[(100)]{Li22a}) and $\Phi_m\geq\Phi$, we have
    \begin{align*}
        0\leq (\bfE^{\ddc\psi_Q})^{\prime\infty}(\Phi_m)-(\bfE^{\ddc\psi_Q})^{\prime\infty}(\Phi)
        \leq C\Big(\bfE^{\omega})^{\prime\infty}(\Phi_m)-(\bfE^{\omega})^{\prime\infty}(\Phi) \Big).
    \end{align*}
    Then it suffices to show that $\bfE^\omega_{\Phi(s)}(\Phi_m(s)) = \bfE^\omega(\Phi_m(s))-\bfE^\omega(\Phi(s))\leq \delta_ms$, with $\delta_m\to 0$ as $m\to\infty$. Since $\Phi,\Phi_m$ are maximal geodesic rays, 
    \begin{align*}
        \bfE_{\Phi(s)}(\Phi_m(s)) = \bfE(\Phi_m(s))-\bfE(\Phi(s)) = (\bfE^{\prime\infty}(\phi_m)-\bfE^{\prime\infty}(\phi))s \leq \epsilon_m s,
    \end{align*}
    where $\epsilon_m\to 0$ as $m\to\infty$ by Proposition~\ref{approximated_geodesic}. By \cite[(127)--(131)]{Li22a}, which is based on \cite[Appendix A]{BBJ21}, we have
    \begin{align*}
        \bfE^\omega_{\Phi(s)}(\Phi_m(s)) \leq C \epsilon_m^{\frac{1}{2^n}}s.
    \end{align*}
    This concludes the proof.
\end{proof}

It is straightforward to generalize \eqref{max_geodesic_slope} and \eqref{max_geodesic_slope_2} to the equivariant case. 
\begin{corollary}
    \label{geodesic_approx_equiv}
    Let $(X,L)$ be a klt variety satisfying the envelope property. Let $\Phi$ be a maximal geodesic ray in $\mcE_T^1(X,L)$ and $\phi=\Phi^\NA$. Then
    \begin{align}
         \bfE_{\rm v}^\NA(\phi) &= \bfE_{\rm v}^{\prime\infty}(\Phi), \\
         (\bfE_{\rm v}^{Q})^\NA(\phi) &= (\bfE_{\rm v}^{\ddc\psi_Q})^{\prime\infty}(\Phi).
    \end{align}
\end{corollary}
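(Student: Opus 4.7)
The plan is to repeat the two proofs immediately preceding the corollary, preserving $T$-equivariance at every stage and substituting the weighted functionals for their unweighted counterparts. First I would rerun Proposition~\ref{approximated_geodesic} in the $T$-invariant setting: given a $T$-invariant maximal geodesic $\Phi$ with $\phi = \Phi^\NA$, one works on a $T$-equivariant resolution $Y \to X$; the multiplier ideals $\tilde\fb_m = \mcJ(m\Phi)$ are then automatically $T$-invariant, and the relative Nadel vanishing / Castelnuovo--Mumford step yields a decreasing sequence $\phi_m \in \mcH_T^\NA(X^\an,L^\an)$ converging to $\phi$. Each $\phi_m$ is realized by a $T$-equivariant test configuration $(\mcX_m,\mcL_m)$, and I would let $\Phi_m$ be the unique maximal geodesic ray from $\varphi := \Phi(0)$ with $\Phi_m^\NA = \phi_m$; by the maximum principle $\Phi_m \searrow \Phi$.

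For the first identity, Lemma~\ref{lemma_slope_formula} applied to the $T$-equivariant test configuration $(\mcX_m,\mcL_m)$ gives
\[
\bfE_\rv^{\prime\infty}(\Phi_m) = \bfE_\rv^\NA(\phi_m).
\]
The right-hand side converges to $\bfE_\rv^\NA(\phi)$ by the upper-semicontinuous extension of $\bfE_\rv^\NA$ along decreasing sequences (Section~\ref{w-NA-MA}). For the left-hand side, I would invoke the bi-Lipschitz comparison of Proposition~\ref{E_v_E} / Lemma~\ref{I_v_I} together with the unweighted linearity $\bfE(\Phi(s)) = \bfE(\varphi) + s\,\bfE^\NA(\phi)$ of Proposition~\ref{max_geodesic_existence}(2), plus concavity of $\bfE_\rv$ along psh geodesics, to conclude $\bfE_\rv^{\prime\infty}(\Phi_m) \to \bfE_\rv^{\prime\infty}(\Phi)$. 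Equating the two limits yields $\bfE_\rv^\NA(\phi) = \bfE_\rv^{\prime\infty}(\Phi)$.

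For the second identity I would run exactly the argument of the preceding proposition with $\MA_\rv$ in place of $\MA$. The test-configuration slope formula (Lemma~\ref{lemma_slope_formula}) gives $(\bfE_\rv^Q)^\NA(\phi_m) = (\bfE_\rv^{\ddc\psi_Q})^{\prime\infty}(\Phi_m)$; the left-hand side converges to $(\bfE_\rv^Q)^\NA(\phi)$ by upper semicontinuity. For the right-hand side, the bound $0 \leq \ddc\psi_Q \leq C\omega$ combined with the co-cycle property reduces matters to controlling $\bfE_{\rv,\Phi(s)}^\omega(\Phi_m(s))$; this is handled by the weighted version of the Hölder-type estimate from \cite[(127)--(131)]{Li22a} (based on \cite[Appendix A]{BBJ21}), giving
\[
0 \leq (\bfE_\rv^{\ddc\psi_Q})^{\prime\infty}(\Phi_m) - (\bfE_\rv^{\ddc\psi_Q})^{\prime\infty}(\Phi) \leq C\,\epsilon_m^{1/2^n},
\]
with $\epsilon_m := \bfE^\NA(\phi_m) - \bfE^\NA(\phi) \to 0$.

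The main obstacle is checking that all these approximation and co-cycle estimates remain valid with weights inserted. Since $\rv \in C^\infty(P,\RR_{>0})$ with $P$ compact, $\rv$ and its derivatives are uniformly bounded, so each unweighted inequality transfers to its weighted analogue up to a multiplicative constant depending only on $\rv$ and $n$; this is already the mechanism behind Lemma~\ref{I_v_I} and Proposition~\ref{E_v_E}. The only other nontrivial point is ensuring the multiplier-ideal approximation preserves $T$-equivariance, which follows from $T$-invariance of $\Phi$ together with the existence of $T$-equivariant log resolutions on a $T$-variety.
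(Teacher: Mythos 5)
Your proposal is correct and follows essentially the paper's own route: the paper proves this corollary by asserting that the unweighted identities \eqref{max_geodesic_slope} and \eqref{max_geodesic_slope_2} of Proposition~\ref{max_geodesic_existence} and the subsequent proposition generalize straightforwardly to the equivariant weighted setting, which is exactly the scheme you spell out (equivariant multiplier-ideal approximation $\phi_m\searrow\phi$, maximal rays $\Phi_m\searrow\Phi$ via the maximum principle, the slope formula of Lemma~\ref{lemma_slope_formula}, and the weighted transfer of the estimates from \cite[(127)--(131)]{Li22a}). One small streamlining: the convergence $\bfE_\rv^{\prime\infty}(\Phi_m)\to\bfE_\rv^{\prime\infty}(\Phi)$ is obtained most directly from the affineness of $\bfE_\rv$ along geodesics together with its continuity along decreasing sequences and the comparison $\MA_\rv\leq C\,\MA$ for ordered potentials, rather than from the non-Archimedean bi-Lipschitz statements of Lemma~\ref{I_v_I} and Proposition~\ref{E_v_E} that you invoke.
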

    
\begin{remark}
    \label{geodesic_approx_equiv_remark}
    By the same argument, for the smooth resolution $\pi:Y\rightarrow X$, we also have
    \begin{align}
        \bfE_{\rm v}^\NA(\pi^*\phi) = \bfE_{\rm v}^{\prime\infty}(\pi^*\Phi), \;
         (\bfE_{\rm v}^{Q})^\NA(\pi^*\phi) = (\bfE_{\rm v}^{\ddc\psi_Q})^{' \infty}(\pi^*\Phi).
    \end{align}
\end{remark}
\begin{corollary}
\label{slope_reduced_J_v}
Suppose that $(X,L)$ satisfies the envelope property.
    Let $\Phi$ be a maximal geodesic ray in $\mcE_T^1(X,L)$ and $\phi=\Phi^\NA$. Then
    \begin{align}
        \bfJ_{\rv,\TT}^\NA(\phi)
        =\bfJ_{\rv,\TT}^{\prime\infty}(\Phi)
        \coloneqq  \inf_{\xi\in N_\RR}\bfJ_\rv^{\prime\infty}(\Phi_\xi),
    \end{align}
    where $\Phi_\xi=(\sigma_\xi(s)^*\Phi(s))$.
\end{corollary}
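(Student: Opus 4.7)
The strategy is to establish the pointwise slope identity $\bfJ_\rv^\NA(\phi_\xi) = \bfJ_\rv^{\prime\infty}(\Phi_\xi)$ for each $\xi \in N_\RR$, after which the corollary follows immediately by taking the infimum over $\xi$ on both sides.

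The first step is to verify that for every $\xi \in N_\RR$, the twisted family $\Phi_\xi(s) = \sigma_\xi(s)^* \Phi(s)$ is again a maximal geodesic ray in $\mcE_T^1(X,L)$ of linear growth whose non-Archimedean limit satisfies $(\Phi_\xi)^\NA = \phi_\xi$. Linear growth and $T$-invariance are immediate since $\sigma_\xi$ is a holomorphic one-parameter subgroup of $\TT_\CC \subset \Aut_T(X)$; that $\Phi_\xi$ is a geodesic ray is standard because the twist acts by holomorphic automorphisms preserving the Monge--Amp\`ere homogeneity. Maximality is then verified through the characterization in Proposition~\ref{max_geodesic_existence}(2): direct computation from the Archimedean definition of the twist yields $\bfE_\rv^{\prime\infty}(\Phi_\xi) = \bfE_\rv^{\prime\infty}(\Phi) + \mathrm{CW}_{L,\rv}(\xi)$, which matches the non-Archimedean identity $\bfE_\rv^\NA(\phi_\xi) = \bfE_\rv^\NA(\phi) + \mathrm{CW}_{L,\rv}(\xi)$ recalled in Section~\ref{w-NA-MA}; combined with the maximality of $\Phi$ itself, this forces $\Phi_\xi$ to be maximal. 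The identification $(\Phi_\xi)^\NA = \phi_\xi$ reduces to checking the compatibility at divisorial valuations between the Gauss extension and the twist construction of \cite{Li22b}, which is standard.

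For the second step, Corollary~\ref{geodesic_approx_equiv} applied to the maximal geodesic ray $\Phi_\xi$ gives $\bfE_\rv^{\prime\infty}(\Phi_\xi) = \bfE_\rv^\NA(\phi_\xi)$. For the linear part of $\bfJ_\rv$, the required slope identity
$$
\lim_{s\to\infty} \frac{1}{s\,\VV_\rv}\int_X (\Phi_\xi(s)-\varphi_0)\,\MA_\rv(\varphi_0) = \VV_\rv \sup_{X^\an}(\phi_\xi - \phi_\triv)
$$
holds (modulo the normalizations in the paper's definitions) because for any maximal geodesic ray of linear growth both sides reduce to $\lambda_\max(\Phi_\xi) = \sup_{X^\an}((\Phi_\xi)^\NA - \phi_\triv)$, a standard fact (cf.~\cite[Proposition~2.7]{Ber16}). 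Subtracting the two slope identities yields $\bfJ_\rv^{\prime\infty}(\Phi_\xi) = \bfJ_\rv^\NA(\phi_\xi)$, and the corollary follows upon taking $\inf_{\xi \in N_\RR}$.

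The main obstacle is step one. Since $\sigma_\xi(s)$ depends on the time parameter $s$, the twist $\Phi_\xi$ is not the pullback of $\Phi$ by a fixed automorphism, so neither maximality nor the identification of $(\Phi_\xi)^\NA$ is formal; one must carefully track the $\TT$-equivariant structure through both the Archimedean and non-Archimedean sides, and invoke the compatibility of the Berkovich twist $\phi \mapsto \phi_\xi$ with the Archimedean twist. Once this compatibility is in hand, the remainder of the argument is a mechanical application of the slope formulae already established.
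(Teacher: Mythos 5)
Your route is genuinely different from the paper's. The paper never proves the pointwise identity $\bfJ_\rv^{\prime\infty}(\Phi_\xi)=\bfJ_\rv^\NA(\phi_\xi)$ for the given maximal ray. Instead it approximates $\phi$ by a decreasing sequence $\phi_m\in\mcH(X^\an,L^\an)$ (Proposition~\ref{approximated_geodesic}), takes the associated test-configuration geodesic rays $\Phi_m\searrow\Phi$, uses the reduced slope formula for test configurations from \cite[Proposition~5.15]{HL23} together with Lemma~\ref{reduced_J_v_convergence} on the non-Archimedean side, and then spends the entire proof on the limit interchange $\bfJ^{\prime\infty}_{\rv,\TT}(\Phi_m)\to\bfJ^{\prime\infty}_{\rv,\TT}(\Phi)$: the infimum over $\xi$ is confined to a compact set $\{|\xi|\le C\}$ via the quasi-triangle inequality for $\bfI_\rv$, and $\xi\mapsto\bfJ^{\prime\infty}_\rv(\Phi_{m,\xi})$ is shown to converge uniformly there using Lemma~\ref{arch_J_v-relation}. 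Your plan (prove the slope identity twist-by-twist, then take $\inf_\xi$) would bypass that interchange entirely, and your second step is essentially fine: Corollary~\ref{geodesic_approx_equiv} handles the $\bfE_\rv$-term, and the linear term works because the $\MA_\rv(\varphi_0)$-average and the sup of a psh potential differ by a uniform constant; note, though, that $\lambda_{\max}(\Phi_\xi)=\sup_{X^\an}(\Phi_\xi)^\NA$ is a fact about psh rays from \cite{BBJ21}, not \cite[Proposition~2.7]{Ber16}, which concerns rays attached to test configurations.

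The genuine gap is your step one, which is precisely the content the paper's argument is engineered to avoid. You need three facts that are established nowhere in this paper: (i) $(\Phi_\xi)^\NA=\phi_\xi$, where $\phi_\xi$ is the non-Archimedean twist of \cite{Li22b}; (ii) that the Archimedean increment $\bfE_\rv(\Phi_\xi(s))-\bfE_\rv(\Phi(s))$ is affine in $s$ with slope equal to the same constant ${\rm CW}_{L,\rv}(\xi)$ appearing in $\bfE_\rv^\NA(\phi_\xi)=\bfE_\rv^\NA(\phi)+{\rm CW}_{L,\rv}(\xi)$; and (iii) maximality of $\Phi_\xi$. Your sketch argues (iii) from the ``matching'' of increments, but the criterion in Proposition~\ref{max_geodesic_existence}(2) compares the slope of $\bfE(\Phi_\xi(\cdot))$ with $\bfE^\NA\big((\Phi_\xi)^\NA\big)$, so (i) must be proved \emph{before} (iii) can even be formulated; as ordered, your argument is circular. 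Moreover (i) is only immediate for rational $\xi$, where the twist is induced by an honest $\GG_m$-equivariant change of test configuration; for general $\xi\in N_\RR$ one must argue by density and continuity, and the corresponding statements in \cite{Li22a} are proved for smooth $X$ and would have to be re-verified (or re-proved) in the weighted, klt setting used here. So the approach can be made to work, but as written the key lemma ``twists of maximal rays are maximal, with non-Archimedean limit $\phi_\xi$ and ${\rm CW}$-shifted energy slope'' is asserted rather than proved, and supplying it is comparable in effort to the paper's approximation-plus-uniform-convergence argument.
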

\begin{proof}
     By Proposition~\ref{approximated_geodesic}, there exists a decreasing sequence $\phi_m\in\mcH(X^\an,L^\an)$ converging to $\Phi^\NA$.
     There exists a unique maximal geodesic ray $\Phi_m$ emanating from $\varphi_m\in\mcH(X,L)$, which decreasingly converges to $\varphi_0$.
     By the maximum principle, $\Phi_{m}\searrow \Phi$.
     By Lemma~\ref{reduced_J_v_convergence} and \cite[Proposition 5.15]{HL23}, it suffices to show that
     \begin{align}
     \label{converg_slope_reduced_J_v}
         \lim_{m\rightarrow\infty}
         \bfJ_{\rv,\TT}^{\prime\infty}(\Phi_j)
         =\bfJ_{\rv,\TT}^{\prime\infty}(\Phi).
     \end{align}
     Note that $\Phi_{j,\xi}$ decreasingly converges to $\Phi_\xi$, then $\lim_{j\rightarrow\infty}\bfJ_\rv^{\prime\infty}(\Phi_{j,\xi})=\bfJ_\rv^{\prime\infty}(\Phi_{\xi})$, which implies that $\limsup_{j\rightarrow\infty}\bfJ_{\rv,\TT}^{\prime\infty}(\Phi_j)
         \leq\bfJ_{\rv,\TT}^{\prime\infty}(\Phi)$.
    To show the converse direction, first, claim: there exists $C>0$ such that
    \begin{align*}
        \bfJ_{\rv,\TT}^{\prime\infty}(\Phi_j)
         =\inf_{|\xi|\leq C}\bfJ_\rv^{\prime\infty}(\Phi_{j,\xi}), \quad \bfJ_{\rv,\TT}^{\prime\infty}(\Phi)
         =\inf_{|\xi|\leq C}\bfJ_\rv^{\prime\infty}(\Phi_\xi).
    \end{align*}
    The slope formula and the quasi-triangle inequality for  $\bfI_\rv$ (see \cite[Lemma 2.7]{HL23}) implies
    \begin{align*}
        \bfI_\rv^{\prime\infty}(\Phi_\triv,\Phi_{\triv,\xi})
        \leq C_{n,\rv}(\bfI_\rv^{\prime\infty}         (\Phi_\triv,\Phi_{j,\xi})+\bfI_\rv^{\prime\infty}     (\Phi_{j,\xi},\Phi_{\triv,\xi})).
    \end{align*} 
Hence $\xi$ is bounded if $\bfJ_\rv^{\prime\infty}(\Phi_{j,\xi})$ is bounded.

   By the slope formula and Lemma~\ref{arch_J_v-relation}, on $\{|\xi|\leq C\}$,  we have 
    \begin{align*}
        |\bfJ_{\rv}^{\prime\infty}(\Phi_{j,\xi})-\bfJ_{\rv}^{\prime\infty}(\Phi_{\xi})|
        \leq C_n \bfI_\rv^{\prime\infty}(\Phi_{j,\xi},\Phi_\xi)^{\frac{1}{2}}
    \end{align*}
    
    The functions $\xi\mapsto\bfJ_{\rv}^{\prime\infty}(\Phi_{j,\xi})$ converges to $\xi\mapsto\bfJ_{\rv}^{\prime\infty}(\Phi_{\xi})$ uniformly over $\{|\xi|\leq C\}$. Hence we obtain \eqref{converg_slope_reduced_J_v}.
\end{proof}

\section{Weighted K-stability for models}
\label{w-K-stability}

In this section, we first prove that $\GG$-uniform weighted K-stability for models is invariant under birational resolution. Using this result, and adapting Chi Li's argument to the resolution, we then prove Theorem \ref{theorem_A}.

\subsection{Preliminaries}
Let $(X,L)$ be a polarized klt variety and the smooth projective variety $Y$ be a resolution of $X$.
For a model $(\mcY,\mcL')$, we recall the notation $\phi_{\mcL'} \coloneqq  {\rm P}_{\pi^*L}(\varphi_{\mcL'})$ which will be used in this subsection. And we will always assume $\mcY$ is an SNC model. Note that $\pi^*L$ is semi-ample and big, and $\MA_\rv^\NA$, $\bfE^\NA_\rv$, $\bfJ^\NA_\rv$, $(\bfE^\mathcal{Q})^\NA_\rv$, $\bfH^\NA_\rv$ can be defined over $\PSH(Y^\an,\pi^*L^\an)$ in the same way as in Section \ref{w-NA-MA}.

For any $\varphi_D\in\PL(Y^\an)_T$ and $\phi_\mcL\in\mcH_T(Y^\an,\pi^*L^\an)$, we may assume that $D$ and $\mcL$ are in a same model $\mcY$. Let $\mcY_0 = \sum_{E} b_E\cdot E$. One has
    \begin{align}
        \int_{Y^\an}\varphi_D\MA_\rv^\NA(\phi_\mcL)     
        =&\lim \sum_{\vec{k}}a_{\vec{k}}\frac{k_1!\cdots k_r!}{(n+k)!}   D^{[\vec{k}]}\cdot(\mcL^{[\vec{k}]})^{n+k} \nonumber \\
        =& \lim \sum_{\vec{k}}a_{\vec{k}}\frac{k_1!\cdots k_r!}{(n+k)!}  \sum_E \ord_{E^{[\vec{k}]}}(D^{[\vec{k}]})E^{[\vec{k}]}\cdot(\mcL^{[\vec{k}]})^{n+k} \nonumber \\
        =& \lim \sum_{\vec{k}}a_{\vec{k}}\frac{k_1!\cdots k_r!}{(n+k)!} \sum_E b_E\; \varphi_D(v_E) \; E^{[\vec{k}]}\cdot(\mcL^{[\vec{k}]})^{n+k}\nonumber \\
        =&\sum_{E}b_E \; \varphi_D(v_E) \; (E\cdot\mcL^n)_{\rv},
        \label{}
    \end{align}
    where   
    \begin{align}
        (E\cdot\mcL^{n})_\rv
        = \lim \sum_{\vec{k}}a_{\vec{k}}\frac{k_1!\cdots k_r!}{(n+k)!} E^{[\vec{k}]}\cdot(\mcL^{[\vec{k}]})^{n+k}. 
    \end{align}
    Hence we have
    \begin{align}
    \label{MA_v^NA(phi_mcL)smooth_pi^*L}
        \MA_\rv^\NA(\phi_\mcL)=\sum_E b_E(E\cdot\mcL^{n})_\rv\delta_{v_E}.
    \end{align}


For a big $\QQ$-line bundle $\mcL'$ on $\mcY$, let $\tilde{\fa}_m$ be the base ideal of $|m \mcL'|$, $\phi_m = \frac{1}{m}\varphi_{\tilde{\fa}_m} + \varphi_{\mcL'}$.
There exists a test configuration $(\mcY_m,\mcL'_m)$ and birational morphism $\mu_m: \mcY_m\rightarrow \mcY$, such that $\phi_m = \phi_{\mcL'_m}$.
\begin{proposition}
\label{functionals_as_positive_product}
    Let $(\mcY,\mcL')$ be a $T_\CC$-equivariant big SNC model of $(Y,\pi^*L)$.
    Let $\mcY_0 = \sum_{1\leq i\leq r'} b'_i E'_i$,  $v'_i = \frac{1}{b'_i} {\rm ord}_{E'_i}$. Then
        \begin{align}
             \MA_\rv^\NA(\phi_{\mcL'}) &= \sum_{1\leq i\leq r'} b'_i \cdot \langle {\mcL'}^n \rangle_\rv \cdot E'_i \cdot \delta_{v'_i}, \nonumber \\
            \bfH_\rv^\NA(\phi_{\mcL'}) &= \langle {\mcL'}^n \rangle_\rv \cdot K^{\log}_{\mcY/Y_{\PP^1}}, \nonumber \\
            \bfR_\rv^\NA(\phi_{\mcL'}) &= \langle {\mcL'}^n \rangle_\rv \cdot \rho^* K_{Y_\PP^1}, \\
            \bfE_\rv^\NA(\phi_{\mcL'}) &= \langle {\mcL'}^{n+1} \rangle_\rv =  \langle {\mcL'}^n \rangle_\rv \cdot \mcL', \nonumber
        \end{align}
        where we define
        \begin{align}
            \label{big_m_L}
            \langle\mcL'^n\rangle_\rv\cdot D    \coloneqq \sup_{m\to \infty} (\mu_m^* D\cdot \mcL_m'^n)_\rv =  \lim_{m\to \infty} (\mu_m^* D\cdot \mcL_m'^n)_\rv.
        \end{align}
\end{proposition}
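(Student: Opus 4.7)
The approach is to reduce each identity to the semiample test-configuration case via the base-ideal approximation of Corollary~\ref{env_cor}(4), and then pass to the limit using positive-product intersection theory.

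Let $\mu_m\colon \mcY_m\to\mcY$ be a $T_\CC$-equivariant log resolution of $\tilde{\fa}_m$, the base ideal of $|m\mcL'|$, with $\tilde{\fa}_m\cdot\oo_{\mcY_m}=\oo_{\mcY_m}(-F_m)$, and put $\mcL'_m\coloneqq \mu_m^*\mcL'-\tfrac{1}{m}F_m$. Then $(\mcY_m,\mcL'_m)$ is a semiample test configuration whose Fubini--Study function $\phi_m\coloneqq \varphi_{\mcL'}+\tfrac{1}{m}\varphi_{\tilde{\fa}_m}$ decreases uniformly to $\phi_{\mcL'}$ by Corollary~\ref{env_cor}(4). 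The components of $\mcY_{m,0}$ split into the strict transforms $\tilde E'_i$ (with multiplicity $b'_i$ and valuation $v'_i$) and $\mu_m$-exceptional divisors $F_{m,j}$.

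For the Monge--Amp\`ere identity, apply formula~\eqref{MA_v^NA(phi_mcL)smooth_pi^*L} to $(\mcY_m,\mcL'_m)$. The projection formula, performed fiberwise on each $\mcY_m^{[\vk]}$, shows $(F_{m,j}\cdot(\mu_m^*\mcL')^n)_\rv=0$; combined with $\mcL'_m=\mu_m^*\mcL'-\tfrac{1}{m}F_m$ converging in positive-product sense to $\mu_m^*\mcL'$, this forces $(F_{m,j}\cdot\mcL_m'^n)_\rv\to 0$ as $m\to\infty$. For the strict-transform terms, $\mu_m^*E'_i=\tilde E'_i+(\text{exceptional})$ gives $(\tilde E'_i\cdot\mcL_m'^n)_\rv\to\langle\mcL'^n\rangle_\rv\cdot E'_i$ by \eqref{big_m_L}. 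Weak convergence $\MA_\rv^\NA(\phi_m)\to\MA_\rv^\NA(\phi_{\mcL'})$ then yields the claimed measure. The energy formulas are treated in parallel: Lemma~\ref{lemma_slope_formula} applied to $(\mcY_m,\mcL'_m)$ gives $\bfE_\rv^\NA(\phi_m)=(\mcL_m'^{n+1})_\rv$, $\bfH_\rv^\NA(\phi_m)=(K^{\log}_{\mcY_m/Y_{\PP^1}}\cdot\mcL_m'^n)_\rv$, and $\bfR_\rv^\NA(\phi_m)=(\rho_m^*K_{Y_{\PP^1}}\cdot\mcL_m'^n)_\rv$; writing $K^{\log}_{\mcY_m/Y_{\PP^1}}=\mu_m^*K^{\log}_{\mcY/Y_{\PP^1}}+E_m$ with $E_m$ supported on $\mu_m$-exceptional divisors, the pullback parts converge to the required positive products by \eqref{big_m_L}, while the $E_m$-contributions vanish by the same projection-formula argument. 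Continuity of $\bfE^\NA_\rv$, $\bfH^\NA_\rv$, $\bfR^\NA_\rv$ along the uniform decreasing sequence $\phi_m\downarrow\phi_{\mcL'}$ then identifies the limits with the values at $\phi_{\mcL'}$.

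The equality $\sup_m=\lim_m$ in \eqref{big_m_L} rests on sub-additivity of the base ideals along a divisible subsequence combined with classical Fujita-type approximation, applied level-by-level via the monomial expansion $\rv=\sum a_{\vk}\underline{y}^{\vk}$ on the fiber bundles $\mcY_m^{[\vk]}$, and then extended to smooth $\rv$ by Stone--Weierstrass with uniform bounds on the coefficients. The main technical obstacle is the careful control of the $\mu_m$-exceptional contributions at each weight level $\vk$: once the projection formula on $\mcY_m^{[\vk]}$ is combined with the vanishing of intersections of $\tfrac{1}{m}F_m$ against classes pulled back from $\mcY^{[\vk]}$ in the positive-product topology, the remaining steps are formal continuity and approximation.
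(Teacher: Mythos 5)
Your overall strategy (approximate by the semiample test configurations coming from the base ideals, compute on $(\mcY_m,\mcL'_m)$ via the slope/intersection formulas, and pass to the limit through \eqref{big_m_L}) is the same skeleton as the paper's proof, but the two steps you dispose of with the ``projection-formula argument'' are exactly where the real work lies, and your mechanism for them does not close. First, the vanishing of the exceptional contributions: the projection formula only gives $(F_{m,j}\cdot(\mu_m^*\mcL')^n)_\rv=0$, and since $\mcL'_m=\mu_m^*\mcL'-\tfrac1m F_m$ the difference $(F_{m,j}\cdot\mcL_m'^n)_\rv-(F_{m,j}\cdot(\mu_m^*\mcL')^n)_\rv$ consists of terms $\tfrac1m F_m\cdot F_{m,j}\cdot(\cdots)$, i.e.\ intersections of two effective \emph{vertical} divisors against semiample classes, which are not sign-controlled; ``converging in positive-product sense'' is not an argument here. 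The same gap infects your identification $(\tilde E'_{i,m}\cdot\mcL_m'^n)_\rv\to\langle\mcL'^n\rangle_\rv\cdot E'_i$: \eqref{big_m_L} is defined with the full pullback $\mu_m^*E'_i$, and the discrepancy with the strict transform is precisely the exceptional quantity you have not bounded. The paper handles this differently: it splits into the cases $E'_i\not\subset\mathbb{B}_+(\mcL')$ (where \cite[Theorem 2.7]{Li23}, applied fiberwise on the $\mcY^{[\vec k]}$, identifies the limit with the positive product) and $E'_i\subset\mathbb{B}_+(\mcL')$ (where the restricted volume vanishes, so both sides are $0$), and then pins down the limit measure by a global mass-conservation argument: $\sum_i b'_i\langle\mcL'^n\rangle_\rv\cdot E'_i=\langle\mcL'^n\rangle_\rv\cdot\mcY_0=\VV_\rv$ together with Portmanteau applied to the pushforwards $(r_\mcY)_*\MA^\NA_\rv(\phi_m)$ on the dual complex forces all the mass to sit at the $v'_i$ with the stated weights, and the inverse limit $Y^\an=\varprojlim\Delta_\mcY$ upgrades this to the measure identity.

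Second, and more seriously, your treatment of the entropy is not salvageable as written. You claim $\bfH^\NA_\rv(\phi_m)\to\bfH^\NA_\rv(\phi_{\mcL'})$ and that the $E_m=K^{\log}_{\mcY_m/Y_{\PP^1}}-\mu_m^*K^{\log}_{\mcY/Y_{\PP^1}}$ contributions vanish ``by the same argument''; but the coefficients of $E_m$ are log discrepancies of ever deeper $\mu_m$-exceptional divisors and are unbounded as $m\to\infty$, so even if the exceptional \emph{masses} tend to zero, the $A_Y$-weighted masses need not, and $A_Y$ is only lower semicontinuous, so weak convergence of the measures gives one inequality only. The paper never proves (nor needs) continuity of $\bfH^\NA_\rv$ along the approximation: it first establishes the atomic formula for $\MA^\NA_\rv(\phi_{\mcL'})$ and then evaluates $\bfH^\NA_\rv(\phi_{\mcL'})=\int A_Y\,\MA^\NA_\rv(\phi_{\mcL'})=\sum_i b'_iA_Y(v'_i)\langle\mcL'^n\rangle_\rv\cdot E'_i=\langle\mcL'^n\rangle_\rv\cdot K^{\log}_{\mcY/Y_{\PP^1}}$ directly. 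You should restructure your proof the same way. A minor slip to also fix: the base-ideal approximants $\phi_m=\varphi_{\mcL'}+\tfrac1m\varphi_{\tilde\fa_m}$ \emph{increase} (by superadditivity of base ideals) to $\phi_{\mcL'}$, uniformly by Dini; they do not decrease. This does not hurt the $\bfE_\rv^\NA$ and $\bfR_\rv^\NA$ limits, which are fine once stated for an increasing uniform approximation, but the direction matters for which semicontinuity properties you may invoke.
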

\begin{remark}
\label{exist_lim}
    Since $\rv$ is a bounded positive function, there exists a constant $C>0$ independent of $m$ such that $(\mu_m^* D\cdot \mcL_m'^n)_\rv \leq C (\mu_m^* D\cdot \mcL_m'^n)$. Since $\sup_{m\to \infty} (\mu_m^* D\cdot \mcL_m'^n)<+\infty$ by \cite[Proposition 3.6]{DEL00}, the limit in \eqref{big_m_L} exists.
\end{remark}

\begin{proof}
 Denote $\tilde{E}'_{i,m}$ as the strict transform of $E_i'$ in $\mu:\mcY_m\rightarrow \mcY$.
By modifying Proposition~\ref{env_cor} to the weighted case, $\MA_\rv^\NA(\phi_{\mcL'_m})$ converges to $\MA_\rv^\NA(\phi_{\mcL'})$ weakly as Radon measures on $Y^\an$. In particular,
\begin{align*}
    \int_{Y^\an} \MA_\rv^\NA(\phi_{\mcL'}) = \VV_\rv.
\end{align*}

The SNC model $\mcY$ induces a retraction $r_\mcY: Y^\an\rightarrow \Delta_\mcY$. 
Let $\nu_\mcY \coloneqq  (r_\mcY)_*\MA_\rv^\NA(\phi_{\mcL'})$ and $\nu_{\mcY, m}\coloneqq (r_\mcY)_*\MA_\rv^\NA(\phi_{\mcL'_m})$.
Since $v'_i$ is a point in $\Delta_\mcY$, we have
\begin{align*}
    \nu_\mcY(\{v'_i\})=\MA_\rv^\NA(\phi_{\mcL'})(r_\mcY^{-1}\{v'_i\}) 
    \geq \MA_\rv^\NA(\phi_{\mcL'})(\{v'_i\}), \;
    \nu_{\mcY, m}(\{v'_i\}) \geq \MA_\rv^\NA(\phi_{\mcL'_m})(\{v'_i\}).
\end{align*}
By Portmanteau's theorem for weak convergence of measures, we have
\begin{align*}
    \limsup_{m\to\infty} \nu_{\mcY, m}(\{v'_i\}) \leq \nu_\mcY(\{v'_i\}) =: V_i.
\end{align*}
\begin{enumerate}[(i)]
\item If $E'_i\not\subset \mathbb{B}_+(\mcL')$, since $(\mcY,\mcL')$ is $T_\CC$-equivariant, $E_i'^{[\vec{k}]}\not\subset \mathbb{B}_+(\mcL'^{[\vec{k}]})$. Hence we have 
\begin{align*}
    \lim_{m\rightarrow\infty}({\mcL'_m}^n \cdot \tilde{E}'_{i,m})_\rv
    =&\lim_m\lim\sum_{\vec{k}}a_{\vec{k}}\frac{k_1!\cdots k_r!}{(n+k)!} (\mcL_m'^{[\vec{k}]})^{n+k}\cdot (\tilde{E}_{i,m}')^{[\vec{k}]} \\
    =& \lim_m\lim\sum_{\vec{k}}a_{\vec{k}}\frac{k_1!\cdots k_r!}{(n+k)!} (\mcL_m'^{[\vec{k}]})^{n+k}\cdot (\mu_m^*E_{i}')^{[\vec{k}]}  =\langle\mcL'^n\rangle_\rv\cdot E_i',
\end{align*}
where the second equality is by \cite[Theorem 2.7]{Li23}.

It follows that
\begin{align*}
    b'_i \langle {\mcL'}^n \rangle_\rv \cdot E'_i    
        = \lim_{m\to\infty} b'_i ({\mcL'_m}^n \cdot \tilde{E}'_{i,m})_\rv 
        &=  \lim_{m\to\infty} \MA_\rv^\NA(\phi_m)(\{v'_i\}) \\
        &\leq  \lim_{m\to\infty} \nu_{\mcY,m}(\{v'_i\}) \leq  \nu_\mcY(\{v'_i\}) = V_i.
\end{align*}

\item\label{E_i_subset_B_+} 
If $E'_i\subset \mathbb{B}_+(\mcL')$, then $E_i'^{[\vec{k}]}\subset\mathbb{B}_+(\mcL'^{[\vec{k}]})$. then 
\begin{align*}
&\vol_{\mcY^{\vec{k}}|E_i'^{[\vec{k}]}}(\mcL'^{[\vec{k}]}) \\
=&     \lim_{r\to\infty}\frac{\dim\big( {\rm Im}\big( H^0(\mcY^{[\vec{k}]},\oo_\mcY(r\mcL'^{[\vec{k}]}))
    \rightarrow H^0(E_i'^{[\vec{k}]}, \oo_{E_i'^{[\vec{k}]}}(r\mcL'^{[\vec{k}]}))\big)}{r^{(n+k)}/(n+k)!} = 0.
\end{align*}
Since the restricted volume equals to zero, by \cite[Theorem 2.7]{Li23}, $ \langle\mcL'^{[\vec{k}]}\rangle^{n+k}\cdot (E_{i}')^{[\vec{k}]} = 0$. Then by the definition of positive product intersection,
\begin{align*}
    0\leq  (\mcL_m'^{[\vec{k}]})^{n+k}\cdot (\mu_m^*E_{i}')^{[\vec{k}]} \leq \langle\mcL'^{[\vec{k}]}\rangle^{n+k}\cdot (E_{i}')^{[\vec{k}]} = 0.
\end{align*}
This implies that
$\langle\mcL'^n\rangle_\rv\cdot E_i'=0$. Therefore
\begin{align*}
       0 =b_i'\langle\mcL'^n\rangle_\rv\cdot E_i'= \lim_{m\to\infty} b'_i {(\mcL'_m}^n \cdot \tilde{E}'_{i,m})_\rv
       = \lim_{m\to\infty} \MA_\rv^\NA(\phi_m)(\{v'_i\}) \leq V_i.
\end{align*}
\end{enumerate}
Combining these, we obtain
\begin{align*}
    \VV_\rv = \sum_i b'_i \langle {\mcL'}^n \rangle_\rv \cdot E'_i    
        = \sum_i \lim_{m\to\infty} b'_i ({\mcL'_m}^n \cdot \tilde{E}'_i)_\rv 
        =& \sum_i \lim_{m\to\infty} \MA_\rv^\NA(\phi_m)(\{v'_i\}) \\
        \leq& \sum_i \nu_\mcY(\{v'_i\}) =\VV_\rv.
\end{align*}
Therefore, $\MA_\rv^\NA(\phi_{\mcL'}\circ r_\mcY)(\{v'_i\}) = V_i=b'_i \langle {\mcL'}^n \rangle_\rv \cdot E'_i$ for $1\leq i\leq r'$.
And this equality holds for any blowups of $\mcY$.
And since $Y$ is smooth, 
\begin{align*}
Y^\an = \lim_{\leftarrow}\Delta_\mcY    
\end{align*}
over SNC models $\mcY$, then $\MA_\rv^\NA(\phi_{\mcL'}\circ r_\mcY) = \sum_{1\leq i\leq r'} b'_i \cdot \langle {\mcL'}^n \rangle _\rv\cdot E'_i \cdot \delta_{v'_i}$ holds for all higher model $\mcY$. Therefore, we have
\begin{align*}
    \MA_\rv^\NA(\phi_{\mcL'}) &= \sum_{1\leq i\leq r'} b'_i \cdot \langle {\mcL'}^n \rangle_\rv \cdot E'_i \cdot \delta_{v'_i}.
\end{align*}

Since $\phi_m$ converges to $\phi_{\mcL'}$ increasingly, we have
\begin{align*}
    \bfE_\rv^\NA(\phi_{\mcL'}) &= \lim_{m\to\infty} \bfE_\rv^\NA(\phi_m) 
    = \lim_{m\to\infty}\left(\lim\sum_{\vec{k}}a_{\vec{k}} \frac{k_1!\cdots k_r! }{(n+k+1)!}  (\mcL_m'^{[\vec{k}] })^{n+1} \right)
    = \langle \mcL'^{n+1}\rangle_\rv ,
\end{align*}
where the last equality above is due to Remark \ref{exist_lim}.

The proof for $\bfR_\rv^\NA(\phi_{\mcL'})$ follows similarly. So we omit the proof.

For the entropy,  note that
\begin{align*}
    K^{\log}_{\mcY/Y_{\PP^1}} &= K_\mcY + \mcY_0^{\rm red} - (K_{Y_{\PP^1}}+\mcY_0) = \sum_i (A_{Y_{\PP^1}}-b_i') E_i' \\
    &= \sum_i b'_i (A_{Y_{\PP^1}}(v_i') - 1) E'_i = \sum_i b'_i A_Y(v'_i) E'_i.
\end{align*}
Hence, we have
\begin{align*}
   \bfH^\NA(\phi_{\mcL'}) 
   = \int_{Y^\an} A_Y(y)\; \MA_\rv^\NA(\phi_{\mcL'}) 
   =& \sum_i A_Y(v'_i)\cdot b'_i \cdot \langle \mcL'^n \rangle _\rv\cdot E'_i \\
  =& \langle \mcL'^n \rangle_\rv \cdot K^{\log}_{\mcY/Y_{\PP^1}}.
\end{align*}

\end{proof}

\begin{proposition}
\label{M_v,w_equal}
Suppose that $(X,L)$ satisfies the envelope property.
    Let $(\mcY,\mcL')$ be a big model of $(Y,\pi^*L)$. Then there exists a model $(\mcX,\mcL)$ of $(X,L)$ such that $\phi_{\mcL'} = \pi^* \phi_\mcL$, and 
    \begin{align*}
        \bfM_{\rv,\rw}^\NA(\phi_\mcL) = \bfM_{\rv,\rw}^\NA(\phi_{\mcL'}),\;
        \bfJ_{\rv,\TT}^\NA(\phi_\mcL) = \bfJ_{\rv,\TT}^\NA(\phi_{\mcL'}).
    \end{align*}
\end{proposition}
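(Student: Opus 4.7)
My plan is to first apply Corollary~\ref{env_cor}(3) to extract the model $(\mcX,\mcL)$ of $(X,L)$ satisfying $\phi_{\mcL'} = \pi^*\phi_\mcL$, and then to match each piece of $\bfM_{\rv,\rw}^\NA$ and $\bfJ_{\rv,\TT}^\NA$ across the two sides of the resolution.

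For $\bfE_\rw^\NA(\phi_\mcL) = \bfE_\rw^\NA(\phi_{\mcL'})$ I would run a slope/approximation argument. By Corollary~\ref{env_cor}(4), write $\phi_\mcL$ as the uniform limit of Fubini--Study functions $\psi_m = \varphi_\mcL + \tfrac{1}{m}\varphi_{\fa_m}$ coming from test configurations $(\mcX_m,\mcL_m)$; then $\pi^*\psi_m$ correspondingly approximates $\phi_{\mcL'}$ via test configurations $(\mcY_m,\pi^*\mcL_m)$. On each test configuration, $\bfE_\rw^\NA(\psi_m) = \bfE_\rw^\NA(\pi^*\psi_m)$ is just the projection formula for the $[\vec{k}]$-bundle intersection numbers, $\pi$ being birational. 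Taking limits via upper semicontinuity yields the identity. (Alternatively, the same equality follows from Corollary~\ref{geodesic_approx_equiv} and Remark~\ref{geodesic_approx_equiv_remark} via the slope formula on the maximal geodesic.)

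For the entropy and $\bfR_\rv^\NA$ pieces, I would use the weighted version of the pushforward formula in Corollary~\ref{env_cor}(5), giving $\pi_*\MA_\rv^\NA(\phi_{\mcL'}) = \MA_\rv^\NA(\phi_\mcL)$, together with the identity $A_X(\pi_*v) = A_Y(v) + v(K_{Y/X})$ for divisorial $v \in Y^\div$. This produces
\begin{align*}
  \bfH_\rv^\NA(\phi_\mcL) - \bfH_\rv^\NA(\phi_{\mcL'}) \;=\; \int_{Y^\an} \varphi_{K_{Y/X}}\,\MA_\rv^\NA(\phi_{\mcL'}).
\end{align*}
A parallel computation for $\bfR_\rv^\NA$, leveraging the decomposition $K^{\log}_{\mcY/\PP^1} = \pi^* K^{\log}_{\mcX/\PP^1} + K_{Y/X}$ through the factorization $\mcY \to \mcX$, produces
\begin{align*}
  \bfR_\rv^\NA(\phi_{\mcL'}) - \bfR_\rv^\NA(\phi_\mcL) \;=\; \int_{Y^\an} \varphi_{K_{Y/X}}\,\MA_\rv^\NA(\phi_{\mcL'}),
\end{align*}
so the two discrepancies cancel in $\bfH_\rv^\NA + \bfR_\rv^\NA$. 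Since $c_{(\rv,\rw)}(L) = c_{(\rv,\rw)}(\pi^*L)$ is birationally invariant, combining with the $\bfE_\rw^\NA$ identity gives $\bfM_{\rv,\rw}^\NA(\phi_\mcL) = \bfM_{\rv,\rw}^\NA(\phi_{\mcL'})$.

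For $\bfJ_{\rv,\TT}^\NA$, the equality $\bfJ_\rv^\NA(\phi_\mcL) = \bfJ_\rv^\NA(\phi_{\mcL'})$ follows from the $\bfE_\rv^\NA$ identity and $\sup_{X^\an}(\phi_\mcL - \phi_\triv) = \sup_{Y^\an}(\phi_{\mcL'} - \phi_\triv)$ (surjectivity of $\pi^\an$). The twist $(\phi)_\xi$ for $\xi \in N_\RR$ intertwines with $\pi^*$ because the central torus $\TT$ lifts along $\pi$, so the infima over $\xi$ coincide. The main obstacle I anticipate is the careful bookkeeping required to establish the two opposite-sign discrepancies above: one has to verify both that the weighted pushforward of the mixed Monge--Amp\`ere measure is valid in this big-model positive-product setting, and that the log-relative canonical divisors on $\mcX$ and $\mcY$ decompose as claimed through the factorization $\mcY \to \mcX \to X_{\PP^1}$ on a suitable common resolution.
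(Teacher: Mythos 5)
Your overall architecture matches the paper's: extract $(\mcX,\mcL)$ from Corollary~\ref{env_cor}, match the energy/$\bfJ$ terms by birational invariance of the (weighted) positive products, and cancel the $\bfH$- and $\bfR$-discrepancies against each other via $K_{Y/X}$. The $\bfH$-side of your argument is fine in principle: once you know $\MA_\rv^\NA(\phi_{\mcL'})$ is the atomic measure of Proposition~\ref{functionals_as_positive_product} and that it pushes forward to $\MA_\rv^\NA(\phi_\mcL)$, the identity $A_X(v)=A_Y(v)+v(K_{Y/X})$ gives $\bfH_\rv^\NA(\phi_\mcL)-\bfH_\rv^\NA(\phi_{\mcL'})=\int_{Y^\an}v(K_{Y/X})\,\MA_\rv^\NA(\phi_{\mcL'})$, which is exactly the paper's first computation (note that the pushforward/atomic description here is not literally Corollary~\ref{env_cor}(5), which is unweighted and for semiample approximants; it is supplied by the positive-product statement of Proposition~\ref{functionals_as_positive_product}).

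The genuine gap is in the $\bfR$-side, which you flag but do not close, and it is the heart of the proof. First, the decomposition you invoke, $K^{\log}_{\mcY/\PP^1}=\pi^*K^{\log}_{\mcX/\PP^1}+K_{Y/X}$, is false as a divisor identity: the difference is the strict transform of $(K_{Y/X})_{\PP^1}$ \emph{plus} vertical divisors exceptional for $\mcY\to\mcX$ (the terms $\sum_{r+1\le i\le l}c_iE_i'$ in the paper's computation), and $K_{Y/X}$ must in any case be replaced by $\rho^*K_{(Y/X)_{\PP^1}}$ when pairing. Second, even after correcting this, your claimed formula
\begin{align*}
  \bfR_\rv^\NA(\phi_{\mcL'})-\bfR_\rv^\NA(\phi_\mcL)=\int_{Y^\an}v(K_{Y/X})\,\MA_\rv^\NA(\phi_{\mcL'})
\end{align*}
is precisely the nontrivial point: the left side is $\langle(\pi^*\mcL)^n\rangle_\rv\cdot\rho^*K_{(Y/X)_{\PP^1}}$, and decomposing $\rho^*K_{(Y/X)_{\PP^1}}$ into its vertical part (which does give the integral against the atomic measure) plus the horizontal strict transform, one must show that the weighted positive product of $\langle(\pi^*\mcL)^n\rangle_\rv$ with every extra component — the horizontal strict transform of $K_{Y/X}$ and the vertical $\mcY\to\mcX$-exceptional divisors — vanishes. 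This is where the paper does real work: these components lie in $\mathrm{Ex}(\pi)\subset\mathbb{B}_+(\pi^*\mcL)$ (citing \cite{BBG13}), so their restricted volumes vanish and hence $\langle(\pi^*\mcL)^{[\vec k]}\rangle^{n+k}\cdot E^{[\vec k]}=0$ by \cite{Li23}, as in step (ii) of the proof of Proposition~\ref{functionals_as_positive_product}. Without this restricted-volume/augmented-base-locus vanishing, your two discrepancies are not known to be equal and the cancellation in $\bfH_\rv^\NA+\bfR_\rv^\NA$ does not follow; since you explicitly defer this "bookkeeping" rather than prove it, the proposal as written does not establish the proposition.
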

\begin{proof}
    By the envelope property and Proposition~\ref{env_cor}, there exists a model $(\mcX,\mcL)$ of $(X,L)$, such that $\phi_{\mcL'} = \pi^*\phi_{\mcL}$. 
    Let $\pi: (\mcY,\pi^*\mcL)\rightarrow (\mcX,\mcL)$ be the birational morphism induced by the pullback. Then $\phi_{\mcL'} = \phi_{\pi^*\mcL} = \pi^*\phi_{\mcL}$ and $\phi_{\mcL',\xi}=\pi^*\phi_{\mcL,\xi}$ for any $\xi\in N_\RR$. Without the loss of generality, we may assume $\mcY$ is a SNC model.
    
    Without the loss of generality, we can add $c\cdot\mcX_0$ to $\mcL$ for some $c\geq 0$ (which is equivalent to add a constant $c$ to $\phi_\mcL$), such that $(\mcX,\mcL)$ is a big model. Then $(\mcY,\pi^*\mcL)$ is also a big model.

    By Proposition~\ref{functionals_as_positive_product}, we have
    \begin{align*}
        \bfE_\rv^\NA(\phi_\mcL) =\langle\mcL^{n+1}\rangle_\rv
        = \langle \pi^*\mcL^{n+1}\rangle_\rv
        &= \bfE_\rv^\NA(\phi_{\mcL'}).
    \end{align*}
    Similarly, $\bfE_\rw^\NA(\phi_\mcL) = \bfE^\NA_\rw(\phi_{\mcL'})$.
    In addition, by definition, $\sup_{X^\an}\phi_\mcL = \sup_{Y^\an}\phi_{\pi^*\mcL}$. Therefore
    \begin{align*}
        \bfJ_\rv^\NA(\phi_\mcL) 
        = \VV_\rv\cdot \sup_{X^\an}(\phi_\mcL-\phi_\triv) 
          - \bfE_\rv^\NA(\phi_\mcL) =  \bfJ_\rv^\NA(\phi_{\mcL'}),
    \end{align*}
and
    \begin{align*}
        \bfJ^\NA_{\rv,\TT}(\phi_\mcL) 
        = \inf_{\xi\in N_\RR} \bfJ^\NA_\rv(\phi_{\mcL,\xi})
        =\inf_{\xi\in N_\RR} \bfJ^\NA_\rv(\pi^*\phi_{\mcL,\xi})
        =\inf_{\xi\in N_\RR} \bfJ^\NA_\rv(\phi_{\mcL',\xi})
        = \bfJ^\NA_{\rv,\TT}(\phi_\mcL'). 
    \end{align*}
    Let $\mcX_0 = \sum_{1\leq i\leq r} b_i E_i$, $\mcY_0 = \sum_{1\leq i\leq r} b'_i E'_i + \sum_{r+1\leq i\leq l} b'_i E'_i$, where for $1\leq j \leq r$, $E'_i$ is the strict transform of $E_i$, $b'_i = b_i$; for $r+1\leq i\leq l$, $E'_i$ is an exceptional divisor of $\pi: \mcY\rightarrow \mcX$. Let $v_i = \frac{1}{b_i}{\rm ord}_{E_i}$, $v'_i = \frac{1}{b'_i}{\rm ord}_{E'_i}$.
     By Proposition~\ref{functionals_as_positive_product}, we have 
     \begin{align*}
         \MA^\NA(\pi^*\phi_\mcL) 
         &= \sum_{1\leq i\leq r} \big( b_i \cdot \langle \pi^*\mcL^n \rangle_\rv \cdot E'_i \big) \cdot \delta_{v'_i} 
          + \sum_{r+1\leq i\leq l} \big( b'_i \cdot \langle \pi^*\mcL^n \rangle_\rv \cdot E'_i \big) \cdot \delta_{v''_i}\\
         &= \sum_{1\leq i\leq r} \big( b_i \cdot \langle \pi^*\mcL^n \rangle_\rv \cdot E'_i \big) \cdot \delta_{v'_i}.
     \end{align*}
     By \cite[Proposition 2.3]{BBG13}, for $r+1\leq i\leq l$, we have $E'_i\in {\rm Ex}(\pi)\subset \mathbb{B}_+(\pi^*\mcL)$. Then $E_i'^{[\vec{k}]}\subset\mathbb{B}_+((\pi^*\mcL)^{[\vec{k}]})$ for each $\vec{k}$. 
     By \eqref{E_i_subset_B_+} in the proof of Proposition~\ref{functionals_as_positive_product}, then $\langle \pi^*\mcL^n \rangle_\rv \cdot E'_j = 0$.
     This implies the second equality in the formula above. Recall that
    \begin{align*}
        & \sum_{1\leq i\leq l} A_Y(v'_i)\; b'_i\; E'_i - \sum_{1\leq i\leq l} \pi^*A_X(v'_i)\; b'_i\; E'_i \\
        &= \Big( K_\mcY - (\mcY_0 - \mcY_{0, {\rm red}}) - \pi^*(K_\mcX - (\mcX_0-\mcX_{0,{\rm red}})) \Big) - \Big( \rho^* K_{Y_{\PP^1}} - \rho^*\pi^* K_{X_{\PP^1}} \Big) \\
        &= \sum_{r+1\leq i\leq l} c_i E'_i - \rho^* K_{(Y/X)_{\PP^1}}.
    \end{align*}
     Then we have
     \begin{align*}
         & \bfH_\rv^\NA(\phi_{\mcL'}) - \bfH_\rv^\NA(\phi_\mcL) \\
         &= \int_{Y^\an} A_Y(y) \; \MA_\rv^\NA(\phi_{\mcL'})
              - \int_{X^\an} A_X(x) \; \MA_\rv^\NA(\phi_\mcL) \\
         &= \int_{Y^\an} A_Y(y) \; \MA_\rv^\NA(\pi^*\phi_\mcL)
             - \int_{Y^\an} \pi^* A_X(y) \; \MA_\rv^\NA(\pi^*\phi_\mcL)\\
             &=\sum_{1\leq i\leq r}b_i\langle (\pi^*\mcL)^n\rangle_\rv\cdot E_i'(A_Y(E_i')-\pi^*A_X(E_i')) \\
         &= \sum_{r+1\leq  i\leq l}  c_i\cdot \langle (\pi^*\mcL)^n\rangle_\rv\cdot E_i' 
         -\langle (\pi^*\mcL)^n \rangle _\rv\cdot\rho^*K_{(Y/X)_{\PP^1}}\\
         &=-\langle (\pi^*\mcL)^n \rangle _\rv\cdot\rho^*K_{Y_{\PP^1}}
           +\langle (\pi^*\mcL)^n \rangle _\rv\cdot\rho^*\pi^*K_{X_{\PP^1}}  \\
         &= -\bfR_\rv^\NA(\phi_{\pi^*\mcL}) + \bfR_\rv^\NA(\phi_\mcL).
     \end{align*}
     Hence,
     \begin{align}
        \label{H_X_H_Y}
         \bfH_\rv^\NA(\phi_{\mcL'}) + \bfR_\rv^\NA(\phi_{\mcL'}) 
         = \bfH_\rv^\NA(\phi_\mcL) + \bfR_\rv^\NA(\phi_\mcL).
     \end{align}
     Therefore,
     \begin{align*}
         \bfM_{\rv,\rw}^\NA(\phi_\mcL) &
         = \bfH_\rv^\NA(\phi_\mcL) + \bfR_\rv^\NA(\phi_\mcL) 
           +c_{(\rv,\rw)}  \bfE_{\rw}^\NA(\phi_\mcL) \\
         &= \bfH_\rv^\NA(\phi_{\mcL'}) + \bfR_\rv^\NA(\phi_{\mcL'})  
          +c_{(\rv,\rw)}  \bfE_{\rw}^\NA(\phi_{\mcL'}) 
         = \bfM_{\rv,\rw}^\NA(\phi_{\mcL'}).
     \end{align*}
\end{proof}

\begin{corollary}
\label{birational_uKs}
Suppose that $(X,L)$ satisfies the envelope property.
    If $(X,L)$ is $\bG$-uniformly weighted K-stable for models. Then there exists $\gamma >0$, such that for any $\bG$-equivariant big model $(\mcY,\mcL')$ of $(Y,\pi^*L)$, we have
    \begin{align*}
        \bfM_{\rv,\rw}^\NA(\phi_{\mcL'}) \geq \gamma \cdot \bfJ_{\rv,\TT}^\NA(\phi_{\mcL'}).
    \end{align*}
\end{corollary}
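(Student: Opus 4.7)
The plan is to deduce this as a direct consequence of Proposition~\ref{M_v,w_equal}, translating the hypothesis of $\bG$-uniform weighted K-stability from models on $(X,L)$ to big models on the resolution $(Y,\pi^*L)$.

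Let $(\mcY,\mcL')$ be an arbitrary $\bG$-equivariant big model of $(Y,\pi^*L)$. By Proposition~\ref{M_v,w_equal}, one obtains a model $(\mcX,\mcL)$ of $(X,L)$ with $\phi_{\mcL'}=\pi^*\phi_\mcL$ and the two identities
\begin{align*}
    \bfM_{\rv,\rw}^\NA(\phi_\mcL) = \bfM_{\rv,\rw}^\NA(\phi_{\mcL'}),
    \qquad
    \bfJ_{\rv,\TT}^\NA(\phi_\mcL) = \bfJ_{\rv,\TT}^\NA(\phi_{\mcL'}).
\end{align*}
The construction of $(\mcX,\mcL)$ proceeds through Corollary~\ref{env_cor}~(3), where $(\mcX,\mcL)$ is recovered from the divisorial norm $\chi_{(\mcY,\mcL')}^{\rm hom}\in\mcN^\div_\QQ(X,L)=\mcN^\div_\QQ(Y,\pi^*L)$.

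The first step I would carry out is to verify that $(\mcX,\mcL)$ can be taken $\bG$-equivariantly. Since $(\mcY,\mcL')$ is $\bG$-equivariant, the induced filtration on $R=\bigoplus_m H^0(X,mL)=\bigoplus_m H^0(Y,m\pi^*L)$ is $\bG$-invariant; equivalently, the corresponding norm lies in the $\bG$-invariant subspace $\mcN_\QQ^\div(X,L)^\bG$. The model-norm correspondence \cite[Proposition~A.3]{BJ24} is equivariant, so any model realizing this norm may be chosen $\bG$-equivariantly (one may then $\bG$-equivariantly normalize to ensure $(\mcX,\mcL)$ is big by adding $c\cdot\mcX_0$, which does not affect either side of the two identities above).

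Once the $\bG$-equivariance is transferred, the conclusion follows by a one-line application of the hypothesis: the assumed $\bG$-uniform weighted K-stability for models yields a constant $\gamma>0$, independent of the chosen $\bG$-equivariant model of $(X,L)$, such that
\begin{align*}
    \bfM_{\rv,\rw}^\NA(\phi_\mcL)\geq \gamma\cdot \bfJ_{\rv,\TT}^\NA(\phi_\mcL),
\end{align*}
and substituting the identities from Proposition~\ref{M_v,w_equal} gives the desired inequality for $\phi_{\mcL'}$. The only genuinely delicate point is the equivariance check in the first step; the remainder is formal.
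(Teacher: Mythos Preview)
Your proposal is correct and follows exactly the approach implicit in the paper: the corollary is stated immediately after Proposition~\ref{M_v,w_equal} without proof, and your argument spells out precisely the intended deduction. Your attention to the $\bG$-equivariance of the model $(\mcX,\mcL)$ is a detail the paper leaves tacit; your justification via the $\bG$-invariance of the induced norm and the equivariance of the model--norm correspondence is the natural way to fill this in.
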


\subsection{Uniform weighted K-stability for models}

\begin{theorem}
    Let $(X,L)$ be a polarized klt variety satisfying the envelope property. Assume $(X,L)$ is $\bG$-uniformly weighted K-stable for models. Then the weighted Mabuchi functional $\bfM_{\rv,\rw}$ is $\bG$-coercive over $\mcE^1_{K}$.
\end{theorem}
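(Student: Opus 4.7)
The plan is to argue by contradiction, following the variational strategy of Berman--Boucksom--Jonsson as adapted in \cite{Li22a}, with the smooth resolution $\pi:Y\to X$ bridging the klt setting to the smooth one. Suppose $\bfM_{\rv,\rw}$ is not $\bG$-coercive, so there exists a sequence $\varphi_j\in\mcE^1_K$ with $T_j:=\bfJ_{\rv,\TT}(\varphi_j)\to+\infty$ and $\bfM_{\rv,\rw}(\varphi_j)/T_j\to 0$. After replacing each $\varphi_j$ by a $\TT$-orbit element realizing $\bfJ_{\rv,\TT}(\varphi_j)=\bfJ_\rv(\varphi_j)$ up to small error, I would extract a subsequential limit of the geodesic segments $[\varphi_0,\varphi_j]$, reparametrized to unit $\bfJ_\rv$-speed, producing a maximal geodesic ray $\Phi\in\mcE^1_K$ of linear growth. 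Convexity of $\bfM_{\rv,\rw}$ along geodesics together with the slope formulas in Corollary~\ref{geodesic_approx_equiv} and Corollary~\ref{slope_reduced_J_v} yields $\bfJ_{\rv,\TT}^{\prime\infty}(\Phi)>0$ and $\bfM_{\rv,\rw}^{\prime\infty}(\Phi)\le 0$.

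The next step is to relocate the geodesic to the resolution. The ray $\pi^*\Phi$ is a maximal geodesic in $\mcE^1_K(Y,\pi^*L)$ with non-Archimedean limit $\pi^*\phi$, where $\phi=\Phi^\NA\in\PSH(X^\an,L^\an)$. By Remark~\ref{geodesic_approx_equiv_remark}, the slopes of $\bfE_\rv$ and $\bfE_\rv^Q$ along $\pi^*\Phi$ equal $\bfE_\rv^\NA(\pi^*\phi)$ and $(\bfE_\rv^Q)^\NA(\pi^*\phi)$, respectively. The crucial ingredient is the slope inequality for the weighted entropy on the smooth variety $Y$:
\begin{align*}
\bfH_\rv^{\prime\infty}(\pi^*\Phi)\ge \bfH_\rv^\NA(\pi^*\phi),
\end{align*}
whose proof I would adapt from \cite[Theorem~4.1]{Li22a} using the weighted Archimedean analysis of \cite{HL23}. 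Combined with the slope identities for the energy pieces, this gives $\bfM_{\rv,\rw}^{\prime\infty}(\pi^*\Phi)\ge \bfM_{\rv,\rw}^\NA(\pi^*\phi)$.

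The conclusion comes from approximating $\pi^*\phi$ from above by test configuration potentials $\phi_{\mcL'_m}$ arising from big $\bG$-equivariant SNC models $(\mcY_m,\mcL'_m)$ of $(Y,\pi^*L)$, via Proposition~\ref{approximated_geodesic} and Corollary~\ref{env_cor}(4). Let $\Phi_m$ be the associated test configuration rays with $\Phi_m^\NA=\phi_{\mcL'_m}$; by Lemma~\ref{lemma_slope_formula} we have $\bfM_{\rv,\rw}^{\prime\infty}(\Phi_m)=\bfM_{\rv,\rw}^\NA(\phi_{\mcL'_m})$, and Corollary~\ref{birational_uKs} applied to $(\mcY_m,\mcL'_m)$ gives
\begin{align*}
\bfM_{\rv,\rw}^\NA(\phi_{\mcL'_m})\ge \gamma\,\bfJ_{\rv,\TT}^\NA(\phi_{\mcL'_m}).
\end{align*}
As $m\to\infty$, Lemma~\ref{reduced_J_v_convergence} yields $\bfJ_{\rv,\TT}^\NA(\phi_{\mcL'_m})\to \bfJ_{\rv,\TT}^\NA(\pi^*\phi)=\bfJ_{\rv,\TT}^{\prime\infty}(\pi^*\Phi)>0$; coupled with a careful semicontinuity argument controlling $\bfM_{\rv,\rw}^{\prime\infty}(\pi^*\Phi)$ from above by $\limsup_m \bfM_{\rv,\rw}^\NA(\phi_{\mcL'_m})$ along the decreasing approximation, one gets $\bfM_{\rv,\rw}^{\prime\infty}(\pi^*\Phi)\ge \gamma\,\bfJ_{\rv,\TT}^{\prime\infty}(\pi^*\Phi)>0$, contradicting the earlier bound $\bfM_{\rv,\rw}^{\prime\infty}(\Phi)\le 0$.

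The hard part will be the weighted Archimedean--non-Archimedean entropy slope inequality on $(Y,\pi^*L)$: since $\pi^*L$ is only semiample and big, the regularization of $\pi^*\phi$ by Fubini--Study approximants must perturb by a small ample class as in Proposition~\ref{approximated_geodesic}, and the weighted entropy must be tracked carefully under the multiplier ideal construction and the twist by weights. A second technical difficulty is controlling $\bfM_{\rv,\rw}^{\prime\infty}(\pi^*\Phi)$ from above by $\limsup_m \bfM_{\rv,\rw}^\NA(\phi_{\mcL'_m})$ along the decreasing model approximation, since $\bfH_\rv^\NA$ is only lower semicontinuous along decreasing sequences; this requires a careful slope analysis exploiting the klt envelope property from Theorem~\ref{env_thm} together with the slope identity for test configuration rays.
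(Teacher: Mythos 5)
There is a genuine gap in your final step, and it is exactly the point the paper's proof is built to overcome. You approximate $\pi^*\phi$ from above by the decreasing test-configuration potentials of Proposition~\ref{approximated_geodesic} and then hope to pass the stability inequality $\bfM^\NA_{\rv,\rw}(\phi_{\mcL'_m})\geq\gamma\,\bfJ^\NA_{\rv,\TT}(\phi_{\mcL'_m})$ to the limit by "a careful semicontinuity argument". First, the direction you state ("controlling $\bfM^{\prime\infty}_{\rv,\rw}(\pi^*\Phi)$ from above by $\limsup_m\bfM^\NA_{\rv,\rw}(\phi_{\mcL'_m})$") cannot yield the lower bound $\bfM^{\prime\infty}_{\rv,\rw}(\pi^*\Phi)\geq\gamma\,\bfJ^{\prime\infty}_{\rv,\TT}(\pi^*\Phi)$. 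What you actually need is $\bfH^\NA_\rv(\pi^*\phi)\geq\liminf_m\bfH^\NA_\rv(\pi^*\phi_{\mcL'_m})$, i.e.\ upper semicontinuity of the entropy along your approximation; but since $A_Y$ is lower semicontinuous and the measures converge weakly, only $\liminf_m\bfH^\NA_\rv(\pi^*\phi_{\mcL'_m})\geq\bfH^\NA_\rv(\pi^*\phi)$ holds, and for a decreasing Fubini--Study/multiplier-ideal approximation the entropy of the approximants can stay strictly larger, so the limit of the stability inequalities gives nothing about $\bfM^\NA_{\rv,\rw}(\pi^*\phi)$. This cannot be fixed by the envelope property alone. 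The paper's Step 3 replaces your approximation by a measure-side one: it pushes $\nu=\MA^\NA_\rv(\pi^*\phi)$ forward to the skeleta of a sequence of models, solves $\MA^\NA_\rv(\phi_j)=\pi_*\nu_j$ by the weighted non-Archimedean Calabi--Yau theorem (Theorem~\ref{weighted_NA_Calabi_problem}), and further approximates by divisorial measures, producing \emph{model} metrics $\phi_{\mcL_j}$ (not test configurations --- which is precisely why the hypothesis is stability for models) that converge strongly \emph{and} satisfy $\bfH^\NA_\rv(\pi^*\phi_{\mcL_j})\to\bfH^\NA_\rv(\pi^*\phi)$, because $A_Y$ is continuous on the relevant skeleta and the divisorial measures converge strongly. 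Without this construction your chain of inequalities does not close.

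A secondary gap: you assert that the destabilizing ray obtained as a limit of geodesic segments is maximal. Maximality is not automatic from that construction; in the paper it is Step 1, which shows that $\bfM^{\prime\infty}_{\rv,\rw}(\Phi)<\infty$ (hence finite entropy slope) forces maximality, and this requires the multiplier-ideal/Ohsawa--Takegoshi comparison of $\hat\Phi$ and $\Phi$ (Lemma~\ref{integrabiliy_Phihat-Phi}) together with the klt extension of the Berman--Darvas--Lu entropy approximation (Lemma~\ref{BDL_klt}), the latter being one of the places where the klt hypothesis genuinely enters. Your Step 2 (the entropy slope inequality on the resolution) and your use of Proposition~\ref{M_v,w_equal}, Corollary~\ref{birational_uKs}, Lemma~\ref{reduced_J_v_convergence} and Corollary~\ref{slope_reduced_J_v} do match the paper, but these two missing ingredients --- especially the entropy-convergent model approximation --- are the heart of the argument.
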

\begin{proof}
    The proof basically follows the argument of Chi Li for the cscK case. We will sketch Li's argument and emphasize the necessary modifications under the weighted and singular setting.

    {\bf Step 1: } Show that, for a geodesic ray $\Phi$, if $\bfM^{\prime\infty}_{\rv,\rw}(\Phi)<\infty$, then $\Phi$ is a maximal geodesic ray.

\begin{lemma}
\label{integrabiliy_Phihat-Phi}
    Fix $\varphi\in \mcE^1(X,L)$, $\phi\in\mcE^{1}(X^\an,L^\an)$. Let $\Phi$ be a geodesic ray, $\hat{\Phi}$ be a maximal geodesic ray, such that $\Phi(0)=\hat{\Phi}(0)=\varphi$, and $\Phi^\NA = \hat{\Phi}^\NA = \phi$. Then for any $\alpha>0$,
\begin{align}
    \int_{X\times \mathbb{D}} e^{\alpha(\hat{\Phi}-\Phi)} \; \Omega\wedge \sqrt{-1} dt \wedge \bar{dt} < \infty.
\end{align}
\end{lemma}
\begin{proof}
The proof is based on \cite[Section~4.2]{LTW22}.
Let $A$ be an ample line bundle on $Y$ and $\varphi_A\in c_1(A)$ be a smooth metric. 
	Then one sets
	\begin{align*}
		\Phi_\epsilon\coloneqq \pi^*\Phi+\epsilon q_1^*\varphi_A, \text{ and }
		\hat{\Phi}_\epsilon\coloneqq \pi^*\hat\Phi+\epsilon q_1^*\varphi_A
	\end{align*}
	where $\pi:Y_\CC\rightarrow X_\CC$ and $q_1:Y_\CC\rightarrow Y$ is the projection.
	Then $\Phi_\epsilon$ and $\hat{\Phi}_\epsilon$ are singular metrics on $q_1^*L_\epsilon$ over $Y_\CC$, where $L_\epsilon\coloneqq \pi^*L+\epsilon A$. 
	Note that 
	\begin{align*}
		\mcJ(m\Phi_\epsilon)=\mcJ(m\pi^*\Phi)=\mcJ(m\pi^*\hat\Phi)=\mcJ(m\hat\Phi_\epsilon).
	\end{align*}
	By the relative Castelnuovo–Mumford criterion, $\oo_{Y_\CC}(q_1^*((m+m_0)L_\epsilon)\otimes\mcJ(m\Phi_\epsilon))$ is $q_2$-globally generated for $m$ sufficiently large.
    By the multiplier ideal approximation of \cite{BBJ21}, we have a sequence of geodesic rays $\Phi_{\epsilon,m}$ associated to test configurations $(\mcY_{\epsilon,m},\mcL_{\epsilon,m})$.
	Locally the singularity of $\Phi_{\epsilon,m}$ is comparable to $\frac{1}{m+m_0}\log\Sigma_i|f_i|^2$, where $\{f_i\}$ are generators of $\mcJ(m\Phi_\epsilon)=\mcJ(m\Phi)$.
	By Demailly's regularization theorem which depends on the Ohsawa--Takegoshi extension theorem,  there exists $C = C_{\epsilon,m}>0$ such that
	\begin{align*}
		(m+m_0)\Phi_{\epsilon,m}\geq m\Phi_\epsilon + m_0 \Phi_{\triv,\epsilon}-{C},
	\end{align*}
    where $\Phi_{\triv,\epsilon} = \pi^*\Phi_\triv + \epsilon q_1^*\varphi_A$.
	
	A similar estimate also holds for $\hat\Phi$, that is, 
	\begin{align*}
		(m+m_0)\Phi_{\epsilon,m}\geq m\hat\Phi_\epsilon + m_0 \Phi_{\triv,\epsilon}-C.
	\end{align*}
	Then
 \begin{align*}
         &\int_{X\times \mathbb{D}} e^{\alpha(\hat{\Phi}-\Phi)} \; \Omega\wedge \sqrt{-1} dt \wedge \bar{dt} \\
         &=  \int_{Y\times \mathbb{D}} e^{\alpha(\hat{\Phi}_\epsilon+\frac{m_0}{m}\Phi_{\triv ,\epsilon}- \frac{m+m_0}{m}\Phi_{\epsilon,m})} e^{\alpha(\frac{m+m_0}{m}\Phi_{\epsilon,m}-\Phi_\epsilon-\frac{m_0}{m}\Phi_{\triv,\epsilon}) } \; \Omega\wedge \sqrt{-1} dt \wedge \bar{dt}\\
         &\leq e^{\frac{\alpha C}{m}} \int_{Y\times \mathbb{D}} e^{\alpha(\frac{m+m_0}{m}\Phi_{\epsilon, m}-\Phi_\epsilon-\frac{m_0}{m}\Phi_{\triv,\epsilon})} \; \Omega\wedge \sqrt{-1} dt \wedge \bar{dt}.
    \end{align*}
    Take $\alpha=m$, integral is bounded by the definition of multiplier ideal sheaf.

\end{proof}

We also need a generalization of \cite[Lemma 3.1]{BDL17} in the setting of klt varieties.
\begin{lemma}
\label{BDL_klt}
    Let $(X,L)$ be a polarized klt variety. Let $\omega \in c_1(L)$ be a smooth K\"ahler metric on $X$, i.e, it can be extended to a smooth K\"ahler metric on the ambient projective space. For a given $\varphi\in \mcE^1(X,L)$, there exists $\varphi_j\in C^0(X)\cap C^\infty(X^{\rm reg})$, such that $d_1(\varphi_j,\varphi)\to 0$, $\bfH_\rv(\varphi_j)\to \bfH_\rv(\varphi)$.
\end{lemma}
\begin{proof}
    Let $g = \frac{\MA_\rv(\varphi)}{\omega^n}$, $g\in L^1(X,\omega^n)$. Let $h_j = \min\{g, j\}$, $j\in \NN^*$. As in the proof of \cite[Lemma 3.1]{BDL17}, we have $\int_X|h_j-g|\omega^n \to 0$, and $\int_X h_j\log(h_j)\omega^n \to \int_X g \log(g)\omega^n$. And there exists $0< g_j\in C^\infty(X)$, i.e, $g_j$ is the restriction of a smooth function defined on the ambient projective space, such that $\int_X|h_j-g_j|\omega^n \leq \frac{1}{j}$, $|\int_X \big(h_j\log(h_j)-g_j\log(g_j)\big)\omega^n|\leq \frac{1}{j}$. 
    By \cite[Proposition~3.8]{HL23}, there exists $\varphi_j\in C^0(X)\cap C^\infty(X^{\rm reg})$, such that $\MA_\rv(\varphi_j) = \frac{V_\rv}{\int_X g_j\omega^n}g_j\omega^n$. Without the loss of generality, we may assume $\sup_X g_j = \sup_X\varphi = 0$. Since the entropy is uniformly bounded, by the weak compactness, up to  picking a subsequence, $\varphi_j$ converges to a limit $\varphi'$ in $d_1$-topology, and $\omega_{\varphi_j}^n$ converges to $\omega_{\varphi'}^n$ weakly. Then by the uniqueness of the solution of the complex Monge-Amp\`ere equation, we have $\varphi = \varphi'$.
\end{proof}

Let $U = \hat{\Phi}-\Phi$.
Note that $\bfM_{\rv,\rw}^{\prime\infty}(\Phi)<+\infty$ implies that $\bfH^{\prime\infty}(\phi)<\infty$. By the proof of \cite[Theorem 1.2]{Li22a} and Lemma~\ref{integrabiliy_Phihat-Phi}, for any $\alpha>0$, there exists $s_j\to +\infty$, such that
\begin{align}
    \int_X e^{\alpha U(s_j)} \; \Omega \leq e^{s_j}.
\end{align}
By Lemma~\ref{BDL_klt}, there exists $\varphi_k\in C^0(X)\cap C^\infty(X^{\rm reg})$ that converges to $\Phi(s_j)$.
Apply Jensen's inequality to $\int_X e^{\alpha U(s_j)-\log(\frac{(\ddc \varphi_k)^n}{\Omega})} \; (\ddc \varphi_k)^n \leq e^{s_j}$,
we have
\begin{align*}
    \int_X \log(\frac{(\ddc \varphi_k)^n}{\Omega})(\ddc \varphi_k)^n \geq \alpha\Big(\int_X U(s_j) \; (\ddc \varphi_k)^n \Big) - s_j V.
\end{align*}
Let $k\to\infty$, we then have
\begin{align*}
    \bfH_\Omega(\Phi(s_j)) \geq \alpha (\bfE(\hat{\Phi}(s_j))-\bfE(\Phi(s_j)) - s_j V,
\end{align*}
which furthermore implies
\begin{align}
    \bfH^{\prime\infty}(\Phi) \geq \alpha (\bfE^{\prime\infty}(\hat{\Phi})-\bfE^{\prime\infty}(\Phi)) - V.
\end{align}
If $\Phi$ is not a maximal geodesic ray, then $\bfE^{\prime\infty}(\hat{\Phi})-\bfE^{\prime\infty}(\Phi)>0$. Since $\alpha$ can be chosen arbitrarily large, this contradicts with the fact that $\bfH^{\prime\infty}(\Phi)<\infty$.

\bigskip

     {\bf Step 2: } Show that for a maximal geodesic ray $\Phi$ with the associated $\phi=\Phi^\NA\in\mcE^{1}_\rv(X^\an,L^\an)$, 
     \begin{align}
        \label{Entropy_inequality}
            \bfH^{\prime\infty}_{\rv}(\pi^*\Phi) \geq \bfH^\NA_{\rv}(\pi^*\phi),
    \end{align}
    where we fix a smooth positive measure $\Omega_Y$ on $Y$ and define
    \begin{align*}
        \bfH_{\rv}(\pi^*\Phi(s))
        \coloneqq
        \int_Y \log(\frac{\pi^*\MA_\rv(\Phi(s))}{\Omega_Y})\; \pi^*\MA_\rv(\Phi(s)), 
    \end{align*}
    and $\pi^*\MA_\rv(\Phi(s))$ is viewed as the pullback of $(n,n)$-form.
    
By Proposition~\ref{approximated_geodesic}, there exists a sequence $\phi_m\in \mcH^\NA(X^\an,L^\an)$ that converges to $\phi$,  and each $\phi_m$ corresponds to a test configuration $(\mcX_m,\mcL_m)$. 
Let $\mcY$ be an snc model of $Y$, denote $\mcY_m = \pi^*\mcX_m$ and $\mcL_m'=\pi^*\mcL_m$. Let $\mcZ_m$ be a common resolution of $\mcY,\mcY_m$:

\begin{equation}
  \begin{tikzcd}
       & \mcZ_m \arrow[ld, "p_1"'] \arrow[d, "p_0"] \arrow[rd, "p_2"]\\
        \mcY_m \arrow[r,dashed] & Y_\CC & \mcY. \arrow[l, dashed]
    \end{tikzcd}
\end{equation}

As in \cite[Section~5]{Li22a}, one defines
\begin{align*}
    \bfH_\rv^\NA(\pi^*\phi,\mcY)
    \coloneqq\lim_{m\rightarrow\infty}\bfH_\rv^\NA(\pi^*\phi_m,\mcY),
\end{align*}
where 
\begin{align*}
    \bfH_\rv^\NA(\pi^*\phi_m,\mcY)
    \coloneqq& \lim_i(\mcL_m'^{n}\cdot(p_2^*K_{\bar{\mcY}/\PP^1}^{\log}-p_0^*K_{Y_{\PP^1}/\PP^1}^{\log} ))_{\rv_i} \\
    =&\lim\sum_{\vec{k}}a_{\vec{k}}\frac{k_1!\cdots k_r!}{(n+k)!}(p_2^*K_{\bar{\mcY}/\PP^1}^{\log}-p_0^*K_{Y_{\PP^1}/\PP^1}^{\log} )^{[\vec{k}]}\cdot(\mcL_m'^{[\vec{k}]})^{n+k}.
\end{align*}
Recall \cite[(144)]{Li22a},
\begin{align*}
    A_Y(r_\mcY(v))=\sigma(v)(D) \eqqcolon f_D(v),
\end{align*}
where $D\coloneqq K_{\mcY/Y_{\PP^1}}^{\log}$.
By \eqref{MA_v^NA(phi_mcL)smooth_pi^*L}, one has
\begin{align*}
    \int_{Y^\an}A_Y(r_\mcY(v))\MA_\rv^\NA(\pi^*\phi_m)(v)
    =&\sum_E b_E(\mcL_m'^n\cdot E)_\rv \sigma(r(b_E^{-1}\ord_{E})) (D) \\
    =&\sum_E(\mcL_m'^n\cdot E)_\rv  \ord_E(D)  \\
    =&\lim_i\sum_{\vec{k}}a_{\vec{k}}\frac{k_1!\cdots k_r!}{(n+k)!}(K_{\mcY/Y_{\PP^1}}^{\log})^{[\vec{k}]}\cdot(\mcL_m'^{[\vec{k}]})^{n+k} \\
    =&\bfH_\rv^\NA(\pi^*\phi_m,\mcY).
\end{align*}
Taking the limit, we obtain
\begin{align*}
     \bfH^\NA_{\rm v}(\pi^*\phi;\mcY) 
     = \int_{Y^\an} A_Y(r_{\mcY}(v)) \; \MA^\NA_{\rm v}(\pi^*\phi).
\end{align*}
Since $A_Y=\lim_\mcY A_Y\circ r_\mcY$ (see \cite[Theorem A.10]{BJ23}), where $\mcY$ runs all snc models, then we have
\begin{align}
    \label{entropy_model_sup}
    \bfH^\NA_{\rm v}(\pi^*\phi) 
    = \sup_{\mcY} \int_{Y^\an} A_Y(r_{\mcY}(v)) \; \MA^\NA_{\rm v}(\pi^*\phi)
    = \sup_{\mcY} \bfH^\NA_{\rm v}(\pi^*\phi;\mcY) .
\end{align}
To show \eqref{Entropy_inequality}, it suffices to show that
\begin{align}
    \bfH_\rv^{\prime\infty}(\pi^*\Phi) \geq  \bfH^\NA_{\rm v}(\pi^*\phi;\mcY)
\end{align}
for any model $\mcY$.

\begin{lemma} 
\label{lem:holder_estimate}
For any $\varphi_0\in\mcE^1(X,L)$ and $\varphi_1\in\mcH(X,L)$, $f\in \mcH(Y,\omega_Y)$, then
	\begin{align}
		&\left|\int_Y f\,\Big(\MA_\rv(\pi^*\varphi_0)- \MA_\rv(\pi^*\varphi_1)\Big)\right| \nonumber\\
		\leq & C \left(\sup_Y|f|\int_Y\omega_Y\wedge(\pi^*\omega)^{n-1}\right)^{\frac{1}{2}}\cdot\bfI(\varphi_0,\varphi_1)^{\alpha_n/2}\max_i(1+\bfI(\varphi_i))^{\frac{1}{2}-\frac{\alpha_n}{2}},
        \label{eq:holder_estimate_f()}
	\end{align}
    where $\alpha_n\coloneqq 2^{-n}$.
\end{lemma}
\begin{proof}
	First, we assume $\varphi_0\in\mcH(X,L)$, and denote $\varphi_t=t\varphi_0+(1-t)\varphi_1$ for $t\in [0,1]$.
	By the integration by part (see \cite[Lemma~3.10]{BJT24}), then
	\begin{align*}
		&\int_Y f\,\Big(\MA_\rv(\pi^*\varphi_0)- \MA_\rv(\pi^*\varphi_1)\Big) \\
		=&\int_Yf\,(\int_0^1\frac{d}{dt}\MA_\rv(\pi^*\varphi_t)dt) \\
		=&\int_0^1dt\int_Yf\Big(n\pi^*\rv(m_{\varphi_t})\ddc\pi^*(\varphi_0-\varphi_1)\wedge(\ddc\pi^*\varphi_t)^{n-1}+\pi^*\langle\rv'(m_{\varphi_t}),m_{\ddc(\varphi_0-\varphi_1)}\rangle\MA_\rv(\pi^*\varphi_t) \Big)  \\
		=&-n\int_0^1dt\int_X\pi^*\rv(m_{\varphi_t})d f\wedge d^c\pi^*(\varphi_0-\varphi_1)\wedge(\ddc\pi^*\varphi_t)^{n-1}.
	\end{align*}
	By the Cauchy-Schwarz inequality, one has
	\begin{align*}
		&\left|\int_Y f\,\Big(\MA_\rv(\pi^*\varphi_0)- \MA_\rv(\pi^*\varphi_1)\Big)\right|
		\leq C\sup_{t\in[0,1]}\left(\int_Yd f\wedge d^cf\wedge(\ddc\pi^*\varphi_t)^{n-1} \right)^{\frac{1}{2}} \\
		 &\qquad\qquad\qquad\qquad\qquad\qquad\ \cdot \left(\int_Yd \pi^*(\varphi_0-\varphi_1)\wedge d^c\pi^*(\varphi_0-\varphi_1)\wedge(\ddc\pi^*\varphi_t)^{n-1} \right)^{\frac{1}{2}}.
	\end{align*}
	Note that
	\begin{align*}
		&\int_Yd \pi^*(\varphi_0-\varphi_1)\wedge d^c\pi^*(\varphi_0-\varphi_1)\wedge(\ddc\pi^*\varphi_t)^{n-1} \\
		=&\int_X d (\varphi_0-\varphi_1)\wedge d^c(\varphi_0-\varphi_1)\wedge(\ddc\varphi_t)^{n-1} \\
		\leq& C\bfI(\varphi_0,\varphi_1)^{\alpha_n}(1+\bfI(\varphi_t))^{1-\alpha_n} 
		\leq  C\bfI(\varphi_0,\varphi_1)^{\alpha_n}\max_i(1+\bfI(\varphi_i))^{1-\alpha_n}. 
	\end{align*}
	On the other hand, since $\omega_Y+\ddc f>0$, then we have
	\begin{align}
		\int_Yd f\wedge d^cf\wedge(\ddc\pi^*\varphi_t)^{n-1}
		=&-\int_Y f\ddc f\wedge(\ddc\pi^*\varphi_t)^{n-1} \nonumber \\
		=&\int_Yf\omega_Y\wedge(\ddc\pi^*\varphi_t)^{n-1}-\int_Yf(\omega_Y+\ddc f)\wedge(\ddc\pi^*\varphi_t)^{n-1} \nonumber \\
		\leq&2\sup|f|\cdot\int_Y\omega_Y\wedge(\pi^*\omega)^{n-1}.
	\end{align}
    Combining the above inequalities, we obtain \eqref{eq:holder_estimate_f()}.

    For any $\varphi_0\in\mcE^1(X,L)$, one can choose a sequence $\varphi_j\in\mcH(X,L)$ such that $\varphi_j$ strongly converges to $\varphi_0$.
    One has
    \begin{align*}
        &\left|\int_Y f\,\Big(\MA_\rv(\pi^*\varphi_j)- \MA_\rv(\pi^*\varphi_1)\Big)\right| \nonumber\\
		\leq & C \left(\sup_Y|f|\int_Y\omega_Y\wedge(\pi^*\omega)^{n-1}\right)^{\frac{1}{2}}\cdot\bfI(\varphi_j,\varphi_1)^{\alpha_n/2}\max(1+\bfI(\varphi_0), 1+\bfI(\varphi_1))^{\frac{1}{2}-\frac{\alpha_n}{2}}.
    \end{align*}
    Taking $i\rightarrow\infty$ in the both side of the above inequality implies \eqref{eq:holder_estimate_f()}.

\end{proof}

Let $\mcY$ be a snc model of $Y$.
\begin{proposition}
\label{prop:slope_int_MA_v()}
	Let $\Phi$ be a maximal geodesic in $\mcE^1(X,L)$ with $\phi=\Phi^\NA\in\mcE^1_\rv(X^\an,L^\an)$, and $\nu$ be a smooth Hermitian metric on $-K_{\mcY/\PP^1}^{\log}$. Then
	\begin{align}
	\label{eq:slope_ent_mcY}
		\lim_{s\rightarrow\infty}s^{-1}\int_Y\log(\frac{\nu_s}{\Omega_Y})\MA_\rv(\pi^*\Phi(s))
		= \bfH^\NA_{\rm v}(\pi^*\phi;\mcY) 
     = \int_{Y^\an} A_Y(r_{\mcY}(v)) \; \MA^\NA_{\rm v}(\pi^*\phi).
	\end{align}
\end{proposition}
\begin{proof}
	For simplicity, we denote by $f_s\coloneqq\log(\frac{\nu_s}{\Omega_Y})$, which defines a ray with analytic singularity such that the non-Archimedean limit (\cite{BHJ19}) is $\varphi_D= A_Y\circ r_\mcY$, where $D\coloneqq K_{\mcY/Y_{\PP^1}}^{\log}$.
	We can find ample line bundles $F_i$ on $Y$, and relative ample line bundles $\mcF_i$ on $\mcY$, such that such that $F_1-F_2=\mcO_Y$, $\mcF_1-\mcF_2 = D$. Correspondingly, we also have psh rays with analytic singularities $\{\varphi_{i,s}\}$ in $\mcH(Y,F_i)$  for $i=1,2$, such that $f_s=\varphi_{1,s}-\varphi_{2,s}$. We denote by $\phi_i$ the non-Archimedean limit of $\{\varphi_{i,s}\}$ for $i=1,2$.  Then, $\varphi_D=\phi_1-\phi_2$.
	Therefore, to show \eqref{eq:slope_ent_mcY}, it suffices to prove 
	\begin{align}
	\label{eq:slope_formula_phi_i}
		\lim_{s\rightarrow\infty}s^{-1}\int_Y\varphi_{i,s}
        \MA_\rv(\pi^*\Phi(s))
		=\int_{Y^\an} \phi_i \; \MA^\NA_{\rm v}(\pi^*\phi).
	\end{align}
	
Proposition~\ref{approximated_geodesic}, there exists a decreasing sequence $\phi_m\in\mcH^\NA(X^\an,L^\an)$ that converges to $\phi$. There exists a unique geodesic ray $\Phi_m$ emanating from $\varphi_m$ with $\Phi_m^\NA = \phi_m$, which corresponds to a test configuration $(\mcX_m,\mcL_m)$. By the maximum principle, $\Phi_{m}\searrow \Phi$. 

{\bf Claim:} for any $m$,
\begin{align*}
	\lim_{s\rightarrow\infty}s^{-1}\int_Y\varphi_{i,s}\MA_\rv(\pi^*\Phi_m(s))
		=\int_{Y^\an} \phi_i \; \MA^\NA_{\rm v}(\pi^*\phi_m).
\end{align*}
{\bf Proof of Claim:} Let $\Psi_m$ be the smooth sub-geodesic induced by $\CC^*$-action of the test configuration, which emanates from $\varphi_m$ such that $\Psi_m^\NA = \phi_m$. We have
\begin{align}
	&\left|\int_Y \varphi_{i,s}\,\MA_\rv(\pi^*\Psi_m(s))- \int_Y \varphi_{i,s}\,\MA_\rv(\pi^*\Phi_m(s))\right| \nonumber \\
	=&\left|\int_Y \varphi_{i,s}\,\Big(\MA_\rv(\pi^*\Psi_m(s))- \MA_\rv(\pi^*\Phi_m(s))\Big)\right| \nonumber\\
	\leq&C \left(\sup_Y|\varphi_{i,s}|\cdot(F_i\cdot(\pi^*L)^{n-1})\right)^{\frac{1}{2}}\cdot\bfI(\Psi_m(s),\Phi_m(s))^{\alpha_n/2}\Big(1+\max\{\bfI(\Psi_m(s)), \bfI(\Phi_m(s)\}\Big)^{\frac{1}{2}-\frac{\alpha_n}{2}}.
	\label{eq:holder_estimate_along_ray}
\end{align}
Since $\lim_{s\to \infty}\frac{1}{s}\bfI(\Psi_m(s),\Phi_m(s)) = 0$, we have
\begin{align*}
    \lim_{s\rightarrow\infty}s^{-1}\int_Y\varphi_{i,s}\MA_\rv(\pi^*\Psi_m(s)) = \lim_{s\rightarrow\infty}s^{-1}\int_Y\varphi_{i,s}\MA_\rv(\pi^*\Phi_m(s)).
\end{align*}
Therefore to show the Claim, it suffices to show that
\begin{align}
	\label{claim_aid}\lim_{s\rightarrow\infty}s^{-1}\int_Y\varphi_{i,s}\MA_\rv(\pi^*\Psi_m(s))
		=\int_{Y^\an} \phi_i \; \MA^\NA_{\rm v}(\pi^*\phi_m).
\end{align}

As $\pi^*\Psi_m$ is a smooth psh metric on $\mcY_m$, where $\mcY_m$ is a snc model associated with $\pi^*\phi_m$, we have
$\MA_\rv(\pi^*\Psi_m(s))$ converges to $\MA_\rv(\pi^*\Psi_m|_{\mcY_{m,0}})$ weakly as measures. Meanwhile, $\varphi_{i,s}$ is a continuous function on the Hybrid space $\mcY_m^{\bf hyb}$. Then by \cite[Theorem B]{PS22}, the equality \eqref{claim_aid} holds. This concludes the Claim.

\vspace{0.15in}

On the other hand, by Lemma~\ref{lem:holder_estimate}, one has
\begin{align}
	&\left|\int_Y \varphi_{i,s}\,\MA_\rv(\pi^*\Phi(s))- \int_Y \varphi_{i,s}\,\MA_\rv(\pi^*\Phi_m(s))\right| \nonumber \\
	=&\left|\int_Y \varphi_{i,s}\,\Big(\MA_\rv(\pi^*\Phi(s))- \MA_\rv(\pi^*\Phi_m(s))\Big)\right| \nonumber\\
	\leq&C \left(\sup_Y|\varphi_{i,s}|\cdot(F_i\cdot(\pi^*L)^{n-1})\right)^{\frac{1}{2}}\cdot\bfI(\Phi(s),\Phi_m(s))^{\alpha_n/2}\Big(1+\max\{\bfI(\Phi(s)), \bfI(\Phi_m(s)\}\Big)^{\frac{1}{2}-\frac{\alpha_n}{2}}.
	\label{eq:holder_estimate_along_ray}
\end{align}
Note that
\begin{align*}
	\lim_{s\rightarrow\infty}s^{-1}\bfI(\Phi(s),\Phi_m(s))
	=&\bfI^\NA(\phi,\phi_m), \\
	\lim_{s\rightarrow\infty}s^{-1}\max\{\bfI(\Phi(s)), \bfI(\Phi_m(s)\}
	=&\max\{\bfI^\NA(\phi),\bfI^\NA(\phi_m)\}.
\end{align*}
And $\varphi_{i,s}$ extends to a quasi-psh function with analytic singularity on a simple-normal-crossing model $\mcY_i$, we also have
\begin{align*}
	\lim_{s\rightarrow\infty}s^{-1}\sup_Y|\varphi_{i,s}|
	=\sup_{Y^\an}|\phi_i|. 
\end{align*}
By \eqref{eq:holder_estimate_along_ray}, we obtain
\begin{align}
	&\int_{Y^\an} \phi_i \; \MA^\NA_{\rm v}(\pi^*\phi_m)
		-C_{F_i}(\sup_{Y^\an}|\phi_i|)^{\frac{1}{2}}\cdot\bfI^\NA(\phi,\phi_m)^{\frac{\alpha_n}{2}}\cdot\Big(\max\{\bfI^\NA(\phi),\bfI^\NA(\phi_m)\}\Big)^{\frac{1}{2}-\frac{\alpha_n}{2}}  \nonumber\\
		\leq&\liminf_{s\rightarrow\infty}s^{-1}\int_Y\varphi_{i,s}\,\MA_\rv(\pi^*\Phi(s))
		\leq \limsup_{s\rightarrow\infty}s^{-1}\int_Y\varphi_{i,s}\,\MA_\rv(\pi^*\Phi(s)) \nonumber  \\
		\leq& \int_{Y^\an} \phi_i \; \MA^\NA_{\rm v}(\pi^*\phi_m)
		+C_{F_i}(\sup_{Y^\an}|\phi_i|)^{\frac{1}{2}}\cdot\bfI^\NA(\phi,\phi_m)^{\frac{\alpha_n}{2}}\cdot\Big(\max\{\bfI^\NA(\phi),\bfI^\NA(\phi_m)\}\Big)^{\frac{1}{2}-\frac{\alpha_n}{2}} . 
		\label{eq:upperlower_bound_slope_formula}
\end{align}
Since
\begin{align*}
	\lim_{m\rightarrow\infty}\int_{Y^\an} \phi_i \; \MA^\NA_{\rm v}(\pi^*\phi_m)
	=\int_{Y^\an} \phi_i \; \MA^\NA_{\rm v}(\pi^*\phi),
\end{align*}
by letting $m\rightarrow\infty$ in \eqref{eq:upperlower_bound_slope_formula}, we obtain \eqref{eq:slope_formula_phi_i}.
Therefore, we finish the proof.

\end{proof}

Reformulate the entropy into
\begin{align}
\label{eq:entropy_reformula}
    \bfH_\rv(\pi^*\Phi(s)) 
    =& 
     \int_Y \log\left(\frac{\MA_\rv(\pi^*\Phi(s))}{\nu}\right)\; \MA_\rv(\pi^*\Phi(s)) 
     +\int_Y\log\left(\frac{\nu}{\Omega_Y}\right)\,\MA_\rv(\pi^*\Phi(s)).
\end{align}
By Jenssen's inequality and limit of Monge--Amp\`ere measures in \cite[Lemma 3.11]{BHJ19},
\begin{align}
\label{eq:1st_term_ent}
    \int_Y \log\left(\frac{\MA_\rv(\pi^*\Phi(s))}{\nu}\right)\;\MA_\rv(\pi^*\Phi(s)) \geq -\VV_\rv\log\int_{Y_\tau}\frac{\nu}{\VV_\rv}=-O(\log s^d),
\end{align}
where $s=-\log|\tau|^2$ and $d$ is the dimension of the dual complex of $\mcY_0$.

By \eqref{eq:entropy_reformula}, \eqref{eq:1st_term_ent} and Proposition \ref{prop:slope_int_MA_v()}, we obtain
\begin{align*}
	\bfH_\rv^{\prime\infty}(\pi^*\Phi)
	\geq  \bfH^\NA_{\rm v}(\pi^*\phi;\mcY). 
\end{align*}
Therefore, we have
\begin{align*}
    \bfH_\rv^{\prime\infty}(\pi^*\Phi)
	\geq \sup_\mcY \bfH^\NA_{\rm v}(\pi^*\phi;\mcY)
    =\bfH_\rv^\NA(\pi^*\phi).
\end{align*}

When the class $c_1(\pi^*L)$ on $Y$ is semipositive, the Ricci energy $\bfE^{-\Ric(\Omega_Y)}(\varphi)$ is not necessarily well-defined for an arbitrary psh metric $\varphi\in\mcE^1(Y,\pi^*L)$. 
This motivates restricting its definition to a suitable subset of $\mcE^1(Y,\pi^*L)$.
Denote $Ent(X,L)\coloneqq\{\varphi\in\mcE^1(X,L):\bfH_\rv(\varphi)<\infty\}$.

\begin{prop-def}
\label{prop-def:Ricci_energy_on_Y}
	The Ricci energy $\bfE^{-\Ric(\Omega_Y)}_\rv:\pi^*{\rm Psh}(X,L)\cap L^\infty\rightarrow\RR$ admits a unique extension $\bfE^{-\Ric(\Omega_Y)}_\rv:\pi^*Ent(X,L)\rightarrow\RR$ satisfying 
	\begin{align*}
		\bfH_\rv(\pi^*\varphi)-\bfH_\rv(\varphi)
		=\bfE_\rv^{-\Ric(\pi^*\Omega)}(\pi^*\varphi)
     -\bfE_\rv^{-\Ric(\Omega_Y)}(\pi^*\varphi)
     +\int_Y\log(\frac{\pi^*\Omega}{\Omega_Y})\; \MA_\rv(\pi^*\varphi_0),
	\end{align*}
	for any $\varphi\in Ent(X,L)$.
\end{prop-def}
\begin{proof}
	For any $\varphi\in\mcE^1(X,L)$, by Lemma~\ref{BDL_klt}, then there exists $\varphi_j\in C^0(X)\cap C^\infty(X^{\rm reg})$, such that $d_1(\varphi_i,\varphi)\to 0$, $\bfH_\rv(\varphi_i)\to \bfH_\rv(\varphi)=\int_X\frac{\MA_\rv(\varphi)}{\Omega}\MA_\rv(\varphi)$, where $\Omega=\omega^n$.
	For each $\varphi_i$, integration by parts implies 
\begin{align*}
    &\bfH_\rv(\pi^*\varphi_i)-\bfH_\rv(\varphi_i) \\
    =&\int_Y \log(\frac{\MA_\rv(\pi^*\varphi_i)}{\Omega_Y})\; \MA_\rv(\pi^*\varphi_i)
     -\int_Y \log(\frac{\MA_\rv(\pi^*\varphi_i)}{\pi^*\Omega})\; \MA_\rv(\pi^*\varphi_i) \\
    =&\int_Y\log(\frac{\pi^*\Omega}{\Omega_Y})\; \MA_\rv(\pi^*\varphi_i) \\
    =&\bfE_\rv^{-\Ric(\pi^*\Omega)}(\pi^*\varphi_i)
     -\bfE_\rv^{-\Ric(\Omega_Y)}(\pi^*\varphi_i)
     +\int_Y\log(\frac{\pi^*\Omega}{\Omega_Y})\; \MA_\rv(\pi^*\varphi_0).
\end{align*}

It suffices to show that 
\begin{align*}
	\int_Y\log(\frac{\pi^*\Omega}{\Omega_Y})\; \MA_\rv(\pi^*\varphi_i)
	\rightarrow
	\int_Y\log(\frac{\pi^*\Omega}{\Omega_Y})\; \MA_\rv(\pi^*\varphi)
\end{align*}
For simplicity, we
denote $\Theta\coloneqq\pi^*\Omega$, $f_i\cdot\Theta\coloneqq\MA_\rv(\pi^*\varphi_i)$,
$f\cdot\Theta\coloneqq\MA_\rv(\pi^*\varphi)$ and $g\coloneqq \log(\frac{\pi^*\Omega}{\Omega_Y})$.
Next, we prove the following desired convergence
\begin{align}
\label{eq:L1-convergence}
	\int_Yg\cdot f_i\,\Theta
	\rightarrow \int_Yg\cdot f\,\Theta.
\end{align}

By \cite[Theorem~4.5.8, Example~4.5.10]{Bog07}, the convergence of entropy and $\varphi\in Ent(X,L)$ imply that $\{f_i\}$ is uniformly integrable (see \cite[Definition~4.5.1]{Bog07}).
Note that $f_i\rightarrow f$ in measure.  By Lebesgue--Vitali theorem (see \textup{\cite[Theorem~4.5.4]{Bog07}}), then $f_i\rightarrow f$ in $L^1(\Theta)$.
It follows from the definition that for any $\epsilon>0$,
\begin{align*}
\lim_{i\rightarrow\infty}\Theta(\{y\in Y:|f_i(y)-f(y)|>\epsilon\})=0.
\end{align*}
One denotes by $E_\epsilon\coloneqq\{y\in Y:|g(f_i-f)(y)|>\epsilon\}$. For any sufficient large $M\gg1$, one splits
\begin{align*}
	\Theta(E_\epsilon)
	=\Theta(E_\epsilon\cap\{|g|\leq M\})
	 +\Theta(E_\epsilon\cap\{|g|> M\}).
\end{align*}
On the one hand, 
\begin{align*}
	\Theta(E_\epsilon\cap\{|g|> M\})
	\leq \Theta(E_\epsilon\cap\{|g|> M\}).
\end{align*}
Since $g$ is finite almost everywhere, then for any $\delta>0$, we can choose $M\gg1$ sufficiently large such that $\Theta(E_\epsilon\cap\{|g|> M\})<\frac{\delta}{2}$.
On the other hand, for any $y\in E_\epsilon\cap\{|g|\leq M\}$, then $|(f_i-f)(y)|>\frac{\epsilon}{M}$. Thus
\begin{align*}
	\Theta(E_\epsilon\cap\{|g|\leq M\})
	\leq
	\Theta(\{(f_i-f)(y)|>\frac{\epsilon}{M} \}).
\end{align*}
Since $f_i\rightarrow f$ in measure, then one can choose $i\gg1$ sufficiently large, such that
\begin{align*}
	\Theta(\{(f_i-f)(y)|>\frac{\epsilon}{M} \})
	\leq\frac{\delta}{2}.
\end{align*}
Therefore, for any $\delta>0$, there exists $N=N(M)\gg1$ such that for all $i>N$,
\begin{align*}
	\Theta(\{y\in Y:|g(f_i-f)(y)|>\epsilon\})
	<\frac{\delta}{2}+\frac{\delta}{2}=\delta.
\end{align*}
We obtain that $g\cdot f_i\rightarrow g\cdot f$ in measure $\Theta$.

Note that
\begin{align*}
    \Omega=\omega^n
    =g_0\left(\sum_j|g_i|^2 \right)^{\frac{1}{m}}\cdot\mu_h,
\end{align*}
where $g_0$ is a positive smooth function, $g_i$ are holomorphic functions such that $V((g_i)_i)=X^{\rm sing}$ and $\mu_h$ is the adapted measure (see \cite[\S 1.2.5]{PTT23}).
By \cite[Lemma~3.2]{BBEGZ19}, then
\begin{align*}
    g=\log(\frac{\pi^*\Omega}{\Omega_Y})
    =\log(g_0)+\frac{1}{m}\log\sum_j|g_j|^2+\psi^+-\psi^-.
\end{align*}
Thus,
\begin{align*}
    |g|\leq |\log(g_0)|-\frac{1}{m}\log\sum_j|g_j|^2-\psi^+-\psi^-+ C.
\end{align*}
It follows that there exists a sufficiently small $\lambda>0$ such that
\begin{align}
\label{eq:integrablity_volume_ratio}
	\int_Ye^{\lambda|g|}\,\Theta<\infty.
\end{align}

Recall Young inequality with respect to weights $\chi(t)=e^{t}$ and $\chi^*(s)=s\log s-s$, 
\begin{align*}
	st\leq (s\log s-s)+e^t.
\end{align*}
Applying the above inequality with $s=f_i>0$ and $t=\lambda |g|\geq0$, then
\begin{align*}
	|g\cdot f_i|
	\leq\frac{1}{\lambda}f_i\log f_i+\frac{1}{\lambda}e^{\lambda |g|}
\end{align*}
For $\epsilon>0$ and any measurable set $B$ with $\Theta(B)<\delta$ (determined later), 
\begin{align*}
	\int_{B}|g\cdot f_i|\, \Theta
	\leq \frac{1}{\lambda}\int_Bf_i\log f_i\,\Theta+\frac{1}{\lambda}\int_Be^{\lambda |g|}\,\Theta.
\end{align*}
By \eqref{eq:integrablity_volume_ratio}, then we can choose $\delta>0$ sufficiently small, such that
\begin{align*}
	\int_Be^{\lambda |g|}\,\Theta\leq \frac{\epsilon}{2}.
\end{align*}
Since $\int_Xf_i\log f_i\,\Theta<C_0$, then we can choose $T>0$ such that
\begin{align*}
	\int_Bf_i\log f_i\,\Theta
	=&\int_{B\cap\{f_i\leq T\}}f_i\log f_i\,\Theta+\int_{B\cap\{f_i>T\}}f_i\log f_i\,\Theta \\
	\leq&T\log T\cdot\Theta(B)+\int_{B\cap\{f_i>T\}}f_i\log f_i\, \Theta
	\leq (T\log T)\delta+ \frac{\epsilon}{4}
	\leq \frac{\epsilon}{2}.
\end{align*}
Thus we obtain
\begin{align*}
	\int_{B}|g\cdot f_i|\, \Theta<\epsilon.
\end{align*}
By \cite[Proposition~4.5.3]{Bog07},
we derive that $\{g\cdot f_i\}$ is uniformly integrable.

Again by Lebesgue--Vitali theorem (see \textup{\cite[Theorem~4.5.4]{Bog07}}), together with the convergence $g\cdot f_i\rightarrow g\cdot f$ in measure, we obtain $g\cdot f_i\rightarrow g\cdot f$ in $L^1(\Theta)$, that is, \eqref{eq:L1-convergence}.
Moreover, $\int_Yg\cdot f\, \Theta<\infty$.

\end{proof}

If $\Phi\in\mcE^1(X,L)$ is a maximal geodesic ray with $\bfH_\rv^{\prime\infty}(\Phi)<\infty$.
By Proposition-Definition~\ref{prop-def:Ricci_energy_on_Y}, one has
\begin{align*}
		\bfH_\rv(\pi^*\Phi(s))-\bfH_\rv(\Phi(s))
		=&\bfE_\rv^{-\Ric(\pi^*\Omega)}(\pi^*\Phi(s))
     -\bfE_\rv^{-\Ric(\Omega_Y)}(\pi^*\Phi(s)) \\
     &+\int_Y\log(\frac{\pi^*\Omega}{\Omega_Y})\; \MA_\rv(\pi^*\varphi_0).
	\end{align*}
Taking the slope at infinity, we have
\begin{align}
\label{H+R=H+R_slope}
	\bfH_\rv^{\prime\infty}(\pi^*\Phi)-\bfH_\rv^{\prime\infty}(\Phi)
	=\bfR_\rv^{\prime\infty}(\Phi)-\bfR_\rv^{\prime\infty}(\pi^*\Phi).
\end{align}
In addition, by Proposition~\ref{prop:slope_int_MA_v()} and \eqref{H+R=H+R_slope}, we obtain the following.
\begin{align}
    \label{eq:R_slope=RNA}
    \bfR_\rv^{\prime\infty}(\pi^*\Phi)
    =\bfR_\rv^\NA(\pi^*\phi).
\end{align}

\bigskip
    
    {\bf Step 3: } Show that for any $\phi\in\mcE^{1}_\rv(X^\an,L^\an)$, there exist model metrics $\phi_j$, $j\in\NN$, such that $\phi_j\to\phi$ in strong topology, and $\bfH^\NA_{\rv}(\pi^*\phi_j)\to \bfH^\NA_{\rv}(\pi^*\phi)$.

Denote $\nu = \MA^\NA_\rv(\pi^*\phi)$, $\mu = \MA^\NA_\rv(\phi)$. Then $\mu = \pi_*\nu$.
By \eqref{entropy_model_sup}, there exists a sequence of s.n.c models $\mcY_j$, such that $\nu_j \coloneqq (r_{\mcY_j})_*\nu$, and since $A_Y$ is continuous on $Y^{\rm val}$,
\begin{align}
    \bfH_\rv^\NA(\pi^*\phi) = \lim_{j\to\infty}\int_{Y^\an} A_Y(v)\; \nu_j.
\end{align}
Then $\nu_j$ is supported on ${\rm Sk}(\mcY_j)\subset Y^{\rm lin}$, where $Y^{\rm lin}$ is the space of valuations of linear growth, see \cite[Section 11]{BJ22}. By \cite[Proposition 11.10]{BJ22}, $\pi:Y^{\rm lin}\rightarrow X^{\rm lin}$ is a homeomorphism.
Let $\mu_j = \pi_*\nu_j$, the support of which is in $X^{\rm lin}$.
By Theorem~\ref{weighted_NA_Calabi_problem}, there exists $\phi_j\in C^0(X^\an)$ such that $\MA_\rv^\NA(\phi_j)=\mu_j$.
In addition, $\MA_\rv^\NA(\pi^*\phi_j)=\nu_j$.

Let $\phi'_{j,k}\in\mcH(X^\an,L^\an)$ such that $\phi'_{j,k}$ strongly converges to $\phi_j$ and $\MA_\rv^\NA(\phi'_{j,k})$ converges to $\MA_\rv^\NA(\phi_j)=\mu_j$. 
Let $\nu_{j,k} = (r_{\mcY_j})_*\MA^\NA_\rv(\pi^*\phi_{j,k}')$,which is a divisorial type measure on $Y^\an$.
Let $\mu_{j,k}=\pi_*\nu_{j,k}$, which is a divisorial type measure on $X^\an$.
Let $\phi_{j,k}$ be the solution of $\MA_\rv^\NA(\phi_{j,k}) = \mu_{j,k}$. 
Then $\MA_\rv^\NA(\pi^*\phi_{j,k}) = \nu_{j,k}$,  which is a divisorial measure supported on ${\rm Sk}(\mcY_j)$. By Theorem \ref{Thm_M_div_MA}, $\phi_{j,k} = \FS(\chi_{j,k})$. Up to making a small perturbation, we may assume $\chi_{j,k}\in \mcN^\div_\QQ$. Then $\chi_{j,k}$ is induced by a model.

Since $\pi^*(\tilde{\phi}-\phi_{\triv})\leq \pi^*(\tilde{\phi}-\phi_\triv)\circ r_{\mcY_j}$ for any psh function $\tilde{\phi}$, following the definition of $\bfE^*_\rv$, we have $(\bfE^*_\rv)^\NA(\mu_{j,k}) \leq (\bfE^*_\rv)^\NA(\MA_\rv^\NA(\phi'_{j,k}))$. Then $\limsup_{k\to\infty}(\bfE^*_\rv)^\NA(\mu_{j,k})\leq \limsup_{k\to\infty}(\bfE^*_\rv)^\NA(\MA_\rv^\NA(\phi'_{j,k})) = (\bfE^*_\rv)^\NA(\mu_j)$.
By the same argument as in the proof of \cite[Proposition 6.3]{Li22a}, we also have $\liminf_{k\to\infty}(\bfE^*_\rv)^\NA(\mu_{j,k}) \geq (\bfE^*_\rv)^\NA(\mu_j)$. Therefore $\mu_{j,k}$ converges to $\mu_{j}$ in strong topology.
Similarly, we have $(\bfE^*_\rv)^\NA(\mu_j)\leq (\bfE^*_\rv)^\NA(\mu)$. On the other hand, 
\begin{align*}
\liminf_{j\to\infty}(\bfE^*_\rv)^\NA(\mu_j) 
\geq& \liminf_{j\to\infty} \Big(\bfE_\rv^\NA(\phi)-\int_{X^\an}(\phi-\phi_\triv)\mu_j\Big) \\
=&\bfE_\rv^\NA(\phi)-\int_{X^\an}(\phi-\phi_\triv)\mu = \bfE_\rv^*(\mu).\end{align*}
This implies that $\mu_j$ converges to $\mu$ in strong topology. Therefore the divisorial measure $\mu_{j,k}$ converges to $\mu$ in strong topology.

Since $(\bfE^*_\rv)^\NA(\nu_{j,k}) = (\bfE^*_\rv)^\NA(\mu_{j,k})$, $\nu_{j,k}$ also converges to $\nu$ in strong topology. Since $A_Y$ is a continuous function on $Y^{\rm val}$, we have
\begin{align}
    \bfH^\NA_\rv(\pi^*\phi) = \int_{Y^\an} A_Y(v)\; \nu = \lim_{j,k\to\infty}\int_{Y^\an} A_Y(v)\; \nu_{j,k} = \lim_{j,k\to\infty} \bfH_\rv^\NA(\pi^*\phi_{j,k}).
\end{align}

    \bigskip

    {\bf Step 4: } Show that if $(X,L)$ is $\bG$-uniformly weighted K-stable for models, then the weighted Mabuchi functional $\bfM_{\rv,\rw}$ is $\bG$-coercive over $\mcE^1_{ K}$.

Assume that $\bfM_{\rv,\rw}$ is not $\bG$-coercive. 
Using the same proof as in \cite[Lemma 6.2]{HL23}, there exists a psh geodesic ray $\Phi=(\Phi(t))\in\mcE_K^1(X,L)$ emanating from $\varphi_0$ satisfying
\begin{align*}
    \sup(\Phi(t)-\varphi_0)=&0, \quad \bfE_\rv(\Phi(t))=-t, \\
    \bfM_{\rv,\rw}^{\prime\infty}(\Phi)\leq&0, \\
    \inf_{\xi\in N_\RR}\bfJ_\rv^{\prime\infty}(\Phi_\xi)=&1.
\end{align*}
Hence, by {\bf Step 1}, $\Phi$ is a maximal geodesic.
By {\bf Step 3}, there exist model metrics $\phi_{\mcL_j}$ such that $\phi_{\mcL_j}\to\Phi^\NA$ in strong topology, and $\bfH^\NA_{\rv}(\pi^*\phi_{\mcL_j})\to \bfH^\NA_{\rv}(\pi^*\Phi^\NA)$.
Then, we obtain
\begin{align*}
    0\geq \bfM_{\rv,\rw}^{\prime\infty}(\Phi)
    = \bfM_{\rv,\rw}^{\prime\infty}(\pi^*\Phi)
    \geq& \bfM_{\rv,\rw}^{\NA}(\pi^*\Phi^\NA) \quad ( \eqref{H+R=H+R_slope}, {\bf Step\ 2}, \eqref{eq:R_slope=RNA}) \\
    =&\lim_{j\rightarrow\infty} \bfM_{\rv,\rw}^{\NA}(\pi^*\phi_{\mcL_j}) \quad ({\bf Step\ 3}) \\
    \geq&\lim_{j\rightarrow\infty} \gamma\cdot\bfJ_{\rv,\TT}^{\NA}(\pi^*\phi_{\mcL_j}) \quad(\textup{Corollary~\ref{birational_uKs}}) \\
    =&\lim_{j\rightarrow\infty} \gamma\cdot\bfJ_{\rv,\TT}^{\NA}(\phi_{\mcL_j}) \quad(\textup{Proposition~\ref{M_v,w_equal}})  \\
    =&\gamma\cdot\bfJ_{\rv,\TT}^{\NA }(\Phi^\NA) \quad(\textup{Lemma~\ref{reduced_J_v_convergence}}) \\
    =&\gamma\cdot\inf_{\xi\in N_\RR}\bfJ_\rv^{\prime\infty}(\Phi_\xi)\quad(\textup{Corollary~\ref{slope_reduced_J_v}}) \\
    =&\gamma>0,
\end{align*}
which leads to the contradiction.

\end{proof}

\section{Openness of \texorpdfstring{$\GG$}{G}-uniform weighted K-stability in toric case}
\label{toric}

In this final section, we focus on toric varieties. We give an equivalent characterization of $\GG$-uniform weighted K-stability, then prove Theorem~\ref{theorem_B}. 
\vskip1em

Let $(X,L)$ be a toric polarized klt variety with the associated polytope $P$.
\begin{lemma}
    The \textit{envelope property} holds for $\PSH_T(X^\an,L^\an)$, i.e, for any increasing sequence $\phi_j$ of toric invariant psh functions that is uniformly  bounded from above, $(\lim_{j\to\infty}\varphi_j)^*$ is in $\PSH_T(X^\an,L^\an)$.
\end{lemma}
\begin{proof}
    Let $\pi:Y\rightarrow X$ be a smooth toric resolution.
    By the description of the toric invariant psh functions in \cite[Theorem 4.8.1]{BPS14} and \cite{BGJK25}, we have $\PSH_T(X^\an,L^\an) = \PSH_T(Y^\an,\pi^*L^\an)$. By \cite[Theorem A]{BJ24a}, the envelope property holds for $\PSH_T(Y^\an,\pi^*L^\an)$. This concludes the lemma.
\end{proof}

Each rational piecewise linear convex function $f$ on $P$ gives rise to a toric test configuration (see \cite{Do02}), denoted by $(\mcX_f,\mcL_f)$. Up to taking base changes, it suffices to consider test configurations with reduced central fibers.
\begin{proposition}
    As the above notations, for any weight $\rw\in C^\infty(P,\RR)$, then
    \begin{align}
    \label{sing_toric_M_v,w_formula}
        \bfM_{\rv,\rw}^\NA(\phi_{\mcL_f})
    =&2\int_{\partial P}f \rv\, d\sigma
    -c_{\rv,\rw}(L)\int_P f\rw\, dy
    \eqqcolon \mcL_{\rv,\rw}(f).
    \end{align}
\end{proposition}
\begin{proof}
    Let $\pi:Y\rightarrow X$ be a smooth toric resolution of $X$ and $A$ be an ample toric line bundle on $Y$. Denote $L^\epsilon\coloneqq\pi^*L+\epsilon A$ and $P^\epsilon$ the associated convex polytope.
    As constructed in \eqref{pull-back}, one denotes $\mcY$ the fiber product of $\mcX_f$ and $Y_{\PP^1}$. Then $(\mcY,\pi^*\mcL_f)$ is a semiample test configuration of $(Y,\pi^*L)$.
    By the calculation of Proposition~\ref{M_v,w_equal}, \eqref{M_v,w^NA_(mcL)}, and Proposition~\ref{functionals_as_positive_product},
    then
    \begin{align*}
        \bfM_{\rv,\rw}^\NA(\phi_{\mcL_f})
        =\bfM_{\rv,\rw}^\NA(\phi_{\pi^*\mcL_f})
        =(K^{\log}_{\mcY/\PP^1}\cdot (\pi^*\mcL_f)^n)_\rv + {c_{\rv,\rw}(L)}((\pi^*\mcL_f)^{n+1})_{\rw}
    \end{align*}
    
    One writes $\pi^*\mcL_f=\rho^*(\pi^*L)_{\PP^1}+D$ for some vertical divisor $D$. Then one defines $\mcL^\epsilon\coloneqq\rho^*(L^\epsilon)_{\PP^1}+D
    =\pi^*\mcL_f+\epsilon\rho^*A_{\PP^1}$, which is still semiample.
    Thus $(\mcY,\mcL^\epsilon)$ is a toric test configuration of $(Y,L^\epsilon)$.
    By \eqref{M_v,w^NA_(mcL)}, one has
    \begin{align}
    \label{M_v,w_inv_pullback}
        \bfM_{\rv,\rw}^\NA(\phi_{\mcL^\epsilon})
        =(K^{\log}_{\mcY/\PP^1}\cdot (\mcL^\epsilon)^n)_\rv + {c_{\rv,\rw}(L_\epsilon)}((\mcL^\epsilon)^{n+1})_{\rw}.
    \end{align}
    By the definition of the weighted intersection product \eqref{E_L_v}, then as $\epsilon\rightarrow0$,
    \begin{align}
        \label{converg_M_v,w_eps}
 \bfM_{\rv,\rw}^\NA(\phi_{\mcL^\epsilon})
 \rightarrow\bfM_{\rv,\rw}^\NA(\phi_{\pi^*\mcL_f})=\bfM_{\rv,\rw}^\NA(\phi_{\mcL_f}).
    \end{align}

    Note that as in \cite[Appendix B]{BJ22}, the toric test configuration $(\mcY,\mcL^\epsilon)$ induces a rational piecewise linear convex function $f_\epsilon$ on $P^\epsilon$ as follows,
    \begin{align*}
        f_\epsilon(y)\coloneqq\sup_{x\in\RR^n}(\langle x,y\rangle-(\varphi_D+\psi_{P^\epsilon})(x)),
    \end{align*}
    where $\psi_{P^\epsilon}(x)\coloneqq\sup_{y\in P^\epsilon}\langle x,y\rangle$ is the support function of $P^\epsilon$.

    Apply \cite[Proposition 4]{Lah19} to the toric test configuration $(\mcY,\mcL^\epsilon)$, which is an orbifold, we have
\begin{align}
\label{slope_M_v,w_eps}
    \bfM_{\rv,\rw}^\NA(\phi_{\mcL^\epsilon})
    =2\int_{\partial P^\epsilon}f_\epsilon \rv\, d\sigma-c_{\rv,\rw}(L^\epsilon)\int_{P^\epsilon} f_\epsilon\rw\, dy
    \eqqcolon \mcL_{\epsilon;\rv,\rw}(f_\epsilon).
\end{align}
On the other hand, since $f=\sup_{x\in\RR^n}(\langle x,\cdot\rangle-(\varphi_D+\psi_{P})(x))$ and $\psi_{P^\epsilon}\searrow\psi_P$ as $\epsilon\rightarrow0$, then $f_\epsilon\nearrow f$.
Thus $\mcL_{\epsilon;\rv,\rw}(f_\epsilon)\rightarrow\mcL_{\rv,\rw}(f)$.
Together with \eqref{converg_M_v,w_eps} and \eqref{slope_M_v,w_eps}, we obtain the formula \eqref{sing_toric_M_v,w_formula}.
\end{proof}

Denote $P^*$ as the union of $P^0$ with all relative interiors of maximal faces and $\cc_*(P)$ as the set of all lower semicontinuous convex functions which are continuous on $P^*$ and integrable on $\partial P$.
And set
\begin{align*}
    \tilde{\cc_*}(P)
    \coloneqq\{f\in\cc_*(P):\inf_Pf=f(0)=0 \}.
\end{align*}

Modifying the argument of \cite[Theorem~1.0.1]{NS21}, we have the following weighted version.
\begin{proposition}
\label{weighted_NittaSaito}
    Let $(X,L,P)$ be a toric polarized variety. For weight functions $\rv\in C^\infty(P,\RR_{>0})$, $\rw\in C^\infty(P,\RR)$, the followings are equivalent:
    \begin{enumerate}[$(\rm i)$]
        \item $(X,L,P)$ is $\bG$-uniformly $(\rv,\rw)$-weighted K-stable;
        \item There exists  $\delta>0$ such that $\mcL_{\rv,\rw}(f)\geq\delta\int_{\partial P}fd\sigma$ for any $f\in\tilde{\cc_*}(P)$;
        \item $\mcL_{\rv,\rw}(f)\geq0$ for any $f\in\cc_*(P)$ and $\mcL_{\rv,\rw}(f)=0$ iff $f$ is affine.
    \end{enumerate}
\end{proposition}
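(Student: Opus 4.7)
The strategy is to follow Nitta--Saito \cite{NS21} and transfer the weighted K-stability condition to an integral inequality on the polytope, using the slope formula (Lemma~\ref{lemma_slope_formula}) to identify non-Archimedean functionals on toric test configurations with explicit integrals over $P$.

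For \textup{(i)}$\Leftrightarrow$\textup{(ii)}, the slope computation already displayed for the weight $\rw\cdot\ell_\ext$ goes through for any smooth weight $\rw$ (with $\ell_\ext$ replaced by the constant $c_{(\rv,\rw)}(L)$), giving
\begin{align*}
    \bfM^{\NA}_{\rv,\rw}(\phi_{\mcL_f}) \;=\; \mcL_{\rv,\rw}(f) \;\coloneqq\; 2\int_{\partial P} f\, \rv\, d\sigma - c_{(\rv,\rw)}(L) \int_P f\, \rw\, dy
\end{align*}
for every rational piecewise linear convex $f$ on $P$. In the toric case $\TT = T_\CC$, so twisting by $\xi \in N_\RR$ merely replaces $f$ by $f+\langle\xi,\cdot\rangle$; representing the affine class of $f$ by the unique $\tilde f \in \tilde{\cc_*}(P)$, a direct toric computation (on $\bfE_\rv^\NA$ and $\sup_{X^\an}(\phi_{\mcL_f}-\phi_\triv)$) yields the two-sided comparison $\bfJ^{\NA}_{\rv,\TT}(\phi_{\mcL_f}) \asymp \int_{\partial P} \tilde f\, d\sigma$ with constants depending only on $\rv$ and $P$. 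Density of rational piecewise linear convex functions in $\tilde{\cc_*}(P)$, combined with linearity of $\mcL_{\rv,\rw}$ in $f$ and monotone convergence of the boundary integral, then extends the equivalence to all of $\tilde{\cc_*}(P)$.

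The direction \textup{(ii)}$\Rightarrow$\textup{(iii)} is immediate: $\mcL_{\rv,\rw}(f)\geq\delta\int_{\partial P}f\,d\sigma>0$ for any nonzero $f\in\tilde{\cc_*}(P)$, and $\cc_*(P)/\{\text{affine}\}\simeq\tilde{\cc_*}(P)$ combined with vanishing of $\mcL_{\rv,\rw}$ on affine functions (forced by the sign-invariance $\mcL_{\rv,\rw}(\pm\ell)\geq 0$) handles the lift to $\cc_*(P)$. For \textup{(iii)}$\Rightarrow$\textup{(ii)}, which is the core of the argument, we argue by contradiction via Blaschke-type compactness: if (ii) fails, choose $f_n \in \tilde{\cc_*}(P)$ with $\int_{\partial P} f_n\, d\sigma = 1$ and $\mcL_{\rv,\rw}(f_n) \to 0$. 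A reverse Harnack-type bound $\sup_P f \leq C \int_{\partial P} f\, d\sigma$ for nonnegative convex functions vanishing at $0$ yields uniform $C^0$-boundedness; Blaschke selection extracts a subsequence converging in $L^1_{\mathrm{loc}}(P^*)$ to a limit $f_\infty \in \cc_*(P)$. Lower semicontinuity of $\int_{\partial P}\cdot\,d\sigma$ (Fatou) together with continuity of $\int_P \cdot\, \rw\, dy$ gives $\mcL_{\rv,\rw}(f_\infty)\leq 0$, so by (iii) $f_\infty$ is affine; the normalizations $f_\infty(0)=\inf_P f_\infty=0$ force $f_\infty\equiv 0$, contradicting $\int_{\partial P} f_n\, d\sigma=1$ once boundary continuity is secured.

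\textbf{Main obstacle.} The principal analytic difficulty is the last boundary step: ensuring that $L^1_{\mathrm{loc}}(P^*)$-convergence $f_n\to f_\infty$ propagates to the boundary so that $\int_{\partial P} f_n\,d\sigma\to\int_{\partial P} f_\infty\,d\sigma$ (rather than merely $\liminf\geq\int_{\partial P} f_\infty\,d\sigma$), because convex functions on $P$ can a priori concentrate $L^1$-mass near $\partial P$. The resolution, which is the technical heart of the Nitta--Saito argument, uses convexity alone to prove a uniform trace estimate ruling out such concentration. Since $\rv>0$ and $\rw$ are smooth and bounded, the weighted boundary measure $\rv\,d\sigma$ is comparable to $d\sigma$ and the interior measure $\rw\,dy$ is comparable to $dy$, so the unweighted trace and compactness arguments of \cite{NS21} transfer to the weighted setting with no essential modification.
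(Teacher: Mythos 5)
Your overall scheme (reduce to toric test configurations via the slope formula, identify $\bfM^\NA_{\rv,\rw}(\phi_{\mcL_f})$ with $\mcL_{\rv,\rw}(f)$, then run a Donaldson/Nitta--Saito compactness argument for (iii)$\Rightarrow$(ii)) is exactly the route the paper intends (it gives no details and simply cites \cite{NS21}), but two of your load-bearing quantitative claims are false. First, $\bfJ^\NA_{\rv,\TT}(\phi_{\mcL_f})$ is \emph{not} comparable to $\int_{\partial P}\tilde f\,d\sigma$. With the paper's conventions ($u_{\varphi_s}=u_\varphi+sf$, so $\dot\Phi=-f\circ m_{\varphi_s}$), one gets $\sup(\phi_{\mcL_f}-\phi_\triv)=-\min_P f$ and $\bfE^\NA_\rv(\phi_{\mcL_f})=-\int_P f\rv\,dy$ up to normalization, hence $\bfJ^\NA_{\rv,\TT}(\phi_{\mcL_f})\asymp\inf_{\ell\ \mathrm{affine}}\int_P|f-\ell|\,dy$, an \emph{interior} $L^1$ quantity. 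A convex spike at a vertex, $f=\max(0,1-\epsilon^{-1}\langle a,x-v\rangle)$, has $\int_P\tilde f\,dy\sim\epsilon^{n}$ while $\int_{\partial P}\tilde f\,d\sigma\sim\epsilon^{n-1}$, so the two norms are inequivalent for $n\geq2$. Consequently (i)$\Rightarrow$(ii) is not a formal norm equivalence: you must use the specific structure of $\mcL_{\rv,\rw}$, namely the strictly positive boundary term $2\int_{\partial P}f\rv\,d\sigma$, and split into the cases $\int_P\tilde f\,dy\leq\epsilon\int_{\partial P}\tilde f\,d\sigma$ (where $\mcL$ is directly bounded below by a multiple of the boundary integral) and the opposite case (where the $\bfJ_{\rv,\TT}^\NA$-coercivity applies). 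Second, the ``reverse Harnack'' bound $\sup_P f\leq C\int_{\partial P}f\,d\sigma$ is false in dimension $\geq2$ (same corner spike: $\sup=1$, boundary integral $\sim\epsilon$), so your uniform $C^0$ bound and Blaschke selection as stated collapse. The correct compactness input is interior local Lipschitz bounds for normalized convex functions (Donaldson's Lemma~5.2.3, as in the paper's proof of Theorem~\ref{openness_toric}) together with the convexity collar estimate $\int_{P\cap(\partial P)_\eta}f\,dy\leq C\eta\int_{\partial P}f\,d\sigma$; and your ``main obstacle'' is mis-identified: one cannot (and need not) show $\int_{\partial P}f_n\,d\sigma\to\int_{\partial P}f_\infty\,d\sigma$ — mass may escape to $\partial P$ — the contradiction instead comes from $\rv\geq\min_P\rv>0$ and the normalization $\int_{\partial P}f_n\,d\sigma=1$ once the interior term is controlled by the collar estimate.

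There is also a gap in your (ii)$\Rightarrow$(iii): condition (ii) only concerns $f\in\tilde{\cc_*}(P)$, and the only affine function in $\tilde{\cc_*}(P)$ is $0$ (an affine $\ell$ with $\ell(0)=\inf_P\ell=0$ and $0\in P^0$ is identically zero), so (ii) gives no information about $\mcL_{\rv,\rw}$ on affine functions; your appeal to ``sign-invariance $\mcL_{\rv,\rw}(\pm\ell)\geq0$'' presupposes part of (iii) and is circular. The vanishing of $\mcL_{\rv,\rw}$ on affine functions must be supplied separately: it holds for the weight $\rw\cdot\ell_\ext$ used in the application, by the defining equation \eqref{extremal_fun_condition} of $\ell_\ext$, or it can be extracted from (i) via the weighted Futaki character ($\Fut_{\rv,\rw}(\xi)=0$ for $\xi\in N_\RR$ by twisting the trivial configuration), but not from (ii) alone. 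So either add the hypothesis that the toric weighted Futaki character vanishes (which is how the statement is used) or prove the implications in a cycle through (i).
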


Suppose $\rw\in C^\infty(P,\RR_{>0})$, then there exists a unique affine function $\ell_\ext$ on $P$, called \textit{weighted extremal function}, satisfying for any $\xi$ affine linear on $P$, 
\begin{align}
\label{extremal_fun_condition}
    2\int_{\partial P}\xi\rv\, d\sigma- \int_P\xi\rw\cdot\ell_\ext\, dx=0.
\end{align}

The following Lemma is essentially \cite[Theorem 1.12]{Li21}.
\begin{lemma}
    Let $(X,L)$ be a polarized projective toric variety with at most klt singularities and $\rv,\rw\in C^\infty(P,\RR_{>0})$. Then $(X,L)$ is $(\CC^{\times})^n$-uniformly $(\rv,\rw\cdot\ell_\ext)$-weighted K-stable is equivalent to $(X,L)$ is $(\CC^{\times})^n$-uniformly $(\rv,\rw\cdot\ell_\ext)$-weighted K-stable for models.
\end{lemma}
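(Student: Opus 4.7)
The direction ``K-stability for models $\Rightarrow$ K-stability for test configurations'' is immediate, since every $(\CC^\times)^n$-equivariant semiample test configuration is in particular a $(\CC^\times)^n$-equivariant model, so the defining inequality $\bfM^\NA_{\rv,\rw\cdot\ell_\ext}\geq \gamma\, \bfJ^\NA_{\rv,\TT}$ on the larger class automatically restricts to the smaller one.

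The plan for the nontrivial direction is to translate both statements into a single convex-analytic inequality on the moment polytope $P$. Assume $(X,L)$ is $(\CC^\times)^n$-uniformly $(\rv,\rw\cdot\ell_\ext)$-weighted K-stable. Applying Proposition~\ref{weighted_NittaSaito} with the sign-changing weight $\rw\cdot\ell_\ext\in C^\infty(P,\RR)$, one obtains a constant $\delta>0$ such that
\[
\mcL_{\rv,\rw\cdot\ell_\ext}(f) \;\geq\; \delta \inf_{\xi\text{ affine on }P}\int_{\partial P} (f-\xi)\,\rv\, d\sigma
\qquad\text{for every } f\in\cc_*(P),
\]
where the affine infimum on the right records the $\TT$-reduction (linear translations of $f$ correspond to twists by $\xi\in N_\RR$).

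Next I would show that any $(\CC^\times)^n$-equivariant model $(\mcX,\mcL)$ of $(X,L)$ yields, via Legendre duality, some $f_\mcL\in\cc_*(P)$, and that the non-Archimedean functionals are computed by the same integrals as for test configurations:
\[
\bfM^\NA_{\rv,\rw\cdot\ell_\ext}(\phi_\mcL) = \mcL_{\rv,\rw\cdot\ell_\ext}(f_\mcL),
\qquad
\bfJ^\NA_{\rv,\TT}(\phi_\mcL) = \inf_{\xi\text{ affine}}\int_{\partial P} (f_\mcL-\xi)\,\rv\, d\sigma.
\]
Here Theorem~\ref{env_thm} ensures that $\phi_\mcL=\mathrm{P}_L(\varphi_\mcL)$ is a well-defined continuous $L$-psh metric whose restriction to the open torus orbit is represented by a convex $f_\mcL$. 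Proposition~\ref{functionals_as_positive_product} expresses $\bfH^\NA_\rv,\bfR^\NA_\rv,\bfE^\NA_\rv,\bfE^\NA_{\rw\cdot\ell_\ext}$ as positive-product intersections on a dominating snc toric model; by toric invariance these collapse to polytope integrals (using \eqref{extremal_fun_condition} to collect the $\rw\cdot\ell_\ext$ terms for $\bfM^\NA$, and Corollary~\ref{slope_reduced_J_v} with the toric slope formula for $\bfJ^\NA_{\rv,\TT}$). Feeding $f=f_\mcL$ into the convex-analytic inequality above then gives $\bfM^\NA_{\rv,\rw\cdot\ell_\ext}(\phi_\mcL)\geq \delta\,\bfJ^\NA_{\rv,\TT}(\phi_\mcL)$, which is uniform K-stability for models.

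The main obstacle is the extension of Donaldson's toric integral formulas from semiample test configurations to general (not necessarily semiample) models. For test configurations the identifications $\bfM^\NA=\mcL_{\rv,\rw\cdot\ell_\ext}$ and the boundary-integral expression of $\bfJ^\NA_{\rv,\TT}$ are classical; for models, one only controls $\mcL$ as a $\QQ$-line bundle and must handle the base locus via positive products. This is where the envelope property proved in Section~\ref{envelope} and the positive-product formulas of Proposition~\ref{functionals_as_positive_product} are essential: they guarantee that the passage $(\mcX,\mcL)\rightsquigarrow f_\mcL\in\cc_*(P)$ is compatible with all the functionals, so that the single convex-analytic inequality coming from Proposition~\ref{weighted_NittaSaito} uniformly controls both test configurations and models. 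Together with the reference~\cite[Theorem~1.12]{Li21}, adapted to the weighted klt setting via the tools developed in this paper, this closes the equivalence.
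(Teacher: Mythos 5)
The easy direction is fine, but your argument for the substantive direction has a genuine gap at exactly the point you flag as "the main obstacle": the identities $\bfM^\NA_{\rv,\rw\cdot\ell_\ext}(\phi_\mcL)=\mcL_{\rv,\rw\cdot\ell_\ext}(f_\mcL)$ and $\bfJ^\NA_{\rv,\TT}(\phi_\mcL)=\inf_\xi\int_{\partial P}(f_\mcL-\xi)\rv\,d\sigma$ for an arbitrary $(\CC^\times)^n$-equivariant model are asserted, not proved, and they are the whole content of the lemma. Donaldson-type polytope formulas are available for semiample test configurations, i.e. for rational PL convex functions; for a general model one only has the envelope $\phi_\mcL={\rm P}_L(\varphi_\mcL)$, whose Monge--Amp\`ere measure and entropy are computed through positive products on a sequence of blowups (Proposition~\ref{functionals_as_positive_product}), and it is not automatic that the pairing of $A_X$ with that measure collapses to the boundary integral $2\int_{\partial P}f_\mcL\,\rv\,d\sigma$; establishing this (even as an inequality in the useful direction) is precisely the weighted, singular analogue of \cite[Theorem~1.12]{Li21}, so invoking that reference "adapted to this setting" is close to circular. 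Moreover the claimed exact equality for $\bfJ^\NA_{\rv,\TT}$ is too strong: $\bfJ^\NA_\rv$ is defined via $\sup(\phi-\phi_\triv)$ and $\bfE^\NA_\rv$, and is only \emph{comparable} to the boundary-integral seminorm, so at best you get a two-sided comparison (which would still suffice, but must be stated and proved). Finally, Proposition~\ref{weighted_NittaSaito}(ii) gives $\mcL_{\rv,\rw}(f)\geq\delta\int_{\partial P}f\,d\sigma$ only for normalized $f\in\tilde{\cc_*}(P)$; your reformulation with the affine infimum needs a (short) justification that normalization corresponds to the $\TT$-twist.

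The paper's proof takes a different and more economical route that avoids computing functionals of model envelopes on the polytope altogether: it observes that uniform K-stability for models only needs to be tested on model metrics arising as solutions of the weighted non-Archimedean Monge--Amp\`ere equation $\MA^\NA_\rv(\phi)=\mu$ with $\mu$ a toric-invariant divisorial measure (this is how model metrics enter the coercivity argument for Theorem~\ref{theorem_A}, via Theorem~\ref{weighted_NA_Calabi_problem} and the approximation scheme there), and then cites \cite{BPS14,BGJK25} to show that in the toric case such solutions are piecewise-linear psh, hence already come from $(\CC^\times)^n$-equivariant test configurations. So the paper reduces "stability for models" to "stability for test configurations" by a regularity statement for the toric NA Monge--Amp\`ere operator, whereas you attempt a direct Legendre-transform computation for every model; your route could in principle work (it is the divisorial-stability/convex-analysis approach), but it requires proving the polytope formulas for envelopes of non-semiample models, which your proposal leaves unproved.
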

\begin{proof}
    We sketch the proof. It suffices to show that $(X,L)$ is $(\CC^{\times})^n$-uniformly $(\rv,\rw\cdot\ell_\ext)$-weighted K-stable implies that $(X,L)$ is $(\CC^{\times})^n$-uniformly $(\rv,\rw\cdot\ell_\ext)$-weighted K-stable for models. It suffices to show that for any toric invariant measure $\mu = \sum_{i}c_i\cdot \delta_{v_i} \in \mcM^\div_\rv$, the solution of $\MA^\NA_\rv(\phi_\mcL) = \mu$ corresponds to a $(\CC^\times)^n$-equivariant test configuration $(\mcX,\mcL)$. A modification of \cite{BPS14, BGJK25} to the weighted Monge-Amp\`ere operator shows that, the solution $\phi_\mcL$ is a piecewise-linear psh function. Therefore it corresponds to a test configuration.
\end{proof}

Let $Y$ be a smooth toric resolution of $X$. Let $L_\epsilon =\pi^* L-\epsilon E$ be an ample line bundle over $Y$, where $E$ is an exceptional divisor.
Denote $P_\epsilon$ as the polytope of $(Y,L_\epsilon)$, $F_\epsilon$ as the maximal faces that converge to lower dimensional strata. A fact is that $P_\epsilon \subset P$.

The bi-linear form $\int_{P_\epsilon}\langle \cdot , \cdot \rangle\; \rw \; dy$ is strictly positive definite for $\epsilon\in [0,\epsilon_0]$. Therefore, there exists a unique affine function $\ell_{\ext,\epsilon}$ that satisfies
\begin{align}
\label{extremal_fun_condition_epsilon}
    2\int_{\partial P_\epsilon}\xi\rv\, d\sigma- \int_{P_\epsilon}\xi\rw\cdot\ell_{\ext,\epsilon}\, dx=0
\end{align}
for any affine functions $\xi$.
Since the coefficients in equation \eqref{extremal_fun_condition_epsilon} are smooth for $\epsilon\in [0,\epsilon_0]$, the affine functions  $\ell_{\ext,\epsilon}$ converge to $\ell_\ext$ smoothly. 

\begin{theorem}
    \label{openness_toric}
     Assume $(X,L)$ is $\bG$-uniformly $(\rv,\rw\cdot\ell_{\ext})$-weighted $K$-stable. Then there exists $\epsilon_0>0$, such that for any $\epsilon<\epsilon_0$, $(Y,L_\epsilon)$ is $\bG$-uniformly $(\rv,\rw\cdot\ell_{\ext,\epsilon})$-weighted $K$-stable.
\end{theorem}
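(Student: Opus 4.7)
The plan is to reduce everything to the convex--analytic characterization of $\bG$-uniform weighted K-stability on the polytope given by Proposition~\ref{weighted_NittaSaito}. By hypothesis applied with weight $\rw\cdot\ell_\ext$, there exists $\delta_0>0$ such that for every $f\in\tilde{\cc_*}(P)$
\[
\mcL^P_{\rv,\rw\cdot\ell_\ext}(f)=2\int_{\partial P}f\,\rv\,d\sigma-\int_P f\,\rw\cdot\ell_\ext\,dy\geq \delta_0\int_{\partial P}f\,d\sigma,
\]
and I want to produce $\delta_\epsilon>0$ such that the analogous inequality holds on $P_\epsilon$ for all $f\in\tilde{\cc_*}(P_\epsilon)$ when $\epsilon$ is small. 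The ingredients already in place are: $P_\epsilon\subset P$ and $P_\epsilon\to P$ in Hausdorff distance; the weights $\rv,\rw$ are fixed smooth positive functions on $P$ hence restrict smoothly to each $P_\epsilon$; and the affine functions $\ell_{\ext,\epsilon}$, uniquely determined by \eqref{extremal_fun_condition_epsilon}, converge smoothly to $\ell_\ext$ as $\epsilon\to0$. The combinatorial subtlety is that $\partial P_\epsilon$ splits into the "old" facets $\partial^{\rm old}P_\epsilon$ that deform continuously into facets of $\partial P$ and "new" facets $\partial^{\rm new}P_\epsilon$ (coming from the exceptional divisor $E$) whose $(n{-}1)$-dimensional Lebesgue measure is $O(\epsilon)$.

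I would argue by contradiction. Suppose no such $\delta_\epsilon$ exists; then there are sequences $\epsilon_n\searrow0$ and $f_n\in\tilde{\cc_*}(P_{\epsilon_n})$ normalized by $\int_{\partial P_{\epsilon_n}}f_n\,d\sigma=1$ with $\mcL^{P_{\epsilon_n}}_{\rv,\rw\cdot\ell_{\ext,\epsilon_n}}(f_n)\leq 1/n$. Extend each $f_n$ to a convex function $\tilde f_n\in\tilde{\cc_*}(P)$ by setting $\tilde f_n$ equal to the convex envelope of the extension that assigns to each vertex of $P\setminus P_{\epsilon_n}$ the supremum of $f_n$ along the nearby rays through $0$; the extension preserves $\tilde f_n(0)=\min_P \tilde f_n=0$. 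Convexity together with the boundary normalization yields a uniform $L^1$ bound for $\tilde f_n$ on $P$ and uniform bounds on compact subsets of $P^\circ$. After extracting a subsequence, $\tilde f_n$ converges in $L^1(P)$ and pointwise on $P^\circ$ to a convex limit $f_\infty\in\tilde{\cc_*}(P)$. Using that $\int_{\partial^{\rm new}P_{\epsilon_n}}f_n\,\rv\,d\sigma=O(\epsilon_n)\cdot\|f_n\|_{L^\infty(P_{\epsilon_n})}$ is controlled by the $L^1$ bound and vanishes in the limit (together with lower semicontinuity on $\partial^{\rm old}P_{\epsilon_n}$ and Fatou on each facet), the limit satisfies $\int_{\partial P}f_\infty\,d\sigma\geq 1$, while smooth convergence of $\ell_{\ext,\epsilon_n}\to\ell_\ext$ and dominated convergence in the polytope interior give $\mcL^P_{\rv,\rw\cdot\ell_\ext}(f_\infty)\leq 0$. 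This contradicts $\delta_0>0$, proving the uniform inequality on $P_\epsilon$ and hence $\bG$-uniform $(\rv,\rw\cdot\ell_{\ext,\epsilon})$-weighted K-stability of $(Y,L_\epsilon)$ by Proposition~\ref{weighted_NittaSaito} again. The final assertion on existence of a weighted extremal metric follows from Theorem~\ref{theorem_A} combined with \cite[Theorem~1.2]{HL25} under log-concavity of $\rv$.

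The main obstacle is the compactness/continuity step: one needs both that the extension $\tilde f_n$ remains convex, normalized, and uniformly controlled on $P$, and that the boundary integrals $\int_{\partial^{\rm old}P_{\epsilon_n}}f_n\,\rv\,d\sigma$ converge to $\int_{\partial P}f_\infty\,\rv\,d\sigma$ facet by facet despite the collapse of $\partial^{\rm new}P_{\epsilon_n}$ and the non-trivial deformation of the old facets. The natural fix is to argue facet-by-facet via a Lipschitz reparametrization of each $\partial^{\rm old}P_{\epsilon_n}$-facet onto the corresponding facet of $\partial P$, with Jacobian converging uniformly to $1$, so that lower semicontinuity of convex functions on each facet together with the $L^1$-control supplied by the normalization yields the desired limit; the contribution from $\partial^{\rm new}P_{\epsilon_n}$ is absorbed because the $\rv$-measure there is $O(\epsilon_n)$ while $f_n$ grows at most linearly along rays from $0$ by convexity and is therefore controlled on $\partial^{\rm new}P_{\epsilon_n}$ by the uniform boundary normalization.
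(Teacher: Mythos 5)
Your overall strategy (reduce to the polytope criterion of Proposition~\ref{weighted_NittaSaito}, take a destabilizing sequence $f_n\in\tilde{\cc_*}(P_{\epsilon_n})$ normalized by $\int_{\partial P_{\epsilon_n}}f_n\,d\sigma=1$, pass to a convex limit on $P$) is the same as the paper's, but your contradiction hinges on a step that fails: the claim $\int_{\partial P}f_\infty\,d\sigma\geq 1$. Under locally uniform (or $L^1$) convergence of convex functions in the interior, the boundary functional is only \emph{lower} semicontinuous: Fatou and the facet-by-facet comparison give $\int_{\partial P}f_\infty\,d\sigma\leq\liminf_n\int_{\partial P_{\epsilon_n}}f_n\,d\sigma$, not $\geq$. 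Boundary mass can genuinely be lost (e.g.\ $f_n=c_n\max(0,\langle a,x\rangle-t_n)$ with $t_n$ tending to $\max_P\langle a,x\rangle$, rescaled to have boundary integral $1$, converges to $0$ in the interior), so from the normalization alone the limit $f_\infty$ could be identically zero, in which case $\mcL^P_{\rv,\rw\cdot\ell_\ext}(f_\infty)\leq 0$ produces no contradiction with the $\delta_0$-inequality. Your proposed "fix" via Lipschitz reparametrization of the old facets still only yields the $\leq$ direction, because the traces of $f_n$ on the facets need not converge to the trace of $f_\infty$; and the auxiliary bound $\int_{\partial^{\rm new}P_{\epsilon_n}}f_n\,\rv\,d\sigma=O(\epsilon_n)\|f_n\|_{L^\infty}$ is also unjustified, since the sup of a convex function up to the boundary is not controlled by its interior $L^1$ norm (though that term is harmlessly bounded by $\sup\rv$ times the normalization anyway).

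The missing idea, which is exactly how the paper closes the argument, is to extract nondegeneracy of the limit from the destabilizing inequality itself rather than from the boundary normalization: $\mcL_{\epsilon_n;\rv,\rw\cdot\ell_{\ext,\epsilon_n}}(f_n)\leq 0$ together with $\int_{\partial P_{\epsilon_n}}f_n\,d\sigma=1$ gives $\int_{P_{\epsilon_n}}f_n\,\rw\cdot\ell_{\ext,\epsilon_n}\,dy\geq 2\int_{\partial P_{\epsilon_n}}f_n\,\rv\,d\sigma\geq 2\inf_P\rv>0$, and this interior integral \emph{does} pass to the limit (locally uniform convergence in $P^0$ plus the estimate that the contribution of an $\eta$-collar of $\partial P$ is $O(\eta)$ times the boundary normalization). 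Hence $\int_P f_\infty\,\rw\cdot\ell_\ext\,dy>0$, so $f_\infty$ is nonzero and, being normalized with interior minimum, non-affine; combined with $\mcL^P_{\rv,\rw\cdot\ell_\ext}(f_\infty)\leq\liminf_n\mcL_{\epsilon_n}(f_n)\leq 0$ this contradicts characterization (iii) of Proposition~\ref{weighted_NittaSaito} (equivalently, non-affineness plus convexity forces $\int_{\partial P}f_\infty\,d\sigma>0$, contradicting (ii)). Your extension-by-convex-envelope device is an inessential variation (the paper instead restricts to $P_\epsilon\subset P_{\epsilon_j}$, bounds $\int_{\partial P_\epsilon}f_j\,d\sigma$ by radial projection to $\partial P_{\epsilon_j}$, and diagonalizes), but without the interior lower bound your argument does not rule out a degenerate limit and therefore does not yield the contradiction.
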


\begin{proof}
    Assume that no such $\epsilon_0$ in the statement of the theorem exists. By applying Proposition~\ref{weighted_NittaSaito} to $\rw\cdot\ell_\ext$, we can pick a sequence $\epsilon_j\to 0$, such that there exists a sequence $f_j\in \tilde{\cc_*}(P_{\epsilon_j})$, ${\mathcal{L}_{\epsilon_j;\rv,\rw\cdot\ell_{\ext,\epsilon_j}}}(f_j)\leq 0$. We can multiply $f_j$ by a positive constant such that
    \begin{align}
        \label{boundary_normal}
        \int_{\partial P_{\epsilon_j}} f_j = 1.
    \end{align}
    Then by
    \begin{align}
        \label{instability_assumption}
         \mathcal{L}_{\epsilon_j;\rv,\rw\cdot\ell_{\ext,\epsilon_j}}(f_j) 
         = 2\int_{\partial P_{\epsilon_j}} f_j \rv\, d\sigma 
          - \int_{P_{\epsilon_j}} f_j \rw\cdot\ell_{\ext,\epsilon_j}\, dy \leq 0,
    \end{align}
    it follows that
    \begin{align*}
        \int_{\partial P_{\epsilon_j}}f_j\, d\sigma
        \leq C_{\rv, \rw}\int_{P_{\epsilon_j}}f_j\,dy,
    \end{align*}
    where $C(\rv, \rw)>0$ is independent of $\epsilon_j$.
    By \cite[Lemma~5.1.3]{Do02} or \cite[Proposition~5.1.2]{NS21}, there exists a constant $C>0$ independent of $\epsilon_j$ such that
    \begin{align*}
        \frac{1}{C}\leq \int_{P_{\epsilon_j}} f_j \; dy \leq C.
    \end{align*}
    On any compact subset $K$ of $P^0$,
    by \cite[Lemma~5.2.3]{Do02}, $f_j$ is uniformly Lipschitz for $j$ sufficiently large. Fix an $0<\epsilon\ll1$, and let $j\gg 1$ such that $\epsilon_j<\epsilon$.
    On $\partial P_\epsilon$,
    \begin{align*}
        \int_{\partial P_\epsilon}f_j d\sigma
        =&\int_{\partial P_\epsilon\cap\partial P_{\epsilon_j}}f_j\, d\sigma +\int_{\partial P_\epsilon\setminus\partial P_{\epsilon_j}}f_j\, d\sigma. 
    \end{align*}
    Let $F\subset \partial P_\epsilon \setminus \partial P_{\epsilon_j}$ be the union of maximal faces. Consider the projection $\tau_j: F\rightarrow \partial P_{\epsilon_j}$, by sending each $y\in F$ to the intersection of the ray $\overrightarrow{0y}$ with $\partial P_{\epsilon_j}$. Since $f_j$ is normalized, $f(\tau_j(y))\geq f(y)$. Then there exists a constant $C>0$ such that $\int_{F}f_j \; d\sigma \leq C\int_{\tau_j(F)} f_j \; d\sigma$. Then
    \begin{align*}
        \int_{\partial P_\epsilon} f_j \; d\sigma \leq C \int_{\partial P_{\epsilon_j}} f_j \; d\sigma = C,
    \end{align*}
    where $C>0$ is a constant independent of the choices of $\epsilon,\epsilon_j, f_j$.
    
   Then by \cite[Proposition~5.2.1]{NS21}, for each fixed $\epsilon>0$, on $P_\epsilon$, up to picking a subsequence, $f_j$ converges to a normalized convex function $f_\epsilon$ in $L^1$. The convergence is also locally uniform in $P^0_\epsilon$, and $0\leq \int_{P_{\epsilon}} f_\epsilon \; dy \leq C$. By letting $\epsilon\to 0$, up to picking a subsequence once more, 
    $f_\epsilon$ converges to $f$ locally uniformly on $P^0$, which extends to a convex function on $P$.

    We claim that $f\in \tilde{C_*}(P)$.
    Since $f_{\epsilon_j}$ converges on $P^*\cap P_{\epsilon}$ and $P^* = \cup_{\epsilon}(P^*\cap P_{\epsilon})$, we have $f\in C^0(P^*)$.
    Since $\int_{\partial P\cap \partial P_{\epsilon_j}} f_j \; d\sigma \leq 1$, we have
    \begin{align}
    \label{boundary_inequality}
        \int_{\partial P\cap \partial P_{\epsilon_j}} f \; d\sigma
        \leq \int_{\partial P\cap \partial P_{\epsilon_j}}\lim f_j \; d\sigma
        \leq \liminf \int_{\partial P\cap \partial P_{\epsilon_j}} f_j \; d\sigma \leq 1.
    \end{align}
    Therefore 
    \begin{align*}
         \int_{\partial P} f \; d\sigma \leq 1.
    \end{align*}
    And since each $f_{\epsilon_j}$ is normalized, $f$ is also normalized. Then $f\in \tilde{C}_*(P)$. 

    Meanwhile, by replacing $f$ by $f\cdot\rv$ in \eqref{boundary_inequality}, we have
    \begin{align}
    \label{boundary_inequality_2}
        \int_{\partial P} f\rv\, d\sigma \leq \liminf_{j\to\infty}\int_{\partial P_{\epsilon_j}} f_j\rv\, d\sigma.
    \end{align}
    As $\ell_{\ext,\epsilon_j}$ converges to $\ell_{\ext}$ smoothly, we do not need to worry about the regularity issue caused by $\rw\cdot \ell_{\ext,\epsilon_j}$.
    Let $\eta>0$ be a small number, and $(\partial P)_\eta$ be the $\eta$-neighborhood of $\partial P$. Since for $j$ sufficiently large, 
    \begin{align*}
       \left |\int_{P_{\epsilon_j}\cap (\partial P)_\eta} f_j \cdot \rw\ell_{\ext,\epsilon_j}\; dy\right|
       \leq C\eta\cdot\int_{\partial P_{\epsilon_j}} f_j\;d\sigma \leq C\eta
    \end{align*}
    for some uniform constant $C>0$, we have
    \begin{align*}
       \left |\int_{P\setminus (\partial P)_\eta} f_j \cdot \rw\ell_{\ext,\epsilon_j}\; dy 
       - \int_{P_{\epsilon_j}} f_j \cdot \rw\ell_{\ext,\epsilon_j}\; dy\right| \leq C\eta.
    \end{align*}
    Since $f_j$ converges to $f$ uniformly on $P\setminus (\partial P)_\eta$, we have
    \begin{align}
    \label{polytope_inequality}
        \int_{P} f\rw\cdot\ell_{\ext}\, dy 
        = \lim_{j\to\infty} \int_{P_{\epsilon_j}} f_j \rw\cdot\ell_{\ext,\epsilon_j}\, dy \geq 2C>0,
    \end{align}
    where $C(\rv) = \inf_{P}\rv >0$. The last inequality is by \eqref{instability_assumption} and \eqref{boundary_normal}. Then
    \begin{align}
        \label{interior_lower_bound}
        \int_P f \; dy \geq C \int_P f \rw\cdot \ell_\ext \; dy >0,
    \end{align}
    where $C = \sup_P(\rw\cdot\ell_\ext)^{-1}>0$.
    The normalization condition and $\int_{P} f \; dy >0$ imply that $f$ is not an affine function.
    
    Then by \eqref{boundary_inequality_2} and \eqref{polytope_inequality}, we have
    \begin{align*}
        \mathcal{L}_{\rv,\rw\cdot\ell_{\ext}}(f) \leq \liminf_{j\to\infty} \mathcal{L}_{\epsilon_j;\rv,\rw\cdot\ell_{\ext,\epsilon_j}}(f_{j}) \leq 0.
    \end{align*}
    However, since $f$ is not affine, the corresponding filtration is not induced by a holomorphic vector field. This contradicts with the fact that $(X,L,P)$ is $\bG$-uniformly $(\rv,\rw\cdot\ell_{\ext})$-weighted K-stable.
\end{proof}


\bibliography{Singular-cscK_ref}
	
\end{document}